\title{Optimality and Sub-optimality of PCA \\for Spiked Random Matrices and Synchronization}
\author[1]{Amelia Perry%
\footnote{The first two authors contributed equally.}%
\thanks{Email: {\tt ameliaperry@mit.edu}. This work is supported in part by NSF CAREER Award CCF-1453261 and a grant from the MIT NEC Corporation.}%
}
\newcommand{\firstauthmark}{\footnotemark[1]}
\author[1]{Alexander S.\ Wein%
\protect\firstauthmark%
\thanks{Email: {\tt awein@mit.edu}. This research was conducted with Government support under and awarded by DoD, Air Force Office of Scientific Research, National Defense Science and Engineering Graduate (NDSEG) Fellowship, 32 CFR 168a.}%
}
\author[1,3]{Afonso S.\ Bandeira%
\thanks{Email: {\tt bandeira@cims.nyu.edu}. A.S.B.\ was supported by NSF Grant DMS-1317308.
 Part of this work was done while A.S.B.\ was with the Department of Mathematics at the Massachusetts Institute of Technology.}%
}
\author[1,2]{Ankur Moitra%
\thanks{Email: {\tt moitra@mit.edu}. This work is supported in part by NSF CAREER Award CCF-1453261, NSF Large CCF-1565235, a grant from the MIT NEC Corporation and a Google Faculty Research Award.}%
}
\affil[1]{Department of Mathematics, Massachusetts Institute of Technology}
\affil[2]{Computer Science and Artificial Intelligence Lab, Massachusetts Institute of Technology}
\affil[3]{Department of Mathematics and Center for Data Science, Courant Institute of Mathematical Sciences, New York University}
\begin{document}
\maketitle

\vspace{-1em}
\begin{abstract}

A central problem of random matrix theory is to understand the eigenvalues of `spiked' or `deformed' random matrix models, in which a prominent eigenvector (or `spike') is planted into a random matrix. These distributions form natural statistical models for principal component analysis (PCA) problems throughout the sciences. 
\citet*{bbp} showed that the spiked Wishart ensemble exhibits a sharp phase transition asymptotically: when the signal strength is above a critical threshold, it is possible to detect the presence of a spike based on the top eigenvalue, and below the threshold the top eigenvalue provides no information. Subsequently, sharp spectral phase transitions have been proven in many other random matrix models. Such results form the basis of our understanding of when PCA can detect a low-rank signal in the presence of noise, and how well it can estimate it.

However, not all the information about the spike is necessarily contained in the spectrum. We study the fundamental limitations of statistical methods, including non-spectral ones. Our results include:

\begin{itemize}

\item For the Gaussian Wigner ensemble, we show that PCA achieves the optimal detection threshold for a variety of benign priors for the spike. We extend previous work on the spherically symmetric and \iid Rademacher priors through an elementary, unified analysis.

\item For any non-Gaussian Wigner ensemble, we show that PCA is always suboptimal for detection. However, a variant of PCA achieves the optimal threshold (for benign priors) by pre-transforming the matrix entries according to a carefully designed function.
This approach has been stated before, based on a linearization of approximate message passing, and we give a rigorous and general analysis.

\item Finally, for both the Gaussian Wishart ensemble and various synchronization problems over groups, we show that computationally inefficient procedures can work below the threshold where PCA succeeds, whereas no known efficient algorithm achieves this. This conjectural gap between what is statistically possible and what can be done efficiently remains an interesting open question.

\end{itemize}

\noindent Our results are based on several new tools for establishing that two matrix distributions are contiguous. In some cases, we establish non-asymptotic bounds for hypothesis testing, and also transfer our results to the corresponding estimation problems.

\end{abstract}

\section{Introduction}
\label{sec:intro}

One of the most common ways of analyzing a collection of data is to extract top eigenvectors that represent directions of largest variance, often referred to as principal component analysis (PCA). Starting from the work of Karl Pearson, this technique has been a mainstay in statistics and throughout the sciences for more than a century. For instance, genome-wide association studies construct a correlation matrix of expression levels, whereby PCA is able to identify collections of genes that work together. PCA is also used in economics to extract macroeconomic trends and to predict yields and volatility \citep{macro1,macro2}, and in network science to find well-connected groups of people to identify communities \citep{mcsherry}. More broadly, it underlies much of exploratory data analysis, dimensionality reduction and visualization. 

Classical random matrix theory provides a suite of tools to characterize the behavior of the eigenvalues of various random matrix models in high-dimensional settings. Nevertheless, most of these works can be thought of as focusing on a pure noise-model \citep{AGZ-book,BS-book,Tao-book} where there is not necessarily any low-rank structure to extract. A direction initiated by \cite{J-spk} has brought this powerful theory closer to statistical questions by introducing \emph{spiked models} that are of the form ``signal + noise.'' Such models have yielded fundamental new insights on the behaviors of several methods such as principal component analysis (PCA) \citep{JL-sparse-pca,Paul,Nadler}, sparse PCA \citep{AW-sparse-pca,VL-sparse,BR-opt,Ma-sparse,SSM-sparse,CMW-sparse,Birnbaum-sparse,DM-sparse-pca,KNV-sparse-pca}, and synchronization algorithms
\citep{sin11,boumal2014cramer,angular-tightness,boumal}. More precisely, given a true signal in the form of an $n$-dimensional unit vector $x$ called the \emph{spike}, we can define three natural spiked random matrix ensembles as follows:
\begin{itemize}
\item Spiked Wigner: observe $Y = \lambda x x^\top + \frac{1}{\sqrt{n}} W$, where $W$ is an $n \times n$ random symmetric matrix with entries drawn \iid (up to symmetry) from a fixed distribution of mean $0$ and variance $1$.
\item Spiked (Gaussian) Wishart: observe $Y = X X^\top$, where $X$ is an $n \times N$ matrix with columns drawn independently from $\cN(0,I_n + \beta x x^\top)$, in the high-dimensional setting where the sample count $N$ and dimension $n$ scale proportionally as $N \gamma \approx n$.
\item $\ZZ/L$ synchronization: with $x$ instead valued entrywise in the complex $L$th roots of unity, we observe an $n \times n$ Hermitian matrix $Y$ with independent entries (up to conjugate symmetry) as follows: $Y_{uv} = x_u / x_v$ with some low probability $\tilde{p}/\sqrt{n}$, and otherwise $Y_{uv}$ is a random $L$th root of unity.
\end{itemize}
\noindent We allow the spike $x$ to be drawn from an arbitrary but known prior, to encompass structured problems such as sparse PCA. These models together capture a rich collection of settings where noisy pairwise measurements are available and we wish to detect or estimate $x$. We will also consider generalizations of $\ZZ/L$ synchronization to arbitrary compact groups, where even defining a meaningful noise model is challenging; we present a framework to do this founded on representation theory.

We will refer to the parameters $\beta$, $\lambda$, or $\tilde{p}$ as the signal-to-noise ratio. In each of these models, we study the following statistical questions:
\begin{itemize}
\item \emph{Detection}: For what values of the signal-to-noise ratio is it information-theoretically possible to reliably distinguish (with probability $1-o(1)$ as  $n \to \infty$) between a random matrix drawn from the spiked distribution and one drawn from the corresponding unspiked distribution?

\item \emph{Recovery}: Is there any estimator that achieves a correlation with the ground truth $x$ that remains bounded away from zero as $n \to \infty$?
\end{itemize}

\noindent We will primarily study the detection problem; this type of problem has previously been explored throughout various statistical models \citep{Donoho-Jin,CJL,Ingster-detection,ACCD,ACCP,aCBL,BI-detect,SN-size,SN-max}.

The random matrix models above all enjoy a sharp characterization of the performance of PCA through random matrix theory. 
In the complex Wishart case, the seminal work of \citet*{bbp} 
showed that when $\beta > \sqrt{\gamma}$ an isolated eigenvalue emerges from the Marchenko--Pastur-distributed bulk. Later \citet*{baik-silverstein} established this result in the real Wishart case. Many other such sharp phase transitions are known. In the Wigner case, the top eigenvalue separates from the semicircular bulk when $\lambda > 1$ \citep{peche,fp,cdf,wig-spk}, and in synchronization, the threshold is $\tilde p > 1$ \citep{sin11}. Each of these results establishes a sharp threshold at which PCA is able to solve the detection problem for the respective spiked random matrix model. Moreover, it is known that above this threshold, the top eigenvector correlates nontrivially with $x$, while the correlation concentrates about zero below the threshold. We will refer to these results collectively as the {\em spectral threshold}. Despite a great deal of research on the spectral properties of spiked random matrix models, much less is known about the more general statistical question: Can any statistical procedure detect the presence of a spike below the threshold where PCA succeeds? Our main goal in this paper is to address this question in each of the models above, and as we will see, the answer varies considerably across them. Our results shed new light on how much of the accessible information about $x$ is {\em not} captured by the spectrum. 

Several recent works have examined this question. \citet{sphericity} studied the spiked Wishart model where $x$ is a unit vector that is chosen uniformly at random from the unit sphere. Such a model is symmetric under rotations, which implies that without loss of generality any hypothesis test can also be taken to be symmetric, and depend only on the eigenvalues. \citet{sphericity} shows that there is no test to reliably detect the presence of a spike below the spectral threshold, and complements this by showing that there are some tests that can nevertheless distinguish better than random guessing. Even more recent work \citep{all-eigs,rare-weak} elaborates on this point in other spiked models. Similar results were established in the Gaussian Wigner case by \citet{mrz-sphere}, through techniques similar to those of the present paper, which are not fundamentally limited to spherically symmetric models; indeed, these techniques were applied to sparse PCA in \citet{bmvx}.

In another line of work, several papers have studied recovery in spiked random matrix models through approximate message passing \citep{amp-cs,bm,jm} and various other tools originating from statistical physics. These results span sparse PCA \citep{sparse-pca-amp,phase-sparse-pca}, nonnegative PCA \citep{nonneg-pca}, cone-constrained PCA \citep{cone-constrained}, and general structured PCA \citep{RF-amp,LKZ-amp}. Rigorous results are known, for instance, in Wigner models where the distribution of $x$ is \iid Rademacher\footnote{Uniformly distributed on $\pm 1$} or sparse Rachemacher\footnote{Distributed with some mass on $0$ and the rest equally on $\pm 1$} \citep{dam,sparse-pca-amp,mi,mi-proof}. Methods based on approximate message passing often exhibit the same threshold as PCA but above the threshold they obtain better (and sometimes even information-theoretically optimal) estimates of the spike. Such results are attractive, but often times the method of analysis addresses the recovery problem only, and is limited to models where the coordinates of $x$ are independent.

We will primarily study the detection problem (following \citet{Donoho-Jin,CJL,Ingster-detection,ACCD,ACCP,aCBL,BI-detect,SN-size,SN-max}). We develop a number of general purpose tools for proving both upper and lower bounds on detection. We defer the precise statement of our results in each model to their respective sections, but for now we highlight some of our main results:

\begin{itemize}

\item In the Gaussian Wigner model, we show that the spectral threshold $\lambda=1$ is optimal for priors such as the uniform prior on the unit sphere (Theorem~\ref{thm:sphere-prior}), the \iid Rademacher prior (Theorem~\ref{thm:pmone-wigner}), and any prior with a sufficient sub-Gaussian bound (Theorem~\ref{thm:subg}). Thus there is no statistical test that solves the detection problem beneath this threshold. We also study sparse Rademacher priors, where we show the spectral threshold is sometimes optimal and sometimes suboptimal depending on the sparsity level (Section~\ref{sec:sparse-rad}). Our results here are similar to those of \citet{mi} and \citet{bmvx}, but extend them by proving non-detection up to the spectral threshold for sufficiently high density.

\item In the general Wigner model where the entries of $W$ are non-Gaussian, we show that the spectral threshold is never optimal (subject to some mild conditions on the distribution of $x$). More precisely, we show that there is a way to exploit the non-Gaussian distribution of the noise, by performing an entrywise transformation on the observed matrix that strictly improves the performance of PCA (Theorem~\ref{thm:nong-upper}). Such a method was described by \citet{LKZ-amp}, and in our work we give a rigorous analysis. Moreover we provide a lower bound which often matches our upper bound and precisely characterizes the information theoretic limits of detection, as parametrized by the distribution of the noise (Theorem~\ref{thm:nongauss-lower}).

\item Recall that in the Wishart setting, PCA is known to be optimal when $x$ is spherically distributed. In contrast, we show that when $x$ is \iid Rademacher distributed the spectral threshold is only sometimes statistically optimal. When $\gamma \leq 1/3$ we prove that there is no statistical test that succeeds beneath the spectral threshold (Proposition~\ref{prop:wishart-z2-spectral-tight}). But when $\gamma \geq 0.698$ and $\beta < 0$ we give a computationally inefficient test that succeeds even when spectral methods fail (Theorem~\ref{thm:wishart-mle-z2}). This exposes a new statistical phase transition phenomenon in the Wishart setting that seems to be previously unexplored. We prove similar results in a wide range of synchronization problems, establishing cases when spectral methods are optimal and other cases where there are computationally inefficient tests that perform better. 

\end{itemize}

All our lower bounds follow a similar pattern and are based on the notion of \emph{contiguity} introduced by \citet{lecam}. On a technical level, we show that a particular second moment is bounded which (as is standard in contiguity arguments) implies that the spiked distribution cannot be reliably distinguished (with $o(1)$ error as $n \to \infty$) from the corresponding unspiked distribution. We develop general tools for controlling the second moment based on large deviations and on sub-Gaussian fluctuations that apply across a range of models and a range of choices for the distribution on $x$. 

While bounds on the second moment do not \emph{a priori} imply anything about the recovery problem, we appeal to a result from \citet{bmvx} to make this connection and show that many of our non-detection results translate to non-recovery results as well. In addition, the value of the second moment yields specific bounds on the tradeoff between type I and type II error (see Proposition~\ref{prop:hyptest}), as illustrated in Figures~\ref{fig:hyp1} and \ref{fig:hyp2}. Although our focus is mainly on the limit as $n \to \infty$, in some cases we compute the exact second moment for finite $n$, resulting in non-asymptotic bounds.

\begin{figure}[!ht]
    \centering
    \begin{minipage}{0.47\textwidth}
    \includegraphics{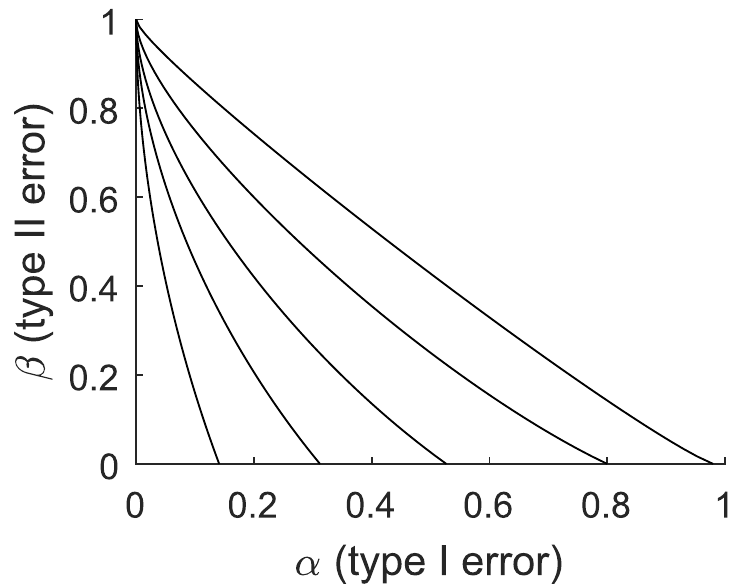}
    \captionof{figure}{Gaussian Wigner model spiked with a uniformly random unit vector. Below each curve, no hypothesis test with the specified type I and type II errors\protect\footnotemark  { }can exist. From left to right: $\lambda = 0.99, 0.95, 0.85, 0.6, 0.2$, in the limit as $n \to \infty$.}
    \label{fig:hyp1}
    \end{minipage}\hfill%
    \begin{minipage}{0.47\textwidth}
    \includegraphics{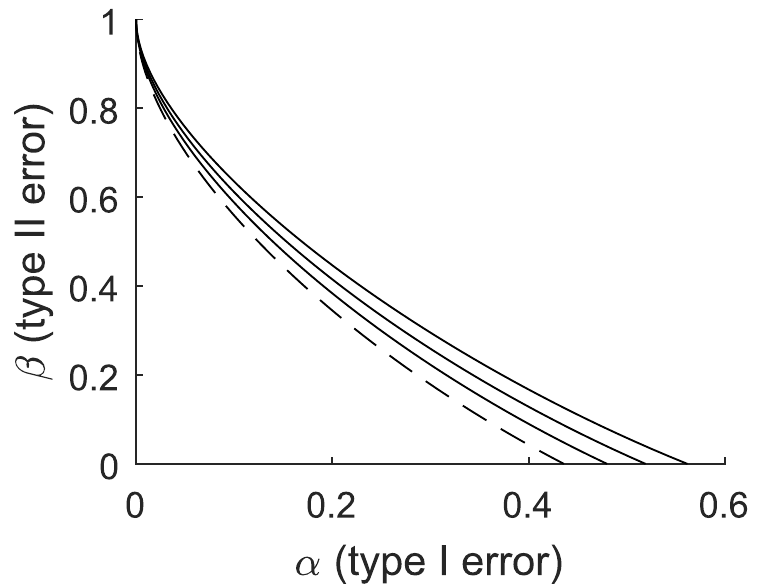}
    \captionof{figure}{Here $n$ varies while $\lambda = 0.9$ is fixed. From left to right: $n = \infty, 75, 25, 10$. This shows the non-asymptotic effectiveness of our methods. For smaller $\lambda$, the curves will appear closer.\\}
    \label{fig:hyp2}
    \end{minipage}
\end{figure}

\footnotetext{Recall that Type I error refers to the probability of reporting a spike when none exists (false positives), while Type II error is the probability of reporting no spike when one does exist (false negatives).}

\subsubsection*{Further related work} 
Above, we reviewed some of the extensive literature on structured PCA problems (e.g. sparse PCA, nonnegative PCA, etc). Synchronization problems are another important family of such problems, motivated by cryo-electron microscopy \citep{singer-shkolnisky}, time synchronization in networks \citep{time-synch}, signals processing \citep{mra}, and many other applications; see e.g.\ \citet{afonso-thesis} for a survey. These are a class of highly symmetric recovery problems, valued in a group such as the cyclic group $\mathbb{Z}/L$, the unit complex numbers $U(1)$, or 3D rotations $SO(3)$. The goal is to recover a collection of group elements from noisy pairwise measurements. The model for synchronization over $\ZZ/L$ described above was introduced by \citet{sin11}, where lower bounds on recovery are presented which we improve upon here. For continuous groups, Cram\'er--Rao bounds on the estimation error are established in \citet{boumal2014cramer}, but few lower bounds that entirely preclude recovery are known. The recovery threshold for $U(1)$ in a Gaussian synchronization model is predicted in \citet{sdp-phase} using techniques from statistical physics; such a Gaussian model was seen also in \citet{angular-tightness,sdp-phase,boumal}. We generalize this line of work by introducing a broad generalization of this Gaussian model, relying on representation theory, and we provide general lower bounds that hold over any compact group.

Finally, our work can be thought of as fitting into an emerging theme in statistics. We indicate several scenarios when PCA is suboptimal but the only known tests that beat it are computationally inefficient. Such computational vs.\ statistical gaps have received considerable recent attention (e.g.\ \citet{Ber-Rig,Ma-Wu}), often in connection with sparsity, but we explore settings here where the difficulty appears in connection with group structure or even the structure of the Rademacher prior. These examples may indicate new classes of problems that demonstrate a statistical price for computational efficiency.

\subsubsection*{Outline}
The rest of this paper is organized as follows. In Section~\ref{sec:contig} we give preliminaries on contiguity. In Section~\ref{sec:gwig} we study the spiked Gaussian Wigner model and in Section~\ref{sec:nong-wig} we study the spiked non-Gaussian Wigner model. In Section~\ref{sec:wish} we study the spiked Wishart model, and in Section~\ref{sec:synch} we study a variety of synchronization problems over compact groups.

\section{Contiguity and the second moment method}
\label{sec:contig}

Contiguity and related ideas will play a crucial role in this paper. To give some background, contiguity was first introduced by 
\citet{lecam} and since then has found many applications throughout probability and statistics. This notion and related tools such as the \emph{small subgraph conditioning method} have been used to establish many fundamental results about random graphs (e.g.\ \citet{rw-ham,janson,1-fact}; see \citet{wor-survey} for a survey). It has also been used to show the impossibility of detecting community structure in certain regimes of the stochastic block model \citep{mns, bmnn}. We will take inspiration from many of these works, in how we go about establishing contiguity. It is formally defined as follows:

\begin{definition}[\citet{lecam}]
Let distributions $P_n$, $Q_n$ be defined on the measurable space $(\Omega_n,\F_n)$. We say that the sequence $P_n$ is \defn{contiguous} to $Q_n$, and write $P_n \contig Q_n$, if for any sequence of events $A_n$, $Q_n(A_n) \to 0 \implies P_n(A_n) \to 0$ as $n \to \infty$.
\end{definition}

\noindent Contiguity implies that the distributions $P_n$ and $Q_n$ cannot be reliably distinguished in the following sense:
\begin{claim}\label{cor:contig}
If $P_n \contig Q_n$ then there is no a statistical test $\mathcal{D}$ that takes a sample from either $P_n$ or $Q_n$ (say each is chosen with probability $\frac{1}{2}$) and correctly outputs which of the two distributions it came from with error probability $o(1)$ as $n \to \infty$.
\end{claim}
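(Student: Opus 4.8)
The plan is to argue by contraposition: I will show that a test with vanishing error probability would violate the contiguity relation $P_n \contig Q_n$. So suppose toward a contradiction that some statistical test $\mathcal{D}$ distinguishes $P_n$ from $Q_n$ with error probability $o(1)$. First I would reduce to the case of a \emph{deterministic} test. A randomized test is a measurable map from $\Omega_n \times [0,1]$ (the sample together with auxiliary randomness $U \sim \mathrm{Unif}[0,1]$) to $\{P,Q\}$; since $P_n \times \mathrm{Unif}[0,1] \contig Q_n \times \mathrm{Unif}[0,1]$ — contiguity is preserved under forming products with a common probability measure, which follows from Fubini applied to the definition — it suffices to derive a contradiction on the enlarged space. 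Equivalently, one notes that the error probability is an average over the coins of $\mathcal{D}$, so some fixing of the coins does no worse than this average; either way, I may assume $\mathcal{D} : \Omega_n \to \{P,Q\}$ is deterministic and measurable.

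Next I would set $A_n := \{\omega \in \Omega_n : \mathcal{D}(\omega) = P\} \in \F_n$, the region on which $\mathcal{D}$ reports ``$P_n$''. With the two hypotheses equally likely, the error probability of $\mathcal{D}$ is
\[
\tfrac12\, Q_n(A_n) + \tfrac12\, P_n(\Omega_n \setminus A_n).
\]
By assumption this quantity is $o(1)$, and since both summands are nonnegative, each tends to $0$; in particular $Q_n(A_n) \to 0$ and $P_n(\Omega_n \setminus A_n) \to 0$, the latter saying $P_n(A_n) \to 1$. Finally I would feed the event sequence $A_n$ into the definition of contiguity: since $Q_n(A_n) \to 0$ and $P_n \contig Q_n$, we get $P_n(A_n) \to 0$, which for large $n$ is incompatible with $P_n(A_n) \to 1$. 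This contradiction proves the claim.

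I expect essentially no serious obstacle here; the statement is a direct unpacking of the definition. The only points requiring mild care are the handling of randomized tests (dispatched by the product-space remark above) and the measurability of $A_n$, which is immediate once ``statistical test'' is taken to mean a measurable decision rule. It is perhaps worth noting in passing that only the one-sided relation $P_n \contig Q_n$ is used — the argument controls the false-negative probability $P_n(\Omega_n \setminus A_n)$ via contiguity and the false-positive probability $Q_n(A_n)$ directly — so the converse contiguity is not needed for this direction.
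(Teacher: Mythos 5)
Your proposal is correct and follows essentially the same argument as the paper: define $A_n$ as the event the test outputs ``$P_n$,'' note $Q_n(A_n)\to 0$ from the vanishing error, apply contiguity to get $P_n(A_n)\to 0$, and contradict $P_n(A_n)\to 1$. The extra remarks on derandomization, measurability, and the one-sidedness of the contiguity used are fine but not needed beyond the paper's own proof.
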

\begin{proof}
Suppose that such a test $\mathcal{D}$ exists. Let $A_n$ be the event that $\cD$ outputs `$P_n$.' Since $\cD$ succeeds reliably when the sample comes from $Q_n$, we have $Q_n(A_n) \to 0$ (as $n \to \infty$). By contiguity this means $P_n(A_n) \to 0$. But this contradicts the fact that $\cD$ succeeds reliably when the sample comes from $P_n$.
\end{proof}
\noindent Note that $P_n \contig Q_n$ and $Q_n \contig P_n$ are not the same. Nevertheless either of them implies non-distinguishability. Also, showing that two distributions are contiguous does not rule out the existence of a test that distinguishes between then with constant probability. In fact, for many pairs of contiguous random graph models, such tests do exist. 

Our goal in this paper is to show thresholds below which spiked and unspiked random matrix models are contiguous. We will do this through computing a particular second moment, related to the $\chi^2$-divergence as $1+\chi^2(P_n || Q_n)$, through a form of the second moment method:
\begin{lemma}[see e.g.\ \citet{mrz-sphere,bmvx}]
\label{lem:sec}
Let $\{P_n\}$ and $\{Q_n\}$ be two sequences of probability measures on $(\Omega_n,\F_n)$. If the second moment
$$\Ex_{Q_n} \left[\left( \dd[P_n]{Q_n} \right)^2\right]$$
exists and remains bounded as $n \to \infty$, then $P_n \contig Q_n$.
\end{lemma}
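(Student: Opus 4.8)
The plan is to run the textbook second moment argument: control $P_n(A_n)$ in terms of $Q_n(A_n)$ and the bounded second moment by a single application of Cauchy--Schwarz to the likelihood ratio. First I would set $L_n = \dd[P_n]{Q_n}$, the Radon--Nikodym derivative, which exists because $P_n$ is absolutely continuous with respect to $Q_n$ (this is implicit in the hypothesis that the displayed expectation is a well-defined finite quantity). Let $C < \infty$ be a uniform bound on $\Ex_{Q_n}[L_n^2]$, valid for all $n$, and recall the identity $\Ex_{Q_n}[L_n] = 1$.

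Now fix an arbitrary sequence of events $A_n \in \F_n$ with $Q_n(A_n) \to 0$. The key computation is
\[
P_n(A_n) = \int_{A_n} L_n \, \mathrm{d}Q_n = \Ex_{Q_n}\!\left[L_n\, \mathbf{1}_{A_n}\right] \le \sqrt{\Ex_{Q_n}[L_n^2]}\cdot\sqrt{\Ex_{Q_n}[\mathbf{1}_{A_n}]} \le \sqrt{C}\,\sqrt{Q_n(A_n)},
\]
using Cauchy--Schwarz for the first inequality and the uniform second-moment bound for the second. Since $Q_n(A_n) \to 0$, the right-hand side tends to $0$, so $P_n(A_n) \to 0$; as $A_n$ was arbitrary this is precisely $P_n \contig Q_n$.

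There is no genuinely hard step here — the whole content is the inequality above — so the ``main obstacle'' is only measure-theoretic bookkeeping: one should note that $P_n \ll Q_n$ is what makes $L_n$ meaningful and $\Ex_{Q_n}[L_n]=1$ valid (without absolute continuity, contiguity can actually fail, e.g.\ if a persistent singular part of $P_n$ survives), and that $L_n \in L^2(Q_n)$ is exactly the standing hypothesis, which legitimizes the Cauchy--Schwarz step. I would also flag, without developing it, that the same one-line estimate gives the non-asymptotic bound $P_n(A_n) \le \sqrt{C\,Q_n(A_n)}$ at each finite $n$, which is the germ of the explicit type I / type II error tradeoffs appearing later in Proposition~\ref{prop:hyptest}.
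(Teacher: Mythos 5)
Your proof is correct and follows essentially the same route as the paper's: writing $P_n(A_n)=\int_{A_n} \frac{dP_n}{dQ_n}\,dQ_n$ and applying Cauchy--Schwarz to obtain $P_n(A_n)\le \sqrt{C\,Q_n(A_n)}$. Your added remarks on absolute continuity and the non-asymptotic bound are accurate but do not change the argument.
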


All of the contiguity results in this paper will follow through Lemma~\ref{lem:sec}. The roles of $P_n$ and $Q_n$ are not symmetric, and we will always take $P_n$ to be the spiked distribution and take $Q_n$ to be the unspiked distribution, as the second moment is more tractable to compute in this direction. We include the proof of Lemma~\ref{lem:sec} here for completeness:

\begin{proof}
Let $\{A_n\}$ be a sequence of events. Using Cauchy--Schwarz,
\begin{align*}
P_n(A_n) &= \int_{A_n} \,\dee P_n = \int_{A_n} \dd[P_n]{Q_n} \,\dee Q_n \le \sqrt{\int_{A_n} \left( \dd[P_n]{Q_n} \right)^2 \,\dee Q_n} \;\cdot\; \sqrt{\int_{A_n} \,\dee Q_n} \\
&\le \sqrt{\Ex_{Q_n} \left( \dd[P_n]{Q_n} \right)^2} \;\cdot\; \sqrt{Q_n(A_n)}.
\end{align*}
The first factor on the right-hand side is bounded; so if $Q_n(A_n) \to 0$ as $n \to \infty$, we must also have $P_n(A_n) \to 0$, as desired.
\end{proof}

There will be times when the above second moment is infinite but we are still able to prove contiguity using a modified second moment that conditions on `good' events. This idea is based on \citet{bmnn}.
\begin{lemma}
\label{lem:cond}
Let $\omega_n$ be a `good' event that occurs with probability $1-o(1)$ under $P_n$.
Suppose $P_n$ and $\tilde P_n$ agree within $\omega_n$. If
$$\Ex_{Q_n} \left[\left( \dd[\tilde P_n]{Q_n} \right)^2\right]$$
remains bounded as $n \to \infty$, then $P_n \contig Q_n$.
\end{lemma}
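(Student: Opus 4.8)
The plan is to deduce this as a quick corollary of Lemma~\ref{lem:sec} together with the conditioning trick. Fix an arbitrary sequence of events $A_n \in \F_n$ with $Q_n(A_n) \to 0$; the goal is to show $P_n(A_n) \to 0$. First I would decompose $P_n(A_n) \le P_n(A_n \cap \omega_n) + P_n(\omega_n^c)$, where the second term is $o(1)$ since $\omega_n$ is good, i.e.\ $P_n(\omega_n) = 1 - o(1)$. So it suffices to control $P_n(A_n \cap \omega_n)$.

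For that term I would use the hypothesis that $P_n$ and $\tilde P_n$ agree within $\omega_n$ — read as: the two measures assign equal mass to every measurable subset of $\omega_n$ — so that $P_n(A_n \cap \omega_n) = \tilde P_n(A_n \cap \omega_n) \le \tilde P_n(A_n)$. Now the hypothesis that $\Ex_{Q_n}\!\big[(\dd[\tilde P_n]{Q_n})^2\big]$ is bounded is exactly the hypothesis of Lemma~\ref{lem:sec} applied to $\tilde P_n$ in place of $P_n$, which yields $\tilde P_n \contig Q_n$ and hence $\tilde P_n(A_n) \to 0$ (equivalently, one can rerun the Cauchy--Schwarz estimate from the proof of Lemma~\ref{lem:sec} verbatim: $\tilde P_n(A_n) \le \sqrt{\Ex_{Q_n}[(\dd[\tilde P_n]{Q_n})^2]}\cdot\sqrt{Q_n(A_n)}$, whose first factor is bounded and whose second factor vanishes). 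Combining, $P_n(A_n) \le \tilde P_n(A_n) + P_n(\omega_n^c) \to 0$, which is what we wanted.

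There is no real obstacle here beyond bookkeeping; the argument is essentially a two-line reduction to Lemma~\ref{lem:sec}. The only points that deserve a moment's care are: (i) that ``agree within $\omega_n$'' be interpreted as equality of the two set functions on all sub-events of $\omega_n$, which is precisely what licenses replacing $P_n$ by $\tilde P_n$ on $A_n \cap \omega_n$; and (ii) that $\tilde P_n \ll Q_n$, which is implicit in the stated second moment being finite, so that $\dd[\tilde P_n]{Q_n}$ and the Cauchy--Schwarz step are meaningful. Note that $\tilde P_n$ need not be a probability measure for any of this to go through — only $\tilde P_n(A_n \cap \omega_n) \le \tilde P_n(A_n)$ and the second-moment bound are used — which is convenient since in applications $\tilde P_n$ will typically be obtained from $P_n$ by truncating or reweighting on $\omega_n^c$.
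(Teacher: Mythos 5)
Your proposal is correct and is essentially the paper's own argument: apply Lemma~\ref{lem:sec} (or rerun its Cauchy--Schwarz bound) to $\tilde P_n$ to get $\tilde P_n \contig Q_n$, then transfer to $P_n$ using $P_n(\omega_n) \to 1$. You simply spell out the transfer step via the decomposition $P_n(A_n) \le \tilde P_n(A_n) + P_n(\omega_n^c)$, which the paper states more tersely.
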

\begin{proof}
By Lemma~\ref{lem:sec} we have $\tilde P_n \contig Q_n$. This implies $P_n \contig Q_n$ because $\tilde P_n(A_n) \to 0$ implies $P_n(A_n) \to 0$ (since $P_n(\omega_n) \to 1$).
\end{proof}

Moreover, given a value of the second moment, we are able to obtain bounds on the tradeoff between type I and type II error in hypothesis testing, which are valid non-asymptotically:
\begin{proposition}\label{prop:hyptest}
Let $\mathcal{D}$ be a distinguisher than takes a sample from either $P$ or $Q$ and outputs `$P$' or `$Q$.' Let $A$ be the event that $\mathcal{D}$ outputs `$P$.' Let $\alpha = Q(A)$ be the probability of type I error, and let $\beta = 1 - P(A)$ be the probability of type II error. Regardless of the distinguisher $\mathcal{D}$, we must have
$$\frac{(1-\beta)^2}{\alpha} + \frac{\beta^2}{(1-\alpha)} \le \Ex_Q \left(\dd[P]{Q}\right)^2,$$
assuming the right-hand side is defined and finite. Furthermore, this is tight in the sense that for any $\alpha,\beta \in (0,1)$ there exist $P,Q,\mathcal{D}$ for which equality holds.
\end{proposition}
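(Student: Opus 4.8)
The plan is to view the distinguisher $\mathcal{D}$ as a (possibly randomized) test, so that $A$ is an event in the underlying sample space and the test is fully described by the function $f(\omega) = \Pr[\mathcal{D} \text{ outputs } `P\text{'} \mid \omega]$, with $Q(A) = \Ex_Q f = \alpha$ and $P(A) = \Ex_P f = 1-\beta$. Write $L = \dd[P]{Q}$ for the likelihood ratio, so that $\Ex_P f = \Ex_Q[fL]$. The key analytic input is the Cauchy--Schwarz inequality applied in a way that extracts information from \emph{both} the acceptance region $A$ and its complement. Specifically, I would bound $1-\beta = \Ex_Q[f L] \le \sqrt{\Ex_Q[f^2]}\,\sqrt{\Ex_Q[L^2 \mathbf{1}_{f \neq 0}]}$ — or, more cleanly, split $L = L\mathbf{1}_A + L\mathbf{1}_{A^c}$ in the deterministic case and apply Cauchy--Schwarz to each piece: $(1-\beta) = \Ex_Q[L \mathbf{1}_A]$ and $\beta = \Ex_P[\mathbf{1}_{A^c}] = \Ex_Q[L\mathbf{1}_{A^c}]$, giving $(1-\beta)^2 \le \alpha \cdot \Ex_Q[L^2 \mathbf{1}_A]$ and $\beta^2 \le (1-\alpha)\cdot \Ex_Q[L^2 \mathbf{1}_{A^c}]$. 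Adding these and using $\Ex_Q[L^2 \mathbf 1_A] + \Ex_Q[L^2 \mathbf 1_{A^c}] = \Ex_Q[L^2]$ yields the claimed inequality.

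The slight subtlety is that the stated bound has $\alpha$ and $(1-\alpha)$ in the denominators, which requires $\alpha \in (0,1)$; the edge cases $\alpha = 0$ or $\alpha=1$ should be handled separately (or interpreted as $+\infty$, making the inequality vacuous), and one should also note that for a randomized test the same argument goes through by conditioning, or by noting that the optimal test for any fixed $\al,\beta$ trade-off is a likelihood-ratio threshold test and hence deterministic — so restricting to deterministic $\mathcal{D}$ is without loss. I would state the deterministic reduction briefly and then run the two-line Cauchy--Schwarz computation above.

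For the tightness claim, the natural approach is to reverse-engineer the equality conditions of Cauchy--Schwarz: equality in $(1-\beta)^2 \le \alpha\,\Ex_Q[L^2\mathbf 1_A]$ forces $L$ to be constant on $A$, and similarly constant on $A^c$. So I would look for a two-valued likelihood ratio: take $\Omega = \{0,1\}$, let $Q$ put mass $\alpha$ on the point $1$ and $1-\alpha$ on $0$, let $\mathcal{D}$ output `$P$' exactly on the point $1$ (so $A = \{1\}$), and choose $P$ to put mass $1-\beta$ on $1$ and $\beta$ on $0$. Then $\alpha = Q(A)$, $\beta = 1-P(A)$ by construction, the likelihood ratio takes the constant value $(1-\beta)/\alpha$ on $A$ and $\beta/(1-\alpha)$ on $A^c$, and a direct computation gives $\Ex_Q[L^2] = \alpha \cdot \big((1-\beta)/\alpha\big)^2 + (1-\alpha)\cdot\big(\beta/(1-\alpha)\big)^2 = (1-\beta)^2/\alpha + \beta^2/(1-\alpha)$, matching the bound with equality.

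The main obstacle is not any deep estimate — the inequality itself is a short Cauchy--Schwarz argument — but rather being careful about the measure-theoretic bookkeeping: justifying the reduction from randomized to deterministic distinguishers (or carrying the randomization through the two Cauchy--Schwarz applications directly), handling the degenerate cases $\alpha \in \{0,1\}$ and $\beta \in \{0,1\}$, and making sure the equality-case construction genuinely realizes prescribed $(\alpha,\beta)$ rather than some constrained subfamily. None of these is hard, but they are the places where a hasty write-up would go wrong.
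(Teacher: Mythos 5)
Your proposal is correct and is essentially the paper's own argument: the paper also splits over $A$ and its complement, applies Cauchy--Schwarz to each piece (written there as $\int_A (\dee P/\dee Q)\,\dee P \ge (\int_A \dee P)^2/\int_A (\dee Q/\dee P)\,\dee P$, which is your $(1-\beta)^2 \le \alpha\,\Ex_Q[L^2\one_A]$), and proves tightness with the same two-point Bernoulli example. Your extra remarks on randomized tests and the edge cases $\alpha \in \{0,1\}$ are fine bookkeeping but do not change the route.
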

\noindent This tradeoff is illustrated in Figures~\ref{fig:hyp1} and~\ref{fig:hyp2} in the introduction. 

\begin{proof}
Let $\bar A$ denote the complement of the event $A$ (defined above).
\begin{align*}
\Ex_Q\left(\dd[P]{Q}\right)^2 &= \int \dd[P]{Q} \,\dee P = \int_A \dd[P]{Q} \,\dee P + \int_{\bar A} \dd[P]{Q} \,\dee P \\
&\ge \frac{\left(\int_A \dee P\right)^2}{\int_A (\dee Q/ \dee P) \,\dee P} + \frac{\left(\int_{\bar A} \dee P\right)^2}{\int_{\bar A} (\dee Q/\dee P) \,\dee P} = \frac{(1-\beta)^2}{\alpha} + \frac{\beta^2}{(1-\alpha)	}
\end{align*}
where the inequality follows from Cauchy--Schwarz. The following example shows tightness: let $Q = \mathrm{Bernoulli}(\alpha)$ and let $P = \mathrm{Bernoulli}(1-\beta)$. On input $0$, $\mathcal{D}$ outputs `$Q$,' and on input $1$, it outputs `$P$.'
\end{proof}

Although contiguity is a statement about non-detection rather than non-recovery, our results also have implications for non-recovery. In general, the detection problem and recovery problem can have different thresholds, but this is due to unnatural counterexamples. In many settings we will be able to obtain non-recovery results by directly appealing to the following result of \citet{bmvx}:

\begin{theorem-nolabel}[\citet{bmvx}, Theorem 4]
Let $P_n$ and $Q_n$ be the spiked and unspiked models $Y = M + W$ and $Y = W$ respectively, where $M$ is a matrix (square or rectangular, with size depending on $n$) drawn from any prior satisfying $\EE[M] = 0$ and $\lim_{n \to \infty}\frac{1}{n}\EE\|M\|_F^2$ exists, and where $W$ is a Gaussian Wigner matrix or an \iid Gaussian matrix. If $\EE_{Q_n}\left(\dd[P_n]{Q_n}\right)^2$ is bounded as $n \to \infty$ then nontrivial recovery is impossible: for any estimator $\hat M = \hat M(Y)$ with $\EE_Y \|\hat M\|_F^2 = \mathcal{O}(n)$, we have that $\liminf_{n \to \infty} \frac{1}{n} \EE_{M,W} \langle M,\hat M \rangle = 0$.
\end{theorem-nolabel}

\noindent Both the spiked Gaussian Wigner model and the positively-spiked ($\beta > 0$) Wishart model\footnote{For the Wishart case, consider the asymmetric $n \times N$ matrix of samples.} fall into this framework. For other models that we consider, non-recovery results do not follow immediately from the above result, but we expect them to be true nonetheless.

\section{Gaussian Wigner models}
\label{sec:gwig}

In this section we establish non-detection results in spiked Gaussian Wigner models. In Section~\ref{sec:gwig-intro} we define the problem and state our main results. In Section~\ref{sec:2nd-comp} we compute the second moment (defined in Lemma~\ref{lem:sec}) of the Gaussian Wigner model. In Section~\ref{sec:spherical} we consider the spherical prior where $x$ is a random unit vector. In Section~\ref{sec:subg-method} we give a general ``sub-Gaussian method'' of analysis for priors with \iid entries, based on sub-Gaussian tail bounds. In Section~\ref{sec:cond-method} we give an improved ``conditioning method'' for \iid priors with finite support. The other subsections contain various examples and applications of these techniques.

\subsection{Main results}\label{sec:gwig-intro}

The spiked Gaussian Wigner model is defined as follows:
\begin{definition}
\label{def:prior}
A \emph{spike prior} is a family of distributions $\cX = \{\cX_n\}$, where $\cX_n$ is a distribution over $\RR^n$. We require our priors to be normalized so that $x^{(n)}$ drawn from $\cX_n$ has $\|x^{(n)}\| \to 1$ (in probability) as $n \to \infty$.
\end{definition}
\noindent We normalize the prior in this way so that the eigenvalue threshold is always $\lambda = 1$.

\begin{definition}
For $\lambda \ge 0$ and a spike prior $\cX$, we define the spiked Gaussian Wigner model $\GWig(\lambda,\cX)$ as follows. We first draw a spike $x \in \RR^n$ from the prior $\cX_n$. Then we reveal
$$Y = \lambda xx^\top + \frac{1}{\sqrt n} W$$
where $W$ is drawn from the $n \times n$ $\GOE$ (Gaussian orthogonal ensemble), i.e.\ $W$ is a random symmetric matrix with off-diagonal entries $\mathcal{N}(0,1)$, diagonal entries $\mathcal{N}(0,2)$, and all entries independent (except for symmetry $W_{ij} = W_{ji}$). We denote the unspiked model ($\lambda = 0$) by $\GWig(0)$.
\end{definition}

It is well known that this model admits the following spectral behavior.
\begin{theorem}[\cite{fp,nong-eigv1}]
Let $Y$ be drawn from $\GWig(\lambda,\cX)$ with any spike prior $\cX$.
\begin{itemize}
\item If $\lambda \le 1$, the top eigenvalue of $Y$ converges almost surely to $2$ as $n \to \infty$, and the top (unit-norm) eigenvector $v$ has trivial correlation with the spike: $\langle v,x \rangle^2 \to 0$ almost surely.
\item If $\lambda > 1$, the top eigenvalue converges almost surely to $\lambda + 1/\lambda > 2$ and $v$ has nontrivial correlation with the spike: $\langle v,x \rangle^2 \to 1 - 1/\lambda^2$ almost surely.
\end{itemize}
\end{theorem}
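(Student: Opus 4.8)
The plan is to condition on the spike $x$ and exploit the rotational invariance of the GOE. Since $W \stackrel{d}{=} O W O^\top$ for every fixed orthogonal $O$, we may assume without loss of generality that $x = \|x\|\,e_1$, and since $\|x\|\to 1$ a Weyl-inequality perturbation argument reduces us to the case $\|x\|=1$ (replacing $\lambda$ by $\lambda\|x\|^2$ does not affect any limit). Thus the whole statement reduces to the classical analysis of the rank-one deformation $Y = \tfrac{1}{\sqrt n}W + \lambda\, e_1 e_1^\top$ of a GOE matrix, and this is precisely the reason the result holds for an arbitrary spike prior with only $\|x\|\to 1$.

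For the eigenvalue, I would write down the secular equation. Writing $R(\mu) = (\mu I - \tfrac1{\sqrt n}W)^{-1}$ for the resolvent, any $\mu$ that is an eigenvalue of $Y$ but not of $\tfrac1{\sqrt n}W$ satisfies, by the matrix determinant lemma, $\lambda\, e_1^\top R(\mu)\, e_1 = 1$, i.e.\ $\lambda R_{11}(\mu) = 1$. The key random-matrix input is that for $\mu$ ranging over a fixed interval in $(2,\infty)$, $R_{11}(\mu)$ converges almost surely to the Stieltjes transform $s(\mu) = \int (\mu-t)^{-1}\,\dee\mu_{\mathrm{sc}}(t) = \tfrac12\big(\mu - \sqrt{\mu^2-4}\big)$ of the semicircle law; this follows from the a.s.\ convergence of the empirical spectral distribution of $\tfrac1{\sqrt n}W$ to $\mu_{\mathrm{sc}}$, standard concentration of a single diagonal resolvent entry about $\tfrac1n\mathrm{Tr}\,R$, and the a.s.\ convergence of the top eigenvalue of $\tfrac1{\sqrt n}W$ to $2$ (so no eigenvalue of $\tfrac1{\sqrt n}W$ interferes). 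Since $s$ is continuous and strictly decreasing on $[2,\infty)$ from $s(2)=1$ down to $0$, the equation $\lambda s(\mu)=1$ has a solution $\mu>2$ iff $\lambda>1$, and solving $\tfrac\lambda2(\mu-\sqrt{\mu^2-4})=1$ gives exactly $\mu=\lambda+1/\lambda$. Approximating $R_{11}$ uniformly on a deterministic interval and then locating the root upgrades this to: the top eigenvalue of $Y$ tends a.s.\ to $\lambda+1/\lambda$ when $\lambda>1$, and to $2$ when $\lambda\le 1$ (in the latter case eigenvalue interlacing between $Y$ and $\tfrac1{\sqrt n}W$ also shows the top eigenvalue cannot fall below $2$).

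For the eigenvector overlap, let $v$ be the unit top eigenvector and $\mu^\star$ the top eigenvalue, so $(\mu^\star I - \tfrac1{\sqrt n}W)v = \lambda(e_1^\top v)\,e_1$, whence $v = \lambda(e_1^\top v)\,R(\mu^\star)e_1$. Taking squared norms gives $1 = \lambda^2 (e_1^\top v)^2\, e_1^\top R(\mu^\star)^2 e_1$. When $\lambda>1$, $\mu^\star\to\lambda+1/\lambda$ lies strictly outside $[-2,2]$, and $e_1^\top R(\mu)^2 e_1 = -\tfrac{d}{d\mu}R_{11}(\mu)$ converges a.s.\ to $-s'(\lambda+1/\lambda) = \int (\lambda+1/\lambda-t)^{-2}\,\dee\mu_{\mathrm{sc}}(t) = \tfrac1{\lambda^2-1}$; substituting yields $\langle v,x\rangle^2 = (e_1^\top v)^2 \to 1 - 1/\lambda^2$. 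When $\lambda\le 1$, $\mu^\star\to 2$ sits at the spectral edge, and one argues $\langle v,x\rangle^2\to 0$: a nonvanishing overlap with eigenvectors whose eigenvalues exceed $2+\varepsilon$ would, via the same identity and the boundedness of $e_1^\top R(2+\varepsilon)^2 e_1$, force an eigenvalue to separate from the bulk, contradicting $\mu^\star\to 2$, while the overlap with the near-edge eigenvectors is killed by delocalization/rigidity of the GOE edge.

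The main obstacle is the uniform control of the random resolvent quantities $R_{11}(\mu)$ and $e_1^\top R(\mu)^2 e_1$ near the \emph{random} value $\mu^\star$: the concentration statements are clean only for deterministic $\mu$, so one proves them uniformly on a fixed interval and then transfers to $\mu^\star$ by a continuity/bootstrap argument; in the $\lambda\le 1$ case one additionally needs edge facts (no outlier eigenvalues, edge delocalization) to exclude a residual eigenvector correlation. All of these ingredients are by now standard, so the proof is essentially an assembly of known results, and we refer to \citet{fp,nong-eigv1} for the details.
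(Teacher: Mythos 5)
The paper never proves this statement: it is imported as a known result, with the proof residing entirely in the references cited in the theorem statement, so there is no internal argument for your sketch to agree or disagree with. What you have written is the standard secular-equation/resolvent derivation that underlies those references, and the computational core is correct: after conditioning on $x$ and using orthogonal invariance of the GOE to take $x$ proportional to $e_1$, the outlier equation $\lambda R_{11}(\mu)=1$ with $R_{11}(\mu)\to s(\mu)=\tfrac12\bigl(\mu-\sqrt{\mu^2-4}\bigr)$ gives $\mu=\lambda+1/\lambda$ exactly when $\lambda>1$, and the overlap identity $1=\lambda^2(e_1^\top v)^2\,e_1^\top R(\mu^\star)^2 e_1$ together with $-s'(\lambda+1/\lambda)=1/(\lambda^2-1)$ yields $\langle v,x\rangle^2\to 1-1/\lambda^2$; the interlacing and Weyl bounds handle the subcritical eigenvalue. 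Two caveats are worth flagging. First, the hardest part of the theorem is the one you dispatch in a single clause: the subcritical eigenvector decorrelation $\langle v,x\rangle^2\to 0$ at $\lambda\le 1$, where $\mu^\star$ sits at the spectral edge and the resolvent identity degenerates; "delocalization/rigidity of the GOE edge" is the right keyword but is precisely the nontrivial input that the cited papers supply, so your proof is an assembly plan rather than a proof at that point (which is consistent with how the paper itself treats the result). Second, the prior is only normalized by $\|x\|\to 1$ \emph{in probability} (Definition~\ref{def:prior}), so the reduction to $\|x\|=1$ and the almost-sure phrasing require either an a.s.\ normalization assumption or a downgrade to convergence in probability; this looseness is inherited from the paper's statement rather than introduced by you, but your Weyl-perturbation step should be stated accordingly.
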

\noindent Therefore PCA solves the detection and recovery problems precisely when $\lambda > 1$. Our goal is now to investigate whether any method can beat this threshold.

As a starting point for all of our proofs, we compute the second moment of Lemma~\ref{lem:sec}:
\begin{proposition}
\label{prop:2nd-comp}
Let $\lambda \ge 0$ and let $\cX$ be a spike prior. Let $P_n = \GWig_n(\lambda,\cX)$ and $Q_n = \GWig_n(0)$. Let $x$ and $x'$ be independently drawn from $\cX_n$. Then
$$\Ex_{Q_n}\left(\dd[P_n]{Q_n}\right)^2 = \Ex_{x,x'} \exp\left(\frac{n \lambda^2}{2} \langle x,x' \rangle^2\right).$$
\end{proposition}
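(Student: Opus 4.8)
The plan is to compute the second moment $\Ex_{Q_n}\left[(\mathrm{d}P_n/\mathrm{d}Q_n)^2\right]$ directly from the Gaussian densities. First I would write down the likelihood ratio: conditioned on a spike $x$, the density of $Y$ under $P_n$ relative to $Q_n$ is the ratio of two Gaussians differing by a mean shift of $M_x := \lambda xx^\top$ in the noise $\frac{1}{\sqrt n}W$. Since $W$ is GOE, the entries of $\frac{1}{\sqrt n}W$ have variance $1/n$ off-diagonal and $2/n$ on-diagonal, which is exactly the covariance structure making $\langle A, B\rangle_{\text{GOE}} := \frac{n}{2}\mathrm{Tr}(AB) = \frac{n}{2}\sum_{ij}A_{ij}B_{ij}$ the natural inner product. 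With that normalization, the conditional likelihood ratio is $\frac{\mathrm{d}P_n}{\mathrm{d}Q_n}(Y \mid x) = \exp\!\left(\frac{n}{2}\mathrm{Tr}(M_x Y) - \frac{n}{4}\mathrm{Tr}(M_x^2)\right)$, and then $\frac{\mathrm{d}P_n}{\mathrm{d}Q_n}(Y) = \Ex_x\!\left[\exp\!\left(\frac{n}{2}\mathrm{Tr}(M_x Y) - \frac{n}{4}\mathrm{Tr}(M_x^2)\right)\right]$, averaging over the prior.

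Next I would square this and take the expectation over $Y \sim Q_n = \GWig_n(0)$, i.e.\ over $Y = \frac{1}{\sqrt n}W$ with $W \sim \GOE$. Writing the square as a double expectation over independent copies $x, x'$ of the spike, we get
\[
\Ex_{Q_n}\left[\left(\dd[P_n]{Q_n}\right)^2\right] = \Ex_{x,x'}\left[ e^{-\frac{n}{4}\mathrm{Tr}(M_x^2) - \frac{n}{4}\mathrm{Tr}(M_{x'}^2)} \; \Ex_{W}\, e^{\frac{n}{2}\mathrm{Tr}((M_x + M_{x'})\frac{1}{\sqrt n}W)} \right].
\]
The inner expectation is a Gaussian (Laplace-transform) computation: $\frac{n}{2}\mathrm{Tr}(A \cdot \frac{1}{\sqrt n}W) = \frac{\sqrt n}{2}\sum_{ij} A_{ij}W_{ij}$ is a linear functional of the Gaussian vector $W$, so its exponential moment is $\exp\!\left(\frac12 \mathrm{Var}\!\left(\frac{\sqrt n}{2}\sum_{ij}A_{ij}W_{ij}\right)\right)$, and one checks using $\mathrm{Var}(W_{ij}) = 1$ off-diagonal, $2$ on-diagonal, and $W_{ij}=W_{ji}$ that this variance equals $\frac{n}{2}\mathrm{Tr}(A^2)$, giving $\Ex_W[\cdots] = \exp\!\left(\frac{n}{4}\mathrm{Tr}((M_x+M_{x'})^2)\right)$. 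Expanding $\mathrm{Tr}((M_x+M_{x'})^2) = \mathrm{Tr}(M_x^2) + \mathrm{Tr}(M_{x'}^2) + 2\,\mathrm{Tr}(M_x M_{x'})$, the two self-terms cancel the prefactor exactly, leaving $\Ex_{x,x'}\exp\!\left(\frac{n}{2}\mathrm{Tr}(M_x M_{x'})\right)$. Finally $\mathrm{Tr}(M_x M_{x'}) = \lambda^2\,\mathrm{Tr}(xx^\top x'x'^\top) = \lambda^2 \langle x,x'\rangle^2$, which yields the claimed formula.

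I expect the only real subtlety — not an obstacle so much as a bookkeeping point to get right — is handling the GOE normalization consistently: the factor-of-two discrepancy between diagonal and off-diagonal variances must be tracked carefully through both the conditional likelihood ratio and the variance computation, and the cleanest way to do this is to fix the inner product $\langle A,B\rangle = \frac{n}{2}\mathrm{Tr}(AB)$ once at the start and express everything in terms of it, so that the density of $Y\mid x$ is literally a standard Gaussian shift in this inner-product space. Everything else is the routine "Gaussian channel / $\chi^2$-divergence of a mean-shifted Gaussian mixture" calculation, and the self-energy terms are designed to cancel. One should also remark that no integrability issue arises at finite $n$ since all integrals are Gaussian; boundedness as $n\to\infty$ is exactly what the later sections establish for specific priors.
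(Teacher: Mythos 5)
Your computation is correct and is essentially the paper's own proof: both write the likelihood ratio as a prior-average of the Gaussian mean-shift density, square it, apply the Gaussian moment-generating function over $Y \sim Q_n$, and let the self-energy terms cancel to leave $\Ex_{x,x'}\exp\bigl(\tfrac{n\lambda^2}{2}\langle x,x'\rangle^2\bigr)$. The only difference is notational (traces of $M_x = \lambda xx^\top$ versus the paper's matrix inner product), and your remark on tracking the GOE diagonal/off-diagonal normalization is exactly the bookkeeping the paper's $\exp(-\tfrac{n}{4}\langle Y,Y\rangle)$ density already encodes.
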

\noindent We defer the proof of this proposition until Section~\ref{sec:2nd-comp}. 
For specific choices of the prior $\cX$, our goal will be to show that if $\lambda$ is below some critical $\lambda^*_\cX$, this second moment is bounded as $n \to \infty$ (implying that detection is impossible). We will specifically consider the following types of priors.
\begin{definition}
Let $\cXs$ denote the spherical prior: $x$ is a uniformly random unit vector in $\RR^n$.
\end{definition}
\begin{definition}
If $\pi$ is a distribution on $\RR$ with $\EE[\pi] = 0$ and $\mathrm{Var}[\pi] = 1$, let $\mathrm{iid}(\pi)$ denote the spike prior that samples each coordinate of $x$ independently from $\frac{1}{\sqrt n} \pi$.
\end{definition}

We will give two general techniques for showing contiguity for \iid priors. We call the first method the \emph{sub-Gaussian method}, and it is presented in Section~\ref{sec:subg-method}. The idea is that if we can show that the random variable $\pi \pi'$ (product of two independent copies of $\pi$) obeys a particular \emph{sub-Gaussian} condition, then this implies strong tail bounds on $\langle x,x' \rangle$ which can be integrated to show that the second moment is bounded.

The second method we use to show contiguity is called the \emph{conditioning method} and is based on ideas from \citet{bmnn}. This method is presented in Section~\ref{sec:cond-method}. The method only applies to \iid priors for which $\pi$ has finite support, but when $\pi$ does have finite support, the conditioning method is at least as strong (and sometimes strictly stronger) than the sub-Gaussian method. The main idea behind the conditioning method is that in some cases the second moment $\EE_{Q_n}\left(\dd[P_n]{Q_n}\right)^2$ is infinite, but only because of contributions from extremely rare `bad' values for the spike $x$. To fix this, we define the distribution $\tilde P_n$ similarly to $P_n = \GWig_n(\lambda,\cX)$ except it disallows `bad' $x$ values whose empirical distributions of entries differ significantly from $\pi$. We then proceed by computing the modified second moment $\EE_{Q_n}\left(\dd[\tilde P_n]{Q_n}\right)^2$. Appealing to a result of \citet{bmnn}, we find that the behavior of this modified second moment is governed by the solution to a particular optimization problem over matrices.

For certain benign priors, we are able to show contiguity up to the spectral threshold:
\begin{theorem-nolabel}[see Theorems \ref{thm:sphere-prior}, \ref{thm:gauss-prior}, \ref{thm:pmone-wigner}]
Let $\cX$ be one of the following priors:
\begin{itemize}
\item the spherical prior $\cXs$
\item the \iid Gaussian prior $\IID(\cN(0,1))$
\item the \iid Rademacher prior $\IID(\pm 1)$.
\end{itemize}
If $\lambda < 1$ then $\GWig(\lambda,\cX)$ is contiguous to $\GWig(0,\cX)$.
\end{theorem-nolabel}
\noindent Note that each of these results is tight, matching the spectral threshold. (We do not consider the behavior exactly at the critical point $\lambda = 1$.) The result for the Rademacher prior was known previously for the related problem of estimating the spike \citep{dam}.

The proof for the spherical prior (Theorem~\ref{thm:sphere-prior}) involves direct computation of the second moment, yielding an expression in terms of a hypergeometric function for which asymptotics are known. The proof for the Gaussian prior (Theorem~\ref{thm:gauss-prior}) is by comparison to the spherical prior. The proof for the Rademacher prior (Theorem~\ref{thm:pmone-wigner}) uses the sub-Gaussian method.

Not all priors are as well behaved as those above. In Section~\ref{sec:sparse-rad} we apply our techniques to the sparse Rademacher prior (defined later) and numerically compute bounds on the threshold where contiguity occurs. Finally, we show that regardless of the prior, the distribution of \emph{eigenvalues} in the spiked model is contiguous to that of the unspiked model for all $\lambda < 1$. This means that no eigenvalue-based test can distinguish the models below the $\lambda = 1$ threshold, even though there are other tests that can in the sparse Rademacher model \citep{mi,mi-proof,bmvx}.

\subsection{Second moment computation}
\label{sec:2nd-comp}

We begin by computing the second moment $\Ex_{Q_n} \left[\left(\dd[P_n]{Q_n}\right)^2\right]$ where $P_n = \GWig_n(\lambda,\cX)$ and $Q_n = \GWig_n(0)$.
First we simplify the likelihood ratio:
\begin{align*}
\dd[P_n]{Q_n} &= \frac{\Ex_{x \sim \cX_n} \exp(-\frac{n}{4} \langle Y-\lambda x x^\top, Y-\lambda x x^\top \rangle)}{\exp(-\frac{n}{4} \langle Y,Y \rangle)} \\
&= \Ex_{x \sim \cX_n} \exp\left( \frac{\lambda n}{2} \langle Y, x x^\top \rangle - \frac{\lambda^2 n}{4} \langle x x^\top, x x^\top \rangle \right).
\end{align*}
Now passing to the second moment:
\begin{align*}
\Ex_{{Q}_n}\left(\dd[P_n]{Q_n}\right)^2
&= \Ex_{x,x' \sim \cX} \Ex_{Y \sim Q_n} \exp\left( \frac{\lambda n}{2} \langle Y, x x^\top + x' x'^\top \rangle - \frac{\lambda^2 n}{4} \left( \langle x x^\top, x x^\top \rangle + \langle x' x'^\top, x' x'^\top \rangle \right) \right),
\intertext{
where $x$ and $x'$ are drawn independently from $\mathcal{X}_n$. Using the Gaussian moment-generating function:}
&= \Ex_{x,x'} \exp\left( \frac{\lambda^2 n}{4} \langle x x^\top + x' x'^\top, x x^\top + x' x'^\top \rangle - \frac{\lambda^2 n}{4} \left( \langle x x^\top, x x^\top \rangle + \langle x' x'^\top, x' x'^\top \rangle \right) \right) \\
&= \Ex_{x,x'} \exp\left( \frac{\lambda^2 n}{2} \langle x,x' \rangle^2 \right).
\end{align*}
This is as far as we can take the computation without specializing to a particular choice of the prior $\mathcal{X}_n$.
\begin{repproposition}{prop:2nd-comp}
Let $\lambda \ge 0$ and let $\cX$ be a spike prior. Let $P_n = \GWig_n(\lambda,\cX)$ and $Q_n = \GWig_n(0)$. Let $x$ and $x'$ be independently drawn from $\cX_n$. Then
$$\Ex_{Q_n}\left(\dd[P_n]{Q_n}\right)^2 = \Ex_{x,x'} \exp\left(\frac{\lambda^2 n}{2} \langle x,x' \rangle^2\right).$$
\end{repproposition}
\noindent Recall that by Lemma~\ref{lem:sec} we have contiguity provided that this second moment is bounded. In the following subsections we will control this quantity for some specific choices of the prior $\cX_n$.

\subsection{Application: the spherical prior}
\label{sec:spherical}

We now begin specializing the above to various choices for the prior $\cX$. We begin with the simple spherical prior $\cXs$ where $x$ is a uniform random unit vector in $\mathbb{R}^n$. Our methods for this case will be specialized to the spherical prior, but we will give more general approaches in the following sections. Note that the non-detection result for this prior has previously appeared in \citet{mrz-sphere}; we give an alternative proof together with non-asymptotic hypothesis testing bounds.

\begin{theorem}\label{thm:sphere-prior} 
Consider the spherical prior $\cXs$. If $\lambda < 1$ then $\GWig(\lambda,\cXs)$ is contiguous to $\GWig(0)$.
\end{theorem}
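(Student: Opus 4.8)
By Lemma~\ref{lem:sec} together with Proposition~\ref{prop:2nd-comp}, it suffices to show that the second moment
$$\Ex_{x,x'} \exp\left(\frac{\lambda^2 n}{2} \langle x,x' \rangle^2\right)$$
remains bounded as $n \to \infty$, where $x,x'$ are independent uniform unit vectors in $\RR^n$. The first step is to reduce this to a one-dimensional integral: by rotational invariance we may condition on $x$ and take $x = e_1$, so that $t := \langle x, x' \rangle$ is distributed as the first coordinate of a uniform random unit vector in $\RR^n$. This is a classical Beta-type distribution: $t^2 \sim \mathrm{Beta}(1/2,(n-1)/2)$, equivalently $t$ has density proportional to $(1-t^2)^{(n-3)/2}$ on $[-1,1]$. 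Hence the second moment equals
$$\frac{\int_{-1}^1 e^{\lambda^2 n t^2/2}\,(1-t^2)^{(n-3)/2}\,dt}{\int_{-1}^1 (1-t^2)^{(n-3)/2}\,dt}.$$

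The plan for the second step is to evaluate (or bound) this ratio. One clean route is to recognize the numerator integral, after the substitution $u = t^2$, as a confluent hypergeometric function: $\int_{-1}^1 e^{cn t^2/2}(1-t^2)^{(n-3)/2}\,dt$ is a Beta integral twisted by an exponential, which gives ${}_1F_1\!\big(\tfrac12;\tfrac n2;\tfrac{\lambda^2 n}{2}\big)$ up to the normalizing Beta factor, so the second moment is exactly $M(\tfrac12;\tfrac n2;\tfrac{\lambda^2 n}{2})$ in Kummer's notation. Then one invokes known asymptotics for ${}_1F_1(a;b;z)$ in the regime where $b$ and $z$ both grow linearly with a parameter and $z/b \to \lambda^2 < 1$; this regime is the ``subcritical'' one where the function stays bounded (indeed converges to $(1-\lambda^2)^{-1/2}$), as opposed to $\lambda^2 > 1$ where it blows up exponentially. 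Alternatively, and perhaps more self-containedly, one can bound the ratio directly by Laplace's method / a saddle-point estimate: write the numerator integrand as $\exp\big(\tfrac n2[\lambda^2 t^2 + \log(1-t^2)] + O(\log(1-t^2))\big)$ and observe that for $\lambda^2 < 1$ the exponent $g(t) = \lambda^2 t^2 + \log(1-t^2)$ has its unique maximum at $t = 0$ with $g(0) = 0$ and $g''(0) = 2(\lambda^2-1) < 0$, so both numerator and denominator are dominated by a Gaussian peak at the origin of width $\Theta(1/\sqrt n)$, and the ratio converges to $\sqrt{\frac{-g''_{\mathrm{den}}(0)}{-g''(0)}} = \sqrt{\frac{1}{1-\lambda^2}}$, a finite constant.

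The main obstacle is controlling the tails of the numerator integral near $t = \pm 1$, i.e.\ making the Laplace approximation rigorous rather than heuristic. Near $t = \pm 1$ the factor $e^{\lambda^2 n t^2/2}$ is at its largest ($\approx e^{\lambda^2 n/2}$) while $(1-t^2)^{(n-3)/2}$ is at its smallest, and one must verify that the product of these competing exponential factors is genuinely dominated by the central contribution. Concretely, it is enough to check that $g(t) = \lambda^2 t^2 + \log(1-t^2) < 0$ for all $t \in (0,1)$ when $\lambda^2 < 1$ — which follows since $g(0)=0$, $g(t)\to -\infty$ as $t \to 1$, and $g'(t) = 2t(\lambda^2 - 1/(1-t^2)) < 0$ on $(0,1)$ because $1/(1-t^2) \ge 1 > \lambda^2$ — and then to split the integral into $|t| \le n^{-1/2+\epsilon}$, where a Taylor expansion gives the Gaussian contribution, and $|t| > n^{-1/2+\epsilon}$, where the uniform bound $g(t) \le g(n^{-1/2+\epsilon}) = -\Theta(n^{-1+2\epsilon})$ makes the tail contribution exponentially negligible relative to the $\Theta(n^{-1/2})$ central mass.

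Finally, since this computation yields an exact finite-$n$ expression for the second moment (the Kummer function ${}_1F_1(\tfrac12;\tfrac n2;\tfrac{\lambda^2 n}{2})$), I would feed it into Proposition~\ref{prop:hyptest} to obtain the non-asymptotic type I / type II error tradeoff curves advertised in Figures~\ref{fig:hyp1} and~\ref{fig:hyp2}, and note that since the spiked Gaussian Wigner model fits the framework of the quoted theorem of \citet{bmvx}, boundedness of the second moment also rules out nontrivial recovery for $\lambda < 1$.
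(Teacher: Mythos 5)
Your proposal is correct and follows essentially the same route as the paper's proof in Appendix~\ref{app:confluent}: reduce by rotational invariance to the first coordinate of a uniform unit vector, identify its square as $\mathrm{Beta}(\tfrac12,\tfrac{n-1}{2})$, recognize the second moment as ${}_1F_1(\tfrac12;\tfrac n2;\tfrac{\lambda^2 n}{2})$, and invoke known asymptotics (the paper uses DLMF 13.8.4 via parabolic cylinder functions) to get the limit $(1-\lambda^2)^{-1/2}$ for $\lambda<1$. Your alternative Laplace-method argument, with the check that $g(t)=\lambda^2 t^2+\log(1-t^2)<0$ on $(0,1)$ and the split at $|t|\le n^{-1/2+\epsilon}$, is a sound self-contained substitute for the DLMF citation and reaches the same limit.
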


\noindent Recall that this matches the spectral threshold $\lambda = 1$, above which the spiked and unspiked models can be reliably distinguished via the top eigenvalue.

\begin{proof}[Proof (sketch)]
Exploiting symmetry, the second moment is identified as ${}_1 F_1( 1/2 ; n/2 ; \lambda^2 n / 2)$, which tends to $(1-\lambda^2)^{-1/2}$ as $n \to \infty$ when $\lambda < 1$. The full proof is deferred to Appendix~\ref{app:confluent}.
\end{proof}
\noindent These non-asymptotic and asymptotic second moments yield the hypothesis testing lower bounds in Figures~\ref{fig:hyp1} and~\ref{fig:hyp2}, through Proposition~\ref{prop:hyptest}.

\subsection{The sub-Gaussian method}
\label{sec:subg-method}

In this section we give a method for controlling the quantity $\EE_{x,x'} \exp\left(\frac{n \lambda^2}{2} \langle x,x' \rangle^2\right)$ in the case where the prior $\mathcal{X} = \IID(\pi)$ draws each entry of $x$ independently from $\frac{1}{\sqrt n} \pi$ for some distribution $\pi$ satisfying $\EE[\pi] = 0$ and $\mathrm{Var}[\pi] = 1$. (Note that this ensures that $x$ will have approximately unit norm.) The method of this section is based on sub-Gaussian tail bounds. We will need the concept of a sub-Gaussian random variable.
\begin{definition}\label{def:subg}
We say that a real-valued random variable $X$ is \emph{sub-Gaussian with variance proxy} $\sigma^2$ if $\EE[X] = 0$ and
$$\EE \exp(t X) \le \exp\left(\frac{1}{2} \sigma^2 t^2\right)$$
for all $t \in \mathbb{R}$.
\end{definition}
\noindent This condition says that the moment-generating function of $X$ is bounded by that of $\cN(0,\sigma^2)$. In particular, $\cN(0,\sigma^2)$ is sub-Gaussian with variance proxy equal to its variance $\sigma^2$. One can think of the sub-Gaussian condition as requiring the tails of a distribution to be smaller than that of a Gaussian $\cN(0,\sigma^2)$.

The main result of this section is the following.
\begin{theorem}[Sub-Gaussian method]
\label{thm:subg}
Let $\cX = \mathrm{iid}(\pi)$ for some distribution $\pi$ on $\RR$. Let $P_n = \GWig_n(\lambda,\cX)$ and $Q_n = \GWig_n(0)$. Suppose $\pi\pi'$ (product of two independent copies) is sub-Gaussian with variance proxy $\sigma^2$. If $\lambda < \frac{1}{\sigma}$ then
$$\lim_{n \to \infty} \Ex_{Q_n}\left(\dd[P_n]{Q_n}\right)^2 = (1-\lambda^2)^{-1/2} < \infty$$
and so $P_n \contig Q_n$.
\end{theorem}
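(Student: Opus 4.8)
The plan is to start from Proposition~\ref{prop:2nd-comp}, which reduces the second moment to $\Ex_{x,x'}\exp\left(\frac{\lambda^2 n}{2}\langle x,x'\rangle^2\right)$, and to bound this expectation by exploiting the sub-Gaussianity of $\pi\pi'$. Write $x = \frac{1}{\sqrt n}(a_1,\dots,a_n)$ and $x' = \frac{1}{\sqrt n}(b_1,\dots,b_n)$ with the $a_i,b_i$ all i.i.d.\ copies of $\pi$. Then $\langle x,x'\rangle = \frac{1}{n}\sum_{i=1}^n a_i b_i$, so $n\langle x,x'\rangle^2 = \frac{1}{n}\left(\sum_i a_i b_i\right)^2$. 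Each $a_i b_i$ has the law of $\pi\pi'$, which has mean $0$ (since $\Ex[\pi]=0$) and variance $1$ (since $\mathrm{Var}[\pi]=1$ and the two copies are independent), and by hypothesis is sub-Gaussian with variance proxy $\sigma^2$. Hence $S_n := \sum_{i=1}^n a_i b_i$ is a sum of $n$ i.i.d.\ sub-Gaussian variables and is itself sub-Gaussian with variance proxy $n\sigma^2$; equivalently $S_n/\sqrt n$ is sub-Gaussian with variance proxy $\sigma^2$.

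The core step is then a ``sub-Gaussian squared has bounded exponential moment'' estimate: if $Z$ is sub-Gaussian with variance proxy $\sigma^2$, then for any $c < \frac{1}{2\sigma^2}$ one has $\Ex\exp(cZ^2) \le (1 - 2c\sigma^2)^{-1/2}$. I would prove this by the standard Gaussian-integral linearization trick: write $\exp(cZ^2) = \Ex_{g}\exp\!\big(\sqrt{2c}\,Z g\big)$ for $g\sim\cN(0,1)$ independent of $Z$ (valid since $\Ex_g\exp(s g) = \exp(s^2/2)$ with $s=\sqrt{2c}Z$), then swap the order of expectation using Fubini (everything nonnegative), apply the sub-Gaussian bound $\Ex_Z\exp(\sqrt{2c}\,g Z)\le\exp(c\sigma^2 g^2)$ pointwise in $g$, and finally compute $\Ex_g\exp(c\sigma^2 g^2) = (1-2c\sigma^2)^{-1/2}$. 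Applying this with $Z = S_n/\sqrt n$ and $c = \lambda^2/2$, the condition $c < \frac{1}{2\sigma^2}$ is exactly $\lambda < 1/\sigma$, and we obtain $\Ex_{Q_n}\left(\dd[P_n]{Q_n}\right)^2 \le (1-\lambda^2\sigma^2)^{-1/2}$, uniformly in $n$ — already enough for contiguity via Lemma~\ref{lem:sec}.

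To get the sharper claimed limit $(1-\lambda^2)^{-1/2}$ rather than merely the bound $(1-\lambda^2\sigma^2)^{-1/2}$, I would argue that in the limit $S_n/\sqrt n$ behaves like a standard Gaussian. By the CLT, $S_n/\sqrt n \Rightarrow \cN(0,1)$; the sub-Gaussian bound on $S_n/\sqrt n$ gives uniform integrability of $\exp(c (S_n/\sqrt n)^2)$ for $c < \frac{1}{2\sigma^2}$ (dominate by a slightly larger exponent and use the uniform moment bound from the previous paragraph to control the tail), so expectations converge: $\Ex\exp\!\big(\tfrac{\lambda^2}{2}(S_n/\sqrt n)^2\big) \to \Ex\exp\!\big(\tfrac{\lambda^2}{2}g^2\big) = (1-\lambda^2)^{-1/2}$ for $g\sim\cN(0,1)$, whenever $\lambda^2 < 1$, which holds since $\lambda < 1/\sigma \le 1$ (note $\sigma^2\ge\mathrm{Var}[\pi\pi']=1$). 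This yields the theorem. The main obstacle is the uniform-integrability / limit-exchange argument for the exact constant; the mere boundedness needed for contiguity is entirely routine given the Gaussian-linearization lemma.
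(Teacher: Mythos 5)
Your proposal is correct, and for the exact limit it ends up on the same skeleton as the paper: central limit theorem for $\sqrt{n}\langle x,x'\rangle$ plus uniform integrability of $\exp\bigl(\tfrac{\lambda^2 n}{2}\langle x,x'\rangle^2\bigr)$, followed by the $\chi^2_1$ moment generating function. Where you differ is in the quantitative device. The paper certifies uniform integrability by a Chernoff-type sub-Gaussian tail bound, $\mathbb{P}\bigl[\exp\bigl(\tfrac{\lambda^2 n}{2}\langle x,x'\rangle^2\bigr)\ge M\bigr]\le M^{-1/(\lambda^2\sigma^2)}$, and integrates the tail; you instead prove the Gaussian-linearization lemma $\mathbb{E}\exp(cZ^2)\le(1-2c\sigma^2)^{-1/2}$ for $Z$ sub-Gaussian with proxy $\sigma^2$ and $c<\tfrac{1}{2\sigma^2}$, which immediately gives the explicit non-asymptotic bound $\mathbb{E}_{Q_n}\bigl(\tfrac{dP_n}{dQ_n}\bigr)^2\le(1-\lambda^2\sigma^2)^{-1/2}$ for every $n$, and then get uniform integrability from $L^{1+\delta}$-boundedness (take $\delta$ small enough that $(1+\delta)\lambda^2\sigma^2<1$). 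Both routes are valid; yours buys a clean finite-$n$ bound on the second moment (which feeds directly into the hypothesis-testing tradeoff of Proposition~\ref{prop:hyptest}, something the paper only gestures at via a remark about integrating tail bounds), while the paper's tail-bound route avoids the auxiliary Gaussian decoupling identity and is the one it reuses verbatim in the synchronization setting (Theorem~\ref{thm:synch-subg}). Your final reduction $\lambda<1/\sigma\le 1$ via $\sigma^2\ge\mathrm{Var}[\pi\pi']=1$ matches the paper's remark after Theorem~\ref{thm:subg}.
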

\noindent Note that since the variance proxy can never be smaller than the variance, we must have $\frac{1}{\sigma} \le 1$. If $\sigma = 1$ then Theorem~\ref{thm:subg} gives a tight result, matching the spectral threshold. The idea of the proof is that sub-Gaussianity implies tail bounds (via the standard Chernoff bound argument), which can be used to show that the second moment is bounded.
\begin{proof}
This proof follows a similar idea to the proof of Lemma~5.5 in \citet*{mns}. By the central limit theorem, $\sqrt{n} \langle x,x' \rangle$ converges in distribution to a Gaussian:
$$\sqrt{n} \langle x,x' \rangle = \sqrt{n} \sum_{i=1}^n x_i x_i' = \sqrt{n} \sum_{i=1}^n \frac{\pi_i}{\sqrt n} \cdot \frac{\pi_i'}{\sqrt n} = \frac{1}{\sqrt n} \sum_{i=1}^n \pi_i\pi_i' \xrightarrow{d} \mathcal{N}(0,1)$$
since $\mathrm{Var}[\pi\pi'] = 1$. By the continuous mapping theorem applied to $g(z) = \exp\left(\frac{\lambda^2}{2}z^2\right)$, we also get the convergence in distribution
$$\exp\left(\frac{n \lambda^2}{2} \langle x, x' \rangle^2\right) \xrightarrow{d} \exp\left( \frac{\lambda^2}{2} \chi_1^2 \right)$$
In order for this convergence in distribution to imply the convergence 
$$\mathbb{E}\exp\left(\frac{n \lambda^2}{2} \langle x, x' \rangle^2\right) \rightarrow \mathbb{E}\exp\left( \frac{\lambda^2}{2} \chi_1^2 \right)$$
that we want, we need to show that the sequence $\exp\left(\frac{n \lambda^2}{2} \langle x, x' \rangle^2\right)$ is uniformly integrable. We will show this provided $\lambda < \frac{1}{\sigma}$. The desired result then follows using the chi-squared moment-generating function:
$$\mathbb{E}\exp\left( \frac{\lambda^2}{2} \chi_1^2 \right) = (1-\lambda^2)^{-1/2}$$
which is finite for $\lambda < 1$.

To complete the proof we need to show uniform integrability of the sequence $\exp\left(\frac{n \lambda^2}{2} \langle x, x' \rangle^2\right)$. Since $\pi\pi'$ is sub-Gaussian with variance proxy $\sigma^2$, it follows that $\sum_{i=1}^n \pi\pi_i'$ is sub-Gaussian with variance proxy $n \sigma^2$ and so we have the sub-Gaussian tail bound
$$\mathbb{P}\left[\sum_{i=1}^n \pi_i\pi_i' > t\right] \le \exp\left(-\frac{t^2}{2n\sigma^2}\right).$$
To show uniform integrability,
\begin{align*}
\mathbb{P}\left[\exp\left(\frac{n \lambda^2}{2} \langle x, x' \rangle^2\right) \ge M\right] &= \mathbb{P}\left[\langle x,x' \rangle \ge \sqrt{\frac{2 \log M}{n\lambda^2}}\right] = \mathbb{P}\left[\sum_{i=1}^n \pi_i\pi_i' \ge \sqrt{\frac{2n \log M}{\lambda^2}}\right] \\
&\le \exp\left(-\frac{1}{2n\sigma^2} \frac{2n \log M}{\lambda^2}\right)
= M^{-1/(\lambda^2 \sigma^2)}
\end{align*}
which is integrable near $\infty$ (uniformly in $n$) provided $\lambda^2 \sigma^2 < 1$, i.e.\ $\lambda < \frac{1}{\sigma}$.
\end{proof}

We remark that if we only want to show that the second moment is bounded (and not find the limit value), we only need the uniform integrability step because we can control the expectation of $\exp\left(\frac{n \lambda^2}{2} \langle x, x' \rangle^2\right)$ by integrating a tail bound; see the proof of Theorem~\ref{thm:synch-subg} for a related example.
We also note that one could in principle strengthen the sub-Gaussian method by using the Chernoff bound (Cram\'er's theorem on large deviations) in place of the sub-Gaussian tail bound. However, this adds additional complication (for instance, one needs to compute the Legendre transform) and does not actually seem to improve any of our results.

\subsection{\texorpdfstring{Application: the Rademacher prior and $\mathbb{Z}/2$ synchronization}{Application: the Rademacher prior and Z/2 synchronization}}
\label{sec:z2}

In this subsection, we consider the special case when $\pi$ is a Rademacher random variable, i.e.\ uniform on $\{-1,+1\}$. We abbreviate this prior as $\IID(\pm 1)$. This case of the Gaussian Wigner model has been studied by \citet{sdp-phase} as a Gaussian model for $\mathbb{Z}/2$ synchronization \citep{z2-toh,z2}. It has also been studied as a Gaussian variant of the community detection problem in \citet{dam}, where it is shown that the spectral threshold $\lambda = 1$ is precisely the threshold above which nontrivial recovery of the signal is possible. We show contiguity below this $\lambda = 1$ threshold (which, recall, is not implied by non-recovery).

\begin{theorem}\label{thm:pmone-wigner}
If $\lambda < 1$ then $\GWig(\lambda,\IID(\pm 1)) \contig \GWig(0)$.
\end{theorem}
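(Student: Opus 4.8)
The plan is to apply the sub-Gaussian method (Theorem~\ref{thm:subg}) with $\pi$ the Rademacher distribution, so the entire content of the proof is the computation of the variance proxy of $\pi\pi'$. First I would observe that if $\pi,\pi'$ are independent Rademacher variables, then $\pi\pi'$ is itself a Rademacher variable (uniform on $\{-1,+1\}$), since the product of two independent uniform signs is again a uniform sign. So it suffices to bound the moment-generating function of a single Rademacher variable $R$.

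Next I would recall the standard fact that a Rademacher variable is sub-Gaussian with variance proxy $\sigma^2 = 1$: indeed $\EE \exp(tR) = \cosh(t) = \sum_{k \ge 0} t^{2k}/(2k)! \le \sum_{k \ge 0} t^{2k}/(2^k k!) = \exp(t^2/2)$, using $(2k)! \ge 2^k k!$. This gives $\sigma = 1$, hence $1/\sigma = 1$, so Theorem~\ref{thm:subg} applies for all $\lambda < 1$ and yields $\lim_{n\to\infty}\EE_{Q_n}(\dd[P_n]{Q_n})^2 = (1-\lambda^2)^{-1/2} < \infty$, which gives $\GWig(\lambda,\IID(\pm 1)) \contig \GWig(0)$ by Lemma~\ref{lem:sec}. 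That is the whole argument.

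There is essentially no main obstacle here — the proof is a one-line instantiation of Theorem~\ref{thm:subg} together with the textbook sub-Gaussian bound for the Rademacher distribution. The only thing worth being careful about is the reduction ``product of two Rademachers is Rademacher,'' which is immediate but is the reason the variance proxy comes out to exactly $1$ (and hence why the threshold is tight, matching the spectral threshold $\lambda = 1$). One could alternatively compute $\EE_{x,x'}\exp(\frac{n\lambda^2}{2}\langle x,x'\rangle^2)$ directly, since $n\langle x,x'\rangle = \sum_i \epsilon_i$ is a sum of $n$ i.i.d.\ signs and its distribution is explicit, but routing through the already-proven sub-Gaussian method is cleaner and avoids reproving uniform integrability by hand.

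Two remarks I would add for context. First, the conditioning method of Section~\ref{sec:cond-method} would also prove this (the Rademacher prior has finite support), but it is overkill since the plain second moment is already finite here; the conditioning method is needed only for priors like sparse Rademacher where rare atypical spikes blow up the second moment. Second, since the spiked Gaussian Wigner model falls into the framework of the quoted theorem of \citet{bmvx}, this contiguity statement also implies non-recovery below $\lambda = 1$ for this prior, recovering (by a different route) the threshold identified in \citet{dam}.
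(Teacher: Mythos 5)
Your proposal is correct and follows essentially the same route as the paper: instantiate the sub-Gaussian method (Theorem~\ref{thm:subg}) after checking that the product $\pi\pi'$ of two Rademachers is sub-Gaussian with variance proxy $1$. The only cosmetic difference is that you verify the variance proxy via the direct bound $\cosh(t)\le\exp(t^2/2)$, whereas the paper invokes Hoeffding's Lemma on the bounded variable; both give $\sigma=1$ and the rest is identical.
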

\begin{proof}
Recall Hoeffding's Lemma: if $X \in [a,b]$ is a bounded random variable then
$$\EE[\exp(tX)] \le \exp\left(\frac{1}{8}t^2(a-b)^2\right).$$
This implies that the Rademacher random variable is sub-Gaussian with variance proxy $1$. The result now follows from Theorem~\ref{thm:subg}.
\end{proof}

Note that the Rademacher prior and the spherical prior both have the same threshold: $\lambda = 1$. A matching upper bound in both cases is PCA (top eigenvalue). Perhaps it is surprising that PCA is optimal for the $\pm 1$ case because this suggests that there is no way to exploit the $\pm 1$ structure. However, PCA is only optimal in terms of the threshold and not in terms of error in recovering the spike once $\lambda > 1$. The optimal algorithm for minimizing mean squared error is the AMP (approximate message passing) algorithm of \citet{dam}.

\subsection{The conditioning method}
\label{sec:cond-method}

In this subsection, we give an alternative to the sub-Gaussian method that can give tighter results in some cases. We assume again that the prior $\mathcal{X} = \IID(\pi)$ draws each entry of $x$ independently from $\frac{1}{\sqrt n} \mathcal{\pi}$ where $\mathcal{\pi}$ is mean-zero and unit-variance, but now we also require that $\mathcal{\pi}$ has finite support.

The argument that we will use is based on \citet{bmnn}, in particular their Proposition~5. Suppose $\omega_n$ is a set of `good' $x$ values so that $x \in \omega_n$ with probability $1-o(1)$. Let $P_n = \GWig_n(\lambda,\cX)$ and let $Q_n = \GWig_n(0)$. Let $\tilde\cX$ be the prior that draws $x$ from $\cX$, outputs $x$ if $x \in \omega_n$, and outputs the zero vector otherwise. Let $\tilde P_n = \GWig_n(\lambda,\tilde \cX)$. Our goal is to show $\tilde{P}_n \contig {Q}_n$,  from which it follows that ${P}_n \contig {Q}_n$ (see Lemma~\ref{lem:cond}). 
In our case, the bad events are when the empirical distribution of $x$ differs significantly from $\pi$, i.e.\ $x$ has atypical proportions of entries.
If we let $\Omega_n$ be the event that $x$ and $x'$ are both in $\omega_n$, our second moment becomes
$$\Ex_{Q_n}\left(\dd[\tilde{{P}}_n]{{Q}_n}\right)^2 = \Ex_{\tilde x,\tilde x' \sim \tilde \cX}\left[\exp\left(\frac{n \lambda^2}{2} \langle \tilde x,\tilde x' \rangle^2\right)\right] = \Ex_{x,x' \sim \cX}\left[\one_{\Omega_n}\exp\left(\frac{n \lambda^2}{2} \langle x,x' \rangle^2\right)\right] + o(1).$$

Let $\Sigma \subseteq \mathbb{R}$ (a finite set) be the support of $\pi$, and let $s = |\Sigma|$. We will index $\Sigma$ by $[s] = \{1,2,\ldots,s\}$ and identify $\pi$ with the vector of probabilities $\pi \in \mathbb{R}^s$. For $a,b \in \Sigma$, let $N_{ab}$ denote the number of indices $i$ for which $x_i = \frac{a}{\sqrt n}$ and $x'_i = \frac{b}{\sqrt n}$ (recall $x_i$ is drawn from $\frac{1}{\sqrt n}\pi$). Note that $N$ follows a multinomial distribution with $n$ trials, $s^2$ outcomes, and with probabilities given by $\bar\alpha = \pi \pi^\top \in \mathbb{R}^{s \times s}$. We have
$$\frac{n\lambda^2}{2}\langle x,x' \rangle^2 = \frac{\lambda^2}{2n}\left(\sum_{a,b \in \Sigma} a b N_{ab}\right)^2 = \frac{\lambda^2}{2n}\sum_{a,b,a',b'} aba'b' N_{ab} N_{a'b'} = \frac{1}{n}N^\top A N$$
where $A$ is the $s^2 \times s^2$ matrix $A_{ab,a'b'} = \frac{\lambda^2}{2} aba'b'$, and the quadratic form $N^\top A N$ is computed by treating $N$ as a vector of length $s^2$.

We are now in a position to apply Proposition~5 from \citet{bmnn}. Define $Y = (N - n \bar\alpha)/\sqrt n$. Let $\Omega_n$ be the event defined in Appendix~A of \citet{bmnn}, which enforces that the empirical distributions of $x$ and $x'$ are close to $\pi$ (in a specific sense).

Note that $\bar\alpha$ (treated as a vector of length $s^2$) is in the kernel of $A$ because $\pi$ is mean-zero: the inner product between $\bar\alpha$ and the $(a,b)$ row of $A$ is
$$\sum_{a',b'} A_{ab,a'b'} \bar\alpha_{a'b'} = \frac{\lambda^2}{2} \sum_{a',b'} aba'b'\pi_{a'}\pi_{b'} = \frac{\lambda^2}{2} \,ab\left(\sum_{a'} a' \pi_{a'}\right)\left(\sum_{b'} b' \pi_{b'}\right) = 0.$$
Therefore we have $\frac{1}{n}N^\top A N = Y^\top A Y$ and so we can write our second moment as $\EE[\one_{\Omega_n} \exp(Y^\top A Y)] + o(1)$.

Let $\Delta_{s^2}(\pi)$ denote the set of nonnegative vectors $\alpha \in \mathbb{R}^{s^2}$ with row- and column-sums prescribed by $\pi$, i.e.\ treating $\alpha$ as an $s \times s$ matrix, we have (for all $i$) that row $i$ and column $i$ of $\alpha$ each sum to $\pi_i$. Let $D(u,v)$ denote the KL divergence between two vectors: $D(u,v) = \sum_i u_i \log(u_i/v_i)$. For convenience, we restate Proposition~5 in \citet{bmnn}.

\begin{proposition}[\citet{bmnn} Proposition~5]
\label{prop:nn}
Let $\pi \in \mathbb{R}^s$ be any vector of probabilities. Let $A$ be any $s^2 \times s^2$ matrix. Define $N$, $Y$, $\bar\alpha$, and $\Omega_n$ as above (depending on $\pi$). Let
$$m = \sup_{\alpha \in \Delta_{s^2}(\pi)} \frac{(\alpha-\bar\alpha)^\top A (\alpha-\bar\alpha)}{D(\alpha,\bar\alpha)}.$$
If $m < 1$ then
$$\lim_{n \to \infty} \EE[\one_{\Omega_n} \exp(Y^\top A Y)] = \EE[\exp(Z^\top A Z)] < \infty$$
where $Z \sim \cN(0,\diag(\bar\alpha) - \bar\alpha \bar\alpha^\top)$. Conversely, if $m > 1$ then
$$\lim_{n \to \infty} \EE[\one_{\Omega_n} \exp(Y^\top A Y)] = \infty.$$
\end{proposition}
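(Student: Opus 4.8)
The plan is to prove this as a Laplace-type asymptotic for a sum of multinomial weights against the exponential factor $\exp(Y^\top AY)$, separating a Gaussian ``bulk'' around $\bar\alpha$ from a ``tail.'' Write $\alpha := N/n$, so that $Y = \sqrt n\,(\alpha-\bar\alpha)$ and $Y^\top AY = n\,(\alpha-\bar\alpha)^\top A(\alpha-\bar\alpha)$. By Stirling's formula the multinomial weight of the lattice point $N = n\alpha$ satisfies $\mathbb{P}[N=n\alpha] = \exp(-n\,D(\alpha,\bar\alpha) + O(\log n))$ uniformly over the simplex (all coordinates of $\bar\alpha = \pi\pi^\top$ are positive, since $\pi$ has full support on $\Sigma$, so there are no coordinate-vanishing issues). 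Hence each term of $\EE[\one_{\Omega_n}\exp(Y^\top AY)]$ is $\exp\!\big(n[(\alpha-\bar\alpha)^\top A(\alpha-\bar\alpha) - D(\alpha,\bar\alpha)] + O(\log n)\big)$ when $\alpha$ meets $\Omega_n$, and $0$ otherwise. I would fix a scale $\delta_n \to 0$ slowly and split the sum at $|\alpha - \bar\alpha| = \delta_n$.

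For the bulk $\{|\alpha-\bar\alpha|\le\delta_n\}$: at this scale $\Omega_n$ constrains $\alpha$ only at a level tending to $0$ relative to the typical $O(n^{-1/2})$ multinomial fluctuations, so $\mathbb{P}[\Omega_n]\to 1$ and the multinomial central limit theorem gives $Y\,\one_{\Omega_n}\xrightarrow{d} Z\sim\cN(0,\diag(\bar\alpha)-\bar\alpha\bar\alpha^\top)$; by the continuous mapping theorem $\exp(Y^\top AY)\,\one_{\Omega_n}\xrightarrow{d}\exp(Z^\top AZ)$. Upgrading this to convergence of expectations needs uniform integrability, which is exactly where the hypothesis $m<1$ enters.

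For the tail $\{|\alpha-\bar\alpha|>\delta_n\}\cap\Omega_n$: here $\Omega_n$ forces the row and column sums of $\alpha$ to lie within $o(1)$ of $\pi$, i.e.\ $\alpha$ within $o(1)$ of the compact set $\Delta_{s^2}(\pi)$. By continuity of $(\alpha-\bar\alpha)^\top A(\alpha-\bar\alpha)$ and of $D(\cdot,\bar\alpha)$ on a neighborhood of $\Delta_{s^2}(\pi)$, together with a second-order Taylor expansion at $\bar\alpha$ that identifies the $0/0$ behavior of the ratio defining $m$ with a Rayleigh quotient, $m<1$ upgrades to a uniform bound $(\alpha-\bar\alpha)^\top A(\alpha-\bar\alpha)\le(1-\eta)\,D(\alpha,\bar\alpha)$ for a fixed $\eta>0$ on a neighborhood of $\Delta_{s^2}(\pi)$. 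The tail sum is then at most $\sum_\alpha\exp(-\eta\,n\,D(\alpha,\bar\alpha)+O(\log n)) = o(1)$, and a parallel estimate at the vertex $\bar\alpha$ yields $\EE\exp(Z^\top AZ)<\infty$; combining bulk and tail proves the first assertion. For the second assertion ($m>1$), pick $\alpha^\star\in\Delta_{s^2}(\pi)$ with $(\alpha^\star-\bar\alpha)^\top A(\alpha^\star-\bar\alpha) - D(\alpha^\star,\bar\alpha) = c > 0$; a lattice point within $o(\sqrt n)$ of $n\alpha^\star$ has row and column sums inside the $\Omega_n$ window for $n$ large, and by continuity its term is at least $\exp(nc/2 - O(\log n))\to\infty$, so the whole sum diverges.

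The main obstacle is the convergent case, and specifically making the \emph{single} hypothesis $m<1$ do two jobs at once: (i) rendering the large-deviation tail negligible --- a statement about the supremum over the entire compact set $\Delta_{s^2}(\pi)$, including its boundary faces where $D(\alpha,\bar\alpha)$ stays finite --- and (ii) guaranteeing finiteness of $\EE\exp(Z^\top AZ)$, which is controlled by the local Hessian behavior of $D(\cdot,\bar\alpha)$ versus $A$ at $\bar\alpha$. Reconciling these requires the event $\Omega_n$ (Appendix~A of \citet{bmnn}) to be calibrated so that its probability tends to $1$ while its effective large-deviation rate function collapses onto $\Delta_{s^2}(\pi)$, together with uniform control of the Stirling error and of the Gaussian approximation across the crossover scale $\delta_n$; checking that all of these can be arranged simultaneously is the technical heart of the argument.
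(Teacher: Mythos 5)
This proposition is not proved in the paper you were given: it is imported verbatim from \citet{bmnn} (their Proposition~5), and the only in-paper material is the heuristic paragraph following it, which matches the Laplace-type picture you start from (a pair of type $\alpha$ contributes $\exp(n(\alpha-\bar\alpha)^\top A(\alpha-\bar\alpha))$ and occurs with probability $\exp(-nD(\alpha,\bar\alpha)+O(\log n))$, so the worst type dominates). So your outline is a reconstruction of the cited argument rather than an alternative to anything in this paper; your treatment of the divergent case $m>1$ (a single lattice point near a maximizer $\alpha^\star$) is fine as stated.

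The genuine gap is exactly the step you flag as the ``technical heart'' and then leave unresolved, and it is not bookkeeping. The hypothesis $m<1$ constrains only types with \emph{exact} marginals $\pi$, i.e.\ directions tangent to $\Delta_{s^2}(\pi)$ at $\bar\alpha$. But for $\Pr[\Omega_n]\to 1$ the event must allow the row and column sums of $N/n$ to fluctuate at the CLT scale, so $Y$ retains nondegenerate components transverse to that affine set; indeed the limiting covariance $\diag(\bar\alpha)-\bar\alpha\bar\alpha^\top$ is supported on the full orthogonal complement of the all-ones vector, not on the tangent space of $\Delta_{s^2}(\pi)$. Your claimed upgrade of $m<1$ to a uniform bound $(\alpha-\bar\alpha)^\top A(\alpha-\bar\alpha)\le(1-\eta)\,D(\alpha,\bar\alpha)$ on a \emph{neighborhood} of $\Delta_{s^2}(\pi)$ does not follow from continuity: the ratio is $0/0$ at $\bar\alpha$, and along $\alpha=\bar\alpha+\epsilon u$ with $u$ transverse it converges to the Rayleigh quotient $u^\top A u \big/ \bigl(\frac12 u^\top \diag(\bar\alpha)^{-1}u\bigr)$, which the supremum defining $m$ never sees. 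Concretely, with $s=2$, $\pi=(\frac12,\frac12)$ and $A=3E$, where $E$ has a single one in the diagonal entry indexed by one pair $(a,b)$, one checks $m=\frac38<1$ while $\EE[\exp(Z^\top AZ)]=\infty$ (since $2\cdot 3\cdot\bar\alpha_{ab}(1-\bar\alpha_{ab})=\frac98>1$); so from the stated hypotheses alone neither your uniform-integrability step in the bulk (which you assert but never carry out) nor the finiteness of the Gaussian limit can be derived, and no calibration of $\delta_n$ rescues the interpolation as written. Closing this requires using the precise definition of $\Omega_n$ in \citet{bmnn} together with structural information about $A$ ensuring the worst direction is tangent --- as happens in this paper's applications, where $A\propto\beta\beta^\top$ with mean-zero $\pi$ has maximizing direction $u\propto\diag(\bar\alpha)\beta$ with zero row and column sums, and finiteness of $\EE[\exp(Z^\top AZ)]$ is verified by a separate computation after Theorem~\ref{thm:cond-method} --- or else a separate argument controlling the transverse components of $Y$; your sketch supplies neither.
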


The intuition behind this matrix optimization problem is the following. The matrix $\alpha$ represents the `type' of a pair of spikes $(x,x')$ in the sense that for any $a,b \in \Sigma$, $\alpha_{ab}$ is the fraction of entries $i$ for which $x_i = a$ and $x'_i = b$. A pair $(x,x')$ of type $\alpha$ yields $\exp(Y^\top A Y) = \exp(n (\alpha - \bar\alpha)^\top A (\alpha - \bar\alpha))$.
The probability (when $x,x' \sim \IID(\pi)$) that a particular type $\alpha$ occurs is asymptotically $\exp(-n D(\alpha,\bar\alpha))$. Due to the exponential scaling, the second moment is dominated by the worst $\alpha$ value: the second moment is unbounded if there is some $\alpha$ such that $(\alpha - \bar\alpha)^\top A (\alpha - \bar\alpha) > D(\alpha,\bar\alpha)$. Rearranging this yields the optimization problem in the theorem. The fact that we are conditioning on `good' values of $x$ (that have close-to-typical proportions of entries) allows us to add the constraint $\alpha \in \Delta_{s^2}(\pi)$. If we were not conditioning, we would have the same optimization problem over $\alpha \in \Delta_{s^2}$ (the simplex of dimension $s^2$), which in some cases gives a worse threshold.

Unfortunately we do not have a good general technique to understand the value of the matrix optimization problem. However, in certain special cases we do. Namely, in Section~\ref{sec:sparse-rad} we show how to use symmetry to reduce the problem to only two variables so that it can be easily solved numerically. We are also able to find a closed form solution for a similar optimization problem when we consider synchronization problems (see Theorems~\ref{thm:toh} and \ref{thm:synch-finite}). In other applications, closed form solutions to related optimization problems have been found \citep{an-chrom,bmnn}.

Applying Proposition~\ref{prop:nn} to our specific choice of $\pi$ and $A$ gives the following.
\begin{theorem}[conditioning method]
\label{thm:cond-method}
Let $\mathcal{X} = \mathrm{iid}(\pi)$ where $\pi$ has finite support $\Sigma \subseteq \mathbb{R}$ with $|\Sigma| = s$. Let $P_n = \GWig_n(\lambda,\cX)$, $\tilde P_n = \GWig_n(\lambda,\tilde \cX)$, and $Q_n = \GWig_n(0)$. Define the $s \times s$ matrix $\beta_{ab} = ab$ for $a,b \in \Sigma$. Let $D(u,v)$ denote the KL divergence between two vectors: $D(u,v) = \sum_i u_i \log(u_i/v_i)$. Identify $\pi$ with the vector of probabilities $\pi \in \RR^\Sigma$, and define $\bar\alpha = \pi \pi^\top$. Let $\Delta_{s^2}(\pi)$ denote the set of $s \times s$ matrices with row- and column-sums prescribed by $\pi$, i.e.\ row $i$ and column $i$ of $\alpha$ each sum to $\pi_i$. Let
$$\lambda^*_\cX = \left[\sup_{\alpha \in \Delta_{s^2}(\pi)} \frac{\langle \alpha,\beta \rangle^2}{2D(\alpha,\bar\alpha)}\right]^{-1/2}.$$
If $\lambda < \lambda^*_\cX$ then
$$\lim_{n \to \infty} \Ex_{Q_n}\left(\dd[\tilde P_n]{Q_n}\right)^2 = \frac{1}{\sqrt{1-\lambda^2}} < \infty$$
and so $P_n \contig Q_n$. Conversely, if $\lambda > \lambda^*_\cX$ then
$$\lim_{n \to \infty} \Ex_{Q_n}\left(\dd[\tilde P_n]{Q_n}\right)^2 = \infty.$$
\end{theorem}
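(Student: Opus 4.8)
The plan is to specialize Proposition~\ref{prop:nn} to the concrete pair $(\pi, A)$ arising from the $\GWig$ second moment, exactly following the reduction already sketched in the text. First I would recall from the discussion preceding Proposition~\ref{prop:nn} that, with $\tilde\cX$ the conditioned prior and $\Omega_n$ the ``good empirical proportions'' event from Appendix~A of \citet{bmnn}, the modified second moment satisfies
$$\Ex_{Q_n}\left(\dd[\tilde P_n]{Q_n}\right)^2 = \Ex_{x,x'\sim\cX}\left[\one_{\Omega_n}\exp\left(\tfrac{n\lambda^2}{2}\langle x,x'\rangle^2\right)\right] + o(1),$$
and that the exponent equals $\tfrac{1}{n}N^\top A N$ where $N_{ab}$ counts the coordinates $i$ with $x_i = a/\sqrt n$, $x'_i = b/\sqrt n$, and $A_{ab,a'b'} = \tfrac{\lambda^2}{2}\,aba'b'$. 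Since $\pi$ is mean-zero, $\bar\alpha = \pi\pi^\top$ (viewed as a length-$s^2$ vector) lies in the kernel of $A$ — this is the one-line computation already displayed in the excerpt — so $\tfrac1n N^\top A N = Y^\top A Y$ with $Y = (N - n\bar\alpha)/\sqrt n$, putting us exactly in the form required by Proposition~\ref{prop:nn}.

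Next I would evaluate the scalar optimization quantity $m$ from Proposition~\ref{prop:nn} for this $A$. For any $\alpha \in \Delta_{s^2}(\pi)$, writing $\beta_{ab} = ab$, one has
$$(\alpha-\bar\alpha)^\top A (\alpha-\bar\alpha) = \tfrac{\lambda^2}{2}\sum_{a,b,a',b'} aba'b'(\alpha-\bar\alpha)_{ab}(\alpha-\bar\alpha)_{a'b'} = \tfrac{\lambda^2}{2}\left(\sum_{a,b} ab\,(\alpha-\bar\alpha)_{ab}\right)^2 = \tfrac{\lambda^2}{2}\,\langle \alpha - \bar\alpha, \beta\rangle^2,$$
and since $\bar\alpha \perp A$ in the relevant sense — more directly, $\langle\bar\alpha,\beta\rangle = \sum_{a,b} ab\,\pi_a\pi_b = (\sum_a a\pi_a)^2 = 0$ because $\pi$ is mean-zero — this simplifies to $\tfrac{\lambda^2}{2}\langle\alpha,\beta\rangle^2$. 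Hence
$$m = \sup_{\alpha\in\Delta_{s^2}(\pi)} \frac{\tfrac{\lambda^2}{2}\langle\alpha,\beta\rangle^2}{D(\alpha,\bar\alpha)} = \lambda^2 \cdot \sup_{\alpha\in\Delta_{s^2}(\pi)} \frac{\langle\alpha,\beta\rangle^2}{2D(\alpha,\bar\alpha)} = \frac{\lambda^2}{(\lambda^*_\cX)^2}.$$
Thus $m < 1 \iff \lambda < \lambda^*_\cX$ and $m > 1 \iff \lambda > \lambda^*_\cX$, and Proposition~\ref{prop:nn} applies directly in both directions.

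In the regime $\lambda < \lambda^*_\cX$, Proposition~\ref{prop:nn} gives $\lim_{n\to\infty}\Ex_{Q_n}(\dd[\tilde P_n]{Q_n})^2 = \Ex[\exp(Z^\top A Z)]$ with $Z \sim \cN(0,\diag(\bar\alpha) - \bar\alpha\bar\alpha^\top)$; the remaining task is to identify this Gaussian integral as $(1-\lambda^2)^{-1/2}$. I would do this by observing that $Z$ is the limiting distribution of $(N - n\bar\alpha)/\sqrt n$ by the multivariate CLT, so $\langle Z,\beta\rangle$ is a mean-zero Gaussian whose variance is $\lim_n \tfrac1n\mathrm{Var}(\sum_{a,b} ab\,N_{ab}) = \lim_n n\,\mathrm{Var}(x_1 x'_1 \cdot n) \cdot \tfrac1n$; more cleanly, $\sqrt n\langle x,x'\rangle \xrightarrow d \cN(0,1)$ (as in the proof of Theorem~\ref{thm:subg}, since $\mathrm{Var}[\pi\pi'] = 1$), and $Z^\top A Z = \tfrac{\lambda^2}{2}\langle Z,\beta\rangle^2$ has the law of $\tfrac{\lambda^2}{2}\chi_1^2$. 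The chi-squared moment generating function then gives $\Ex[\exp(\tfrac{\lambda^2}{2}\chi_1^2)] = (1-\lambda^2)^{-1/2}$, finite since $\lambda^*_\cX \le 1$. Contiguity $P_n \contig Q_n$ follows from Lemma~\ref{lem:cond} with the good event $\omega_n = \{x \in \omega_n\}$, which has probability $1-o(1)$ under $\cX$ by the multinomial law of large numbers. Conversely, for $\lambda > \lambda^*_\cX$ we have $m > 1$, and Proposition~\ref{prop:nn} gives divergence of the modified second moment.

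The only genuine subtlety — and the step I expect to require the most care — is verifying that the ``good'' event $\Omega_n$ we need (two independent samples both having near-typical empirical proportions) coincides with, or is implied by, the specific event $\Omega_n$ constructed in Appendix~A of \citet{bmnn}, and that $\bar\alpha$ being in $\ker A$ is exactly the hypothesis under which their Proposition~5 yields $Y^\top A Y$ rather than a shifted quadratic form; this is where one must be faithful to the precise formulation in that reference. Everything else is the routine bilinear-form manipulation above together with a CLT identification of a Gaussian quadratic form.
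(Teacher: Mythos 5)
Your proposal is correct and follows essentially the same route as the paper: reduce the conditioned second moment to $\EE[\one_{\Omega_n}\exp(Y^\top A Y)]$ with $A_{ab,a'b'}=\tfrac{\lambda^2}{2}aba'b'$, use mean-zero of $\pi$ to put $\bar\alpha$ in $\ker A$ and to simplify $(\alpha-\bar\alpha)^\top A(\alpha-\bar\alpha)=\tfrac{\lambda^2}{2}\langle\alpha,\beta\rangle^2$, and then invoke Proposition~\ref{prop:nn} in both directions with $m=\lambda^2/(\lambda^*_\cX)^2$. The only (immaterial) difference is that you identify $\mathrm{Var}\langle Z,\beta\rangle=1$ via the CLT for $\sqrt{n}\langle x,x'\rangle$, whereas the paper computes $\beta^\top(\diag(\bar\alpha)-\bar\alpha\bar\alpha^\top)\beta=1$ directly; both give the limit $(1-\lambda^2)^{-1/2}$.
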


\noindent Note that this is a tight characterization of when the second moment is bounded, but not necessarily a tight characterization of when contiguity holds.

Above we have computed the limit value of the second moment in the case $\lambda < \lambda^*_\cX$ as follows. Defining $Z$ as in Proposition~\ref{prop:nn} we have $\langle Z, \beta \rangle \sim \cN(0,\sigma^2)$ where
\begin{align*}
\sigma^2 &= \beta^\top (\diag(\bar\alpha) - \bar\alpha \bar\alpha^\top)\beta \\
&= \sum_{ab} \beta_{ab}^2 \bar\alpha_{ab} + \left(\sum_{ab} \beta_{ab} \bar\alpha_{ab} \right)^2 \\
&= \left(\sum_a a^2 \pi_a\right)\left(\sum_b b^2 \pi_b\right) + \left(\sum_a a \pi_a \sum_b b \pi_b \right)^2 \\
&= 1
\end{align*}
since $\pi$ is mean-zero and unit-variance, and so
$$\EE[\exp(Z^\top A Z)] = \EE\left[\exp\left(\frac{\lambda^2}{2}\langle Z, \beta \rangle^2\right)\right] = \EE\left[\exp\left(\frac{\lambda^2}{2}\chi_1^2\right)\right] = \frac{1}{\sqrt{1-\lambda^2}}.$$

\subsection{Application: the sparse Rademacher prior}
\label{sec:sparse-rad}

As an example, consider the case where $\pi = \sqrt{1/\rho}\,\mathcal{R}(\rho)$ where $\mathcal{R}(\rho)$ is the sparse Rademacher distribution with sparsity $\rho \in [0,1]$:
$$\mathcal{R}(\rho) = \left\{\begin{array}{ccc} 0 & \text{w.p.} & 1-\rho \\ +1 & \text{w.p.} & \rho/2 \\ -1 & \text{w.p.} & \rho/2 \end{array}\right..$$
First we try the sub-Gaussian method of Section~\ref{sec:subg-method}. Note that $\pi\pi' = \frac{1}{\rho}\mathcal{R}(\rho^2)$. The variance proxy $\sigma^2$ for $\pi\pi'$ needs to satisfy
\begin{equation} \exp\left(\frac{1}{2} \sigma^2 t^2\right) \ge \EE \exp(t \pi\pi') = (1-\rho^2) + \frac{\rho^2}{2}\exp(t/\rho) + \frac{\rho^2}{2}\exp(-t/\rho) \label{eq:sprad-mgf-compare}\end{equation}
for all $t \in \mathbb{R}$ so the best (smallest) choice for $\sigma^2$ is
$$(\sigma^*)^2 = \sup_{t \in \mathbb{R}} \frac{2}{t^2}\log\left[(1-\rho^2) + \frac{\rho^2}{2}\exp(t/\rho) + \frac{\rho^2}{2}\exp(-t/\rho)\right].$$

Recall that Theorem~\ref{thm:subg} (sub-Gaussian method) gives contiguity for all $\lambda < 1/\sigma^*$. We now resolve a conjecture stated in \citet{bmvx}. For sufficiently large $\rho$, this optimum is in fact $\sigma^* = 1$, implying that PCA is tight:
\begin{theorem}\label{thm:sparse-subg}
When $\rho \geq 1/\sqrt{3} \approx 0.577$, we have $\sigma^* = 1$, yielding contiguity for all $\lambda < 1$. On the other hand, if $\rho < 1/\sqrt{3}$, then $\sigma^* > 1$.
\end{theorem}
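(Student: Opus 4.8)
The plan is to work directly with the explicit optimum
$$(\sigma^*)^2 \;=\; \sup_{t\in\RR} f(t), \qquad f(t)\;:=\;\frac{2}{t^2}\,\log\!\left[(1-\rho^2)+\rho^2\cosh(t/\rho)\right],$$
and to record two observations at the outset. First, $f$ has a removable singularity at $t=0$ with $f(t)\to 1$ (equivalently, the variance proxy of a mean-zero variable is at least its variance, and $\operatorname{Var}(\pi\pi')=1$), so $\sigma^*\ge 1$ for every $\rho$. Hence $\sigma^*=1$ is equivalent to the pointwise bound $f(t)\le 1$ for all $t$, i.e.\ to
\begin{equation}
(1-\rho^2)+\rho^2\cosh(t/\rho)\;\le\;e^{t^2/2}\qquad\text{for all }t\in\RR,
\tag{$\star$}
\end{equation}
whereas $\sigma^*>1$ is equivalent to the existence of a single $t$ with $f(t)>1$. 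The whole theorem thus reduces to deciding whether $(\star)$ holds as a function of $\rho$.

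For the claim that $\rho\ge 1/\sqrt3$ forces $\sigma^*=1$, I would substitute $u=t/\rho$ and $c=\rho^2\in[1/3,1]$, turning $(\star)$ into $1-c+c\cosh u\le e^{cu^2/2}$, and then compare the two sides coefficientwise in their power series in $u^2$ (all coefficients being nonnegative, a termwise inequality propagates to the sums for every real $u$). Writing $\cosh u=\sum_{k\ge 0}u^{2k}/(2k)!$ and $e^{cu^2/2}=\sum_{k\ge 0}c^k u^{2k}/(2^k k!)$, the constant terms cancel and it suffices to verify
$$\frac{c}{(2k)!}\;\le\;\frac{c^k}{2^k k!},\qquad\text{i.e.}\qquad c^{\,k-1}\;\ge\;a_k:=\frac{2^k k!}{(2k)!},\qquad k\ge 1.$$
This is an equality for $k=1$, reduces exactly to $c\ge 1/3$ for $k=2$ (as $a_2=1/3$), and for $k\ge 2$ I would prove $a_k\le 3^{-(k-1)}$ by induction using $a_{k+1}/a_k=1/(2k+1)\le 1/5<1/3$; then $a_k\le 3^{-(k-1)}\le c^{\,k-1}$ whenever $c\ge 1/3$. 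Summing the termwise inequalities yields $(\star)$, hence $\sigma^*\le 1$, hence $\sigma^*=1$. Note the only term that is ever tight is $k=2$, which is precisely why $c=1/3$ is the borderline and why the endpoint $\rho=1/\sqrt3$ sits in the ``$\sigma^*=1$'' regime.

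For the converse, $\rho<1/\sqrt3\Rightarrow\sigma^*>1$, I would Taylor-expand $f$ near $t=0$: from $(1-\rho^2)+\rho^2\cosh(t/\rho)=1+\frac{t^2}{2}+\frac{t^4}{24\rho^2}+O(t^6)$ together with $\log(1+w)=w-\frac{w^2}{2}+O(w^3)$ one gets
$$f(t)\;=\;1+\left(\frac{1}{12\rho^2}-\frac14\right)t^2+O(t^4).$$
When $\rho^2<1/3$ the coefficient of $t^2$ is strictly positive, so $f(t)>1$ for all sufficiently small $t\ne 0$, and therefore $(\sigma^*)^2=\sup_t f(t)>1$.

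Everything here is elementary calculus and bookkeeping; the one genuinely load-bearing step is the termwise power-series comparison together with the bound $a_k=2^k k!/(2k)!\le 3^{-(k-1)}$, which simultaneously establishes $(\star)$ for all $\rho\ge 1/\sqrt3$ and pins down $\rho=1/\sqrt3$ as the exact threshold. I expect that is the step requiring the most care; the Taylor-expansion side is routine.
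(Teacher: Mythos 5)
Your proof is correct. The reduction to the pointwise inequality $(\star)$ and the converse direction are essentially the paper's: the paper also notes that $\sigma^2=1$ is the best one could hope for, and for $\rho<1/\sqrt3$ it likewise argues via the fourth-order Taylor behavior at $t=0$ (your coefficient $\tfrac{1}{12\rho^2}-\tfrac14>0$ is the same obstruction the paper records through $k_\rho^{(4)}(0)$). Where you genuinely diverge is the main direction $\rho\ge 1/\sqrt3$: the paper proves $(\star)$ by showing the second derivative bound $k_\rho''(t)=\frac{\rho^2+(1-\rho^2)\cosh(t/\rho)}{(1-\rho^2+\rho^2\cosh(t/\rho))^2}\le 1$ for all $t\ge 0$, via completing the square in $\cosh(t/\rho)$, whereas you substitute $u=t/\rho$, $c=\rho^2$ and compare the two sides of $1-c+c\cosh u\le e^{cu^2/2}$ coefficientwise in $u^{2}$, reducing everything to $c^{\,k-1}\ge 2^k k!/(2k)!$, which you verify with the ratio $a_{k+1}/a_k=1/(2k+1)$. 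Both arguments are elementary and correct; your series comparison has the advantages that it makes the threshold transparent (only the $k=2$ term is ever tight, which is exactly $c\ge 1/3$) and that it covers the endpoint $\rho=1/\sqrt3$ directly, while the paper's proof as written treats $\rho>1/\sqrt3$ (the equality case following by continuity); the paper's derivative formulation, in turn, stays closer to the cumulant-generating-function picture and avoids any series bookkeeping.
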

\begin{proof}
We are equivalently interested in the following reformulation of (\ref{eq:sprad-mgf-compare}):
\begin{equation} \frac12 \sigma^2 t^2 \stackrel{?}{\geq} \log\left( (1-\rho^2) + \frac{\rho^2}{2}\exp(t/\rho) + \frac{\rho^2}{2}\exp(-t/\rho) \right) = \log\left( 1-\rho^2 + \rho^2 \cosh(t/\rho) \right) \defeq k_\rho(t). \label{eq:sprad-cgf-compare} \end{equation}
Both sides of the inequality are even functions, agreeing in value at $t=0$. When $\sigma^2 < 1$, the inequality fails, by comparing their second-order behavior about $t=0$. When $\sigma^2 = 1$ but $\rho < 1/\sqrt{3}$, the inequality fails, as the two sides have matching behavior up to third order, but $k^{(4)}_\rho(0) = 3 - \rho^{-2} < 0$.

It remains to show that the inequality (\ref{eq:sprad-cgf-compare}) does hold for $\rho > 1/\sqrt{3}$ and $\sigma^2 = 1$. As the left and right sides agree to first order at $t=0$, and are both even functions, it suffices to show that for all $t \geq 0$,
$$ 1 \stackrel{?}{\geq} k_\rho''(t) = \frac{\rho^2 + (1-\rho^2) \cosh(t/\rho)}{(1-\rho^2+\rho^2 \cosh(t/\rho))^2}. $$
Completing the square for $\cosh$, we have the equivalent inequality:
$$ 0 \stackrel{?}{\leq} 1 - 3\rho^2 + \rho^4 + \Big( \underbrace{\rho^2 \cosh(t/\rho) + \frac{(2\rho^2-1)(1-\rho^2)}{2\rho^2}}_{(*)} \Big)^2 - \frac{(2\rho^2 - 1)^2(1-\rho^2)^2}{4\rho^4}. $$
Note that $\cosh$ is bounded below by $1$; thus for $\rho > 1/\sqrt{3}$, the underbraced term ($*$) is nonnegative, and hence minimized in absolute value when $t=0$. It then suffices to establish the above inequality in the case $t=0$, so that $\cosh(t/\rho) = 1$; but here the inequality is in fact an equality, by simple algebra.
\end{proof}

Note that Theorem~\ref{thm:sparse-subg} above implies that the sub-Gaussian method cannot show that PCA is optimal when $\rho < 1/\sqrt{3}$. Using the conditioning method of Section~\ref{sec:cond-method}, we will now improve the range of $\rho$ for which PCA is optimal, although our argument here relies on numerical optimization. Thus, this is an example where conditioning away from `bad' events improves the behavior of the second moment.

\begin{example}\label{ex:sparse-rad}
Let $\cX$ be the sparse Rademacher prior $\IID(\sqrt{1/\rho} \,\mathcal{R}(\rho))$. There exists a critical value $\rho^* \approx 0.184$ (numerically computed) such that if $\rho \ge \rho^*$ and $\lambda < 1$ then $\GWig(\lambda,\cX)$ is contiguous to $\GWig(0,\cX)$. When $\rho < \rho^*$ we are only able to show contiguity when $\lambda < \lambda^*_\rho$ for some $\lambda^*_\rho < 1$.
\end{example}

\noindent We use ``example'' rather than ``theorem'' to indicate results that rely on numerical computations.

\begin{proof}[Details]
Consider the optimization problem of Theorem~\ref{thm:cond-method} (conditioning method). We will first use symmetry to argue that the optimal $\alpha$ must take a simple form. Abbreviate the support of $\pi$ as $\{0,+,-\}$. For a given $\alpha$ matrix, define its complement by swapping $+$ and $-$, e.g.\ swap $\alpha_{0+}$ with $\alpha_{0-}$ and swap $\alpha_{-+}$ with $\alpha_{+-}$. Note that if we average $\alpha$ with its complement, the numerator $\langle \alpha,\beta \rangle^2$ remains unchanged, the denominator $D(\alpha,\bar\alpha)$ can only decrease, and the row- and column-sum constraints remain satisfied; this means the new solution is at least as good as the original $\alpha$. Therefore we only need to consider $\alpha$ values satisfying $\alpha_{++} = \alpha_{--}$ and $\alpha_{+-} = \alpha_{-+}$. Note that the remaining entries of $\alpha$ are uniquely determined by the row- and column-sum constraints, and so we have reduced the problem to only two variables. It is now easy to solve the optimization problem numerically, say by grid search. The result is that we have contiguity for all $\lambda < 1$ provided $\rho$ exceeds a new critical value $\rho^* \approx 0.184$, an improvement over the sub-Gaussian method.
\end{proof}

We do not expect that $0.184$ is the true critical $\rho$ value (the value at which it becomes possible to detect below the spectral threshold) because we expect non-detection and non-recovery to behave the same, and the critical $\rho$ for recovery is known to be approximately $0.09$. This was first conjectured based on heuristics from statistical physics and later proven rigorously \citep{mi,mi-proof}. They also conjecture a computational gap: no efficient algorithm can go below the spectral threshold (regardless of $\rho$).

Although our result for the sparse Rademacher prior does not seem to be tight in terms of the critical $\rho$ value, it is worth noting that it is tight in the sense that once $\rho$ exceeds $0.184$, the modified second moment used in the conditioning method (conditioned on `good' spikes) is infinite. This follows from Theorem~\ref{thm:cond-method} (conditioning method), which is based on Proposition~5 in \citet{bmnn}. Note that this yields an example where the modified second moment is unbounded yet contiguity is expected to hold.

Contiguity for the sparse Rademacher model via the second moment method was also recently studied in \citet{bmvx}. They obtain analytic upper and lower bounds for the threshold $\lambda^*_\rho$, focusing on small values of $\rho$ for which it is possible (via inefficient algorithms) to go below the spectral threshold. They do not, however, give contiguity results that match PCA for any $\rho$; our results resolve a conjecture that they state in this direction, that the PCA threshold is tight for sufficiently large $\rho$.

We remark that our results for the sparse Rademacher prior can be improved by a more involved conditioning method where the `bad' events depend on both and signal and noise; see \cite{noise-cond}.

\subsection{Application: the Gaussian prior}

We now highlight some important issues by discussing the \iid Gaussian prior where $\pi$ is $\mathcal{N}(0,1)$. Although this appears to be a well-behaved prior, we actually cannot apply the sub-Gaussian method because the product of two independent Gaussians is not sub-Gaussian (with any variance proxy). In fact, we expect that the second moment $\EE_{x,x'} \exp\left(\frac{n \lambda^2}{2} \langle x,x' \rangle^2\right)$ is unbounded for all $\lambda > 0$. However, we are still able to prove contiguity for all $\lambda < 1$ by using a variant of the conditioning idea. Note that the \iid Gaussian prior is very similar to the spherical prior; the spherical prior is obtained by drawing $x$ from the Gaussian prior and then normalizing it (due to Gaussian spherical symmetry). The reason that the spherical prior has finite second moment while the Gaussian one does not is because the Gaussian prior allows for extremely rare `bad' events where $x$ has large norm; due to the exponential scaling on the second moment, these rare events dominate. In order to fix this issue, we condition on the `good' events and show that the resulting second moment is finite by comparison to the spherical prior.

\begin{theorem}
\label{thm:gauss-prior}
If $\lambda < 1$ then $\GWig(\lambda,\IID(\cN(0,1))) \contig \GWig(0)$.
\end{theorem}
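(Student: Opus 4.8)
The plan is to exploit the fact that a draw $x$ from $\IID(\cN(0,1))$ is just the spherical prior scaled by an independent random length, and to neutralize the rare large-length events by conditioning. First I would write $x = ru$ with $u$ uniform on the unit sphere of $\RR^n$, $r=\|x\|$ independent of $u$, and $nr^2\sim\chi^2_n$ (so $r^2\to 1$ in probability). Since $\langle x,x'\rangle = rr'\langle u,u'\rangle$, a pair of atypically long spikes would produce an effective signal strength $\lambda rr'>1$, at which the spherical second moment (Proposition~\ref{prop:2nd-comp}) already diverges; these exponentially rare draws make the unconditioned second moment infinite, which is precisely why the spherical argument cannot be applied directly. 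So the approach will be the conditioning method of Lemma~\ref{lem:cond}.

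Concretely: fix $\epsilon>0$ small enough that $\lambda(1+\epsilon)<1$ (possible since $\lambda<1$) and set $\omega_n=\{x:\bigl|\,\|x\|^2-1\,\bigr|\le\epsilon\}$; concentration of $\chi^2_n$ (equivalently the law of large numbers for $\|x\|^2=\frac1n\sum_i\pi_i^2$) gives $\cX_n(\omega_n)\to 1$. Following Section~\ref{sec:cond-method}, let $\tilde\cX_n$ draw $x\sim\cX_n$ and output $x$ if $x\in\omega_n$ and the zero vector otherwise, and put $\tilde P_n=\GWig_n(\lambda,\tilde\cX_n)$, $P_n=\GWig_n(\lambda,\IID(\cN(0,1)))$, $Q_n=\GWig_n(0)$. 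Since $P_n$ and $\tilde P_n$ couple to agree off an $o(1)$-probability event, Lemma~\ref{lem:cond} reduces the theorem to bounding $\Ex_{Q_n}(\dd[\tilde P_n]{Q_n})^2$.

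By Proposition~\ref{prop:2nd-comp} (whose proof is purely algebraic and applies to $\tilde\cX_n$) this modified second moment equals $\Ex_{\tilde x,\tilde x'}\exp(\frac{n\lambda^2}{2}\langle\tilde x,\tilde x'\rangle^2)$ with $\tilde x,\tilde x'$ independent from $\tilde\cX_n$. When either is the zero vector the integrand equals $1$; otherwise $\tilde x=ru$, $\tilde x'=r'u'$ with $r^2,(r')^2\le 1+\epsilon$ and $u,u'$ independent uniform on the sphere, so $\langle\tilde x,\tilde x'\rangle^2=r^2(r')^2\langle u,u'\rangle^2\le(1+\epsilon)^2\langle u,u'\rangle^2$. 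Dropping the nonnegative good-event indicator then gives
\[
\Ex_{Q_n}\!\left(\dd[\tilde P_n]{Q_n}\right)^{\!2}\;\le\;1+\Ex_{u,u'}\exp\!\left(\frac{n\bigl(\lambda(1+\epsilon)\bigr)^2}{2}\,\langle u,u'\rangle^2\right),
\]
and the right-hand expectation is exactly the spherical second moment $\Ex_{Q_n}(\dd[\GWig_n(\lambda(1+\epsilon),\cXs)]{Q_n})^2$, which by (the proof of) Theorem~\ref{thm:sphere-prior} tends to $(1-\lambda^2(1+\epsilon)^2)^{-1/2}<\infty$ since $\lambda(1+\epsilon)<1$. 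Hence the modified second moment stays bounded as $n\to\infty$, and Lemma~\ref{lem:cond} yields $P_n\contig Q_n$.

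I expect the only delicate point to be the conditioning bookkeeping: checking that $\omega_n$ is a function of the spike alone so that the Section~\ref{sec:cond-method} setup applies verbatim, that $\cX_n(\omega_n)\to 1$, and that $\epsilon$ can be chosen with $\lambda(1+\epsilon)<1$. Beyond that, the argument is a direct reduction to the already-established spherical case and requires no new estimate.
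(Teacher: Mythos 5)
Your proposal is correct and follows essentially the same route as the paper's proof: condition on the good event that the spike's norm is at most $1+\eps$ (Lemma~\ref{lem:cond}), use Gaussian spherical symmetry to write the conditioned second moment in terms of the spherical prior with effective signal strength $\lambda(1+\eps)$, and invoke Theorem~\ref{thm:sphere-prior} with $\eps$ chosen so that $\lambda(1+\eps)<1$. The minor bookkeeping differences (two-sided norm event, the additive $1$ for the zero-vector case versus the paper's $o(1)$ term) are immaterial.
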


\begin{proof}
Let $\eps > 0$. Let $x$ be drawn from $\cX = \IID(\cN(0,1))$. Let $\omega_n$ be the `good' event $\|x\|^2 \le 1+\eps$, which occurs with probability $1-o(1)$. Let $y = \frac{x}{\|x\|}$ and note that $y$ is drawn from the spherical prior of Section~\ref{sec:spherical} (by Gaussian spherical symmetry). Let $\tilde P_n$ be the distribution that samples $x$ from $\cX$, outputs $x$ if $\omega_n$ occurs, and outputs the zero vector otherwise. We will use the second moment argument to show $\tilde P_n \contig Q_n$, which implies $P_n \contig Q_n$ (see Lemma~\ref{lem:cond}). Let $\Omega_n$ be the event that $x$ and $x'$ both satisfy $\omega_n$.
\begin{align*}
\Ex_{Q_n}\left(\dd[\tilde{{P}}_n]{{Q}_n}\right)^2
&= \Ex_{x,x'}\left[\one_{\Omega_n}\exp\left(\frac{n \lambda^2}{2} \langle x,x' \rangle^2\right)\right] + o(1) \\
&= \Ex_{x,x'}\left[\one_{\Omega_n}\exp\left(\frac{n \lambda^2}{2} \|x\|^2 \|x'\|^2 \langle y,y' \rangle^2\right)\right] + o(1) \\
&\le \Ex_{y,y'}\exp\left(\frac{n \lambda^2}{2} (1+\eps)^2 \langle y,y' \rangle^2\right) + o(1). \\
\end{align*}
From  Theorem~\ref{thm:sphere-prior} on the spherical prior, we know that this is bounded as $n \to \infty$ provided that $\lambda (1+\eps) < 1$. For any $\lambda < 1$ we can choose $\eps > 0$ small enough to ensure this.

\end{proof}

\subsection{Contiguity of eigenvalues}

In this subsection we show that for \emph{any} prior $\cX$ (with $\|x\| \to 1$ in probability) and for any $\lambda < 1$, the \emph{eigenvalues} of the spiked model are contiguous to the eigenvalues of the unspiked model. Thus, we have that regardless of the prior, no eigenvalue-based test can detect the spike when $\lambda < 1$. This does not follow from any of the known spectral results on spiked matrices because although we know that when $\lambda < 1$ the two models asymptotically agree on many statistics such as top eigenvalue and empirical eigenvalue distribution, this does not rule out all possible eigenvalue-based tests (e.g.\ gaps between eigenvalues, etc.). Therefore, for the sparse Rademacher prior with sufficiently low sparsity (for instance) we know that for $\lambda < 1$, no eigenvalue-based test succeeds, yet there exist other tests that do \citep{bmvx}. However, the only known tests that can go below $\lambda = 1$ are computationally inefficient.

\begin{theorem}
Let $\cX$ be any spike prior (with $\|x\| \to 1$ in probability). Let $P_n$ be the joint distribution of eigenvalues of $\GWig_n(\lambda,\cX)$ and let $Q_n$ be the joint distribution of eigenvalues of $\GWig_n(0)$. If $\lambda < 1$ then $P_n$ is contiguous to $Q_n$.
\end{theorem}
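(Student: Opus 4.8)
The plan is to exploit the orthogonal invariance of the GOE to reduce the problem to the spherical prior, for which Theorem~\ref{thm:sphere-prior} already controls the second moment. The starting observation is that, conditioned on the spike $x$, the joint law of the eigenvalues of $Y=\lambda xx^\top+\frac1{\sqrt n}W$ depends on $x$ only through $\|x\|^2$: since $OWO^\top$ has the same law as $W$ for every orthogonal $O$, conjugating by an $O$ that sends $x/\|x\|$ to $e_1$ shows that these eigenvalues are distributed as those of $\lambda\|x\|^2\,e_1e_1^\top+\frac1{\sqrt n}W$. Writing $R_n^{(\mu)}$ for this eigenvalue law with $\mu = \lambda\|x\|^2$, the same argument shows that $R_n^{(\mu)}$ is exactly the eigenvalue law of $\GWig_n(\mu,\cXs)$, that $Q_n = R_n^{(0)}$, and that $P_n = \int R_n^{(\lambda r)}\,d\nu_n(r)$, where $\nu_n$ is the law of $\|x\|^2$ under $\cX_n$.

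The next ingredient is that for the rotationally-invariant model $\GWig_n(\mu,\cXs)$ the \emph{matrix} likelihood ratio against $\GWig_n(0)$, which by the computation in Section~\ref{sec:2nd-comp} equals $\Ex_{y}\exp(\frac{\mu n}{2}\,y^\top Y y-\frac{\mu^2 n}{4})$ for $y$ uniform on the sphere, is almost surely a function of the eigenvalues of $Y$ alone, since the law of $y^\top Y y$ given $Y$ depends only on $\mathrm{spec}(Y)$. Hence this function is precisely $\dd[R_n^{(\mu)}]{Q_n}$, and therefore the eigenvalue second moment $\Ex_{Q_n}(\dd[R_n^{(\mu)}]{Q_n})^2$ coincides with the matrix second moment of Proposition~\ref{prop:2nd-comp}, namely ${}_1F_1(1/2;n/2;\mu^2n/2)$, which is monotone nondecreasing in $\mu\ge 0$ and tends to $(1-\mu^2)^{-1/2}$ as $n\to\infty$ whenever $\mu<1$ (Theorem~\ref{thm:sphere-prior}).

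To combine these, fix $\eps>0$ small enough that $\lambda':=\lambda(1+\eps)<1$, and use the conditioning idea of Lemma~\ref{lem:cond}: let $\cX^\eps$ be $\cX$ with every spike of squared norm exceeding $1+\eps$ replaced by $0$ — which alters the spike with probability $o(1)$ since $\|x\|\to1$ in probability — and let $\tilde P_n$ be the eigenvalue law of $\GWig_n(\lambda,\cX^\eps)$, so that $\tilde P_n = \int R_n^{(\lambda r)}\,d\tilde\nu_n(r)$ for a probability measure $\tilde\nu_n$ supported on $[0,1+\eps]$. Then $\dd[\tilde P_n]{Q_n} = \int \dd[R_n^{(\lambda r)}]{Q_n}\,d\tilde\nu_n(r)$, so Jensen's inequality and the previous paragraph give
\[
\Ex_{Q_n}\left(\dd[\tilde P_n]{Q_n}\right)^2\ \le\ \int \Ex_{Q_n}\!\left(\dd[R_n^{(\lambda r)}]{Q_n}\right)^2 d\tilde\nu_n(r)\ \le\ {}_1F_1\!\left(\tfrac12;\tfrac n2;\tfrac{(\lambda')^2 n}{2}\right)\ \longrightarrow\ \frac{1}{\sqrt{1-(\lambda')^2}}<\infty .
\]
By Lemma~\ref{lem:sec} this yields $\tilde P_n\contig Q_n$, and since $\|P_n-\tilde P_n\|_{\mathrm{TV}}=o(1)$ (couple the priors to differ only when $\|x\|^2>1+\eps$) and contiguity survives $o(1)$ total-variation perturbations — immediate from the definition together with $|P_n(A_n)-\tilde P_n(A_n)|\le\|P_n-\tilde P_n\|_{\mathrm{TV}}$ — we conclude $P_n\contig Q_n$.

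I expect the main obstacle to be understanding why the conditioning step is essential rather than cosmetic. Expanding the square of the \emph{unconditioned} mixture gives $\int\!\int \Ex_{y,y'}\exp(\frac{\lambda^2 rr' n}{2}\langle y,y'\rangle^2)\,d\nu_n(r)\,d\nu_n(r')$, whose integrand diverges with $n$ as soon as $rr'>\lambda^{-2}$; thus even an exponentially small amount of mass of $\nu_n\times\nu_n$ at large norms can destroy boundedness, and truncating $\nu_n$ to $[0,1+\eps]$ is exactly what prevents this. Everything else is routine: checking the orthogonal-invariance reductions, the fact that the matrix likelihood ratio is an eigenvalue statistic, and the monotonicity of ${}_1F_1(1/2;n/2;\cdot)$.
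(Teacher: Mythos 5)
Your proposal is correct and follows essentially the same route as the paper's proof: Gaussian rotational invariance to reduce the conditional eigenvalue law to that of a spherical spike of length $\|x\|$, truncation to the good event $\|x\|^2 \le 1+\eps$, and the spherical second moment bound of Theorem~\ref{thm:sphere-prior} with $\lambda(1+\eps)<1$. The only difference is bookkeeping: the paper realizes the truncated eigenvalue law via the auxiliary matrix prior $\|x\|\,y$ with $y \sim \cXs$, proves matrix-level contiguity for it, and pushes forward to eigenvalues, whereas you bound the eigenvalue-level second moment directly by noting the spherical-prior likelihood ratio is an eigenvalue statistic and applying Jensen to the mixture over $\|x\|^2$ --- both yield the same bound and conclusion.
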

\begin{proof}
Fix $\eps > 0$. Since $\|x\|^2 \le 1+\eps$ with probability $1-o(1)$, it is sufficient to show contiguity for the modified prior $\tilde X$ that changes $x$ to the zero vector whenever the bad event $\|x\|^2 > 1+\eps$ occurs; let $\tilde P_n$ be the eigenvalue distribution of $\GWig_n(\lambda,\tilde X)$. Due to Gaussian spherical symmetry, the distribution of eigenvalues of the spiked matrix depend only on the norm of the spike and not its direction. Therefore, the following prior $\cX'$ also yields the eigenvalue distribution $\tilde P_n$: draw $x$ from $\tilde X$ and output $\|x\|\, y$ where $y$ is drawn from the spherical prior $\cXs$. But (for sufficiently small $\eps$) we have that $\GWig(\lambda, \cX')$ is contiguous to $\GWig(0)$ because the second moment is
$$\Ex_{x,x' \sim \cX'} \exp\left(\frac{\lambda^2 n}{2} \langle x,x' \rangle^2\right) = \Ex_{\substack{x,x' \sim \tilde X\\y,y' \sim \cXs}} \exp\left(\frac{\lambda^2 n}{2} \|x\|^2 \|x'\|^2 \langle y,y' \rangle^2\right) \le \Ex_{y,y' \sim \cXs} \exp\left(\frac{\lambda^2 n}{2} (1+\eps)^2 \langle y,y' \rangle^2\right)$$
which, by Theorem~\ref{thm:sphere-prior} on the spherical prior, is bounded as $n \to \infty$ provided $\lambda (1+\eps) < 1$. But if two matrix distributions are contiguous then so are their eigenvalues, completing the proof.
\end{proof}

\section{Non-Gaussian Wigner models}
\label{sec:nong-wig}

In this section we consider the spiked Wigner model with non-Gaussian noise distributions. In Section~\ref{sec:mainngw} we define and state our main results. In Section~\ref{sec:gnih} we show that of all noise distributions, Gaussian noise makes the detection problem the hardest. In Section~\ref{sec:nong-symm} we establish contiguity results for non-Gaussian Wigner models. In Section~\ref{sec:nong-wig-upper} we show a modified PCA procedure that can solve the detection problem strictly below the threshold where standard PCA works.

\subsection{Main results}\label{sec:mainngw}

The spiked non-Gaussian Wigner model is defined as follows:

\begin{definition}\label{def:spiked-nongaussian-wigner}
Given $\lambda \geq 0$, a spike prior $\cX$, and a noise distribution $\cP$ on $\RR$ with mean $0$ and variance $1$, we define the general spiked Wigner model $\Wig(\lambda,\cP,\cX)$ as follows: a spike $x \in \RR^n$ is drawn from $\cX$, and we observe the matrix
$$ Y = \lambda x x^\top + \frac{1}{\sqrt{n}}W, $$
where the symmetric matrix $W$ is drawn entrywise from $\cP$, with entries independent except for symmetry. For simplicity we take the diagonal entries of $Y$ to be $0$.
\end{definition}

\noindent Recall that the prior $\cX$ is required to obey the normalization $\|x\| \to 1$ in probability (see Definition~\ref{def:prior}). The spectral behavior of this model is well understood (see e.g.\ \citet{fp,cdf,wig-spk,nong-eigv1}). In fact it exhibits \emph{universality} (see e.g.\ \cite{TV-univ}): regardless of the choice of the noise distribution $\cP$ (as long as it has mean zero, variance one, and sufficiently many finite moments), many properties of the spectrum behave the same as if $\cP$ were a standard Gaussian distribution. In particular, for $\lambda \le 1$, the spectrum bulk has a semicircular distribution and the maximum eigenvalue converges almost surely to $2$. For $\lambda > 1$, an isolated eigenvalue emerges from the spectrum with value converging to $\lambda + 1/\lambda$, and (under suitable assumptions) the top eigenvector has squared correlation $1 - 1/\lambda^2$ with the truth. 

In contrast we will show that from a statistical standpoint, universality breaks down entirely. We will see that the difficulty of the problem depends on $\cP$ via the parameters $\lambda^*_\cX$ and $F_\cP$ defined below, with Gaussian noise being the hardest (for a fixed variance). Let $\cX$ be a spike prior, and suppose that through the second moment method, we can establish contiguity between the Gaussian spiked and unspiked models whenever $\lambda$ lies below some critical value
\begin{equation}\label{eq:lambdastar} \lambda^*_{\cX} = \sup \left\{ \lambda \mid \EE_{x,x' \sim \cX} \exp\left( \frac{\lambda^2 n}{2} \langle x,x' \rangle^2\right) \text{ is bounded as $n \to \infty$} \right\}. \end{equation}
For instance, as discussed in the previous section, we have $\lambda^*_\cX = 1$ for the uniform prior on the unit sphere, as well as for the \iid Rademacher prior. Following universality, we might imagine that contiguity holds in the non-Gaussian setting as well \--- but this is far from the case.  Instead, we find that the choice of noise shifts the threshold:

\begin{theorem-nolabel}[informal; see Theorems~\ref{thm:nongauss-lower} and~\ref{thm:nong-upper}]
Under suitable conditions (see Assumptions~\ref{as:nong-lower} and~\ref{as:nong-upper}), the spiked model is contiguous to the unspiked model for all $\lambda < \lambda^*_\cX/\sqrt{F_\cP}$; but when $\lambda > 1/\sqrt{F_\cP}$, there exists an entrywise transformation $f$ such that the spiked and unspiked models can be distinguished via the top eigenvalue of $f(\sqrt{n} Y)$,and furthermore the top eigenvector of $f(\sqrt{n} Y)$ has nontrivial correlation with the spike. 
\end{theorem-nolabel}
\noindent The function $f$ is explicitly defined below. We require $\cP$ to be a continuous distribution with density $p(w)$. The parameter $F_\cP$, which quantifies its difficulty, is the Fisher information of $\cP$ under translation:
$$ F_\cP = \Ex_{w \sim \cP}\left[\left(\frac{p'(w)}{p(w)}\right)^2\right] = \int_{-\infty}^\infty\frac{p'(w)^2}{p(w)} \,\dee w. $$
Provided $\cP$ has unit variance, this quantity is always at least $1$, with equality only in the case of a standard Gaussian (among all noise distributions satisfying certain reasonable properties); see Proposition~\ref{prop:F}. Note that when $\lambda^*_\cX = 1$ (i.e.\ PCA is optimal for the Gaussian noise setting), our upper and lower bounds match, and so our modified PCA procedure is optimal for the non-Gaussian setting.

Our upper bound proceeds by a modified PCA procedure. Define $f(w) = -p'(w)/p(w)$, where $p$ is the probability density function of the noise $\cP$. Given the observed matrix $Y$, we apply $f$ entrywise to $\sqrt{n} Y$, and examine the largest eigenvalue. This entrywise transformation approximately yields another spiked Wigner model, but with improved signal-to-noise ratio. One can derive the transformation $-p'(w)/p(w)$ by using calculus of variations to optimize a spike-to-noise ratio of this new spiked Wigner model. This phenomenon is illustrated in Figures~\ref{fig:spectrum} and~\ref{fig:transform}:

\begin{figure}[!ht]
    \centering
    \begin{minipage}{0.4\textwidth}
    \hspace*{-0.15in}\includegraphics{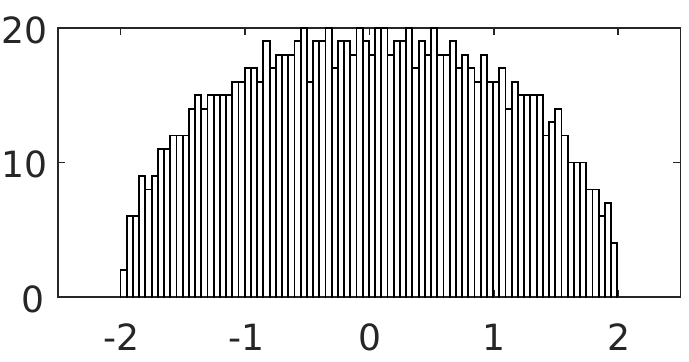}\\
    \hspace*{-0.15in}\includegraphics{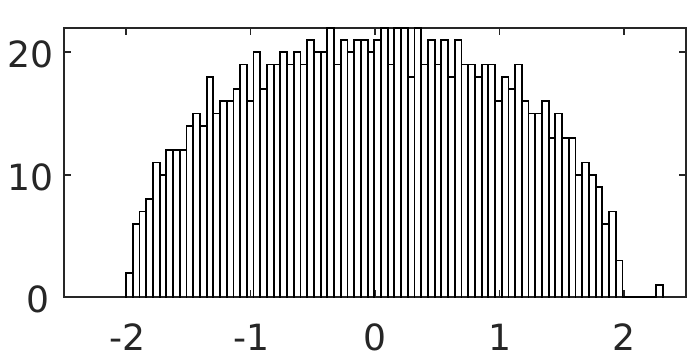}
    \captionof{figure}{The spectrum of a spiked Wigner matrix ($\lambda = 0.9$, $n=1200$) with bimodal noise, before (above) and after (below) the entrywise transformation. An isolated eigenvalue is evident only in the latter. Both are normalized by $1/\sqrt{n}$.}
    \label{fig:spectrum}
    \end{minipage}\hspace{3em}%
    \begin{minipage}{0.4\textwidth}
    \vspace{.38in}\hspace{-0.2in}
    \includegraphics{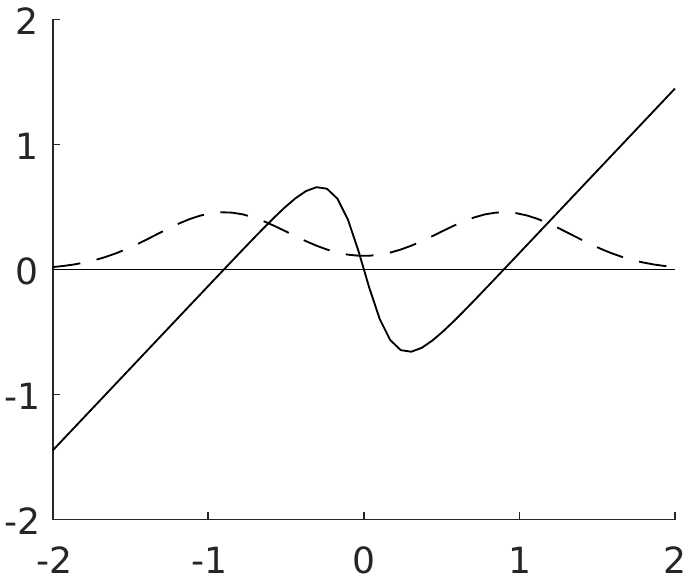}
    \captionof{figure}{The bimodal density $p$ (dashed) and entrywise transformation $-p'/p$ (solid). The noise is a convolution of Rademacher and Gaussian random variables.}
    \label{fig:transform}
    \end{minipage}
\end{figure}

These results on non-Gaussian noise parallel a \emph{channel universality} phenomenon for mutual information, due to \citet{mi} (shown for finitely-supported \iid priors); in particular, channel universality implies (via the I-MMSE relation of \citet{i-mmse}) the analogue of our results for the recovery threshold. The modified PCA procedure we use for our upper bound was previously suggested in \citet{LKZ-amp} based on linearizing an AMP algorithm, but to our knowledge, no rigorous results have been previously established about its performance in general. Other entrywise pre-transformations have been shown to improve spectral approaches to various structured PCA problems \citep{DM-sparse-pca,kv}.

\subsection{Gaussian noise is the hardest}\label{sec:gnih}

In this subsection, we prove that Gaussian noise is the hardest in the following sense:

\begin{proposition}\label{prop:F}
Let $\cP$ be a continuous distribution with a continuously differentiable density function $p(w)$ with $p(w) > 0$ everywhere. Suppose $\mathrm{Var}[\cP] = 1$. Then $F_\cP \ge 1$ with equality if and only if $\cP$ is a standard Gaussian.
\end{proposition}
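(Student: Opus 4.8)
The plan is to prove this as a Cram\'er--Rao-type inequality via Cauchy--Schwarz applied to the translation score of $\cP$. I would write $\rho(w) := p'(w)/p(w)$, so that $F_\cP = \Ex_{w\sim\cP}[\rho(w)^2]$, and first record that $\Ex_{w\sim\cP}[\rho(w)] = \int_{-\infty}^\infty p'(w)\,\dee w = 0$. We may assume $F_\cP < \infty$, since otherwise the inequality is trivial.

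The key step will be the identity $\Ex_{w\sim\cP}[(w-\mu)\rho(w)] = -1$, where $\mu := \Ex_{w\sim\cP}[w]$: this equals $\int (w-\mu)p'(w)\,\dee w$, and integration by parts turns it into $\big[(w-\mu)p(w)\big]_{-\infty}^{\infty} - \int p(w)\,\dee w = -1$ once the boundary term is shown to vanish. Granting that, Cauchy--Schwarz in $L^2(\cP)$ gives
$$1 = \left(\Ex_{w\sim\cP}[(w-\mu)\rho(w)]\right)^2 \;\le\; \Ex_{w\sim\cP}[(w-\mu)^2]\cdot\Ex_{w\sim\cP}[\rho(w)^2] \;=\; \mathrm{Var}[\cP]\cdot F_\cP \;=\; F_\cP,$$
which is the claimed bound.

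For the equality case I would argue that equality in Cauchy--Schwarz forces $\rho(w) = c\,(w-\mu)$ for a constant $c$ and, by continuity of $\rho$ (using that $p$ is $C^1$ and strictly positive), for every $w$; integrating $(\log p)'(w) = c(w-\mu)$ gives $p(w) = \exp\!\left(\frac{c}{2}(w-\mu)^2 + C\right)$, after which integrability forces $c<0$ while $\int p = 1$ and $\mathrm{Var}[\cP]=1$ pin $p$ down to the density of $\cN(\mu,1)$; a direct check of $F_{\cN(\mu,1)}=1$ handles the converse. Since $F_\cP$ is invariant under translating $\cP$, the mean plays no role here, and imposing mean zero on the noise (as in Definition~\ref{def:spiked-nongaussian-wigner}) singles out precisely the standard Gaussian, matching the proposition as stated.

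The main obstacle is the lone analytic subtlety: justifying that $(w-\mu)p(w)\to 0$ as $w\to\pm\infty$. I would handle this using $F_\cP<\infty$: Cauchy--Schwarz gives $\int |p'| = \int |\rho|\,p \le \sqrt{F_\cP} < \infty$, so $p$ has limits at $\pm\infty$, necessarily $0$ by integrability and nonnegativity; moreover finite variance gives $\int |w|\,p(w)\,\dee w < \infty$, which forces $\liminf_{|w|\to\infty}|w|\,p(w) = 0$. Integrating by parts on intervals $[w_k^-,w_k^+]$ with $w_k^\pm\to\pm\infty$ chosen along such a $\liminf$, and then passing to the limit, makes the boundary contributions vanish. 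Everything else is the routine Fisher-information computation.
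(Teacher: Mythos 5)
Your proof is correct and is essentially the paper's argument: the paper expands the nonnegative integral $\int \frac{(p'(w)+w\,p(w))^2}{p(w)}\,dw \ge 0$, which is exactly the completed-square form of your Cauchy--Schwarz step, and both proofs hinge on the same integration-by-parts identity $\int w\,p'(w)\,dw = -1$ and the same ODE $p'(w) = -w\,p(w)$ for the equality case. If anything, your handling of the vanishing boundary term (via $F_\cP<\infty$ and a liminf subsequence) and of the translation/mean-zero point is slightly more careful than the paper's.
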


To intuitively understand why non-Gaussian noise makes the detection problem easier, consider the extreme case where the noise distribution is uniform on $\{\pm 1\}$. Since the signal $\lambda xx^\top$ is entrywise $\Theta(1/n)$ and the noise $\frac{1}{\sqrt n} W$ is entrywise $\pm 1/\sqrt n$, it is actually quite easy to detect the spike in this case. If there is no spike, all the entries will be $\pm \frac{1}{\sqrt n}$. If there is a spike, each entry will be $\pm \frac{1}{\sqrt n}$ plus a much smaller offset. One can therefore subtract off the noise and recover the signal exactly. In fact, if we let the noise be a smoothed version of $\{\pm 1\}$ (so that the derivative $p'$ exists), the entrywise transformation $-p'(w)/p(w)$ is precisely implementing this noise-subtraction procedure. Note that this justifies the restriction to continuous noise distributions because any distribution with a point mass will admit a similar trivial recovery procedure and we will not have contiguity for \emph{any} $\lambda > 0$.

\begin{proof}[Proof of Proposition~\ref{prop:F}]
Since $F_\cP$ is translation-invariant, assume $\EE[\cP] = 0$ without loss of generality. We have
\begin{align*}
0 &\le \int_{-\infty}^\infty \frac{1}{p(w)}\left(p'(w) + w p(w)\right)^2 \,\dee w \\
&= \int_{-\infty}^\infty \left[\frac{p'(w)^2}{p(w)} + 2wp'(w) + w^2 p(w)\right] \,\dee w \\
&= F_\cP + \int_{-\infty}^\infty 2wp'(w)\,\dee w + 1
\end{align*}
since $\EE[\cP] = 0$ and $\mathrm{Var}[\cP] = 1$. (The integral in the first line is finite, provided that $F_\cP$ and $\mathrm{Var}[\cP]$ are finite.) Using integration by parts,
$$\int_{-\infty}^\infty 2wp'(w)\,\dee w = 2wp(w)\Big|_{-\infty}^\infty - \int_{-\infty}^\infty 2p(w) \,\dee w = -2$$
since $wp(w) \to 0$ as $w \to \pm\infty$ or else $p(w)$ would not be integrable. (Here we have used the fact that the limits $\lim_{w \to \pm \infty} wp(w)$ must exists, since the left-hand side is defined.)
We now have $F_\cP \ge 1$. Equality holds only if $p'(w) = -wp(w)$ for all $w$. We can solve this differential equation for $p(w)$:
$$\dd{p}\log p(w) = -w$$
$$p(w) = C \exp\left(-\frac{w^2}{2}\right)$$
which is a standard Gaussian.
\end{proof}

\subsection{Symmetric noise and general priors}
\label{sec:nong-symm}

In this subsection, we set up and state our main statistical lower bound that establishes contiguity in the non-Gaussian Wigner setting. Given a noise distribution, define the \emph{translation function}
$$ \tau(a,b) = \log \Ex_{\cP}\left[ \dd[T_a \cP]{\cP} \dd[T_b \cP]{\cP} \right] = \log \Ex_{z \sim \cP}\left[ \frac{p(z-a)}{p(z)} \frac{p(z-b)}{p(z)} \right], $$
where $T_a \cP$ denotes the translation of distribution $\cP$ by $a$. For instance, the translation function of standard Gaussian noise is computed to be $\tau(a,b) = ab$.

\begin{assumption}\label{as:nong-lower}
We assume the following of the prior $\cX$:
\begin{enumerate}[(i)]
\item With probability $1-o(1)$, for all $i \in [n]$, $|x_i| < n^{-1/3}$,
\item for each $q \in \{2,4,6,8\}$, there exists a constant $\alpha_q$ with $\Pr[ \|x\|_q > \alpha_q n^{\frac1q - \frac12}] = o(1)$.\\
\quad\\
We assume the following of the noise $\cP$:
\item $\cP$ is a continuous distribution with a density function $p(w)$,
\item $p(w) > 0$ everywhere,
\item The translation function $\tau$ is $C^4$ in a neighborhood of $(0,0)$,
\item $\cP$ is symmetric about $0$.
\end{enumerate}
\end{assumption}

The above assumptions on $\cP$ are satisfied by any symmetric mixture of Gaussians of positive variance, for example, but will rule out some extremely sparse priors whose entries are large when nonzero.
We expect that the symmetry of $\cP$ is not crucial, but relaxing this condition adds considerable complication to the next theorem. In Appendix~\ref{app:nong-prior-conditions} we show that the assumptions on $\cX$ are satisfied for the spherical prior and certain \iid priors; see Propositions~\ref{prop:nong-lower-spherical} and \ref{prop:nong-lower-spherical} below.

\begin{theorem}\label{thm:nongauss-lower}
Under Assumption~\ref{as:nong-lower}, $\Wig(\lambda,\cP,\cX)$ is contiguous to $\Wig(0,\cP)$ for all $\lambda < \lambda^*_\cX/\sqrt{F_\cP}$.
\end{theorem}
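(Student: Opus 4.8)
The plan is to compute the second moment $\Ex_{Q_n}(\dd[P_n]{Q_n})^2$ exactly, as in Proposition~\ref{prop:2nd-comp}, but now for the non-Gaussian noise $\cP$, and then show it is bounded when $\lambda < \lambda^*_\cX/\sqrt{F_\cP}$. The likelihood ratio is $\dd[P_n]{Q_n}(Y) = \Ex_{x\sim\cX_n}\prod_{i<j}\frac{p(\sqrt n\, Y_{ij}-\sqrt n\,\lambda x_ix_j)}{p(\sqrt n\, Y_{ij})}$, so passing to the square and taking the expectation over $Y\sim Q_n$ (i.e.\ over $W$ drawn entrywise from $\cP$) decouples across the pairs $i<j$, giving
\begin{equation*}
\Ex_{Q_n}\left(\dd[P_n]{Q_n}\right)^2 = \Ex_{x,x'}\prod_{i<j}\Ex_{z\sim\cP}\left[\frac{p(z-\sqrt n\,\lambda x_ix_j)}{p(z)}\frac{p(z-\sqrt n\,\lambda x'_ix'_j)}{p(z)}\right] = \Ex_{x,x'}\exp\left(\sum_{i<j}\tau\!\left(\sqrt n\,\lambda x_ix_j,\ \sqrt n\,\lambda x'_ix'_j\right)\right).
\end{equation*}
This reduces everything to understanding the translation function $\tau$ near the origin, which is where Assumption~\ref{as:nong-lower}(v) and the bound $|x_i|<n^{-1/3}$ from (i) come in: the arguments of $\tau$ are of order $\sqrt n\cdot n^{-2/3}=n^{-1/6}\to 0$.

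**Next I would Taylor-expand** $\tau(a,b)$ about $(0,0)$ to fourth order. Since $\cP$ is symmetric (Assumption (vi)), $\tau(a,b)=\tau(-a,-b)$, and one checks directly that $\tau(0,0)=0$, $\partial_a\tau(0,0)=\partial_b\tau(0,0)=0$, $\partial_a^2\tau(0,0)=\partial_b^2\tau(0,0)=0$ (these vanish because $\Ex_\cP[p'(z)/p(z)]=0$ and a second-derivative identity), and the crucial mixed term is $\partial_a\partial_b\tau(0,0)=\Ex_\cP[(p'(z)/p(z))^2]=F_\cP$. Hence $\tau(a,b)=F_\cP\, ab + (\text{cubic and higher})$, and by symmetry the cubic terms in the expansion also vanish, leaving $\tau(a,b)= F_\cP\,ab + O(a^4+b^4+a^2b^2)$ uniformly in a neighborhood (using the $C^4$ hypothesis to bound the remainder). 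Plugging in $a=\sqrt n\,\lambda x_ix_j$, $b=\sqrt n\,\lambda x'_ix'_j$ and summing over $i<j$: the main term gives $F_\cP\,\lambda^2 n\sum_{i<j}x_ix_jx'_ix'_j = \frac{F_\cP\,\lambda^2 n}{2}\big(\langle x,x'\rangle^2 - \sum_i x_i^2 x_i'^2\big)$, and the diagonal correction $\frac{F_\cP\lambda^2 n}{2}\sum_i x_i^2x_i'^2$ is $o(1)$ with high probability using Assumption (i)–(ii) (it is bounded by $\frac{F_\cP\lambda^2 n}{2}\,n^{-2/3}\|x\|^2\|x'\|^2$ after a crude bound, or more carefully by $\|x\|_4^2\|x'\|_4^2$-type estimates). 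This is exactly the Gaussian second-moment integrand from Proposition~\ref{prop:2nd-comp} but with $\lambda^2$ replaced by $F_\cP\lambda^2$, so it is controlled — via the definition of $\lambda^*_\cX$ in \eqref{eq:lambdastar} — precisely when $\sqrt{F_\cP}\,\lambda < \lambda^*_\cX$.

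**The main obstacle** is controlling the higher-order error terms $O(a^4+b^4+a^2b^2)$ in the sum $\sum_{i<j}$, together with the possibility of atypical $x$. The fourth-order terms contribute something like $C\lambda^4 n^2\sum_{i<j}(x_i^4x_j^4 + x_i'^4x_j'^4 + x_i^2x_j^2x_i'^2x_j'^2)$, which one bounds by $C\lambda^4 n^2 (\|x\|_4^8 + \|x'\|_4^8 + \|x\|_4^4\|x'\|_4^4)$; using Assumption (ii) with $q=4$ this is $O(n^2\cdot (n^{-1/4})^8)=O(1)$ — but actually one needs it to be $o(1)$ or at least to not blow up the integral, so a more delicate argument conditioning on the good event $\{|x_i|<n^{-1/3}\ \forall i\}$ is needed: on that event $\sum_{i<j}x_i^4x_j^4 \le n^{-2/3}\sum_{i<j}x_i^2x_j^4 \le \cdots$, trading powers of $n^{-1/3}$ against the $n^2$ prefactor to get a genuinely vanishing bound, and the $q=6,8$ moment assumptions in (ii) are presumably there to handle the fifth- and sixth-order Taylor remainder terms similarly. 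I would therefore (a) restrict to the good event via Lemma~\ref{lem:cond}, so that pointwise $|a|,|b|\le \lambda n^{-1/6}$ and the Taylor remainder bound applies uniformly, (b) bound $\exp(\sum\tau)\le \exp\big(\tfrac{F_\cP\lambda^2 n}{2}\langle x,x'\rangle^2\big)\cdot\exp(\text{error})$ and argue the error is uniformly bounded (say by $2$) on the good event using the moment assumptions, (c) pull the uniform error bound out and invoke the Gaussian analysis at signal strength $\sqrt{F_\cP}\lambda<\lambda^*_\cX$ to conclude the whole expectation is bounded. The bookkeeping in step (b) — keeping all Taylor remainders uniformly small after summing $\binom n2$ terms — is the technically heaviest part.
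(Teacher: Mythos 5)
Your proposal is correct and follows essentially the same route as the paper's proof: condition on the good event via Lemma~\ref{lem:cond}, write the second moment as $\Ex_{x,x'}\exp(\sum_{i<j}\tau(\lambda\sqrt n x_ix_j,\lambda\sqrt n x_i'x_j'))$, Taylor-expand $\tau$ to fourth order (symmetry of $\cP$ kills the odd terms, the mixed second partial is $F_\cP$), reduce the main term to the Gaussian second moment at effective SNR $\sqrt{F_\cP}\,\lambda<\lambda^*_\cX$, and control the fourth-order remainders pointwise on the good event using the $\ell_q$-norm bounds of Assumption~\ref{as:nong-lower}(ii) (which, note, exist precisely to handle the $(k,\ell)$ fourth-order terms with $k+\ell=4$ via $\|x\|_{2k},\|x'\|_{2\ell}$, not higher-order Taylor terms). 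The only difference is bookkeeping at the end: the paper splits the expectation with a weighted AM--GM inequality plus Cauchy--Schwarz, while you pull out a uniform bound on the error factor, which works equally well.
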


\begin{proof}
We begin by defining a modification $\tilde \cX$ of the prior $\cX$, by returning the spike $0$ whenever one of the tail events described in Assumption~\ref{as:nong-lower} occur---namely, when some entry $x_i$ exceeds $n^{-1/3}$ in magnitude, or when $\|x\|_q > \alpha_q$ for some $q \in \{2,4,6,8\}$. By hypothesis, with probability $1 - o(1)$, no such tail event occurs; hence if $\Wig(\lambda,\cP,\tilde\cX)$ is contiguous to $\Wig(0,\cP)$ then so is $\Wig(\lambda,\cP,\cX)$. Let $P_n = \Wig_n(\lambda,\cP,\cX)$, $\tilde P_n = \Wig_n(\lambda,\cP,\tilde\cX)$, and $Q_n = \Wig_n(0,\cP)$.

We proceed from the second moment:
\begin{align*}
\Ex_{Q_n} \left(\dd[\tilde P_n]{Q_n}\right)^2
&= \Ex_{Y \sim Q_n}\left[ \Ex_{x,x' \sim \tilde\cX} \prod_{i < j} \frac{p(\sqrt{n} Y_{ij} - \lambda \sqrt{n} x_i x_j)}{p(\sqrt{n} Y_{ij})} \frac{p(\sqrt{n} Y_{ij} - \lambda \sqrt{n}  x_i' x_j')}{p(\sqrt{n} Y_{ij})} \right] \\
&= \Ex_{x,x' \sim \tilde\cX}\left[ \prod_{i < j} \Ex_{\sqrt{n} Y_{ij} \sim \cP} \frac{p(\sqrt{n} Y_{ij} - \lambda\sqrt{n} x_i x_j)}{p(\sqrt{n} Y_{ij})} \frac{p(\sqrt{n} Y_{ij} - \lambda \sqrt{n} x_i' x_j')}{p(\sqrt{n} Y_{ij})} \right] \\
&= \Ex_{x,x' \sim \tilde\cX}\left[ \exp\left( \sum_{i < j} \tau(\lambda \sqrt{n} x_i x_j, \lambda \sqrt{n} x_i' x_j') \right) \right].
\end{align*}

We will expand $\tau$ using Taylor's theorem, using the $C^4$ assumption:
$$ \tau(a,b) = \sum_{0 \leq k+\ell \leq 3} \frac{\partial^{k+\ell} \tau}{\partial a^k \partial b^\ell}(0,0)\; a^k b^\ell + \sum_{k+\ell=4} \left( \frac{\partial^4 \tau}{\partial a^k \partial b^\ell}(0,0) + h_{k,\ell}(a,b) \right) a^k b^\ell $$
for some remainder function $h_{k,\ell}(a,b)$ tending to $0$ as $(a,b) \to (0,0)$. Given the bounds assumed on the entries of $x$ and $x'$, these remainder terms $h_{k,\ell}(\lambda\sqrt{n} x_i x_j, \lambda\sqrt{n} x_i' x_j')$ are $o(1)$ as $n \to \infty$. Note that $\tau(a,0) = 0 = \tau(0,b)$, so that the non-mixed partials of $\tau$ vanish. Further, by the hypothesis of noise symmetry, we have $\tau(-a,-b) = \tau(a,b)$, so that all partials of odd total degree vanish; in particular the mixed third partials vanish. We note also that $\frac{\partial^2 \tau}{\partial a \partial b}(0,0) = F_\cP$, the Fisher information defined above. Thus,
\begin{align*}
\Ex_{Q_n} \left(\dd[\tilde P_n]{Q_n}\right)^2 &= \Ex_{x,x' \sim \tilde \cX}\left[ \exp\left( F_\cP \lambda^2 n \sum_{i < j} x_i x_j x_i' x_j' + \sum_{k+\ell=4} \left( \frac{\partial^4 \tau}{\partial a^k \partial b^\ell}(0,0) + o(1) \right) \lambda^4 n^2 \sum_{i < j} x_i^k x_j^k (x_i')^\ell (x_j')^\ell \right) \right] \\
&\leq \Ex_{x,x' \sim \tilde\cX}\left[ \exp\left( \frac{F_\cP \lambda^2 n}{2} \langle x,x' \rangle^2 \right) \prod_{k+\ell=4} \exp\left( \left( \frac{\partial^4 \tau}{\partial a^k \partial b^\ell}(0,0) + o(1) \right) \frac{\lambda^4 n^2}{2} \langle x^k, (x')^\ell \rangle^2 \right) \right],
\end{align*}
where $x^k$ denotes entrywise $k$th power. For all $\eps > 0$, we can apply the weighted AM--GM inequality:
\begin{align*}
&\leq \Ex_{x,x' \sim \tilde\cX}\left[ (1-\eps) \exp\left( \frac{F_\cP \lambda^2 n}{2} \langle x,x' \rangle^2 \right)^{(1-\eps)^{-1}} + \sum_{k+\ell=4} \frac{\eps}{5} \exp\left( \left( \frac{\partial^4 \tau}{\partial a^k \partial b^\ell}(0,0) + o(1) \right) \frac{\lambda^4 n^2}{2} \langle x^k, (x')^\ell \rangle^2 \right)^{5/\eps} \right] \\
&= \Ex_{x,x' \sim \tilde\cX}\left[ (1-\eps) \exp\left( \frac{(1-\eps)^{-1} F_\cP \lambda^2 n}{2} \langle x,x' \rangle^2 \right) \right] \\
&\qquad + \sum_{k+\ell=4} \frac{\eps}{5} \Ex_{x,x' \sim \tilde\cX}\left[ \exp\left( \left( \frac{\partial^4 \tau}{\partial a^k \partial b^\ell}(0,0) + o(1) \right) \frac{5 \lambda^4 n^2}{2 \eps} \langle x^k, (x')^\ell \rangle^2 \right) \right],\numberthis\label{eq:momentparts}
\end{align*}
so it suffices to bound each of these expectations.

By hypothesis, $\lambda < \lambda^*_\cX / \sqrt{F_\cP}$, implying that we can choose $\eps > 0$ such that $(1-\eps)^{-1} F_\cP \lambda^2 < (\lambda^*_\cX)^2$. But $\tilde\cX$ is dominated as a measure by the sum of $\cX$ and an $o(1)$ mass at $0$; it follows that $\lambda_\cX \leq \lambda_{\tilde \cX}$, and the first expectation in (\ref{eq:momentparts}) is bounded.

We bound each of the other expectations using Cauchy--Schwarz:
\begin{align*}
& \Ex_{x,x' \sim \tilde\cX}\left[ \exp\left( \left( \frac{\partial^4 \tau}{\partial a^k \partial b^\ell}(0,0) + o(1) \right) \frac{5 \lambda^4 n^2}{2\eps} \langle x^k, (x')^\ell \rangle^2 \right) \right] \\
&\leq \Ex_{x,x' \sim \tilde\cX}\left[ \exp\left( \left( \frac{\partial^4 \tau}{\partial a^k \partial b^\ell}(0,0) + o(1) \right) \frac{5 \lambda^4 n^2}{2\eps} \|x^k\|_2^2\; \|(x')^\ell\|_2^2 \right) \right] \\
&= \Ex_{x,x' \sim \tilde\cX}\left[ \exp\left( \left( \frac{\partial^4 \tau}{\partial a^k \partial b^\ell}(0,0) + o(1) \right) \frac{5\lambda^4 n^2}{2\eps} \|x\|_{2k}^{2k}\; \|x'\|_{2\ell}^{2\ell} \right) \right] \\
&\leq \Ex_{x,x' \sim \tilde\cX}\left[ \exp\left( \left( \frac{\partial^4 \tau}{\partial a^k \partial b^\ell}(0,0) + o(1) \right) \frac{5\lambda^4 n^2}{2\eps} \alpha_{2k}^{2k} n^{1-k} \alpha_{2\ell}^{2\ell} n^{1-\ell} \right) \right],
\intertext{due to the norm restrictions on prior $\tilde\cX$,}
&= \exp\left( \left( \frac{\partial^4 \tau}{\partial a^k \partial b^\ell}(0,0) + o(1) \right) \frac{5\lambda^4}{2\eps} \alpha_{2k}^{2k} \alpha_{2\ell}^{2\ell} \right),
\end{align*}
which remains bounded as $n \to \infty$.

With the overall second moment $\Ex_{Q_n} \left(\dd[\tilde P_n]{Q_n}\right)^2$ bounded as $n \to \infty$, the result follows from Lemma~\ref{lem:cond}.
\end{proof}

In Appendix~\ref{app:nong-prior-conditions}, we verify the hypotheses of this theorem for spherical and \iid priors:

\begin{proposition}\label{prop:nong-lower-spherical}
Consider the spherical prior $\cXs$. Then conditions (i) and (ii) in Assumption~\ref{as:nong-lower} are satisfied.
\end{proposition}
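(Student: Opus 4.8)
The plan is to verify conditions (i) and (ii) of Assumption~\ref{as:nong-lower} directly for a uniformly random unit vector $x \in \RR^n$, exploiting the fact that the spherical prior can be realized as a normalized Gaussian vector. First I would write $x = g/\|g\|$ where $g = (g_1,\dots,g_n)$ has \iid $\cN(0,1)$ entries; then $\|g\|^2 = \sum g_i^2$ concentrates sharply around $n$ (it is a $\chi^2_n$ variable), so with probability $1-o(1)$ we have $\|g\|^2 \in [n/2, 2n]$, say. This reduces both conditions to bounds on the unnormalized Gaussian coordinates, up to harmless constant factors.

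For condition (i), I would argue that the largest coordinate $\max_i |g_i|$ is at most $O(\sqrt{\log n})$ with probability $1-o(1)$: a single Gaussian exceeds $t$ with probability at most $\exp(-t^2/2)$, so a union bound over the $n$ coordinates with $t = \sqrt{3\log n}$ gives failure probability at most $n \cdot n^{-3/2} = n^{-1/2} = o(1)$. Dividing by $\|g\| \geq \sqrt{n/2}$, we get $\max_i |x_i| \leq O(\sqrt{\log n}/\sqrt{n}) \ll n^{-1/3}$ for large $n$, as required. For condition (ii), I would control $\|g\|_q^q = \sum_i |g_i|^q$ for $q \in \{2,4,6,8\}$: each $\EE|g_i|^q$ is the appropriate Gaussian absolute moment (a fixed constant $c_q$), so $\EE \|g\|_q^q = c_q n$, and by Markov (or a simple concentration argument, since the summands have all finite moments) $\|g\|_q^q \leq 2 c_q n$ with probability $1-o(1)$. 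Hence $\|g\|_q \leq (2c_q)^{1/q} n^{1/q}$, and after dividing by $\|g\| \geq \sqrt{n/2}$ we obtain $\|x\|_q \leq \alpha_q n^{1/q - 1/2}$ for a suitable constant $\alpha_q$ (e.g.\ $\alpha_q = \sqrt{2}\,(2c_q)^{1/q}$), which is exactly condition (ii). The case $q=2$ is trivial since $\|x\|_2 = 1$ exactly.

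The argument is essentially routine Gaussian concentration, so there is no serious obstacle; the only point requiring a little care is making sure the constants line up, in particular that the $\sqrt{\log n}$ factor in the $\ell_\infty$ bound is genuinely $o(n^{1/6})$ so that condition (i) holds with the stated exponent $-1/3$, and that the $q=8$ moment of a Gaussian is finite (it is) so the Markov-type bound applies uniformly. One could alternatively avoid the Gaussian representation and work directly with the known distribution of a single coordinate of a random unit vector (a scaled Beta variable), but the normalized-Gaussian route is cleaner and makes all the tail bounds transparent. I would also note in passing that this same computation, with $\sqrt{n}$ replaced by the relevant scaling, is what underlies the analogous verification for \iid priors in Proposition~\ref{prop:nong-lower-spherical} (the second one), so the two propositions share a common skeleton.
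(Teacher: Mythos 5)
Your proposal is correct and follows essentially the same route as the paper: represent $x$ as a normalized Gaussian vector, use concentration of $\|g\|_2^2$ around $n$, a Gaussian tail plus union bound for condition (i), and a second-moment bound on $\|g\|_q^q$ for condition (ii) with $q\in\{4,6,8\}$ (the paper uses Chebyshev here, which is exactly the ``simple concentration argument'' you allude to — note that plain Markov at the level $2c_q n$ only gives failure probability $\le 1/2$, not $o(1)$, so the Chebyshev/variance version is the one to keep). Aside from that small point and a reference slip at the end (you mean the \iid-prior proposition, Proposition~\ref{prop:nong-lower-iid}), nothing further is needed.
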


\begin{proposition}\label{prop:nong-lower-iid}
Consider an \iid prior $\cX = \IID(\pi)$ where $\pi$ is zero-mean and unit-variance with $\EE[\pi^{16}] < \infty$. Then conditions (i) and (ii) in Assumption~\ref{as:nong-lower} are satisfied.
\end{proposition}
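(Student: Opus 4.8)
The plan is to derive both conditions directly from elementary tail bounds, using only $\EE[\pi^{16}] < \infty$ together with the scaling $x_i = \pi_i/\sqrt n$, where the $\pi_i$ are i.i.d.\ copies of $\pi$.

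First I would handle condition (i). The event ``$|x_i| < n^{-1/3}$ for all $i$'' is exactly $\max_i |\pi_i| < n^{1/6}$. A union bound gives $\Pr[\max_i |\pi_i| \ge n^{1/6}] \le n\, \Pr[|\pi| \ge n^{1/6}]$, and Markov's inequality applied to $\pi^{16}$ bounds $\Pr[|\pi| \ge n^{1/6}] \le \EE[\pi^{16}]\, n^{-8/3}$. Multiplying, the probability that (i) fails is at most $\EE[\pi^{16}]\, n^{-5/3} = o(1)$.

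Next I would handle condition (ii). Fix $q \in \{2,4,6,8\}$; since $q$ is even, $\|x\|_q^q = n^{-q/2} \sum_i |\pi_i|^q$, so ``$\|x\|_q > \alpha_q n^{1/q - 1/2}$'' is equivalent to $S_n := \sum_i |\pi_i|^q > \alpha_q^q\, n$. Here $S_n$ is a sum of $n$ i.i.d.\ copies of $|\pi|^q$ with mean $n\,\EE[|\pi|^q]$ and variance at most $n\,\EE[|\pi|^{2q}]$; since $2q \le 16$, both $\EE[|\pi|^q]$ and $\EE[|\pi|^{2q}]$ are finite by hypothesis. Picking any constant $\alpha_q$ with $\alpha_q^q > \EE[|\pi|^q]$ and applying Chebyshev's inequality gives $\Pr[S_n > \alpha_q^q n] \le \EE[|\pi|^{2q}]\big/\big((\alpha_q^q - \EE[|\pi|^q])^2\, n\big) = o(1)$, as required.

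There is essentially no obstacle here, only a bookkeeping point worth flagging: the $q = 8$ case of (ii) needs $\Var(|\pi|^8) < \infty$, which is exactly why the hypothesis is stated as $\EE[\pi^{16}] < \infty$ rather than with a smaller exponent; this single moment assumption simultaneously powers the union bound in (i) and the Chebyshev estimate for every $q \in \{2,4,6,8\}$ in (ii). One could equivalently invoke the weak law of large numbers for $\tfrac1n \sum_i |\pi_i|^q$ in place of Chebyshev, but the quantitative version above is self-contained.
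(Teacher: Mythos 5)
Your proposal is correct and follows essentially the same route as the paper: a union bound plus Markov's inequality for condition (i) (the paper applies Markov to $\pi^8$ rather than $\pi^{16}$, which changes nothing substantive), and Chebyshev's inequality for $\sum_i |\pi_i|^q$ with $\alpha_q^q > \EE[|\pi|^q]$ for condition (ii), using $\EE[\pi^{2q}] < \infty$ for $q \le 8$ exactly as the paper does. No gaps.
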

\noindent An immediate implication of this is that conditions (i) and (ii) are also satisfied for a `conditioned' prior which draws $x$ from $\IID(\pi)$ but then outputs zero if a `bad' event occurred.

\subsection{Pre-transformed PCA}
\label{sec:nong-wig-upper}

In this subsection we analyze a modified PCA procedure for the non-Gaussian spiked Wigner model, which in certain cases matches the lower bound of the previous subsections. Recall that the non-Gaussian Wigner model $\Wig(\lambda,\cX,\cP)$ is given by
$$Y = \lambda xx^\top + \frac{1}{\sqrt n} W.$$
In this subsection, however, we will prefer the normalization
$$\hat Y = \sqrt{n}\, Y = \lambda \sqrt{n}\, xx^\top + W$$
so that the noise is entrywise constant size. Recall that $x$ is drawn from a prior $\cX$, normalized so that $\|x\| \to 1$ in probability. The noise $W$ is a symmetric matrix with off-diagonal entries drawn from some distribution $\cP$. For consistency with the previous section we take the diagonal entries of $Y$ to be zero. The entries of $W$ are independent except for symmetry: $W_{ij} = W_{ji}$. We do not allow $\lambda$ and $\cP$ to depend on $n$. We make the following regularity assumptions.

\begin{assumption}
\label{as:nong-upper}
Assumption on $\cX$:
\begin{enumerate}[(i)]
\item With probability $1-o(1)$, all entries of $x$ are small: $|x_i| \le n^{-1/2 + \alpha}$ for some fixed $\alpha < \frac{1}{32}$.\\
\quad\\
Assumptions on $\cP$:
\item $\cP$ is a continuous distribution with a density function $p(w)$ that is three times differentiable.
\item $p(w) > 0$ everywhere.
\item Letting $f(w) = -p'(w)/p(w)$, we have that $f$ and its first two derivatives are polynomially-bounded: there exists $C > 0$ and an even integer $m \ge 2$ such that $|f^{(\ell)}(w)| \le C + w^m$ for all $0 \le \ell \le 2$.
\item With $m$ as in (iv), $\cP$ has finite moments up to $5m$: $\EE|\cP|^k < \infty$ for all $1 \le k \le 5m$.
\end{enumerate}
\end{assumption}

\noindent An important consequence of assumptions (iv) and (v) is the following.
\begin{lemma}
\label{lem:mom}
$\EE|f^{(\ell)}(\cP)|^q < \infty$ for all $0 \le \ell \le 2$ and $1 \le q \le 5$.
\end{lemma}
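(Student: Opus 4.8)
The plan is a one-step estimate combining the polynomial growth bound of Assumption~\ref{as:nong-upper}(iv) with the moment hypothesis of Assumption~\ref{as:nong-upper}(v). I would fix $\ell \in \{0,1,2\}$ and $q \in [1,5]$. By (iv) there are a constant $C>0$ and an even integer $m \ge 2$ with $|f^{(\ell)}(w)| \le C + w^m$ for all $w$; since $m$ is even we have $w^m = |w|^m \ge 0$, so this is a genuine bound on $|f^{(\ell)}(w)|$ by a nonnegative quantity.

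Next I would raise this to the $q$-th power and use convexity of $t \mapsto t^q$ on $[0,\infty)$ (equivalently the elementary inequality $(a+b)^q \le 2^{q-1}(a^q + b^q)$, valid for $a,b \ge 0$ and $q \ge 1$) to obtain $|f^{(\ell)}(w)|^q \le 2^{q-1}\big(C^q + |w|^{mq}\big)$. Taking expectations over $w \sim \cP$ then gives
$$\EE\,|f^{(\ell)}(\cP)|^q \;\le\; 2^{q-1}\Big(C^q + \EE\,|\cP|^{mq}\Big).$$
Since $q \le 5$ we have $mq \le 5m$, and Assumption~\ref{as:nong-upper}(v) guarantees $\EE|\cP|^{mq} < \infty$. (If one reads (v) as covering only integer exponents, the crude bound $|\cP|^{mq} \le 1 + |\cP|^{\lceil mq\rceil} \le 1 + |\cP|^{5m}$ handles non-integer values of $q$.) Hence the right-hand side is finite, which is the claim.

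There is essentially no obstacle here: the content of the lemma is precisely that conditions (iv) and (v) have been chosen to be compatible, with the factor of $5$ in (v) providing exactly the slack needed to absorb the range $q \le 5$. The reason it is worth isolating is that it will be invoked repeatedly in the spectral analysis of the pre-transformed matrix $f(\hat Y)$, where a Taylor expansion of $f$ forces control of up to fifth moments of the transformed entries.
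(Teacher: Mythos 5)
Your proof is correct and follows essentially the same route as the paper: bound $|f^{(\ell)}(w)|$ by $C+w^m$ via Assumption~\ref{as:nong-upper}(iv), apply the elementary inequality $(a+b)^q \le 2^{q}(a^q+b^q)$ (you use the slightly sharper $2^{q-1}$), and conclude with $\EE|\cP|^{mq} \le \EE|\cP|^{5m} < \infty$ from Assumption~\ref{as:nong-upper}(v). The extra remark about non-integer $q$ is harmless additional care; no changes needed.
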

\begin{proof}
Using $|a+b|^q \le |2a|^q + |2b|^q = 2^q(|a|^q+|b|^q)$ we have
\begin{equation*} \EE|f^{(\ell)}(\cP)|^q \le \EE|C + \cP^m|^q \le 2^q(C^q + \EE|\cP|^{mq}) < \infty. \qedhere\end{equation*}
\end{proof}

The main theorem of this section is the following.
\begin{theorem}\label{thm:nong-upper}
Let $\lambda \ge 0$ and let $\cX,\cP$ satisfy Assumption~\ref{as:nong-upper}. Let $\hat Y = \sqrt{n}\, Y$ where $Y$ is drawn from $\Wig(\lambda,\cX,\cP)$. Let $f(\hat Y)$ denote entrywise application of the function $f(w) = -p'(w)/p(w)$ to $\hat Y$, except the diagonal entries remain zero. Let
$$F_\cP = \EE[f(\cP)^2] = \int_{-\infty}^\infty\frac{p'(w)^2}{p(w)}dw.$$
\begin{itemize}
\item If $\lambda \le 1/\sqrt{F_\cP}$ then $\frac{1}{\sqrt n}\lambda_{\max}(f(\hat Y)) \to 2\sqrt{F_\cP}$ as $n \to \infty$.
\item If $\lambda > 1/\sqrt{F_\cP}$ then $\frac{1}{\sqrt n}\lambda_{\max}(f(\hat Y)) \to \lambda F_\cP + \frac{1}{\lambda} > 2 \sqrt{F_\cP}$ as $n \to \infty$ and furthermore the top (unit-norm) eigenvector $v$ of $f(\hat Y)$ correlates with the spike:
$$\langle v,x \rangle^2 \ge \frac{(\lambda - 1/\sqrt{F_\cP})^2}{\lambda^2} - o(1) \quad\text{with probability } 1-o(1).$$
\end{itemize}
Convergence is in probability. Here $\lambda_{\max}(\cdot)$ denotes the largest eigenvalue of a matrix.
\end{theorem}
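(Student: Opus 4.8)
The plan is to show that $f(\hat Y)$ is, up to an additive error of operator norm $o(\sqrt n)$, a genuine spiked non-Gaussian Wigner matrix whose noise bulk has radius $2\sqrt{F_\cP}$ and whose spike has strength $\lambda F_\cP$, and then to quote the known BBP-type spectral theory for such matrices. First I would condition on the good event of Assumption~\ref{as:nong-upper}(i), which has probability $1-o(1)$ and leaves $x$ independent of $W$. Write $\Delta_{ij} = \lambda\sqrt n\, x_i x_j$, so $\hat Y_{ij} = W_{ij} + \Delta_{ij}$ with $|\Delta_{ij}| \le \lambda n^{-1/2+2\alpha} = o(1)$ uniformly. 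Since $p\in C^3$ we have $f\in C^2$, and Taylor's theorem with Lagrange remainder gives, for $i\ne j$,
$$ f(\hat Y_{ij}) = f(W_{ij}) + f'(W_{ij})\,\Delta_{ij} + \tfrac12 f''(\xi_{ij})\,\Delta_{ij}^2, \qquad \xi_{ij}\ \text{between}\ W_{ij}\ \text{and}\ \hat Y_{ij}. $$
Writing $D = \diag(x)$ and letting $\circ$ be the entrywise (Hadamard) product, this yields the matrix identity (off the diagonal, which is zero in every term)
$$ f(\hat Y) = f(W) + \lambda\sqrt n\,\big(f'(W)\circ xx^\top\big) + R, \qquad R_{ij} = \tfrac12 f''(\xi_{ij})\Delta_{ij}^2 . $$

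Next I would identify the leading term and bound the errors. The off-diagonal entries of $f(W)$ are i.i.d.\ (up to symmetry) copies of $f(\cP)$ with $\EE f(\cP) = -\int p' = 0$, $\EE f(\cP)^2 = F_\cP$, and, by Lemma~\ref{lem:mom}, finite moments up to order $5$; standard Wigner semicircle and edge theory then gives $\tfrac1{\sqrt n}\lambda_{\max}(f(W)) \to 2\sqrt{F_\cP}$ in probability and convergence of the empirical spectral distribution to the semicircle supported on $[-2\sqrt{F_\cP},2\sqrt{F_\cP}]$. For the Hadamard term I would split $f'(W_{ij}) = F_\cP + \big(f'(W_{ij}) - F_\cP\big)$ using the classical identity $\EE f'(\cP) = -\int p'' + \int (p')^2/p = F_\cP$; the constant part contributes $\lambda\sqrt n\, F_\cP\, xx^\top = \lambda F_\cP\|x\|^2\,\sqrt n\, uu^\top$ with $u = x/\|x\|$, a rank-one spike of strength $\lambda F_\cP\|x\|^2 \to \lambda F_\cP$ that is independent of $W$, while the fluctuation part equals $\lambda\sqrt n\, D M D$ where $M$ is the Wigner matrix with mean-zero entries $f'(W_{ij}) - F_\cP$ (finite variance by Lemma~\ref{lem:mom}), so its operator norm is at most $\lambda\sqrt n\,(\max_i|x_i|)^2\,\|M\|_{op} \le \lambda\sqrt n \cdot n^{-1+2\alpha}\cdot O(\sqrt n) = O(n^{2\alpha}) = o(\sqrt n)$. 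Finally $|\xi_{ij}| \le |W_{ij}|+1$ for large $n$, so Assumption~\ref{as:nong-upper}(iv) gives $|f''(\xi_{ij})| \le C' + C'|W_{ij}|^m$, hence $\EE R_{ij}^2 \le C''(1+\EE|\cP|^{2m})\Delta_{ij}^4$; since $\cP$ has at least $2m$ finite moments, $\EE\|R\|_F^2 = O(n^2\cdot n^{-2+8\alpha}) = O(n^{8\alpha})$, so $\|R\|_{op}\le\|R\|_F = o(\sqrt n)$ in probability (the slack $\alpha<1/32$ is more than enough), and a diagonal matrix of $O(1)$ entries has operator norm $O(1)$, so the diagonal conventions are harmless.

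Combining these, $\tfrac1{\sqrt n} f(\hat Y) = \tfrac1{\sqrt n} f(W) + \lambda F_\cP\|x\|^2\, uu^\top + E$ with $\|E\|_{op}\to0$ in probability and $u$ a unit vector independent of $W$. I would then apply the BBP phase transition for spiked (non-Gaussian) Wigner matrices \citet{fp,cdf,wig-spk,nong-eigv1} with bulk radius $\sigma = \sqrt{F_\cP}$ and spike strength $\theta = \lambda F_\cP\|x\|^2$: when $\theta\le\sigma$ (which, as $\|x\|\to1$, corresponds to $\lambda\le 1/\sqrt{F_\cP}$) the top eigenvalue converges to $2\sqrt{F_\cP}$; when $\theta>\sigma$ (i.e.\ $\lambda>1/\sqrt{F_\cP}$) it converges to $\theta+\sigma^2/\theta = \lambda F_\cP + 1/\lambda$, with the associated top eigenvector $v_0$ satisfying $\langle v_0,u\rangle^2 \to 1 - 1/(\lambda^2 F_\cP)$. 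Weyl's inequality transfers the eigenvalue statements through the $o(1)$ perturbation $E$, and the spectral gap $\lambda F_\cP + 1/\lambda > 2\sqrt{F_\cP}$ lets a Davis--Kahan ($\sin\Theta$) bound give $\langle v, v_0\rangle^2 \ge 1-o(1)$ for the true top eigenvector $v$ of $f(\hat Y)$; combined with the elementary inequality $1 - 1/(\lambda^2 F_\cP) \ge \big(1 - 1/(\lambda\sqrt{F_\cP})\big)^2 = (\lambda - 1/\sqrt{F_\cP})^2/\lambda^2$ (valid since $\lambda\sqrt{F_\cP}\ge1$) and $\|x\|\to1$, this yields $\langle v,x\rangle^2 \ge (\lambda - 1/\sqrt{F_\cP})^2/\lambda^2 - o(1)$ with probability $1-o(1)$, the displayed bound being a convenient relaxation of the sharp value.

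The step I expect to be the main obstacle is the operator-norm control of the two error matrices. Bounding $\|R\|_{op}$ genuinely uses the polynomial bound on $f''$ (Assumption~\ref{as:nong-upper}(iv)), the fact that the Taylor base point $\xi_{ij}$ stays within $O(1)$ of $W_{ij}$, and enough moments of $\cP$ (Assumption~\ref{as:nong-upper}(v)) to make $\|R\|_F$ small; likewise the Hadamard fluctuation, conceptually clean once one recognizes it as $DMD$, still needs the $O(\sqrt n)$ operator-norm bound for the Wigner matrix with entries $f'(W_{ij}) - F_\cP$, which again leans on Lemma~\ref{lem:mom}. Everything else — pinning down the limiting bulk radius and spike strength, and transferring through $E$ via Weyl and Davis--Kahan — is routine once these estimates are in hand.
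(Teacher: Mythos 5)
Your decomposition and error analysis coincide with the paper's: the same Taylor expansion around $W_{ij}$, the same Frobenius-norm bound on the second-order remainder using the polynomial growth of $f''$ and the moment assumption, and the same treatment of the first-order fluctuation (your $\lambda\sqrt{n}\,DMD$ observation is exactly the paper's quadratic-form bound $\|\Delta\|\le\lambda n^{-1/2+2\alpha}\|A\|$ with $A_{ij}=f'(W_{ij})-\mathbb{E}f'(W_{ij})$, whose $O(\sqrt n)$ operator norm is obtained from the fifth-moment bound of Lemma~\ref{lem:mom} --- note that ``finite variance'' alone would not suffice here, so be explicit that you are using the higher moments). The genuine divergence is the eigenvector step. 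The paper deliberately uses only the \emph{eigenvalue} statements: it writes the Rayleigh quotient of the top eigenvector $v$ of $f(\hat Y)$ as $v^\top(\tfrac{1}{\sqrt n}f(W))v+\lambda F_{\mathcal{P}}\langle v,x\rangle^2+o(1)$, bounds the first term by $2\sqrt{F_{\mathcal{P}}}+o(1)$, and solves for $\langle v,x\rangle^2$; this elementary gap argument is exactly what produces the (non-sharp) bound $(\lambda-1/\sqrt{F_{\mathcal{P}}})^2/\lambda^2$ in the statement. You instead invoke the BBP eigenvector asymptotics $\langle v_0,u\rangle^2\to 1-1/(\lambda^2 F_{\mathcal{P}})$ for the clean spiked matrix and transfer through the $o(1)$ perturbation by Davis--Kahan; your arithmetic relaxing $1-t^2\ge(1-t)^2$ is fine and would even give a sharper conclusion. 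The one point needing care is the citation you lean on: eigenvector (as opposed to eigenvalue) correlation results for rank-one deformations of \emph{non-Gaussian} Wigner matrices are not obviously available under only a finite fifth moment of $f(\mathcal{P})$ and a general delocalized, prior-dependent spike direction --- this is precisely the dependence the paper's Rayleigh-quotient trick avoids, and it is why the paper settles for the weaker constant. So either verify that the eigenvector theorem you cite covers entries with merely five finite moments and an arbitrary independent unit spike, or substitute the paper's quadratic-form argument for that step; everything else in your write-up matches the paper's proof.
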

\noindent Note that Lemma~\ref{lem:mom} implies that the expectation defining $F_\cP$ is finite. The following corollary is immediate.

\begin{corollary}\label{cor:nong-noncontig}
Suppose $\cX$ and $\cP$ satisfy Assumption~\ref{as:nong-upper}. If $\lambda > 1/\sqrt{F_\cP}$ then $\Wig(\lambda,\cP,\cX)$ is {\bf not} contiguous to $\Wig(0,\cP)$.
\end{corollary}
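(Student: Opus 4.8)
The plan is to deduce Corollary~\ref{cor:nong-noncontig} directly from Theorem~\ref{thm:nong-upper} by exhibiting an explicit distinguishing statistic. Since the roles of ``spiked'' and ``unspiked'' in a contiguity statement are not symmetric, I should be careful about which direction is claimed: $\Wig(\lambda,\cP,\cX) \contig \Wig(0,\cP)$ would mean that any event rare under the unspiked model is rare under the spiked model. To refute this it suffices to produce a sequence of events $A_n$ with $Q_n(A_n) = \Wig_n(0,\cP)(A_n) \to 0$ but $P_n(A_n) = \Wig_n(\lambda,\cP,\cX)(A_n) \not\to 0$ (in fact $\to 1$).

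The natural choice is $A_n = \{\tfrac{1}{\sqrt n}\lambda_{\max}(f(\hat Y)) > 2\sqrt{F_\cP} + \delta\}$ for a suitably small constant $\delta > 0$, where $\hat Y = \sqrt n\, Y$ and $f(w) = -p'(w)/p(w)$. The key inputs are: (1) under $\Wig(0,\cP)$, the matrix $f(\hat Y)$ is itself a (non-spiked) Wigner matrix with i.i.d.\ entries $f(\cP)$ of mean $0$ (by symmetry of $\cP$, or more precisely because $\EE[f(\cP)] = \int -p'(w)\,dw = 0$) and variance $F_\cP = \EE[f(\cP)^2]$, which is finite by Lemma~\ref{lem:mom}; hence by standard Wigner semicircle edge results its top eigenvalue, rescaled by $1/\sqrt n$, converges to $2\sqrt{F_\cP}$, so $Q_n(A_n) \to 0$. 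Alternatively, this is exactly the $\lambda = 0$ instance of the first bullet of Theorem~\ref{thm:nong-upper}, so I need not invoke external random matrix theory at all. (2) Under $\Wig(\lambda,\cP,\cX)$ with $\lambda > 1/\sqrt{F_\cP}$, the second bullet of Theorem~\ref{thm:nong-upper} gives $\tfrac{1}{\sqrt n}\lambda_{\max}(f(\hat Y)) \to \lambda F_\cP + 1/\lambda > 2\sqrt{F_\cP}$ in probability. Choosing $\delta = \tfrac12(\lambda F_\cP + 1/\lambda - 2\sqrt{F_\cP}) > 0$ makes $P_n(A_n) \to 1$.

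Putting these together: $Q_n(A_n) \to 0$ while $P_n(A_n) \to 1 \neq 0$, which by the definition of contiguity shows $\Wig(\lambda,\cP,\cX)$ is not contiguous to $\Wig(0,\cP)$. In fact this argument proves the stronger statement that the two models are \emph{distinguishable} with vanishing error probability (via the test ``report spiked iff $\tfrac{1}{\sqrt n}\lambda_{\max}(f(\hat Y)) > 2\sqrt{F_\cP} + \delta$''), of which non-contiguity is a formal consequence — indeed one can also phrase the deduction through the contrapositive of Claim~\ref{cor:contig}.

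There is essentially no obstacle here — the content is entirely in Theorem~\ref{thm:nong-upper}, and the corollary is a one-line consequence. The only minor point requiring care is confirming that the threshold inequality is strict, i.e.\ $\lambda F_\cP + 1/\lambda > 2\sqrt{F_\cP}$ whenever $\lambda > 1/\sqrt{F_\cP}$; this follows from AM–GM since $\lambda F_\cP + 1/\lambda \ge 2\sqrt{F_\cP}$ always, with equality exactly at $\lambda = 1/\sqrt{F_\cP}$, so for $\lambda$ strictly above the threshold the gap $\delta$ above is genuinely positive. This is already asserted in the statement of Theorem~\ref{thm:nong-upper}, so it can simply be cited.
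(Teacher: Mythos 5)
Your proof is correct and matches the paper's intent exactly: the paper states this corollary as an immediate consequence of Theorem~\ref{thm:nong-upper}, and your argument (take $A_n = \{\frac{1}{\sqrt n}\lambda_{\max}(f(\hat Y)) > 2\sqrt{F_\cP}+\delta\}$, note $Q_n(A_n)\to 0$ by the $\lambda=0$ case of the first bullet while $P_n(A_n)\to 1$ by the second bullet and AM--GM) is simply the spelled-out version of that deduction. Nothing is missing; the direction of contiguity and the strictness of the gap are both handled correctly.
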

\noindent Note that this matches the lower bound of Theorem~\ref{thm:nongauss-lower} provided the prior $\cX$ has $\lambda^*_\cX = 1$. In other words, if for some prior we are able to show that PCA is optimal for Gaussian noise, then modified PCA is optimal for any type of non-Gaussian noise.

\begin{proof}[Proof of Theorem~\ref{thm:nong-upper}]
First we justify a local linear approximation of $f(\hat Y_{ij})$. For $i \ne j$, define the error term $\mathcal{E}_{ij}$ by $$f(\hat Y_{ij}) = f(W_{ij}) + \lambda \sqrt{n} x_i x_j f'(W_{ij}) + \mathcal{E}_{ij}.$$
(Define $\mathcal{E}_{ii} = 0$.) We will show that the operator norm of $\mathcal{E}$ is small: $\|\mathcal{E}\| = o(\sqrt n)$ with probability $1-o(1)$.
Apply the mean-value form of the Taylor approximation remainder: $\mathcal{E}_{ij} = \frac{1}{2} f''(W_{ij} + e_{ij}) \lambda^2 n x_i^2 x_j^2$ for some $|e_{ij}| \le |\lambda \sqrt n x_i x_j|$. Bound the operator norm by the Frobenius norm:
$$\|\mathcal{E}\|^2 \le \|\mathcal{E}\|_F^2 = \frac{\lambda^4 n^2}{4} \sum_{i \ne j} x_i^4x_j^4 f''(W_{ij} + e_{ij})^2 \le \frac{\lambda^4}{4} n^{8\alpha - 2} \sum_{i \ne j} f''(W_{ij} + e_{ij})^2.$$
Using the polynomial bound on $f''$ and the fact $|a+b|^{k} \le 2^{k}(|a|^{k} + |b|^{k})$, we have
\begin{align*}
f''(W_{ij} + e_{ij})^2 &\le (C + (W_{ij} + e_{ij})^{m})^2 \\
&\le 4C^2 + 4(W_{ij} + e_{ij})^{2m} \\
&\le 4C^2 + 4 \cdot 2^{2m} (W_{ij}^{2m} + e_{ij}^{2m}) \\
&\le 4C^2 + 2^{2m+2} (W_{ij}^{2m} + \lambda^{2m}n^{(4\alpha-1)m}) \\
&= 4C^2 + 2^{2m+2} W_{ij}^{2m} + o(1).
\end{align*}
Using finite moments of $W_{ij} \sim \cP$, it follows that
$$\EE\left[\sum_{i \ne j} f''(W_{ij} + e_{ij})^2\right] = \mathcal{O}(n^2)$$
and so $\EE \|\mathcal{E}\|^2 = \mathcal{O}(n^{8\alpha}).$
Since $\alpha < \frac{1}{32}$, Markov's inequality now gives the desired result: with probability $1-o(1)$, $\|\mathcal{E}\|^2 = o(n^{1/4})$ and so $\|\mathcal{E}\| = o(\sqrt n)$.

Our goal will be to show that $f(\hat Y)$ is, up to small error terms, another spiked Wigner matrix. Toward this goal we define another error term: for $i \ne j$, let
$$\Delta_{ij} = \lambda \sqrt{n} x_i x_j \left(f'(W_{ij}) - \mathbb{E}[f'(W_{ij})]\right)$$
so that
\begin{equation}
\label{eq:off-diag}
f(\hat Y_{ij}) = f(W_{ij}) + \lambda \sqrt{n} x_i x_j \mathbb{E}[f'(W_{ij})] + \mathcal{E}_{ij} + \Delta_{ij}.
\end{equation}
(Define $\Delta_{ii} = 0$.) We will show that the operator norm of $\Delta$ is small: $\|\Delta\| = o(\sqrt n)$ with probability $1-o(1)$. Let $A_{ij} = f'(W_{ij}) - \mathbb{E}[f'(W_{ij})]$ so that $\Delta_{ij} = \lambda \sqrt{n} x_i x_j A_{ij}$. (Define $A_{ii} = 0$.) We have $\|\Delta\| \le \lambda n^{-1/2+2\alpha} \|A\|$ because for any unit vector $y$,
\begin{align*}
y^\top \Delta y &= \sum_{i,j} \lambda \sqrt{n} x_i x_j A_{ij} y_i y_j \le \sum_{i,j} \lambda \sqrt{n} z_i A_{ij} z_j \qquad\text{where } z_i = x_i y_i \\
&\le \lambda \sqrt{n}\, \|A\| \cdot \|z\|^2 \le \lambda n^{-1/2+2\alpha} \|A\| \cdot \|y\| = \lambda n^{-1/2+2\alpha} \|A\|.
\end{align*}
Note that $A$ is a Wigner matrix (i.e.\ a symmetric matrix with off-diagonal entries i.i.d.) and so $\|A\| = \mathcal{O}(\sqrt n)$ with probability $1-o(1)$. This follows from \citet{wig-spk} Theorem~1.1, provided we can check that each entry of $A$ has finite fifth moment. But this follows from Lemma~\ref{lem:mom}:
$$\EE|A_{ij}|^5 \le 2^5 \left(\EE|f'(W_{ij})|^5 + |\EE[f'(W_{ij})]|^5\right) < \infty.$$
Now we have $\|\Delta\| = \mathcal{O}(n^{2\alpha}) = o(\sqrt n)$ with probability $1-o(1)$ as desired.

From (\ref{eq:off-diag}) we now have that, up to small error terms, $f(\hat Y)$ is another spiked Wigner matrix:
$$f(\hat Y) = f(W) + \lambda \sqrt{n}\, \EE[f'(\cP)]\, xx^\top + \mathcal{E} + \Delta - \delta$$
where (to take care of the diagonal) we define $f(W)_{ii} = 0$, $\delta_{ij} = 0$, and $\delta_{ii} = \lambda \sqrt{n}\, \EE[f'(\cP)] x_i^2$. Note that the final error term $\delta$ is also small: $\|\delta\| \le \|\delta\|_F = \mathcal{O}(n^{2\alpha}) = o(\sqrt n)$. We now have
$$\frac{1}{\sqrt n} \lambda_{\max}(f(\hat Y)) = \lambda_{\max}\left(\frac{1}{\sqrt n} f(W) + \lambda\, \mathbb{E}[f'(\cP)] \,xx^\top \right) + o(1)$$
and so the theorem follows from known results on the spectrum of spiked Wigner matrices, namely Theorem~1.1 from \citet{wig-spk}. We need to check the following details. First note that the Wigner matrix $f(W)$ has off-diagonal \iid entries that are centered:
$$\mathbb{E}[f(W_{ij})] = \int_{-\infty}^\infty f(w)p(w)dw = \int_{-\infty}^\infty \frac{-p'(w)}{p(w)}p(w)dw = -\int_{-\infty}^\infty p'(w)dw = p(-\infty) - p(\infty) = 0.$$
Each off-diagonal entry of $f(W)$ has variance
$$\mathbb{E}[f(W_{ij})^2] = F_\cP.$$
The rank-1 deformation $\lambda\, \mathbb{E}[f'(\cP)] \,xx^\top$ has top eigenvalue $\lambda\, \mathbb{E}[f'(\cP)] \cdot \|x\|^2$. Recall that $\|x\|^2 \to 1$ in probability. Also,
$$f'(w) = \frac{d}{dw} \frac{-p'(w)}{p(w)} = -\frac{p''(w)p(w) - p'(w)^2}{p(w)^2}$$
and so
$$\mathbb{E}[f'(\cP)] = \int_{-\infty}^\infty f'(w)p(w)dw = \int_{-\infty}^\infty \left[-p''(w) + \frac{p'(w)^2}{p(w)}\right]dw = \int_{-\infty}^\infty \frac{p'(w)^2}{p(w)}dw = F_\cP.$$
Therefore the top eigenvalue of the rank-1 deformation converges in probability to $\lambda F_\cP$. Finally, by Lemma~\ref{lem:mom}, the entries of $f(W)$ have finite fifth moment.

The desired convergence of the top eigenvalue now follows. It remains to show that when $\lambda > 1/\sqrt{F_\cP}$, the top eigenvalue of $f(\hat Y)$ correlates with the planted vector $x$. Let $v$ be the top eigenvector of $f(\hat Y)$ with $\|v\| = 1$. From above we have
$$v^\top \left(\frac{1}{\sqrt n} f(\hat Y)\right)v = v^\top \left(\frac{1}{\sqrt n} f(W)\right)v + \lambda F_\cP \langle v,x \rangle^2 + o(1).$$
We know $\frac{1}{\sqrt n} f(\hat Y)$ has top eigenvalue $\lambda F_\cP + 1/\lambda + o(1)$ and $\frac{1}{\sqrt n} f(W)$ has top eigenvalue $2\sqrt{F_\cP} + o(1)$, which yields
\begin{align*}
\langle v,x \rangle^2 &\ge \frac{1}{\lambda F_\cP}(\lambda F_\cP + 1/\lambda - 2\sqrt{F_\cP})-o(1)\\
&= \frac{(\lambda - 1/\sqrt{F_\cP})^2}{\lambda^2} - o(1).\qedhere
\end{align*}

\end{proof}


\section{Spiked Wishart models}
\label{sec:wish}

In this section we consider the spiked Wishart model, which apart from the signal-to-noise ratio $\beta$ has a scaling parameter $\gamma$. In Section~\ref{sec:wishmain} we define the model and state our main results. In Section~\ref{sec:wishsecond} and Section~\ref{sec:rate} we develop an understanding of the relevant second moment through large deviations behavior together with comparison to the Wigner model. In Section~\ref{sec:wishcontig} we show contiguity results which among other things establish that for any $\gamma \leq \frac{1}{3}$, PCA is optimal for the Rademacher prior. On the other hand, in Section~\ref{sec:wishup} we show that for any $\gamma \geq 0.698$, PCA is suboptimal for the Rademacher prior with a negative spike ($\beta < 0$), and that there is a computationally inefficient procedure that solves this detection problem even when PCA fails. This pair of results suggests that a phase transition occurs at some $\gamma$.

\subsection{Main results}\label{sec:wishmain}

 We first formally define the spiked Wishart model:
\begin{definition}\label{def:spiked-wishart}
The spiked (Gaussian) Wishart model $\Wish(\gamma,\beta,\cX)$ on $n \times n$ matrices is defined thus: we first draw a hidden spike $x \sim \cX$, and then reveal $Y = X X^\top$, where $X$ is an $n \times N$ matrix whose columns are sampled independently from $\cN(0,I+\beta x x^\top)$; the parameters $N$ and $n$ scale proportionally with $n/N \to \gamma$ (the high-dimensional regime).
\end{definition}

\noindent Recall that the prior $\cX$ is required to obey the normalization $\|x\| \to 1$ in probability (see Definition~\ref{def:prior}).

In this high-dimensional setting, the spectrum bulk converges in the unspiked case to the Marchenko--Pastur distribution with shape parameter $\gamma$. By results of \citet{bbp} and \citet{baik-silverstein}, it is known that the top eigenvalue distinguishes the spiked and unspiked models when $\beta > \sqrt{\gamma}$. In fact, matching lower bounds are known for the spherical prior, due to \citet{sphericity}: for $0 \leq \beta < \sqrt{\gamma}$, no hypothesis test distinguishes this spiked model from the unspiked model with $o(1)$ error. In the case of $-1 \leq \beta < 0$, a corresponding PCA threshold exists: the minimum eigenvalue exits the bulk when $\beta < -\sqrt{\gamma}$ \citep{baik-silverstein}, but we are not aware of lower bounds in the literature. The case of $\beta < -1$ is of course invalid, as the covariance matrix must be positive semidefinite.

We approach contiguity for the spiked Wishart model through the second moment method outlined in Section~\ref{sec:contig}. Note that detection and recovery can only become easier given the original sample matrix $X$ (instead of $XX^\top$), so we establish the stronger statement that the spiked distribution on $X$ is contiguous to the unspiked distribution. We first simplify the second moment in high generality:
\begin{repproposition}{prop:wishart-2mom}
With $P_n$ and $Q_n$ the spiked and unspiked models, respectively, we have
$$ \Ex_{Q_n}\left[ \left(\dd[P_n]{Q_n} \right)^2 \right] = \EE_{x,x' \sim \cX}\left[ \left( 1 - \beta^2 \langle x,x' \rangle^2 \right)^{-N/2} \right].$$
\end{repproposition}
\noindent It is worth noting that this expression has the curious property of symmetry under replacing $\beta$ with $-\beta$. In contrast, the original Wishart model does not. This is a limitation of our methods since they cannot distinguish between positive and negative $\beta$, even though the resulting Wishart distributions are quite different. (In Appendix~\ref{app:wish-nc}, however, we show how a more sophisticated conditioning method can break this symmetry and obtain stronger results.) For the \iid Rademacher prior with either positive or negative $\beta$, we show:
\begin{theorem-nolabel}[see Proposition~\ref{prop:wishart-z2-spectral-tight} and Theorem~\ref{thm:wishart-mle-z2}]
When $\cX$ is the \iid Rademacher prior, and $\gamma \leq \frac13$, then the spectral threshold is tight: the spiked and unspiked models are contiguous when $|\beta| < \sqrt{\gamma}$. However, for large enough $\gamma < 1$, spectral is not tight in the $\beta < 0$ case: in fact for any prior $\cX_n$ supported on at most $c^n$ points, there is a computationally inefficient procedure that distinguishes between the spiked Wishart model $\Wish(\gamma,\beta,\cX)$ and the unspiked model $\Wish(\gamma)$, with $o(1)$ probability of error, whenever
$$ (-\beta) + \log(1-(-\beta)) < -2\gamma \log c. $$
\end{theorem-nolabel}
\noindent In particular, for all $\gamma > 0.698$, this second statement shows that the Rademacher-spiked and unspiked Wishart models can be distinguished for some $\beta \in (-\sqrt{\gamma},0)$; thus there is some threshold $\gamma^* \in [\frac13, 0.698]$ at which the spectral method ceases to be tight in the case of $\beta < 0$. To our knowledge, this phase transition has not been previously observed; it is somewhat surprising that PCA becomes suboptimal for a prior as natural as \iid Rademacher.

In the case of positive $\beta$ with the Rademacher prior, the PCA threshold is always optimal. We show this in Appendix~\ref{app:wish-nc}.

\begin{figure}[!ht]\centering
\begin{minipage}{0.8\textwidth}\centering
\includegraphics[scale=1.0]{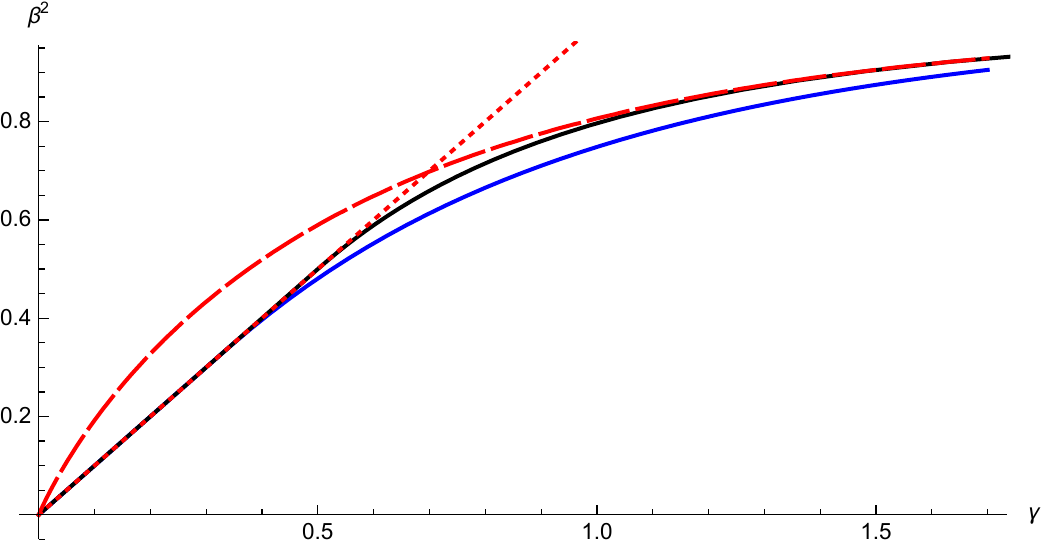}
\caption{Wishart model spiked with Rademacher prior, for $\beta < 0$. Above the dotted red line, the bottom eigenvalue distinguishes spiked from unspiked; below, the bottom eigenvalue fails. Above the dashed red curve, MLE distinguishes spiked from unspiked. Below the solid black curve, the spiked model is contiguous to the unspiked, and no hypothesis test achieves full power; this is the stronger noise-conditioned argument of Appendix~\ref{app:wish-nc}. The blue, lowest curve is the weaker lower bound of Theorem~\ref{thm:wishart-ld}.}
\end{minipage}
\end{figure}

We obtain our lower bounds in the Rademacher setting through a combination of comparison to the Wigner model as well as large deviations theory. In general, the best second moment lower bound for a prior involves an optimization problem involving the large deviations rate function of $\langle x,x' \rangle$, the correlation of two independent spikes drawn from $\cX$. A precise result to this end is given in Theorem~\ref{thm:wishart-ld}.

A simpler, looser bound can be achieved using only comparison to the Wigner model:
\begin{claim-nolabel}[see Remark~\ref{rmk:wigner-wishart-bound}]
The $\cX$-spiked model is contiguous to the unspiked model whenever $\beta^2 < 1-e^{-\gamma (\lambda^*_\cX)^2}$, where $\lambda^*_\cX$ is the threshold for $\lambda$ beyond which the second moment becomes unbounded in the Wigner setting.
\end{claim-nolabel}
\noindent In particular, whenever $\lambda^*_\cX = 1$, so that the spectral method is optimal in the Wigner setting, it follows that the ratio between the above lower bound (Remark~\ref{rmk:wigner-wishart-bound}) and the spectral upper bound tends to $1$ as $\gamma \to 0$.

\subsection{Second moment computation}\label{sec:wishsecond}

\begin{proposition}\label{prop:wishart-2mom}
Let $\cX$ be a spike prior. In distribution $P_n$, let a hidden spike $x$ be drawn from $\cX$, and let $N$ independent samples $y_i$, $1 \leq i \leq N$, be revealed from the normal distribution $\cN(0, I_{n \times n} + \beta x x^\top)$. In distribution $Q_n$, let $N$ independent samples $y_i$, $1 \leq i \leq N$, be revealed from $\cN(0,I_{n \times n})$. Then we have
$$ \EE_{Q_n}\left[ \left(\dd[P_n]{Q_n} \right)^2 \right] = \EE_{x,x' \sim \cX}\left[ \left( 1 - \beta^2 \langle x,x' \rangle^2 \right)^{-N/2} \right].$$
\end{proposition}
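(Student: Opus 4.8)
The plan is to unfold the likelihood ratio, peel off the two independent spike draws, factor over the $N$ samples, and reduce the whole quantity to a single Gaussian integral evaluated in closed form. Write $\phi_\Sigma$ for the density of $\cN(0,\Sigma)$; marginalizing over the spike, the likelihood ratio of $P_n$ against $Q_n$ at data $(y_1,\dots,y_N)$ is $\dd[P_n]{Q_n} = \EE_{x\sim\cX}\prod_{i=1}^N \frac{\phi_{I+\beta xx^\top}(y_i)}{\phi_I(y_i)}$. Squaring introduces a second, independent copy $x'$; taking $\EE_{Q_n}$, I would use Fubini to pull the $x,x'$ expectation to the outside, cancel one of the three factors of $\prod_i\phi_I(y_i)$, and use independence of the $N$ columns $y_i\in\RR^n$ to split the integral into an $N$th power:
$$\EE_{Q_n}\left[\left(\dd[P_n]{Q_n}\right)^2\right] = \EE_{x,x'\sim\cX}\left[\left(\int_{\RR^n}\frac{\phi_{I+\beta xx^\top}(y)\,\phi_{I+\beta x'x'^\top}(y)}{\phi_I(y)}\,\dee y\right)^N\right].$$

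The heart of the argument is the single-sample integral. Putting $\Sigma = I+\beta xx^\top$ and $\Sigma' = I+\beta x'x'^\top$, collecting the Gaussian normalizing constants and recognizing the integrand as proportional to a centered Gaussian density with precision matrix $M = \Sigma^{-1}+\Sigma'^{-1}-I$, the integral equals $|\Sigma|^{-1/2}|\Sigma'|^{-1/2}|M|^{-1/2}$ whenever $M\succ 0$. By Sherman--Morrison, $\Sigma^{-1} = I - \frac{\beta}{1+\beta\|x\|^2}xx^\top$ and similarly for $\Sigma'$, so $M = I - u\,xx^\top - v\,x'x'^\top$ with $u = \beta/(1+\beta\|x\|^2)$ and $v = \beta/(1+\beta\|x'\|^2)$. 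Since $M$ is a rank-two perturbation of the identity supported on $\mathrm{span}(x,x')$, the matrix determinant lemma collapses $|M|$ to a $2\times 2$ determinant built from the Gram matrix of $x,x'$; a short computation yields $|M| = (1-\beta^2\langle x,x'\rangle^2)/\big[(1+\beta\|x\|^2)(1+\beta\|x'\|^2)\big]$. Since $|\Sigma| = 1+\beta\|x\|^2$ and $|\Sigma'| = 1+\beta\|x'\|^2$, all dependence on $\|x\|$ and $\|x'\|$ cancels, and the single-sample integral is exactly $(1-\beta^2\langle x,x'\rangle^2)^{-1/2}$; raising to the $N$th power and averaging over $x,x'$ gives the stated identity.

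The only genuine subtlety is the positive-definiteness of $M$ (equivalently, finiteness of the integrand), which the $2\times 2$ reduction shows is equivalent to $1+\beta\|x\|^2>0$ and $\beta^2\langle x,x'\rangle^2<1$; these hold under the normalization $\|x\|\to 1$ whenever $|\beta|<1$, and in the remaining degenerate configurations the right-hand side of the claimed identity is already $+\infty$, so the equality persists as an identity of extended reals. Everything else is routine Gaussian bookkeeping, so I do not anticipate any further obstacle.
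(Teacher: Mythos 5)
Your proof is correct, and it reaches the identity by a somewhat different computational route than the paper. The paper avoids squaring under $Q_n$ directly: it uses $\EE_{Q_n}[(dP_n/dQ_n)^2] = \EE_{P_n}[dP_n/dQ_n]$, so that one spike $x$ enters through the sampling law $\cN(0,I+\beta xx^\top)$ and the other spike $x'$ through the likelihood ratio; the per-sample factor then becomes the moment generating function of $\langle y_i,x'\rangle^2$ with $\langle y_i,x'\rangle \sim \cN(0,\|x'\|^2+\beta\langle x,x'\rangle^2)$, i.e.\ a $\chi_1^2$ MGF, and the factors $(1+\beta\|x'\|^2)^{-N/2}$ cancel to give $(1-\beta^2\langle x,x'\rangle^2)^{-N/2}$. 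You instead keep the symmetric single-sample integral $\int \phi_{\Sigma}\phi_{\Sigma'}/\phi_I$ and collapse it with Sherman--Morrison plus the matrix determinant lemma; your rank-two determinant computation plays exactly the role of the paper's $\chi_1^2$ MGF, and indeed the two inner integrals are the same object. The paper's reduction to a one-dimensional MGF is a bit shorter and makes the convergence condition ($\beta^2\langle x,x'\rangle^2<1$) automatic, while your version is symmetric in $x,x'$ and adapts mechanically to other low-rank perturbations. Two small points: you should also require $1+\beta\|x'\|^2>0$ (this is anyway forced by positive definiteness of the model covariance), and in the degenerate regime $\beta^2\langle x,x'\rangle^2\ge 1$ the right-hand side must be interpreted as the value $+\infty$ of the divergent integral, since for $1-\beta^2\langle x,x'\rangle^2<0$ the literal expression $(1-\beta^2\langle x,x'\rangle^2)^{-N/2}$ is not $+\infty$; the paper glosses over the same point, and it is immaterial for the intended applications, where $\|x\|\to 1$ and $|\beta|\le 1$.
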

\noindent In particular, if this second moment remains bounded as $N,n \to \infty$ (with $n = \gamma N$), then we have $P_n \contig Q_n$, and thus $\Wish(\gamma,\beta,\cX) \contig \Wish(\gamma)$.

\begin{proof}
We first compute:
\begin{align*}
\dd[P_n]{Q_n}(y_1,\ldots,y_N) &= \EE_{x' \sim \cX}\left[ \prod_{i=1}^n \frac{\exp(-\frac12 y_i^\top (I + \beta x' (x')^\top)^{-1} y_i)}{\sqrt{\det(I + \beta x' (x')^\top)} \exp(-\frac12 y_i^\top y_i)} \right]\\
&= \EE_{x'}\left[ \det(I + \beta x' (x')^\top)^{-N/2} \prod_{i=1}^N \exp\left( -\frac12 y_i^\top ((I + \beta x' (x')^\top)^{-1} - I) y_i \right) \right].
\intertext{Note that $(I + \beta x' (x')^\top)^{-1}$ has eigenvalue $(1+\beta |x'|^2)^{-1}$ on $x'$ and eigenvalue $1$ on the orthogonal complement of $x'$. Thus $(I + \beta x' (x')^\top)^{-1} - I = \frac{-\beta}{1+\beta |x'|^2} x' (x')^\top$, and we have:}
&= \EE_{x'} \left[ (1+\beta |x'|^2)^{-N/2} \prod_{i=1}^N \exp\left( \frac12 \frac{\beta}{1+\beta |x'|^2} y_i^\top x' (x')^\top y_i \right) \right]\\
&= \EE_{x'} \left[ (1+\beta |x'|^2)^{-N/2} \prod_{i=1}^N \exp\left( \frac12 \frac{\beta}{1+\beta |x'|^2} \langle y_i, x' \rangle^2 \right) \right]. \numberthis\label{eq:wishart-dpdq}
\end{align*}

Passing to the second moment, we compute:
\begin{align*}
\EE_{Q_n}\left[ \left(\dd[P_n]{Q_n} \right)^2 \right] &= \EE_{P_n}\left[ \dd[P_n]{Q_n} \right] \\
&= \EE_{x,x'}\left[ (1+\beta |x'|^2)^{-N/2} \prod_{i=1}^N \EE_{y_i \sim \N(0,I+\beta x x^\top)} \exp\left( \frac12 \frac{\beta}{1+\beta |x'|^2} \langle y_i, x' \rangle^2 \right) \right].
\intertext{Over the randomness of $y_i$, we have $\langle y_i, x' \rangle \sim \cN(0, |x'|^2 + \beta \langle x,x' \rangle^2)$, so that the inner expectation is a moment generating function of a $\chi_1^2$ random variable:}
&= \EE_{x,x'}\left[ (1+\beta |x'|^2)^{-N/2} \prod_{i=1}^N \left( 1 - \frac{\beta}{1+\beta |x'|^2} (|x'|^2 + \beta \langle x,x' \rangle^2) \right)^{-1/2} \right] \\
&= \EE_{x,x'}\left[ \left( 1 - \beta^2 \langle x,x' \rangle^2 \right)^{-N/2} \right], \numberthis\label{eq:wishart-moment}
\end{align*}
as desired.
\end{proof}

\subsection{Rate functions}\label{sec:rate}
In this subsection, we develop an understanding of the second moment (\ref{eq:wishart-moment}), through a combination of large deviations theory and comparison to the Wigner model. Recall from equation (\ref{eq:lambdastar}):
$$ \lambda^*_{\cX} = \sup \left\{ \lambda \mid \EE_{x,x' \sim \cX} \exp\left( \frac{\lambda^2 n}{2} \langle x,x' \rangle^2\right) \text{ is bounded as $n \to \infty$} \right\}. $$
and suppose that the following deviations
$$ f_{n,\cX}(t) = -\frac1n \log \Pr_{x,x' \sim \cX}[\langle x,x' \rangle^2 \geq t] $$
converge pointwise to a limit $f_\cX(t) \in [0,\infty]$. We require the Chernoff-style bound
\begin{equation}\label{eq:chernoff-rate-assumption} f_{n,\cX}(t) \geq f_\cX(t).\end{equation}
We furthermore assume that $f_\cX$ is lower semi-continuous on $(0,1+\delta_f]$ for some $\delta_f > 0$. We call such $f_{\cX}$ the \emph{rate function} of the prior $\cX$.

\begin{theorem}\label{thm:wishart-ld}
Let the spike prior $\cX$ have rate function $f_\cX$ which is finite on $(0,1)$.
\begin{enumerate}[(i)]
\item Suppose that $\beta^2 < 1$, that $\beta^2 / \gamma < (\lambda_{\cX}^*)^2$, and that $f_{\cX}(t) > \frac{-1}{2\gamma} \log(1-\beta^2 t)$ for all $t \in (0,1)$. Then $\Wish(\gamma,\beta,\cX) \contig \Wish(\gamma)$.
\item If $\beta^2 > 1$, or if $\beta^2/\gamma > (\lambda^*_\cX)^2$, or if $f_{\cX}(t) < \frac{-1}{2\gamma} \log(1-\beta^2 t)$ for some $t \in (0,1)$, then the second moment (\ref{eq:wishart-moment}) is unbounded.
\end{enumerate}
\end{theorem}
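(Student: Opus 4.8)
The starting point is Proposition~\ref{prop:wishart-2mom}: writing $Z:=\langle x,x'\rangle^2$ for two independent draws $x,x'$ from $\cX$ and using $n=\gamma N$, the second moment of interest equals $\EE_{x,x'}[(1-\beta^2 Z)^{-N/2}]$. Since Definition~\ref{def:prior} only gives $\|x\|\to 1$ in probability, for any $\eps>0$ the event $\{\|x\|^2\le(1+\eps)^2\}$ has probability $1-o(1)$, so by Lemma~\ref{lem:cond} I may replace $\cX$ by the conditioned prior $\tilde\cX$ that outputs the zero spike off this event; under $\tilde\cX$ we have $Z\le(1+\eps)^2=:Z_{\max}$ surely, the tail bound $\Pr_{\tilde\cX}[Z\ge t]\le e^{-\gamma N f_\cX(t)}$ still holds (it can only improve), and $\lambda^*_{\tilde\cX}\ge\lambda^*_\cX$ since only $o(1)$ mass is moved. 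The plan is then to split
\[ \EE_{x,x'}\!\left[(1-\beta^2 Z)^{-N/2}\right] = \EE\!\left[\one\{Z<\delta\}(1-\beta^2 Z)^{-N/2}\right] + \EE\!\left[\one\{Z\ge\delta\}(1-\beta^2 Z)^{-N/2}\right] \]
for a small threshold $\delta>0$, and control the \emph{small-overlap} term by comparison with the Wigner second moment of (\ref{eq:lambdastar}), the \emph{bulk} term by the large-deviations rate bound (\ref{eq:chernoff-rate-assumption}).

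For part~(i) I would argue as follows. On $\{Z<\delta\}$, the elementary estimate $-\log(1-u)\le u/(1-u)$ gives $(1-\beta^2 Z)^{-N/2}\le\exp\!\big(\tfrac{n}{2}\cdot\tfrac{\beta^2}{\gamma(1-\beta^2\delta)}Z\big)$, so the small-overlap term is at most the Wigner second moment $\EE_{x,x'}\exp(\tfrac{\lambda^2 n}{2}Z)$ with $\lambda^2=\tfrac{\beta^2}{\gamma(1-\beta^2\delta)}$; since $\beta^2/\gamma<(\lambda^*_\cX)^2\le(\lambda^*_{\tilde\cX})^2$ by hypothesis, choosing $\delta$ small forces $\lambda<\lambda^*_{\tilde\cX}$ and this term stays bounded. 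For the bulk term, $\beta^2<1$ lets me also take $\eps$ small so that $\beta^2 Z_{\max}<1$, hence $g(t):=(1-\beta^2 t)^{-N/2}$ is finite and increasing on $[\delta,Z_{\max}]$; a layer-cake (integration-by-parts) identity together with $\Pr[Z\ge t]\le e^{-\gamma N f_\cX(t)}$ bounds the bulk term by $\mathrm{poly}(N)\cdot\sup_{t\in[\delta,Z_{\max}]}\exp\!\big(-\gamma N\,h(t)\big)$ where $h(t):=f_\cX(t)+\tfrac{1}{2\gamma}\log(1-\beta^2 t)$. The hypothesis $f_\cX(t)>\tfrac{-1}{2\gamma}\log(1-\beta^2 t)$ makes $h>0$ on $(0,1)$; since $f_\cX$ is lower semi-continuous on $(0,1+\delta_f]$ and the second summand is continuous, $h$ is lower semi-continuous and hence attains a strictly positive infimum over the compact set $[\delta,Z_{\max}]$ (for $\eps$ small, using that $Z\ge1$ already costs rate $f_\cX(1)>0$, so $h>0$ persists just past $1$; for priors like $\IID(\pm1)$ this slack is simply absent, $Z\le1$ identically). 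Thus the bulk term is $o(1)$, the full second moment is bounded, and Lemma~\ref{lem:cond} gives $P_n\contig Q_n$ on the sample matrix $X$; since $Y=XX^\top$ is a deterministic function of $X$, contiguity descends to $\Wish(\gamma,\beta,\cX)\contig\Wish(\gamma)$.

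For part~(ii) I would exhibit divergence in each of the three cases. Monotonicity of $g$ gives, for any admissible $t$, $\EE[(1-\beta^2 Z)^{-N/2}]\ge g(t)\Pr[Z\ge t]=\exp\!\big(N[-\tfrac12\log(1-\beta^2 t)-\gamma f_{n,\cX}(t)]\big)$, and by the pointwise convergence $f_{n,\cX}(t)\to f_\cX(t)$ the bracket tends to $-\gamma h(t)$; so if $f_\cX(t)<\tfrac{-1}{2\gamma}\log(1-\beta^2 t)$ for some $t\in(0,1)$, i.e.\ $h(t)<0$, this lower bound diverges. The case $\beta^2/\gamma>(\lambda^*_\cX)^2$ follows from $-\log(1-u)\ge u$: then $\EE[(1-\beta^2 Z)^{-N/2}]\ge\EE_{x,x'}\exp(\tfrac{\lambda^2 n}{2}Z)$ with $\lambda^2=\beta^2/\gamma>(\lambda^*_\cX)^2$, which is unbounded by the very definition of $\lambda^*_\cX$ as a supremum (the bounded range of $\lambda$ being an interval containing $0$, by monotonicity in $\lambda$). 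For the case $\beta^2>1$ — which, since the Wishart model requires $\beta\ge-1$, means $\beta>1$ — the cleanest route is to revisit the derivation of Proposition~\ref{prop:wishart-2mom}: the $\chi_1^2$ moment generating function appearing there, $\big(1-\tfrac{\beta}{1+\beta|x'|^2}(|x'|^2+\beta\langle x,x'\rangle^2)\big)^{-1/2}=\big(\tfrac{1-\beta^2 Z}{1+\beta|x'|^2}\big)^{-1/2}$, is $+\infty$ exactly when $\beta^2 Z\ge1$; and since $1/\beta^2\in(0,1)$ with $f_\cX$ finite there, $\Pr[Z\ge1/\beta^2]=e^{-n f_{n,\cX}(1/\beta^2)}>0$ for all large $n$, so the second moment is in fact $+\infty$.

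The step I expect to be the main obstacle is the small-overlap regime $Z\approx0$: there the rate bound is vacuous ($f_\cX(0)=0$, so $h(t)\to0$ as $t\downarrow0$), and this is exactly where the Wishart problem inherits the Wigner difficulty — hence the appearance of $(\lambda^*_\cX)^2$ in the hypotheses and the need to route through the Wigner second moment rather than large deviations alone. The remaining work is bookkeeping: choosing $\delta$ and $\eps$ compatibly, checking the layer-cake identity is legitimate (all quantities finite because $\beta^2 Z_{\max}<1$ in part~(i)), and absorbing the $O(\eps)$ slack $Z\in[1,(1+\eps)^2]$ — present only because of the soft normalization $\|x\|\to1$ — via the conditioning of Lemma~\ref{lem:cond} together with the $\delta_f$-margin of the lower-semicontinuity hypothesis.
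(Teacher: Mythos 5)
Your proposal is correct and follows essentially the same route as the paper's proof: condition the prior on a norm bound, split the second moment at a small overlap threshold, control the small-overlap piece by comparison with the Wigner second moment via the elementary logarithm inequality (which is where $\lambda^*_\cX$ enters), control the bulk by a layer-cake argument combined with the Chernoff-style rate bound and lower semicontinuity of $f_\cX(t)+\frac{1}{2\gamma}\log(1-\beta^2 t)$, and prove part (ii) by the same two lower bounds (the Wigner comparison when $\beta^2/\gamma>(\lambda^*_\cX)^2$, and $g(t)\Pr[Z\ge t]$ with pointwise convergence of $f_{n,\cX}$ when the rate condition fails). The only departure is the $\beta^2>1$ sub-case, where you observe that the inner $\chi_1^2$ moment generating function in the derivation of (\ref{eq:wishart-moment}) is $+\infty$ once $\beta^2\langle x,x'\rangle^2\ge 1$, an event of positive probability for large $n$ since $f_\cX(\beta^{-2})<\infty$, whereas the paper reduces this case to the rate-function condition by noting that $\frac{-1}{2\gamma}\log(1-\beta^2 t)$ blows up at $t=\beta^{-2}<1$; both arguments are valid, and yours is slightly more direct.
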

Part (ii) is a partial converse to part (i), though unboundedness of the second moment does not imply that the distributions are not contiguous. In fact, in Appendix~\ref{app:wish-nc} we show how to obtain stronger lower bounds by conditioning away from certain bad events in the second moment.
\begin{proof}
{\bf Part (i).} Let $\delta, \eps > 0$, to be chosen later. With $x \sim \cX$, we have that $\|x\|_2 \to 1$ in probability (by Definition~\ref{def:prior}),
so that $\Pr[\|x\|_2 > 1+\delta] = o(1)$. Let $\cX'$ be the spike prior given by sampling from $\cX$ but instead returning the zero vector if $\|x\| > 1+\delta$. It suffices to bound the second moment (\ref{eq:wishart-moment}) applied to $\cX'$.

We can split the second moment as follows:
\begin{align*} \Ex_{x,x' \sim \cX'}\left[ (1-\beta^2\langle x,x' \rangle^2)^{-n/2\gamma} \right] &= \EE_{x,x'}\left[ (1-\beta^2\langle x,x' \rangle^2)^{-n/2\gamma} \;\one[\langle x,x' \rangle^2 \leq \eps] \right] \\
&+ \EE_{x,x'}\left[ (1-\beta^2\langle x,x' \rangle^2)^{-n/2\gamma} \;\one[\langle x,x' \rangle^2 > \eps] \right]. \numberthis\label{eq:deviations-split}
\end{align*}
We control these two parts separately. For the first part, we compute:
\begin{align*}
\EE_{x,x'} (1-\beta^2\langle x,x' \rangle^2)^{-n/2\gamma} \;\one[\langle x,x' \rangle^2 \leq \eps] &= \EE_{x,x'} \exp\left(\frac{-n}{2\gamma} \log(1-\beta^2\langle x,x' \rangle) \right) \;\one[\langle x,x' \rangle^2 \leq \eps] \\
&= \EE_{x,x'} \exp\left( \frac{-n}{2\gamma \eps^2} \log(1-\eps^2\beta^2) \langle x,x' \rangle^2 \right) \;\one[\langle x,x' \rangle^2 \leq \eps] \\
&\leq \EE_{x,x' \sim \cX'} \exp\left( \frac{-n}{2\gamma \eps^2} \log(1-\eps^2\beta^2) \langle x,x' \rangle^2 \right) \\
&\leq \EE_{x,x' \sim \cX} \exp\left( \frac{-n}{2\gamma \eps^2} \log(1-\eps^2\beta^2) \langle x,x' \rangle^2 \right) \numberthis\label{eq:parti-spectral-log} \\
&\leq \EE_{x,x' \sim \cX} \exp\left( \frac{n \beta^2}{2 \gamma(1-\beta^2 \eps^2)} \langle x,x' \rangle^2 \right), \numberthis\label{eq:parti-spectral-frac}
\end{align*}
using the bound $\log t \geq 1 - 1/t$. This contribution is bounded so long as
$$ \frac{\beta^2}{\gamma(1-\beta^2\eps^2)} < (\lambda^*_\cX)^2,
\text{ or equivalently, }
\frac{1}{\beta^2} - \frac{1}{\gamma(\lambda^*_\cX)^2} > \eps^2. $$
We can choose such $\eps > 0$ so long as $\beta^2 / \gamma < (\lambda^*_\cX)^2$, as assumed.

We control the second expectation in (\ref{eq:deviations-split}) as follows:
\begin{align*}
&\EE (1-\beta^2\langle x,x' \rangle^2)^{-n/2\gamma} \;\one[\langle x,x' \rangle^2 > \eps] \\
&= \int_0^\infty \Pr_{\cX'}\left[ (1-\beta^2\langle x,x' \rangle^2)^{-n/2\gamma} \;\one[\langle x,x' \rangle^2 > \eps] \geq u \right] \,\dee u.
\intertext{Substitute $u = (1-\beta^2 t)^{-n/2\gamma}$, so that $\dee u = \frac{\beta^2 n}{2\gamma} (1-\beta^2 t)^{-1-n/2\gamma} \,\dee t$:}
&= \int_0^\infty \frac{\beta^2 n}{2\gamma} (1-\beta^2 t)^{-1-p/2\gamma} \Pr_{\cX'}\left[ \langle x,x' \rangle^2 \;\one[\langle x,x' \rangle^2 > \eps] \geq t \right] \,\dee t \\
&\leq \int_0^\infty \frac{\beta^2 n}{2\gamma} (1-\beta^2 t)^{-1-n/2\gamma} \Pr_{\cX'}\left[ \langle x,x' \rangle^2 \geq \max(\eps,t) \right] \,\dee t \\
&= \int_0^{1+\delta} \frac{\beta^2 n}{2\gamma} (1-\beta^2 t)^{-1-n/2\gamma} \Pr_{\cX'}\left[ \langle x,x' \rangle^2 \geq \max(\eps,t) \right] \,\dee t \\
&\leq \int_0^{1+\delta} \frac{\beta^2 n}{2\gamma} (1-\beta^2 t)^{-1-n/2\gamma} \Pr_\cX\left[ \langle x,x' \rangle^2 \geq \max(\eps,t) \right] \,\dee t \\
&= \frac{\beta^2}{2\gamma} \int_0^{1+\delta} \exp\left(n\left( \frac{\log n}{n} - \left( \frac{1}{2\gamma} + \frac{1}{n} \right) \log(1-\beta^2 t) - \left( -\frac{1}{n} \log \Pr_\cX[\langle x,x' \rangle^2 \geq \max(\eps,t)] \right)  \right)\right) \,\dee t.
\end{align*}
As $\beta^2 < 1$, we can choose $\delta \leq \delta_f$ sufficiently small so that $\beta^2 (1+\delta) < 1$, so that $\log(1-\beta^2 t)$ is bounded on $[0,1+\delta]$. Then $-\log(1-\beta^2 t)/2\gamma - f_\cX(\max(\eps,t))$ is negative and upper semi-continuous on $[0,1+\delta]$, so that the supremum
$$ m = \sup_{t \in [\eps,1+\delta]} -\log(1-\beta^2 t)/2\gamma - f_\cX(\max(\eps,t)) $$
is attained. As $f_\cX(t) > \frac{-1}{2\gamma} \log(1-\beta^2 t)$ on $(0,1)$, by semi-continuity we can assume the same on $[\eps,1+\delta]$ (perhaps decreasing $\delta$), so that $m < 0$. Using the bound (\ref{eq:chernoff-rate-assumption}), we have for all $n$,
$$-\log(1-\beta^2 t)/2\gamma - f_{n,\cX}(\max(\eps,t)) \leq m < 0 \quad\text{ on } [\eps,1+\delta],$$
so that
$$ \EE\left[ (1-\beta^2\langle x,x' \rangle^2)^{-n/2\gamma} \;\one[\langle x,x' \rangle^2 > \eps] \right] \leq \frac{\beta^2}{2\gamma} \int_0^{1+\delta} \exp\left(n\left( \frac{\log n}{n} - \frac{1}{n} \log(1-\beta^2 t) + m  \right)\right) \,\dee t. $$
As $\frac{\log n}{n} - \frac{1}{n} \log(1-\beta^2 t) - m$ is bounded below $0$ for all sufficiently large $n$, the integral above tends to $0$ as $n \to \infty$, as desired.

{\bf Part (ii).} Suppose first that $\beta^2/\gamma > (\lambda^*_\cX)^2$. We bound the second moment (\ref{eq:wishart-moment}) as follows:
\begin{align*}
\EE_{x,x' \sim \cX}\left[ (1-\beta^2 \langle x,x' \rangle^2 )^{-n/2\gamma} \right] &= \EE_{x,x'}\left[ \exp\left( \frac{-n}{2\gamma} \log(1-\beta^2\langle x,x' \rangle^2) \right) \right] \\
&\geq \EE_{x,x'}\left[ \exp\left( \frac{n \beta^2}{2\gamma} \langle x,x' \rangle^2 \right) \right],
\end{align*}
which is unbounded as $n \to \infty$ as $\beta^2/\gamma > (\lambda^*_\cX)^2$.

Next, suppose that $f_\cX(t) < \frac{-1}{2\gamma} \log(1-\beta^2 t)$ for some $t \in (0,1]$. Then there exists $\eps > 0$ so that, for all sufficiently large $n$,
\begin{equation}
\frac{-1}{n} \log \Pr_{x,x' \sim \cX}[\langle x,x \rangle^2 \geq t] \leq -\eps - \frac{-1}{2\gamma} \log(1-\beta^2 t). \label{eq:finp-ldbound}
\end{equation}
we bound the second moment as follows:
\begin{align*}
\EE_{x,x' \sim \cX}\left[ (1-\beta^2 \langle x,x' \rangle^2 )^{-n/2\gamma} \right] &\geq \Pr[\langle x,x' \rangle^2 \geq t] (1-\beta^2 t)^{-n/2\gamma} \\
&= \exp\left( n \left( \frac1n \log \Pr[\langle x,x' \rangle^2 \geq t] - \frac{1}{2\gamma} \log(1-\beta^2 t) \right) \right) \\
&\geq \exp\left( n \eps \right)
\end{align*}
which is unbounded as $n \to \infty$.

Finally, suppose that $\beta^2 > 1$. Note that $\frac{-1}{2\gamma} \log(1-\beta^2 t)$ tends to infinity as $t \to \beta^{-2}$ from below, whereas $f_{\cX}(t)$ can only become infinite for $t \geq 1$. Hence we must have $f_\cX(t) < \frac{-1}{2\gamma} \log(1-\beta^2 t)$ for some $t < \beta^{-2}$, so that the second moment is unbounded by the argument above.
\end{proof}

\begin{remark}\label{rmk:wigner-wishart-bound}
Certain parts of the argument above do not rely on the existence of a rate function $f_\cX(t)$. Particularly, in part (i), equation (\ref{eq:parti-spectral-log}) demonstrates that we can bound the first expectation with some $\eps > 1$ so long as $-\log(1-\beta^2)/\gamma < (\lambda^*_\cX)^2$, or equivalently $\beta^2 < 1-e^{-\gamma (\lambda^*_\cX)^2}$. In this case, we can choose $\delta < \eps-1$, so that the integral in the second part vanishes entirely, as $\Pr[\langle x,x' \rangle^2 \geq \eps] = 0$. Thus we obtain $\Wish(\gamma,\beta,\cX) \contig \Wish(\gamma)$ whenever $\beta^2 < 1-e^{-\gamma (\lambda^*_\cX)^2}$, for every spike prior $\cX$, without assuming existence of a rate function.
\end{remark}

\begin{remark}
Suppose we have a bound on the Wigner second moment, that
$$ \limsup_{n \to \infty} \EE_{x,x'\sim \cX}\exp\left( \frac{\lambda^2 n}{2} \langle x,x' \rangle^2 \right) \leq B_\cX(\lambda^2). $$
Then from the argument of Theorem~\ref{thm:wishart-ld}, we obtain an explicit bound on the asymptotic second moment as $n \to \infty$: so long as the conditions of part (i) are satisfied, the second integral in (\ref{eq:deviations-split}) is $o(1)$, and the first integral is bounded (\ref{eq:parti-spectral-frac}) by
$$ B_\cX\left( \frac{\beta^2}{\gamma(1-\beta^2\eps^2)} \right). $$
Assuming continuity of $B_\cX$, we can now take $\eps \to 0$ and bound the Wishart second moment by
$ B_\cX(\beta^2/\gamma)$, in order to obtain explicit hypothesis testing error bounds through Proposition~\ref{prop:hyptest}. Non-asymptotic bounds might also be obtained from this line of argument, but we do not pursue this.
\end{remark}

\subsection{\texorpdfstring{Application: \iid priors}{Application: i.i.d. priors}}\label{sec:wishcontig}
Here we discuss the existence of rate functions for \iid priors, and explore contiguity for the $\ZZ/2$ prior in particular.

\begin{claim}Let $\pi$ be a distribution on $\RR$. Suppose that $x_1 x_2$ has a moment generating function $m(\theta) = \EE[e^{\theta x_1 x_2}]$ defined on all\footnote{There are acceptable weaker conditions for this statement.} of $\RR$, where $x_1, x_2 \sim \pi$ independently. Then $\cX = \IID(\pi)$ has rate function $f_\cX(t) = I(\sqrt{t})$, where
$$ I(u) = \sup_{v} u\theta - \log m(\theta). $$
\end{claim}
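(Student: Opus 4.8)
The plan is to recognize $\{\langle x,x'\rangle^2 \geq t\}$ as a large-deviation event for a sum of i.i.d.\ random variables and apply Cram\'er's theorem. Write $x = \frac{1}{\sqrt n}(\pi_1,\dots,\pi_n)$ and $x' = \frac{1}{\sqrt n}(\pi_1',\dots,\pi_n')$ with all $\pi_i,\pi_i'$ i.i.d.\ copies of $\pi$, and set $Z_i = \pi_i\pi_i'$, so that $Z_1,\dots,Z_n$ are i.i.d.\ copies of $Z = \pi_1\pi_2$ with moment generating function $m(\theta)=\EE[e^{\theta Z}]$, finite on all of $\RR$ by hypothesis. Then $\langle x,x'\rangle = \frac1n\sum_i Z_i =: \bar Z_n$, and
$$ \Pr_{x,x'\sim\cX}\left[\langle x,x'\rangle^2 \geq t\right] = \Pr\left[\,|\bar Z_n| \geq \sqrt t\,\right] = \Pr[\bar Z_n \geq \sqrt t] + \Pr[\bar Z_n \leq -\sqrt t]. $$

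First I would record the non-asymptotic Chernoff bound that yields (\ref{eq:chernoff-rate-assumption}). By the exponential Markov inequality and optimization over $\theta$, $\Pr[\bar Z_n \geq u] \leq e^{-nI(u)}$ for every $u \geq \EE Z$, and $\Pr[\bar Z_n \leq u] \leq e^{-nI(u)}$ for every $u \leq \EE Z$, where $I(u)=\sup_\theta\,(u\theta-\log m(\theta))$ is the Legendre transform. In the setting of interest ($\pi$ mean-zero, so $\EE Z=0$, and $t\in(0,1+\delta_f]$) this applies at $u=\pm\sqrt t$, giving $\Pr[|\bar Z_n|\geq\sqrt t]\leq e^{-nI(\sqrt t)}+e^{-nI(-\sqrt t)}$. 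Setting $f_\cX(t):=\min(I(\sqrt t),I(-\sqrt t))$ and taking $-\frac1n\log$ yields $f_{n,\cX}(t)\geq f_\cX(t)-\frac{\log 2}{n}$, which is (\ref{eq:chernoff-rate-assumption}) up to a vanishing additive term, and holds exactly when $\pi$ (hence $Z$) is symmetric, as for the Rademacher prior, in which case $I(-\sqrt t)=I(\sqrt t)=f_\cX(t)$.

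Next I would obtain the matching asymptotic lower bound, giving pointwise convergence. Since $m$ is finite on $\RR$ and hence steep, Cram\'er's theorem gives that $\bar Z_n$ satisfies a large deviation principle with good rate function $I$; in particular $\frac1n\log\Pr[\bar Z_n\geq\sqrt t]\to -I(\sqrt t)$ and $\frac1n\log\Pr[\bar Z_n\leq-\sqrt t]\to -I(-\sqrt t)$ whenever $\sqrt t$ lies in the interior of the convex hull of the support of $Z$. Summing the two tails, the larger exponent dominates, so $f_{n,\cX}(t)\to\min(I(\sqrt t),I(-\sqrt t))=f_\cX(t)$, which equals $I(\sqrt t)$ in the symmetric case (matching the claimed formula; in general the claim should be read with this minimum, the two coinciding whenever the positive tail of $Z$ is the lighter one). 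Lower semicontinuity of $f_\cX$ on $(0,1+\delta_f]$ is then automatic, as $I$ is a convex conjugate (hence convex and lower semicontinuous on $\RR$) and is finite on the open interval between the essential infimum and supremum of $Z$; for unit-variance $\pi$ this interval contains a neighborhood of $(0,1)$, and where $t=1$ is attained (e.g.\ the Rademacher prior) $I$ stays finite and left-continuous, so the hypothesis holds for a suitable $\delta_f$.

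The one real subtlety---more bookkeeping than obstacle---is the two-sided nature of the event: the true rate is $\min(I(\sqrt t),I(-\sqrt t))$, and its clean identification with $I(\sqrt t)$ as stated relies on symmetry of $\pi$ (which covers the Rademacher application used later) or, more generally, on the positive tail of $Z=\pi_1\pi_2$ being the lighter one. The companion point is that the finite-$n$ bound (\ref{eq:chernoff-rate-assumption}) is recovered only up to the harmless $\frac{\log 2}{n}$ slack from the union of the two tails; this is absorbed in every later use of (\ref{eq:chernoff-rate-assumption}) (e.g.\ in the proof of Theorem~\ref{thm:wishart-ld}), or is simply absent in the symmetric case.
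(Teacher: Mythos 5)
Your argument follows the paper's own route---the Chernoff bound for the non-asymptotic inequality (\ref{eq:chernoff-rate-assumption}), Cram\'er's theorem for the pointwise limit of $f_{n,\cX}$, and lower semicontinuity of $I$ as a convex conjugate---with the only real difference being that you treat the two-sided event $\{|\langle x,x'\rangle|\ge \sqrt t\}$ explicitly, which the paper glosses over. The core of the proof is fine; two of your side remarks, however, need correcting. First, the hedge about asymmetric $\pi$ is unnecessary, and the condition you state is reversed: $\min\bigl(I(\sqrt t),I(-\sqrt t)\bigr)=I(\sqrt t)$ requires the positive tail of $Z=\pi_1\pi_2$ to be at least as \emph{heavy} as the negative one (i.e.\ the smaller rate), not lighter. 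In fact this always holds for a product of two \iid copies: every moment satisfies $\EE[Z^k]=(\EE[\pi^k])^2\ge 0$, so $m(\theta)-m(-\theta)=2\EE[\sinh(\theta Z)]\ge 0$ for $\theta\ge 0$, whence $I(u)\le I(-u)$ for all $u\ge 0$. Thus $f_\cX(t)=I(\sqrt t)$ is correct verbatim for every mean-zero $\pi$, with no symmetry assumption and no minimum needed.

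Second, your claim that (\ref{eq:chernoff-rate-assumption}) holds exactly when $\pi$ is symmetric is not right: the two tails are disjoint, so the factor $2$ persists even then. For the Rademacher prior at $t=1$, for instance, $\Pr[|\bar Z_n|\ge 1]=2^{1-n}$, giving $f_{n,\cX}(1)=\frac{n-1}{n}\log 2<\log 2=f_\cX(1)$, so the literal inequality $f_{n,\cX}\ge f_\cX$ can fail by exactly the $\frac{\log 2}{n}$ you identified. Your resolution---that this additive slack is absorbed wherever (\ref{eq:chernoff-rate-assumption}) is invoked, e.g.\ in the proof of Theorem~\ref{thm:wishart-ld} where one only needs the exponent to remain bounded below $0$ for large $n$---is the right one, and is in effect a small but fair correction to the paper's literal statement of (\ref{eq:chernoff-rate-assumption}) as well.
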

\begin{proof}
This pointwise convergence of deviations follows from Cram\'er's theorem. The bound (\ref{eq:chernoff-rate-assumption}) follows from the Chernoff bound. $I$ is lower semi-continuous by virtue of being defined as a convex dual, and so the same is true of $f_\cX$.
\end{proof}

\begin{lemma}
The prior $\cX = \IID(\{\pm 1\})$ has rate function 
$$ f_{\cX}(t) = \frac12 (1+\sqrt{t}) \log(1+\sqrt{t}) + \frac12 (1-\sqrt{t}) \log(1-\sqrt{t}) = \log 2 - H\left(\frac{1+\sqrt{t}}{2}\right), $$
where $H$ is the binary entropy. For $t > 1$ we have $f_{\cX}(t) = \infty$.
\end{lemma}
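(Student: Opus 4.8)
The plan is to reduce the correlation $\langle x,x' \rangle$ to a normalized sum of i.i.d.\ Rademacher variables and then invoke the preceding Claim, which identifies the rate function via Cram\'er's theorem. First I would write each spike coordinate as $x_i = \epsilon_i/\sqrt n$ with $\epsilon_i$ i.i.d.\ uniform on $\{\pm 1\}$, and similarly $x_i' = \epsilon_i'/\sqrt n$; then $\langle x,x' \rangle = \frac1n \sum_{i=1}^n \eta_i$, where $\eta_i := \epsilon_i \epsilon_i'$ are again i.i.d.\ uniform on $\{\pm 1\}$. Thus $\cX = \IID(\pi)$ with $\pi$ the unit Rademacher law, the product of two independent draws from $\pi$ is Rademacher, and its moment generating function $m(\theta) = \EE[e^{\theta \eta}] = \cosh\theta$ is finite on all of $\RR$. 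The Claim then gives $f_\cX(t) = I(\sqrt t)$ with $I(u) = \sup_{\theta \in \RR}\big(u\theta - \log\cosh\theta\big)$; the absolute value implicit in $\langle x,x' \rangle^2 \ge t \iff |\langle x,x' \rangle| \ge \sqrt t$ only contributes a factor $2$ by symmetry of $\pi$, which disappears from the exponential rate.

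Next I would evaluate $I(u)$ for $u \in [0,1)$. Setting the $\theta$-derivative of $u\theta - \log\cosh\theta$ to zero gives $\tanh\theta^\star = u$, hence $\theta^\star = \frac12\log\frac{1+u}{1-u}$ and $\cosh\theta^\star = (1-u^2)^{-1/2}$. Substituting,
$$ I(u) = u\theta^\star - \log\cosh\theta^\star = \frac u2\log\frac{1+u}{1-u} + \frac12\log(1-u^2) = \frac{1+u}{2}\log(1+u) + \frac{1-u}{2}\log(1-u). $$
Taking $u = \sqrt t$ yields the first displayed expression. For the binary-entropy form I would substitute $p = \frac{1+\sqrt t}{2}$ (so $1-p = \frac{1-\sqrt t}{2}$) into $H(p) = -p\log p - (1-p)\log(1-p)$ and collect the $\log 2$ terms, obtaining $H\!\left(\frac{1+\sqrt t}{2}\right) = \log 2 - f_\cX(t)$.

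Finally, for $t > 1$: since $|\langle x,x' \rangle| = \frac1n|\sum \eta_i| \le 1$ deterministically, the event $\langle x,x' \rangle^2 \ge t$ is empty for every $n$, so $f_{n,\cX}(t) = +\infty$ and therefore $f_\cX(t) = +\infty$. As a sanity check at the boundary $t=1$: $\Pr[\langle x,x' \rangle^2 \ge 1] = 2^{1-n}$, giving $f_\cX(1) = \log 2 = \log 2 - H(1)$, in agreement with the formula.

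I do not expect a genuine obstacle here: the argument is a routine Cram\'er computation, with the existence of the pointwise deviation limit, the Chernoff-style bound (\ref{eq:chernoff-rate-assumption}), and lower semi-continuity all already supplied by the preceding Claim. The only places needing care are confirming that the $|\cdot|$ does not affect the rate (handled by symmetry) and the algebraic simplification of $I(u)$ into the entropy form, where a sign slip is the main risk; both are verified directly.
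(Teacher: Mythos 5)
Your proposal is correct and follows the same route as the paper: the lemma is proved there in one line by taking the convex dual of the cumulant generating function $\log m(\theta)=\log\cosh\theta$ via first-order optimality, which is exactly your computation $\tanh\theta^\star=u$, $I(u)=\frac{1+u}{2}\log(1+u)+\frac{1-u}{2}\log(1-u)$, combined with the preceding Claim. Your additional checks (the entropy rewriting, the $t>1$ case, and the irrelevance of the two-sided deviation factor) are fine and merely fill in details the paper leaves implicit.
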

\begin{proof}
We take the convex dual of the cumulant generating function $\log m(\theta) = \log(\cosh(\theta))$, solving the optimization problem via first-order optimality.
\end{proof}

\begin{proposition}\label{prop:wishart-z2-spectral-tight}
Let $\cX = \IID(\{\pm 1\})$. If $\gamma \leq \frac13$ and $\beta^2 < \gamma$, then $f_\cX(t) > \frac{-1}{2\gamma} \log(1-\beta^2 t)$, so that $\Wish(\gamma,\beta,\IID(\{\pm 1\})) \contig \Wish(\gamma)$.
\end{proposition}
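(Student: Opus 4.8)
The plan is to invoke Theorem~\ref{thm:wishart-ld}(i), so the task reduces to verifying its three hypotheses for $\cX = \IID(\{\pm 1\})$. Since $\beta^2 < \gamma \le \frac13 < 1$, both $\beta^2 < 1$ and $\beta^2/\gamma < 1 = (\lambda^*_\cX)^2$ hold (recall $\lambda^*_\cX = 1$ for the Rademacher prior, from Section~\ref{sec:z2}), and by the preceding lemma the rate function $f_\cX(t) = \log 2 - H\!\left(\frac{1+\sqrt t}{2}\right)$ is finite on $(0,1)$. So the whole proposition comes down to the pointwise inequality
$$ f_\cX(t) > \frac{-1}{2\gamma}\log(1 - \beta^2 t) \qquad\text{for all } t \in (0,1). $$

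First I would remove the $\beta$-dependence. The right-hand side is strictly increasing in $\beta^2$ (its $\beta^2$-derivative is $\frac{t}{2\gamma(1-\beta^2 t)} > 0$, and $\beta^2 t < \gamma < 1$), and $\beta^2 < \gamma$; hence it suffices to prove the non-strict bound $2\gamma\, f_\cX(t) \ge -\log(1-\gamma t)$ for $t \in (0,1)$, which then becomes strict once $\beta^2 < \gamma$. Substituting $t = u^2$, $u \in (0,1)$, and using $2 f_\cX(u^2) = (1+u)\log(1+u) + (1-u)\log(1-u)$, this is equivalent to $G(u) \ge 0$ on $[0,1)$, where
$$ G(u) = \gamma\left[(1+u)\log(1+u) + (1-u)\log(1-u)\right] + \log(1-\gamma u^2). $$
Here $\gamma u^2 < \gamma \le \frac13$, so the logarithm is well-defined and the reduction is legitimate.

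The remaining work is a short one-variable convexity check. One computes $G(0) = 0$, $G'(u) = \gamma\log\frac{1+u}{1-u} - \frac{2\gamma u}{1-\gamma u^2}$ so that $G'(0) = 0$, and
$$ G''(u) = 2\gamma\left[\frac{1}{1-u^2} - \frac{1+\gamma u^2}{(1-\gamma u^2)^2}\right]. $$
The bracket is nonnegative because
$$ (1-\gamma u^2)^2 - (1-u^2)(1+\gamma u^2) = u^2\left[(1 - 3\gamma) + \gamma(\gamma+1)u^2\right] \ge 0, $$
and this is the single place where the hypothesis $\gamma \le \frac13$ enters --- it is exactly the condition forcing $G''(0) \ge 0$. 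Thus $G$ is convex on $[0,1)$ with $G(0) = G'(0) = 0$, hence $G \ge 0$, which closes the chain of reductions and yields $\Wish(\gamma,\beta,\IID(\{\pm 1\})) \contig \Wish(\gamma)$ via Theorem~\ref{thm:wishart-ld}(i).

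I do not anticipate a genuine obstacle: the argument is a sequence of elementary monotonicity reductions followed by a two-line derivative computation. The only point requiring a little care is arranging the reductions so that everything collapses to the clean polynomial identity above, which simultaneously explains why the critical value of the scaling parameter in this result is precisely $\gamma = \frac13$.
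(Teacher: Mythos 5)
Your proof is correct, and all the reductions check out: the hypotheses of Theorem~\ref{thm:wishart-ld}(i) are exactly the ones you verify ($\beta^2<1$, $\beta^2/\gamma<(\lambda^*_\cX)^2=1$, finiteness of $f_\cX$ on $(0,1)$), the monotonicity in $\beta^2$ legitimately reduces the strict inequality to the non-strict one at $\beta^2=\gamma$, and the identity $(1-\gamma u^2)^2-(1-u^2)(1+\gamma u^2)=u^2\left[(1-3\gamma)+\gamma(\gamma+1)u^2\right]$ is right, so $G$ is convex with $G(0)=G'(0)=0$ and hence nonnegative on $[0,1)$.

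Where you differ from the paper is in how the pointwise inequality $f_\cX(t)>\frac{-1}{2\gamma}\log(1-\beta^2 t)$ is established. The paper expands $f_\cX(t)+\frac{1}{2\gamma}\log(1-\beta^2 t)$ as a power series in $t$, obtaining coefficients $\frac{1}{2i}\bigl(\frac{1}{2i-1}-\frac{\beta^{2i}}{\gamma}\bigr)$, and shows each is positive by bounding $\beta^{2i}/\gamma<\gamma^{i-1}$ and checking $\gamma^{i-1}\le 1/(2i-1)$, with the binding constraint at $i=2$ giving $\gamma\le\frac13$. You instead freeze $\beta^2=\gamma$, substitute $t=u^2$, and run a one-variable convexity argument; the hypothesis $\gamma\le\frac13$ enters through $G''\ge 0$, which near $u=0$ is essentially the same $i=2$ constraint in disguise. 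The paper's route is slightly more direct (no reduction to $\beta^2=\gamma$ needed, since the coefficient bound absorbs $\beta$ immediately) and its term-by-term structure is what the subsequent partial converse in the paper reuses (the $t^2$ coefficient turning negative for $\gamma>\frac13$); your route buys a global structural explanation --- convexity plus a clean polynomial identity pinpointing exactly why $\frac13$ is critical --- without any series manipulation. Either argument is complete and acceptable.
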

\noindent In other words, the spectral upper bound, which achieves recovery when $\beta^2 > \gamma$, is tight when $\gamma \leq \frac13$. 
\begin{proof}
We wish to see that $f_\cX(t) + \frac{1}{2\gamma} \log(1-\beta^2 t) > 0$ for all $t \in (0,1)$. Indeed, its Taylor series takes the form
\begin{equation}\label{eq:wishart-z2-series} \sum_{i \geq 1} \frac{1}{2i} \left( \frac{1}{2i-1} - \frac{\beta^{2i}}{\gamma} \right) t^i, \end{equation}
convergent on $[0,1)$ since $\beta^2 < \gamma < 1$. We will argue that each term is positive. Since $\beta^2 < \gamma$, we have
$$ \frac{1}{2i} \left( \frac{1}{2i-1} - \frac{\beta^{2i}}{\gamma} \right) > \frac{1}{2i} \left( \frac{1}{2i-1} - \gamma^{i-1} \right), $$
so it suffices that $\gamma^{i-1} \leq 1/(2i-1)$ for all $i \geq 1$. This is clearly the case for $i=1$, and for $i \geq 2$, the strictest of the constraints $\gamma \leq (2i-1)^{-1/(i-1)}$ occurs when $i=2$. It then suffices that $\gamma \leq \frac13$, as assumed. The contiguity result now follows from Theorem~\ref{thm:wishart-ld}(i), noting that $f_\cX$ is finite on $(0,1)$, and that $\lambda^*_\cX = 1$ as computed in Theorem~\ref{thm:pmone-wigner}.
\end{proof}
We have a partial converse:
\begin{proposition}
Let $\cX = \IID(\{\pm 1\})$. For $\gamma > \frac13$, there exists $\beta^2 < \gamma$ for which the second moment (\ref{eq:wishart-moment}) diverges. Further, whenever $\beta^2 > 1-e^{-(2\log 2)\gamma}$, the second moment diverges.
\end{proposition}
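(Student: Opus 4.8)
The plan is to derive both halves of this statement directly from the divergence criterion of Theorem~\ref{thm:wishart-ld}(ii): if $f_\cX(t) < \frac{-1}{2\gamma}\log(1-\beta^2 t)$ for some $t\in(0,1]$ (or if $\beta^2>1$), then the second moment (\ref{eq:wishart-moment}) is unbounded. Here $f_\cX(t)=\log 2 - H\!\left(\frac{1+\sqrt t}{2}\right)$ is finite on $(0,1)$ with $f_\cX(1)=\log 2$, and $\lambda^*_\cX=1$, so the theorem applies. Throughout I would use the power series identity from the proof of Proposition~\ref{prop:wishart-z2-spectral-tight},
$$ g_{\gamma,\beta}(t) \;:=\; f_\cX(t) + \frac{1}{2\gamma}\log(1-\beta^2 t) \;=\; \sum_{i\ge 1}\frac{1}{2i}\left(\frac{1}{2i-1}-\frac{\beta^{2i}}{\gamma}\right)t^i, $$
valid for $t\in(0,1)$ whenever $\beta^2<1$, and finite also at $t=1$.

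For the ``further'' claim I would simply take $t=1$. If $\beta^2\ge 1$ the second moment diverges by Theorem~\ref{thm:wishart-ld}(ii) outright (when $\beta^2=1$, let $t\to 1^-$ and use $\frac{-1}{2\gamma}\log(1-t)\to\infty$ while $f_\cX(t)\le\log 2$). Otherwise $1-e^{-(2\log 2)\gamma}<\beta^2<1$, so $1-\beta^2<e^{-(2\log 2)\gamma}$, hence $\log 2 < \frac{-1}{2\gamma}\log(1-\beta^2)$; since $f_\cX(1)=\log 2$ this is precisely $f_\cX(1)<\frac{-1}{2\gamma}\log(1-\beta^2\cdot 1)$, and divergence follows.

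For the first claim, fix $\gamma>1/3$. If $\gamma\ge 1$, any $\beta^2\in(1-e^{-(2\log2)\gamma},\,1)$ satisfies $\beta^2<1\le\gamma$ and gives divergence by the ``further'' claim just proved; so assume $1/3<\gamma<1$. The key observation is to inspect the boundary value $\beta^2=\gamma$: then the $i=1$ coefficient of $g_{\gamma,\gamma}$ vanishes and the $i=2$ coefficient equals $\frac14(\frac13-\gamma)<0$ (this is exactly where the hypothesis $\gamma>1/3$ enters). Hence $g_{\gamma,\gamma}(t) = -\frac14(\gamma-\frac13)t^2 + O(t^3) < 0$ for all sufficiently small $t>0$; to make this rigorous I would bound the tail $\sum_{i\ge 3}$ crudely, e.g.\ noting each coefficient is at most $\frac{1}{2i}$ in absolute value so that on $[0,1/2]$ the tail is at most $t^3$, and then taking any $t_0$ with $0<t_0\le\min\{1/2,\ \frac14(\gamma-\frac13)\}$. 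Fixing such $t_0\in(0,1)$ gives the strict inequality $f_\cX(t_0)<\frac{-1}{2\gamma}\log(1-\gamma t_0)$. Since the map $\beta^2\mapsto\frac{-1}{2\gamma}\log(1-\beta^2 t_0)$ is continuous (and increasing) in $\beta^2$ near $\beta^2=\gamma$ (and $\gamma t_0<1$), the strict inequality persists for some $\beta^2<\gamma$; Theorem~\ref{thm:wishart-ld}(ii) applied with this $\beta$ and $t=t_0$ then yields divergence of (\ref{eq:wishart-moment}).

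The argument is essentially routine, so I do not expect a serious obstacle. The one place that needs care is the perturbation step: checking that the strict inequality attained at the boundary $\beta^2=\gamma$ survives when $\beta^2$ is decreased slightly --- which works because the right-hand side $\frac{-1}{2\gamma}\log(1-\beta^2 t_0)$ is continuous in $\beta^2$ there --- together with the bookkeeping that the negative $t^2$ term dominates the series tail for small $t$. One must also remember to route the case $\gamma\ge 1$ through the ``further'' statement, since there no admissible $\beta^2<\gamma$ lies close to $\gamma$.
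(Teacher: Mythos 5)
Your proposal matches the paper's proof: both halves are obtained from Theorem~\ref{thm:wishart-ld}(ii), the first by setting $\beta^2=\gamma$ in the series expansion (\ref{eq:wishart-z2-series}), noting the vanishing $t^1$ and negative $t^2$ coefficients for $\gamma>\tfrac13$, and perturbing $\beta^2$ slightly below $\gamma$ by continuity; the second by evaluating at $t=1$ using $f_\cX(1)=\log 2$. Your extra bookkeeping (explicit tail bound on the series, separate handling of $\gamma\ge 1$ and $\beta^2\ge 1$) is fine and only makes the same argument more explicit.
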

\begin{proof}
For the first assertion, note that if we take $\beta^2 = \gamma$, then from the series expansion (\ref{eq:wishart-z2-series}), $f_\cX(t) + \frac{1}{2\gamma} \log(1-\beta^2 t)$ has vanishing $t^0$ and $t^1$ coefficients and negative $t^2$ coefficient for $\gamma \geq \frac13$. It follows that there exists some $t > 0$ for which this quantity is negative. By continuity, this statement remains true if we fix $\gamma$ and decrease $\beta$ a sufficiently small amount. The assertion now follows from Theorem~\ref{thm:wishart-ld}(ii).

The condition on the second assertion is precisely that $f_\cX(t) + \frac{1}{2\gamma} \log(1-\beta^2 t)$ is negative at $t=1$, as $f_\cX(1) = \log 2$. Hence this assertion follows also from Theorem~\ref{thm:wishart-ld}(ii).
\end{proof}

The previous two propositions suggest a threshold behavior, where the spectral method is tight for $\gamma \leq \frac13$ but might not be for larger $\gamma$. In the next section, we verify that the spectral method becomes suboptimal for $\gamma > 0.698$ in the negatively-spiked case where $\beta < 0$. When $\beta > 0$ the spectral threshold is always optimal; we show this in Appendix~\ref{app:wish-nc}.

\subsection{Detecting a negative spike}\label{sec:wishup}

In this subsection, we analyze the performance of a certain computationally inefficient test for the detection problem. Note the following well-known large deviations behavior for $\chi^2$ distributions, which follows from Cram\'er's theorem:
\begin{lemma}\label{lemma:chi2rate}
For all $z < 1$ and $c > 0$,
$$ \lim_{p \to \infty} \frac1p \Pr\left[ \chi_{p}^2 < zp \right] = \frac12(1-z+\log z). $$
\end{lemma}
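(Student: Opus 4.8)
The plan is to recognize that $\chi^2_p$ is a sum of $p$ i.i.d.\ copies of a $\chi^2_1$ variable and apply Cram\'er's theorem to its lower tail. Write $\chi^2_p = \sum_{i=1}^p X_i$ with $X_i = g_i^2$ for $g_i \sim \cN(0,1)$ independent, so that $\EE[X_i] = 1$. Since $z < 1$, the event $\{\chi^2_p < zp\}$ is a large deviation strictly below the mean, and Cram\'er's theorem yields
$$ \lim_{p \to \infty} \frac1p \log \Pr[\chi^2_p < zp] = -I(z), \qquad I(z) = \sup_{\theta \in \RR}\big( \theta z - \Lambda(\theta) \big), $$
where $\Lambda(\theta) = \log \EE[e^{\theta X_1}]$ is the cumulant generating function of $\chi^2_1$. (For $z \le 0$ the probability is exactly $0$, so there is nothing to prove; I read the ``$c > 0$'' in the statement as ``$z > 0$'', and the left-hand side as $\frac1p \log \Pr[\cdots]$, since with no logarithm the limit is trivially $0$.)

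Next I would compute $\Lambda$ explicitly. The moment generating function of $\chi^2_1$ is $\EE[e^{\theta X_1}] = (1-2\theta)^{-1/2}$ for $\theta < 1/2$ and $+\infty$ otherwise, so $\Lambda(\theta) = -\tfrac12 \log(1-2\theta)$ on $(-\infty,1/2)$. Because $\Lambda$ is finite in a neighbourhood of the origin, Cram\'er's theorem applies with no extra hypotheses, and the good rate function $I$ is lower semi-continuous (indeed continuous on its domain), so the open and closed events $\{\chi^2_p < zp\}$ and $\{\chi^2_p \le zp\}$ share the same exponential rate.

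Then I would solve the variational problem. Differentiating $\theta z + \tfrac12 \log(1-2\theta)$ in $\theta$ and setting the derivative to zero gives $z = (1-2\theta)^{-1}$, i.e.\ $\theta^\star = \tfrac12(1 - 1/z)$; for $0 < z < 1$ this lies in $(-\infty, 1/2)$ and is negative, as it must be for a lower-tail event, and the second derivative $-2(1-2\theta)^{-2}$ is negative, confirming a maximum. Substituting back,
$$ I(z) = \theta^\star z + \tfrac12 \log(1-2\theta^\star) = \tfrac12(z-1) + \tfrac12\log(1/z) = \tfrac12\big(z - 1 - \log z\big), $$
so $-I(z) = \tfrac12(1 - z + \log z)$, which is the asserted limit.

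There is essentially no hard step here: the only points requiring care are that the optimizing $\theta^\star$ lies in the domain of finiteness of $\Lambda$ (handled above) and the open-versus-closed event distinction (handled by continuity of $I$). As an alternative that avoids citing Cram\'er, one could integrate the explicit $\chi^2_p$ density directly and apply Laplace's method together with Stirling's formula for $\Gamma(p/2)$, arriving at the same rate function; but the Cram\'er route is cleaner and matches the phrasing already used in the surrounding text.
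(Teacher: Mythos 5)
Your proposal is correct and follows the same route as the paper, which simply asserts the lemma ``follows from Cram\'er's theorem'' without further detail; you have filled in exactly that computation (cumulant generating function $-\tfrac12\log(1-2\theta)$, optimizer $\theta^\star=\tfrac12(1-1/z)$, rate $\tfrac12(z-1-\log z)$), and your reading of the statement's typos (missing $\log$, ``$c>0$'' for ``$z>0$'') matches the intended meaning as used in Theorem~\ref{thm:wishart-mle-z2}.
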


\begin{theorem}\label{thm:wishart-mle-z2} Let $\beta < 0$. Let $\cX_n$ be any prior supported on at most $c^n$ points, for some fixed $c$. Then there is a computationally inefficient procedure that distinguishes between the spiked Wishart model $\Wish(\gamma,\beta,\cX))$ and the unspiked model $\Wish(\gamma)$, with $o(1)$ probability of error, whenever
$$ (-\beta) + \log(1-(-\beta)) < -2\gamma \log c, $$
\end{theorem}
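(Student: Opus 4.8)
The plan is to analyze a maximum-likelihood-style test and control its two error probabilities with a union bound against the large-deviation rate of a chi-squared lower tail. Write $b = -\beta \in (0,1)$; the reason for taking $\beta < 0$ is that the spiked covariance $I + \beta x x^\top = I - b x x^\top$ \emph{shrinks} the variance along the planted direction, so a planted spike betrays itself by making the sample energy $x^\top Y x = \sum_{i=1}^N \langle y_i, x \rangle^2$ anomalously small, where $Y = XX^\top$ and $y_1,\dots,y_N$ are the sample columns. Indeed, from the single-spike likelihood ratio in (\ref{eq:wishart-dpdq}) one checks that, for $\|x\| \approx 1$, the likelihood is a strictly decreasing function of $x^\top Y x$, so the (profile) MLE test is equivalent to thresholding $T := \min_{x} x^\top Y x$ over the candidate spikes; we declare ``spiked'' iff $T < zN$ for a constant $z$ to be chosen in $(1-b,1)$.

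First I would restrict the minimization defining $T$ to the set $S_\delta$ of points of $\mathrm{supp}(\cX_n)$ whose squared norm lies in $[1-\delta, 1+\delta]$, for a small $\delta > 0$ fixed later. By the normalization $\|x\| \to 1$ in probability, $S_\delta$ carries $\cX_n$-mass $1-o(1)$, while still $|S_\delta| \le |\mathrm{supp}(\cX_n)| \le c^n$. For the \emph{null side}: under $\Wish(\gamma)$ each fixed $x$ has $x^\top Y x \sim \|x\|^2 \chi^2_N$, so for $x \in S_\delta$ the standard Chernoff bound for chi-squared variables (whose rate is the function $\psi(w) := \frac{1}{2}(w - 1 - \log w)$ appearing in Lemma~\ref{lemma:chi2rate}) gives $\Pr[x^\top Y x < zN] \le \exp(-N\,\psi(z/(1-\delta)))$ whenever $z/(1-\delta) < 1$. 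A union bound over $S_\delta$, using $n = \gamma N$, bounds the type I error by $\exp(-N(\psi(z/(1-\delta)) - \gamma \log c))$, which is $o(1)$ provided $\psi(z/(1-\delta)) > \gamma \log c$. For the \emph{spiked side}: condition on the probability-$(1-o(1))$ event that the planted spike $x^\ast$ lies in $S_\delta$; then $\langle y_i, x^\ast \rangle \sim \cN(0,\, \|x^\ast\|^2(1-b\|x^\ast\|^2))$ independently, so by the law of large numbers $\frac{1}{N} (x^\ast)^\top Y x^\ast \to \|x^\ast\|^2(1-b\|x^\ast\|^2)$ in probability, and since $u \mapsto u(1-bu)$ is continuous with value $1-b$ at $u=1$, this limit is at most $1-b+C\delta$ on $S_\delta$ for an absolute constant $C$. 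Hence $T \le (x^\ast)^\top Y x^\ast < zN$ with probability $1-o(1)$ as soon as $z > 1-b+C\delta$, making the type II error $o(1)$.

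It remains to choose the constants, and this is where the exact threshold in the statement appears. The function $\psi$ is continuous and strictly decreasing on $(0,1)$ with $\psi(1) = 0$, so $\sup_{z \in (1-b,\,1)} \psi(z) = \psi(1-b) = \frac{1}{2}(-b - \log(1-b))$; and the hypothesis $(-\beta) + \log(1-(-\beta)) < -2\gamma \log c$ says exactly that $\psi(1-b) > \gamma \log c$. By strictness I can pick $z \in (1-b, 1)$ with $\psi(z) > \gamma \log c$, and then pick $\delta > 0$ small enough that simultaneously $z < 1-\delta$, $z > 1-b+C\delta$, and $\psi(z/(1-\delta)) > \gamma \log c$ --- all of which hold by continuity as $\delta \downarrow 0$. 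With these choices both error probabilities are $o(1)$. The procedure is computationally inefficient precisely because evaluating $T$ requires enumerating the up-to-$c^n$ candidate spikes.

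\textbf{Main obstacle.} No individual estimate is delicate; the essential conceptual point --- and the step I would set up most carefully --- is that the right statistic is the \emph{minimum} quadratic form over an exponentially large candidate set, so that the union-bound cost $e^{\gamma N \log c}$ gets balanced against the chi-squared lower-tail rate $\psi(z)$, with the feasible window for the threshold being exactly $z \in (1-b, 1)$. Because $\psi$ is monotone, its supremum on this window sits at the endpoint $z = 1-b$, and this is what pins down the constant $(-\beta) + \log(1-(-\beta)) < -2\gamma\log c$. The only genuinely fiddly bookkeeping is propagating the normalization ``$\|x\| \to 1$ in probability'' (rather than $\|x\| = 1$ exactly) through both tails, which the $S_\delta$-restriction and the $\delta \downarrow 0$ continuity argument handle.
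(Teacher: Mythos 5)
Your proposal is correct and follows essentially the same route as the paper: threshold the minimum quadratic form $\min_x x^\top Y x$ over the (at most $c^n$-point) support, use concentration of $(x^*)^\top Y x^*$ near $(1+\beta)N$ under the spiked model, and beat the union-bound cost $e^{\gamma N \log c}$ with the chi-squared lower-tail rate $\tfrac12(z-1-\log z)$, whose value at $z=1+\beta$ gives exactly the stated condition. Your $S_\delta$-restriction handling of the ``$\|x\|\to 1$ in probability'' normalization is a small refinement that the paper's proof elides (it treats $\|x^*\|=1$ exactly), but it does not change the argument.
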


\begin{proof}
Given a matrix $Y$, consider the test statistic $T = \min_{x \in \supp \cX_n} \frac1n x^\top Y x$. Under $Y \sim \Wish(\gamma,\beta,\cX)$ with true spike $x^*$, we have that $\frac1n (x^*)^\top Y x^* \sim (1+\beta) \chi_{n/\gamma}^2$, which converges in probability to $(1+\beta)/\gamma$. Hence, for all $\eps > 0$, we have that $T < (1+\beta+\eps)/\gamma$ with probability $1-o(1)$ under the spiked model $\Wish(\gamma,\beta,\cX)$.

Under the unspiked model, we have
\begin{align*}
\Pr[T \leq (1+\beta+\eps)/\gamma] &\leq \sum_{x \in \supp \cX} \Pr[x^\top Y x > (1+\beta-\eps)n/\gamma] \\
&\leq c^n \Pr\left[\chi_{n/2\gamma}^2 \leq (1+\beta+\eps)n/\gamma \right] \\
&= \exp\left(n\left( \log c + \frac1n \Pr\left[\chi_{n/2\gamma}^2 \leq (1+\beta+\eps)n/\gamma\right] \right)\right).
\end{align*}
This is $o(1)$ so long as
\begin{align*}
0 &> \log c + \lim_{n \to \infty} \frac1n \Pr\left[\chi_{n/2\gamma}^2 \leq (1+\beta+\eps)n/\gamma \right] \\
&= \log c + \frac{1-(1+\beta+\eps)+\log(1+\beta+\eps)}{2\gamma} \quad\text{by Lemma~\ref{lemma:chi2rate};} \\
-2\gamma\log c &> -\beta-\eps + \log(1+\beta+\eps).
\end{align*}
We can choose such $\eps > 0$ precisely under the hypothesis of this theorem.

Hence, by thresholding the statistic $T$ at $(1+\beta+\eps)/\gamma$, we obtain a hypothesis test that distinguishes $Y \sim \Wish(\gamma,\beta,\cX)$ from $Y \sim \Wish(\gamma)$, with probability $o(1)$ of error of either type.
\end{proof}

\begin{remark}
The upper bound of Theorem~\ref{thm:wishart-mle-z2} is satisfied for some $-\beta < 1$ when $\gamma = 1$; by continuity, it is satisfied for some $-\beta < \sqrt{\gamma}$ for all $\gamma < 1$ sufficiently large. Hence this theorem demonstrates that for each prior $\cX$ of exponential-sized support, the spectral upper bound ceases to be optimal in the negative $\beta$ case for some critical $\gamma \in [0,1)$. To the best of our knowledge, this phenomenon has not appeared previously in the literature.
\end{remark}

The scenario above applies in particular to the Rademacher prior, where the critical $\gamma$ lies between $0.697$ and $0.698$. Other examples of exponential-sized priors include the sparse Rademacher prior.


\section{Synchronization over finite and infinite groups}
\label{sec:synch}

In this section we study synchronization problems over compact groups. In Section~\ref{sec:synchmain} we state our main results. In Section~\ref{sec:tohlower} we study the truth-or-Haar model which was previously introduced, and establish contiguity results. In Section~\ref{sec:gsynch} we introduce a Gaussian sychronization model. We consider this one of the main contributions of this section, since it allows us to define interesting detection and recovery problems over infinite groups which is made possible through incorporating the appropriate notions from representation theory. In Section~\ref{sec:gsynchsecond}, Section~\ref{sec:gsynchsub} and Section~\ref{sec:gsynchcond} we establish methods for proving contiguity in the Gaussian model; similarly to the Gaussian Wigner model we have a sub-Gaussian method and a conditioning method. In Section~\ref{sec:gsynchup1} and Section~\ref{sec:gsynchup2} we give computationally inefficient procedures that succeed below the spectral threshold, for both the truth-or-Haar and Gaussian synchronization models.

\subsection{Main results}\label{sec:synchmain}

First, let us motivate and then define the \emph{truth-or-Haar} model. In earlier sections, we studied the problem of recovering the spike $x$ given
$$\frac{\lambda}{n} xx^\top + \frac{1}{\sqrt n} W$$
where the entries of $x$ are \iid from the Rademacher distribution. We can instead view these coordinates as elements of the group $\mathbb{Z}/2$. Then each entry in the observed matrix is a noisy measurement of the relative group element $x_i x_j^{-1}$. This suggests an entire family of recovery questions. What if we are given noisy measurements of $x_i x_j^{-1}$ where $x_i$ and $x_j$ belong to some group $G$? One simple noise model for such a problem is the following.

\begin{definition}
Let $G$ be a finite group and let $\tilde p \ge 0$. In the \emph{truth-or-Haar} model $\ToH(\tilde p, G)$ we first draw a vector $g \in G^n$ where each coordinate $g_u$ is chosen independently from uniform (Haar) measure on $G$. For each unordered pair $\{u,v\}$ (with $u \ne v$), with probability $p = \frac{\tilde p}{\sqrt n}$ let $Y_{uv} = g_u g_v^{-1}$, and otherwise let $Y_{uv}$ be drawn uniformly from $G$. Define $Y_{uv} = (Y_{vu})^{-1}$ and $Y_{uu} = 1$ (the identity element of $G$). We reveal the matrix $Y \in G^{n \times n}$.
\end{definition}

This problem has been studied previously by \citet{sin11} for the case where the group $G$ is the cyclic group $\mathbb{Z}/L$. It is important to note that since we only have pairwise measurements, we can only hope to recover the group elements up to a global right-multiplication by some group element. 

\citet{sin11} shows that for $G = \mathbb{Z}/L$ there is a spectral approach that succeeds at detection and recovery above the threshold $\tilde p > 1$. Specifically, the spectral method identifies each group element with a complex $L$th root of unity and takes the top eigenvalue (and eigenvector) of the complex-valued observed matrix $Y$. We expect that an efficient algorithm for detection exists for any finite group above this $\tilde p = 1$ threshold: for instance, if the group has a $\mathbb{Z}/L$ quotient (for any $L$) we can apply the $\ZZ/L$ spectral algorithm. Our main results in the truth-or-Haar model are:

\begin{theorem-nolabel}[see Theorems \ref{thm:toh} and \ref{thm:toh-upper}]\label{thm:combined}
Let $G$ be a finite group of order $L$ and let $\tilde p \ge 0$. If
$$\tilde p < \tilde p^*_L \equiv \sqrt{\frac{2(L-1)\log(L-1)}{L(L-2)}}$$
then $\ToH(G,\tilde p)$ is contiguous to $\ToH(G,0)$. For $L = 2$, $\tilde p^*_2 = 1$ (the limit value of the $0/0$ expression). Moreover if
$$\tilde p > \sqrt{\frac{4 \log L}{L-1}}$$
then a non-efficient algorithm can distinguish the spiked and unspiked models with error probability $o(1)$.
\end{theorem-nolabel}

\noindent Our lower bound matches the spectral threshold $\tilde p = 1$ when $L = 2$, but does not match it for larger values of $L$ (see the table in Section~\ref{sec:toh}). In fact the second part of the theorem shows that there are computationally inefficient procedures to solve detection that work below the spectral threshold $\tilde p = 1$ when $L$ is a large enough constant. 

An important observation is that the truth-or-Haar model is not interesting for infinite groups $G$. In particular if $G$ is infinite, if the measurements agree on a triangle they they must be correct (with probability $1$). Thus in order to meaningfully study synchronization over infinite groups such as $U(1)$ (unit-norm complex numbers) we need the noise to be continuous in nature. This motivates the Gaussian synchronization model in which we add Gaussian noise to the true relative group elements $g_u g_v^{-1}$. In Section~\ref{sec:gsynch} we show how to define this model over any compact group using representation theory. Our model allows for measurements on different `frequencies' (irreducible representations of the group).
Special cases of this model have been studied previously for synchronization over $\ZZ/2$ or $U(1)$ with a single frequency \citep{angular-tightness,dam,sdp-phase,boumal} (previously implicit in \citet{sin11}). To the best of our knowledge, we are the first to introduce this model for multiple frequencies and for all compact groups. The idea of optimizing objective functions that have information on multiple frequencies comes from \citet{nug}.

Similarly to the Gaussian Wigner model, we give a sub-Gaussian method and a conditioning method for contiguity in the Gaussian synchronization model. We give both lower and upper bounds for finite groups in Theorem~\ref{thm:synch-finite} and Theorem~\ref{thm:synch-upper} in the Gaussian synchronization model. In Section~\ref{sec:subg-ex} we use the sub-Gaussian method to show contiguity results for a number of examples such as $U(1)$ and variants with multiple frequencies, which show statistical limitations to how much information can be synthesized across different frequencies. Finally, we are once again able to see that there are computationally inefficient procedures that can solve the detection problem even when PCA fails.

\subsection{The truth-or-Haar model}\label{sec:tohlower}
\label{sec:toh}

In this subsection, we establish contiguity results in the truth-or-Haar model. Let $p = \frac{\tilde p}{\sqrt n}$. Let $P_n$ be the `spiked' model $\ToH_n(\tilde p, G)$ and let $Q_n = \ToH_n(0, G)$ be the `unspiked' model in which the observations are completely random. We give an upper bound on the second moment:
\begin{align*}
\dd[P_n]{Q_n} &= \Ex_g \prod_{u < v} \frac{p\, \one[Y_{uv} = g_u g_v^{-1}] + (1-p)/L}{1/L}, \\
\Ex_{Q_n}\left[\left(\dd[P_n]{Q_n}\right)^2\right] &= \Ex_{g,g'} \prod_{u < v} \Ex_{Y_{uv}\sim Q_n} (p L\, \one[Y_{uv} = g_u g_v^{-1}] + 1-p)(p L\, \one[Y_{uv} = g_u' (g_v')^{-1}] + 1-p) \\
&= \Ex_{g,g'} \prod_{u<v} \Ex_{Y_{uv}\sim Q_n} ( p^2 L^2\, \one[g_u g_v^{-1} = Y_{uv} = g_u g_v^{-1}] + p (1-p) L\, \one[Y_{uv} = g_u g_v^{-1}] \\
&\qquad\qquad + p (1-p) L\, \one[Y_{uv} = g_u' (g_v')^{-1}] + (1-p)^2 )
\end{align*}
\begin{align*}
&= \Ex_{g,g'} \prod_{u<v} (1 + p^2(L \one[g_u g_v^{-1} = g_u' (g_v')^{-1}] - 1)) \\
&= \Ex_{g,g'} \prod_{u<v} (1 + p^2(L \one[g_u^{-1} g_u' = g_v^{-1} g_v'] - 1)) \\
&\leq \Ex_{g,g'} \prod_{u<v} \exp \left[p^2(L \one[g_u^{-1} g_u' = g_v^{-1} g_v'] - 1)\right] \\
&\le \Ex_{g,g'} \prod_{u,v} \exp \left[\frac{p^2}{2}(L \one[g_u^{-1} g_u' = g_v^{-1} g_v'] - 1)\right] \\
&= e^{-n^2 p^2 /2} \,\Ex_{g,g'} \exp \left[\frac{p^2 L}{2} \sum_{u,v} \one[g_u^{-1} g_u' = g_v^{-1} g_v']\right].
\end{align*}

As in Section~\ref{sec:cond-method} we apply the conditioning method of \citet{bmnn}. We can write the above as $\EE\exp(Y^\top A Y)$ where $Y = \frac{N-n \bar{\alpha}}{\sqrt n} \in \mathbb{R}^{L^2}$, $N_{ab} = |\{u \,|\, g_u = a, g'_u = b\}|$, $\bar{\alpha} = \frac{1}{L^2}\one_{L^2}$, $p = \frac{\tilde p}{\sqrt n}$, and $A$ is the $L^2 \times L^2$ matrix $A_{ab,a'b'} = \frac{\tilde p^2 L}{2} \one\{a^{-1}b = a'^{-1}b'\}$. By Proposition~5 in \citet{bmnn} (Proposition~\ref{prop:nn} in this paper) we get contiguity provided
$$\sup_\alpha \frac{(\alpha-\bar\alpha)^\top A (\alpha-\bar\alpha)}{D(\alpha,\bar\alpha)} < 1$$
where $D$ is the KL divergence and $\alpha$ ranges over (vectorized) nonnegative $L \times L$ matrices with row- and column-sums equal to $\frac{1}{L}$. Rewrite the numerator:
\begin{align*}
(\alpha - \bar\alpha)^\top A (\alpha - \bar\alpha) &= \alpha^\top A \alpha - 2 \alpha^\top A \bar\alpha + \bar\alpha^\top A \bar\alpha \\
&= \frac{\tilde p^2 L}{2} \left(\sum_{aba'b'} \alpha_{ab}\alpha_{a'b'} \one\{a^{-1}b = a'^{-1}b'\} - \frac{2}{L} + \frac{1}{L}\right) \\
&= \frac{\tilde p^2 L}{2} \left(\sum_{h \in G} \alpha_h^2 - \frac{1}{L}\right)
\end{align*}
where $\alpha_h = \sum_{(a,b) \in S_h} \alpha_{ab}$ and $S_h = \{(a,b)\,|\, a^{-1}b = h\}$.

In Appendix~\ref{app:matrix-opt} we prove the following result which provides the solution to the optimization problem above.

\begin{proposition}
\label{prop:matrix-opt}
For $L \ge 2$,
$$\sup_\alpha \frac{L}{2} \frac{\left(\sum_{h \in G} \alpha_h^2 - \frac{1}{L}\right)}{D(\alpha,\bar\alpha)} = \frac{L(L-2)}{2(L-1)\log(L-1)}$$
where $\alpha$ ranges over (vectorized) nonnegative $L \times L$ matrices with row- and column-sums equal to $\frac{1}{L}$. When $L = 2$, the right-hand side is taken to equal 1 (the limit value of the $0/0$ expression).
\end{proposition}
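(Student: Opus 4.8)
The plan is to reduce the matrix optimization to a one-variable problem on the probability simplex and then solve that by a stationarity argument.

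\emph{Reduction to coset marginals.} Since $\bar\alpha = \frac{1}{L^2}\one_{L^2}$, we have $D(\alpha,\bar\alpha) = \sum_{a,b}\alpha_{ab}\log(L^2\alpha_{ab}) = 2\log L - H(\alpha)$, where $H$ is the Shannon entropy of $\alpha$ viewed as a probability distribution on $G\times G$ (legitimate since the row- and column-sum constraints force $\sum_{a,b}\alpha_{ab}=1$). The sets $S_h=\{(a,b):a^{-1}b=h\}$ partition $G\times G$ into $L$ blocks of size $L$, and $(\alpha_h)_{h\in G}$ is exactly the law of the coset label of a pair drawn from $\alpha$. The chain rule for entropy gives $H(\alpha) = H\big((\alpha_h)_h\big) + \sum_h \alpha_h\,H(\alpha\mid S_h) \le H\big((\alpha_h)_h\big) + \log L$, with equality iff $\alpha$ is uniform inside every coset, i.e.\ $\alpha_{ab} = \alpha_h/L$ for $(a,b)\in S_h$; and a one-line check shows this uniform-within-coset $\alpha$ does obey the row- and column-sum constraints (every row and every column meets each coset in exactly one cell, of mass $\alpha_h/L$, and $\sum_h \alpha_h = 1$). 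Hence, writing $q=(\alpha_h)_h$ and letting $u$ be the uniform probability vector on $L$ points, for fixed $q$ we get $\min_\alpha D(\alpha,\bar\alpha) = \log L - H(q) = D(q,u)$, while the numerator $\sum_h \alpha_h^2 - \frac{1}{L} = \|q-u\|^2$ depends only on $q$. As the numerator is nonnegative, minimizing the denominator is optimal in the supremum, and the problem reduces to
\[ \sup_{q\in\Delta_L} \frac{L}{2}\cdot\frac{\|q-u\|^2}{D(q,u)}, \]
where $\Delta_L$ is the simplex of probability vectors on $L$ points.

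\emph{Solving the simplex problem.} The ratio extends continuously to $q=u$ (a Taylor expansion gives limiting value $1$), so the supremum over the compact set $\Delta_L$ is attained, at some $q^\star$; let $S$ be its support, on which $q^\star>0$. Writing the Lagrange stationarity condition for the ratio subject to $\sum_{h\in S}q_h=1$, and abbreviating $g=\|q^\star-u\|^2$, $d=D(q^\star,u)$, one finds that every positive coordinate satisfies $\log(Lq_h) = \frac{2d}{g}\,q_h + \mathrm{const}$, equivalently $q_h$ solves $x = \frac{1}{L}\exp(\frac{2d}{g}x + \mathrm{const})$. Since $2d/g>0$, a convex exponential meets a line in at most two points, so $q^\star$ takes at most two distinct positive values. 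It therefore suffices to optimize the ratio over configurations with $j$ coordinates equal to $a$, $k$ equal to $b$ (possibly $k=0$), the remaining $L-j-k$ equal to $0$, with $ja+kb=1$; using $\sum_h (q_h-\frac{1}{L})^2 = ja^2+kb^2-\frac{1}{L}$, this is a one-parameter calculus problem for each of the finitely many pairs $(j,k)$. The outcome is that the optimum is attained with $j=1$, $k=L-1$, no zeros, the large coordinate equal to $\frac{L-1}{L}$ and the remaining $L-1$ coordinates equal to $\frac{1}{L(L-1)}$; for this $q^\star$ one computes directly $\|q^\star-u\|^2 = \frac{(L-2)^2}{L(L-1)}$ and $D(q^\star,u) = \frac{L-2}{L}\log(L-1)$, so the ratio equals $\frac{L(L-2)}{2(L-1)\log(L-1)}$, as claimed; in matrix terms this is the $\alpha$ equal to $\frac{L-1}{L^2}$ on one coset and $\frac{1}{L^2(L-1)}$ elsewhere. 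For $L=2$ the formula is the $0/0$ limit and the claim is that the supremum is $1$; this is precisely Pinsker's inequality $D(q,u) \ge 2(q_1-\frac{1}{2})^2$ for two symbols, the value $1$ being approached but not attained as $q\to u$.

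\emph{Main obstacle.} The entropy reduction and the stationarity step are routine; the real work is the finite optimization at the end, i.e.\ checking that no two-level configuration (over all discrete $(j,k)$ and support sizes $j+k$) beats $q^\star$. Equivalently one must prove the scalar inequality
\[ \sum_h \left( q_h\log(Lq_h) - c_L\,(q_h-\tfrac{1}{L})^2 \right) \ge 0 \ \text{ on } \Delta_L, \qquad c_L := \frac{(L-1)\log(L-1)}{L-2}, \]
which is tight at $q^\star$. Because the per-coordinate function $x\mapsto x\log(Lx) - c_L(x-\frac{1}{L})^2$ fails to be convex (it is concave for $x>\frac{1}{2c_L}$) and is strictly negative at $x=0$, no single-letter/tangent-line bound yields the sharp constant, so the stationarity reduction and the case check really are needed. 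A secondary nuisance is uniformity in $L$: the one-variable sub-problems must be controlled for every integer $L\ge 3$, which is done by convexity and monotonicity estimates rather than closed forms.
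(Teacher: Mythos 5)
Your reduction to coset marginals (uniformize within each $S_h$ to minimize the KL term, note the numerator depends only on $q=(\alpha_h)$) and your stationarity argument forcing at most two distinct positive values are exactly the route the paper takes, and your explicit $q^\star$ (one coordinate $\frac{L-1}{L}$, the rest $\frac{1}{L(L-1)}$) correctly establishes the lower bound $\sup \ge \frac{L(L-2)}{2(L-1)\log(L-1)}$; the $L=2$ case via Pinsker is also fine. But the upper bound — which is the substance of the proposition — is not proved. You reduce it to the scalar inequality
$$\sum_h \Bigl( q_h\log(Lq_h) - c_L\,(q_h-\tfrac{1}{L})^2 \Bigr) \ge 0 \quad\text{on } \Delta_L, \qquad c_L = \frac{(L-1)\log(L-1)}{L-2},$$
equivalently to checking that no two-level configuration (including ones with zero coordinates, which your case list allows but never eliminates) beats $q^\star$, and then simply assert ``the outcome is that the optimum is attained with $j=1$, $k=L-1$.'' You yourself flag this finite optimization, uniform in $L$, as the main obstacle; asserting its outcome is the gap, since all the preceding steps are, as you say, routine.

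For comparison, the paper closes exactly this gap as follows. It works with the cross-multiplied functional $T(\alpha)=\sum_h\alpha_h^2-\frac1L-C\,D(q,u)$ (so the Lagrange condition $2\alpha_h - C(\log\alpha_h+1)=\mathrm{const}$ is again 2-to-1, and the blow-up of $\partial T/\partial\alpha_h$ as $\alpha_h\to0^+$ rules out zero coordinates, a point your version leaves open). It then parametrizes two-level configurations by $k\in[1,L/2]$ coordinates at value $x$ and $L-k$ at $\frac{1-kx}{L-k}$, and proves two lemmas: (a) with the $k$-dependent constant $C_k=\frac{L-2k}{k(L-k)\log((L-k)/k)}$, the one-variable function $t_k(x)$ is nonnegative — shown by noting $t_k''''>0$, so $t_k'$ has at most three zeros, identified explicitly as $\frac1L<\frac{1}{2k}<\frac{L-k}{kL}$, with the global minimum value $0$ attained at the outer two; and (b) $C_k$ is decreasing in $k$ on $(0,L/2)$, via the elementary bound $\log x < \frac{2(x-1)}{x+1}$, so $C_k\le C_1=C$ and hence $T_k\ge t_k\ge0$. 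Some argument of this kind (or your promised ``convexity and monotonicity estimates,'' spelled out) is needed before your proof is complete.
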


This immediately implies the following.
\begin{theorem}\label{thm:toh}
Let $G$ be a finite group of order $L$ and let $\tilde p \ge 0$. If
$$\tilde p < \tilde p^*_L \equiv \sqrt{\frac{2(L-1)\log(L-1)}{L(L-2)}}$$
then $\ToH(G,\tilde p)$ is contiguous to $\ToH(G,0)$. For $L = 2$, $\tilde p^*_2 = 1$ (the limit value of the $0/0$ expression).
\end{theorem}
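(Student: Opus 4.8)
The plan is to establish $\ToH(G,\tilde p) \contig \ToH(G,0)$ via the conditioned second moment method of Section~\ref{sec:contig}, feeding the bound already assembled above into Proposition~\ref{prop:matrix-opt}. Recall the chain of inequalities: expanding $\dd[P_n]{Q_n}$ as an average over the hidden spike $g$, squaring, and integrating against $Q_n$ introduces a second independent spike $g'$ and decouples the product over pairs $\{u,v\}$; after the bounds $1+t\le e^t$ and enlarging the product to all ordered pairs (legitimate because the diagonal terms have nonnegative exponent), one obtains $\EE_{Q_n}[(\dd[P_n]{Q_n})^2] \le e^{-n^2 p^2/2}\,\EE_{g,g'}\exp(\tfrac{p^2 L}{2}\sum_{u,v}\one[g_u^{-1}g_u' = g_v^{-1}g_v'])$. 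Collecting the empirical counts $N_{ab} = |\{u: g_u=a,\ g_u'=b\}|$ and using $\sum_{u,v}\one[\cdots] = \sum_{h\in G} N_h^2$ with $N_h = \sum_{a^{-1}b=h}N_{ab}$, the prefactor exactly cancels the deterministic part of $\sum_h N_h^2$, leaving $\EE[\exp(Y^\top A Y)]$ with $Y = (N-n\bar\alpha)/\sqrt n \in \RR^{L^2}$, $\bar\alpha = L^{-2}\one_{L^2}$, and $A_{ab,a'b'} = \tfrac{\tilde p^2 L}{2}\one[a^{-1}b = a'^{-1}b']$.

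Next I would condition. Since $g$ is drawn coordinatewise from Haar measure on the finite group $G$, its empirical distribution is near-uniform with probability $1-o(1)$; replacing $P_n$ by the model $\tilde P_n$ that runs the truth-or-Haar process when $g$ is typical and emits a $Q_n$-sample otherwise, $\tilde P_n$ agrees with $P_n = \ToH_n(\tilde p,G)$ on a $(1-o(1))$-probability event, so by Lemma~\ref{lem:cond} it suffices to bound $\EE_{Q_n}[(\dd[\tilde P_n]{Q_n})^2] = \EE[\one_{\Omega_n}\exp(Y^\top A Y)] + o(1)$, with $\Omega_n$ the good event of Appendix~A of \citet{bmnn}. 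Proposition~\ref{prop:nn} (Proposition~5 of \citet{bmnn}) then says this is bounded as $n\to\infty$ precisely when $m := \sup_\alpha \tfrac{(\alpha-\bar\alpha)^\top A(\alpha-\bar\alpha)}{D(\alpha,\bar\alpha)} < 1$, the supremum over nonnegative $L\times L$ matrices with all row- and column-sums $1/L$. A short computation collapses the numerator to $\tfrac{\tilde p^2 L}{2}(\sum_{h\in G}\alpha_h^2 - \tfrac1L)$ with $\alpha_h = \sum_{a^{-1}b=h}\alpha_{ab}$, so $m = \tilde p^2\,\mathcal{M}_L$, where $\mathcal{M}_L$ is the prior-free quantity of Proposition~\ref{prop:matrix-opt}. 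Granting that proposition, $m<1$ if and only if $\tilde p < \mathcal{M}_L^{-1/2} = \sqrt{\tfrac{2(L-1)\log(L-1)}{L(L-2)}} = \tilde p^*_L$, which is the claim; the converse half of Proposition~\ref{prop:nn} shows moreover that the conditioned second moment diverges once $\tilde p > \tilde p^*_L$, so no sharper threshold comes out of this argument.

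The crux, which I expect to be the main obstacle, is Proposition~\ref{prop:matrix-opt}: showing $\mathcal{M}_L = \tfrac{L(L-2)}{2(L-1)\log(L-1)}$. The plan is a two-stage symmetrization. First, the group action $\alpha_{ab}\mapsto\alpha_{ga,gb}$ fixes the numerator and the row/column-sum constraints and, by convexity of the KL divergence, can only decrease $D(\alpha,\bar\alpha)$ under averaging; averaging over all $g\in G$ sends any feasible $\alpha$ to the $G$-invariant form $\alpha_{ab} = \tfrac1L\alpha_{a^{-1}b}$, for which $D(\alpha,\bar\alpha) = \log L - H((\alpha_h))$ with $H$ the Shannon entropy. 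This reduces the problem to $\sup \tfrac{(L/2)(\|(\alpha_h)\|_2^2 - 1/L)}{\log L - H((\alpha_h))}$ over probability vectors $(\alpha_h)_{h\in G}$. Second, the objective is symmetric under permutations of the $\alpha_h$, and for a fixed value of $\|(\alpha_h)\|_2^2$ the entropy is maximized by a vector taking at most two distinct values (the stationarity condition $\log x + 2\mu x = \mathrm{const}$ has at most two roots); optimizing over that one-parameter family singles out the maximizer with one coordinate $\tfrac{L-1}{L}$ and the other $L-1$ coordinates $\tfrac{1}{L(L-1)}$, where $\|(\alpha_h)\|_2^2 - \tfrac1L = \tfrac{(L-2)^2}{L(L-1)}$ and $\log L - H = \tfrac{L-2}{L}\log(L-1)$, giving the stated value. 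For $L=2$ the two-valued family is the entire simplex, and a second-order expansion about the uniform point shows the ratio never exceeds $1$ and tends to $1$ there, recovering $\tilde p^*_2 = 1$ as the $0/0$ limit. I expect the delicate part to be verifying that these two symmetrization passes actually produce the global maximum (and, in particular, controlling the boundary of the simplex), as is typical for the matrix optimizations arising from the conditioning method.
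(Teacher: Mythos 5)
Your route is the same as the paper's: the second-moment bound with $1+t\le e^t$ and the enlargement to ordered pairs, the reduction to $\EE[\exp(Y^\top A Y)]$ with $A_{ab,a'b'}=\tfrac{\tilde p^2 L}{2}\one[a^{-1}b=a'^{-1}b']$, the appeal to Proposition~\ref{prop:nn} after conditioning on typical empirical distributions, and the collapse of the numerator to $\tfrac{\tilde p^2 L}{2}(\sum_h\alpha_h^2-\tfrac1L)$ are all exactly the paper's chain, and your claimed optimizer (one coordinate $\tfrac{L-1}{L}$, the rest $\tfrac{1}{L(L-1)}$) and the resulting value $\tfrac{L(L-2)}{2(L-1)\log(L-1)}$ agree with Proposition~\ref{prop:matrix-opt}. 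The first symmetrization (reducing to $\alpha$ constant on each $S_h$) is also the paper's first step, phrased via group averaging and convexity of KL rather than ``uniform maximizes entropy within each $S_h$''; and your two-value reduction (entropy maximization at fixed $\ell_2$ norm, stationarity $\log x+2\mu x=\mathrm{const}$ having at most two roots) parallels the paper's argument that $\partial T/\partial\alpha_h$ is at most 2-to-1, with the same need to rule out zero coordinates at the boundary.

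The genuine gap is that you stop exactly where the real work of Proposition~\ref{prop:matrix-opt} begins. After the two-value reduction the family is two-parameter, not one-parameter: a multiplicity $k\in\{1,\dots,\lfloor L/2\rfloor\}$ and a level $x$, with the remaining coordinates forced to $\tfrac{1-kx}{L-k}$. Your sentence ``optimizing over that one-parameter family singles out the maximizer with one coordinate $\tfrac{L-1}{L}$\dots'' is an assertion, not an argument, and it is precisely the step you yourself flag as delicate. In the paper this occupies two lemmas: for each fixed $k$ one must show $t_k(x)\ge 0$ on $[0,1/k]$, which is done by computing that $t_k''''>0$, hence $t_k'$ has at most three zeros, locating them explicitly at $\tfrac1L<\tfrac1{2k}<\tfrac{L-k}{kL}$, and checking that both candidate minimizers give $t_k=0$; and then one must show the per-$k$ constants satisfy $C_k\le C_1=C$, proved via the inequality $\log x<2\tfrac{x-1}{x+1}$. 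Without some substitute for these two verifications (global optimality in $x$ for each $k$, and optimality of $k=1$), the claimed value of the supremum — and hence the threshold $\tilde p^*_L$ — is not established; everything upstream of Proposition~\ref{prop:matrix-opt} in your write-up is fine and matches the paper.
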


We provide some numerical values for the critical value $\tilde p^*$.
\begin{center}
  \begin{tabular}{ | c || c | c | c | c | c | c | c | }
    \hline
    $L$ & 2 & 3 & 4 & 5 & 6 & 10 & 100\\ \hline
    $\tilde p^*$ & 1 & 0.961 & 0.908 & 0.860 & 0.819 & 0.703 & 0.305 \\ \hline
  \end{tabular}
\end{center}
Note that this lower bound matches the spectral threshold $\tilde p = 1$ when $L = 2$, but does not match it for $L \ge 3$. In Section~\ref{sec:gsynchup1} we give an upper bound for the truth-or-Haar model using a non-efficient algorithm; in particular we show that for sufficiently large $L$ it is possible to beat the spectral threshold.

\subsection{The Gaussian synchronization model}
\label{sec:gsynch}

We now turn to our Gaussian noise model for synchronization. In order to have a sensible notion of adding Gaussian noise to a group element, we need to introduce some representation theory. We will assume the reader is familiar with the basics of representation theory. See e.g.\ \citet{rep-theory-compact} for an introduction.

Since we will be discussing representations of quaternionic type, we need to recall basic facts about quaternions. (Quaternions and quaternionic-type representations can be skipped on a first reading.) Quaternions take the form $q = a + bi + cj + dk$ where $a,b,c,d \in \mathbb{R}$ and (non-commutative) multiplication follows the rules $i^2 = j^2 = k^2 = ijk = -1$. Like complex numbers, quaternions support the operations norm $|q| = \sqrt{a^2 + b^2 + c^2 + d^2}$, real part $\mathfrak{Re}(q) = a$, and conjugate $\bar q = a - bi - cj - dk$ satisfying $\bar{q_1 q_2} = \bar{q_2}\,\bar{q_1}$ and $q \bar{q} = \bar{q} q = |q|^2$. These allow for the natural notions of unitarity and conjugate transpose $A^*$ for quaternion-valued matrices $A$. The algebra of quaternions is denoted by $\mathbb{H}$.

Let $G$ be a compact group. The irreducible representations of $G$ over $\mathbb{C}$ are finite dimensional. Every irreducible representation of $G$ over $\mathbb{C}$ has one of three types: real, complex, or quaternionic. Representations of real type can be defined over the reals (i.e.\ each group element is assigned a matrix with real-valued entries). Representations of complex type are (unlike the other types) not isomorphic to their complex conjugate representation $\bar\rho$. Representations of quaternionic type can be assumed to take the following form: each $2 \times 2$ block of complex numbers encodes a quaternion value using the correspondence
$$a + bi + cj + dk \quad\leftrightarrow\quad \left(\begin{array}{cc} a+bi & c+di \\ -c+di & a-bi \end{array} \right).$$
Alternatively, we can think of quaternionic-type representations as being defined over the quaternions (i.e.\ each group element is assigned a quaternion-valued matrix) with dimension half as large. We will assume that our irreducible representations (over $\mathbb{C}$) are defined over $\mathbb{R}$, $\mathbb{C}$, or $\mathbb{H}$, depending on whether their type is real, complex, or quaternionic (respectively). Representations of complex type come in conjugate pairs. Without loss of generality, all these representations can be taken to be unitary.

Let $d_\rho$ be the dimension of representation $\rho$. For quaternionic-type representations we let $d_\rho$ be the quaternionic dimension, which is half the complex dimension. (For real-type representations, the real and complex dimensions are the same.) In defining our Gaussian model we need to fix a finite list of representations (`frequencies') to work with.

\begin{definition}
Let $G$ be a compact group. A \emph{list of frequencies} $\Psi$ is a finite set of non-isomorphic irreducible (over $\mathbb{C}$) representations of $G$. We do not allow the trivial representation to be included in this list. For representations of complex type, we do not allow $\rho$ and its conjugate $\bar\rho$ to both appear in the list.
\end{definition}

We need to introduce Gaussian noise of various types. The type of noise used will correspond to the type of the representation in question.

\begin{definition}
A \emph{standard Gaussian} of real, complex, or quaternionic type is defined to be
\begin{itemize}
\item for real type, $\cN(0,1)$
\item for complex type: $\cN(0,1/2) + \cN(0,1/2) \,i$
\item for quaternionic type: $\cN(0,1/4) + \cN(0,1/4)\,i + \cN(0,1/4)\,j + \cN(0,1/4)\,k$
\end{itemize}
where each component is independent.
\end{definition}
\noindent Note that the normalization ensures that the expected squared norm is 1.

\begin{definition}
Let a \emph{GOE}, \emph{GUE}, or \emph{GSE} (respectively) matrix be a random Hermitian matrix where the off-diagonals are standard Gaussians of real, complex, or quaternionic type (respectively), and the diagonal entries are real Gaussians $\cN(0,2/\beta)$ where $\beta = 1,2,4$ (respectively) depending on the type. All entries are independent except for the Hermitian constraint.
\end{definition}
\noindent These matrices are the well-known Gaussian orthogonal (resp.\ unitary, symplectic)\ ensembles from random matrix theory.

We can now formally state the Gaussian synchronization model over any compact group.

\begin{definition}
Let $G$ be a compact group and let $\Psi$ be a list of frequencies. For each $\rho \in \Psi$, let $\lambda_\rho \ge 0$. The \emph{Gaussian synchronization model} $\GSynch(\{\lambda_\rho\}, G, \Psi)$ is defined as follows. To sample from the $n$th distribution, draw a vector $g \in G^n$ by sampling each coordinate independently from Haar (uniform) measure on $G$. Let $X_\rho$ be the $n d_\rho \times d_\rho$ matrix formed by stacking the matrices $\rho(g_u)$ for all $u$. For each frequency $\rho \in \Psi$, reveal the $n d_\rho \times n d_\rho$ matrix
$$Y_\rho = \frac{\lambda_\rho}{n} X_\rho X_\rho^* + \frac{1}{\sqrt{nd_\rho}} W_\rho$$
where $W_\rho$ is an $n d_\rho \times n d_\rho$ Hermitian Gaussian matrix (GOE, GUE, or GSE depending on whether $\rho$ has real, complex, or quaternionic type, respectively). If we write a scalar $\lambda$ in place of $\{\lambda_\rho\}$ we mean that $\lambda_\rho = \lambda$ for all $\rho$.
\end{definition}

Special cases of this model have been studied previously for synchronization over $\ZZ/2$ or $U(1)$ with a single frequency \citep{angular-tightness,dam,sdp-phase,boumal} (previously implicit in \citet{sin11}). (Note that the $\ZZ/2$ case is simply the spiked Gaussian Wigner model with the Rademcacher prior.) To the best of our knowledge, we are the first to introduce this model for multiple frequencies and for all compact groups. The idea of optimizing objective functions that have information on multiple frequencies comes from \citet{nug}.

When $\lambda_\rho > 1$ for at least one $\rho$, we can use PCA (top eigenvalue) to reliably distinguish between $P_n = \GSynch_n(\{\lambda_\rho\},G,\Psi)$ and $Q_n = \GSynch_n(0,G,\Psi)$. If given $K$ frequencies, all with the same $\lambda$, it may appear that one should be able to combine the frequencies in order to achieve the threshold $\lambda > 1/\sqrt{K}$; after all, this would be possible if given $K$ independent observations of a single frequency. However, our contiguity results will show that $\lambda > 1/\sqrt{K}$ is not sufficient. In fact, we conjecture that $\lambda > 1$ is required for any efficient algorithm to succeed at detection, although there are inefficient algorithms that succeed below this.

\subsection{Second moment computation}\label{sec:gsynchsecond}

Let $P_n$ be $\GSynch(\{\lambda_\rho\},G,\Psi)$ and let $Q_n$ be $\GSynch(0,G,\Psi)$. Let $\beta_\rho = 1,2,4$ for real-, complex-, or quaternionic-type (respectively). We will use the standard Hermitian inner product for matrices: $\langle A,B \rangle = \mathrm{Tr}(AB^*)$ where $B^*$ denotes the conjugate transpose of $B$.

\begin{align*}
\dd[P_n]{Q_n} &= \Ex_X \prod_{\rho \in \Psi} \frac{\exp\left(-\frac{\beta_\rho n d_\rho}{4} \left\|Y_\rho - \frac{\lambda_\rho}{n}X_\rho X_\rho^*\right\|_F^2\right)}{\exp\left(-\frac{\beta_\rho n d_\rho}{4} \left\|Y\right\|_F^2\right)} \\
&= \Ex_X \prod_\rho \exp\left(\frac{\beta_\rho \lambda_\rho d_\rho}{2} \,\mathfrak{Re}\left\langle Y_\rho, X_\rho X_\rho^* \right\rangle - \frac{\beta_\rho \lambda_\rho^2 d_\rho}{4n} \left\|X_\rho X_\rho^*\right\|_F^2 \right).
\end{align*}
\begin{align*}
\Ex_{Q_n} & \left(\dd[P_n]{Q_n} \right)^2 \\
&= \Ex_{Y \sim Q_n} \Ex_{X,X'} \prod_\rho \exp\left(\frac{\beta_\rho \lambda_\rho d_\rho}{2} \,\mathfrak{Re}\left\langle Y_\rho, X_\rho X_\rho^* + X'_\rho {X'}_\rho^* \right\rangle - \frac{\beta_\rho \lambda_\rho^2 d_\rho}{4n} \left\|X_\rho X_\rho^*\right\|_F^2 - \frac{\beta_\rho \lambda_\rho^2 d_\rho}{4n} \left\|{X'}_\rho {X'}_\rho^*\right\|_F^2 \right) \\
&= \Ex_{X,X'} \prod_\rho \Ex_{Y_\rho} \exp\left(\frac{\beta_\rho \lambda_\rho d_\rho}{2} \,\mathfrak{Re}\left\langle Y_\rho, X_\rho X_\rho^* + X'_\rho {X'}_\rho^* \right\rangle - \frac{\beta_\rho \lambda_\rho^2 d_\rho}{4n} \left\|X_\rho X_\rho^*\right\|_F^2 - \frac{\beta_\rho \lambda_\rho^2 d_\rho}{4n} \left\|{X'}_\rho {X'}_\rho^*\right\|_F^2 \right). \\
\intertext{Use the Gaussian moment-generating function to eliminate $Y$: if $z$ is a scalar (from $\RR$, $\CC$, or $\mathbb{H}$) and $y$ is a standard Gaussian of the same type, then $\EE \exp(\mathfrak{Re}(y z)) = \exp(\frac{1}{2\beta} |z|^2)$. Recall that $Y_\rho$ (drawn from $Q_n$) is Hermitian with each off-diagonal entry $\frac{1}{\sqrt{n d_\rho}}$ times a standard Gaussian (of the appropriate type), and each diagonal entry real Gaussian $\cN(0,\beta/2)$. Continuing from above,}
&= \Ex_{X,X'} \prod_\rho \exp\left(\frac{1}{2\beta_\rho} \frac{1}{n d_\rho} \beta_\rho^2 \lambda_\rho^2 d_\rho^2 \,\frac{1}{2} \left\|X_\rho X_\rho^* + {X'}_\rho {X'}_\rho^* \right\|_F^2 - \frac{\beta_\rho \lambda_\rho^2 d_\rho}{4n} \left\|X_\rho X_\rho^*\right\|_F^2 - \frac{\beta_\rho \lambda_\rho^2 d_\rho}{4n} \left\|{X'}_\rho {X'}_\rho^*\right\|_F^2\right) \\
&= \Ex_{X,X'} \prod_\rho \exp\left(\frac{\beta_\rho \lambda_\rho^2 d_\rho}{4n} \left\|X_\rho X_\rho^* + {X'}_\rho {X'}_\rho^* \right\|_F^2 - \frac{\beta_\rho \lambda_\rho^2 d_\rho}{4n} \left\|X_\rho X_\rho^*\right\|_F^2 - \frac{\beta_\rho \lambda_\rho^2 d_\rho}{4n} \left\|{X'}_\rho {X'}_\rho^*\right\|_F^2 \right) \\
&= \Ex_{X,X'} \prod_\rho \exp\left(\frac{\beta_\rho \lambda_\rho^2 d_\rho}{4n} \,2\, \mathfrak{Re} \left\langle X_\rho X_\rho^*, {X'}_\rho {X'}_\rho^* \right\rangle \right) \\
&= \Ex_{X,X'} \prod_\rho \exp\left(\frac{\beta_\rho \lambda_\rho^2 d_\rho}{2n} \left\|X_\rho^*{X'}_\rho\right\|_F^2 \right). \\
\end{align*}

\subsection{The sub-Gaussian method}\label{sec:gsynchsub}

We will aim to show contiguity at a point where all $\lambda$'s are equal: $\lambda_\rho = \lambda$ for all $\rho$. (Note however that if we show contiguity at some $\lambda$ and we then decrease some of the individual $\lambda_\rho$'s, we still have contiguity because the second moment above will only decrease.) Ideally we want contiguity for all $\lambda < 1$, matching the spectral threshold.

For each $u \in [n]$ let $Z_u$ be a vector in $\RR^D$ where $D = \sum_{\rho \in \Psi} \beta_\rho d_\rho^2$, formed as follows. First draw $h_u$ independently from Haar measure on $G$. For each $\rho$, vectorize the matrix $\sqrt{\beta_\rho d_\rho}\,\rho(h_u)$ into a real-valued vector of length $\beta_\rho d_\rho^2$ by separating the $\beta_\rho$ components of each of the $d_\rho^2$ entries. Finally, concatenate all these vectors together to form $Z_u$. Let $Z^{(G,\Psi)}$ denote the distribution that each $Z_u$ follows.

We can rewrite the second moment as
$$\Ex_{Q_n} \left(\dd[P_n]{Q_n}\right)^2 = \Ex_Z \exp\left(\frac{\lambda^2}{2n} \left\|\sum_u Z_u\right\|^2 \right).$$
(Here $h_u = g_u^{-1} g'_u$.)

We will use the following definition of sub-Gaussian for vector-valued random variables.
\begin{definition}\label{def:subg-mult}
We say $z \in \mathbb{R}^m$ is \emph{sub-Gaussian with covariance proxy} $\sigma^2 I$ if $\EE[z] = 0$ and for all vectors $v \in \mathbb{R}^m$,
$$\EE\exp\left(\left\langle z,v \right\rangle\right) \le \exp\left(\frac{1}{2} \,\sigma^2 \|v\|^2 \right).$$
\end{definition}

More generally we can allow for a covariance proxy $\Sigma$ that is not a multiple of the identity by replacing $\sigma^2 \|v\|^2$ by $v^\top \Sigma v$, but we will not need this here. Standard methods in the theory of large deviations give the following multivariate sub-Gaussian tail bound.
\begin{lemma}\label{lem:subg-mult-tail}
Suppose $z \in \mathbb{R}^m$ is sub-Gaussian with covariance proxy $\sigma^2 I$. Let $\eps > 0$. For all $a \ge 0$,
$$\prob{\|z\|^2 \ge a} \le C \exp\left(-\frac{a(1-\eps)}{2 \sigma^2}\right)$$
where $C = C(\eps, m)$ is a constant depending only on $\eps$ and the dimension $m$.
\end{lemma}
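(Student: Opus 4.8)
The plan is to control the exponential moment $\EE\exp(t\|z\|^2)$ for $t$ just below the critical value $1/(2\sigma^2)$, and then finish with a Chernoff bound. The one slightly clever ingredient is \emph{Gaussian smoothing}: I would introduce an auxiliary $g\sim\cN(0,\tau^2 I_m)$ independent of $z$, with $\tau^2<1/\sigma^2$ a parameter to be optimized at the end. Completing the square in the Gaussian density gives, pointwise in $z$, the identity $\EE_g\exp(\langle z,g\rangle)=\exp\!\big(\frac{\tau^2}{2}\|z\|^2\big)$, which is just the moment generating function of the Gaussian vector $g$ evaluated at the point $z$.

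Next I would take expectations over $z$ and swap the order of integration --- legitimate by Tonelli's theorem, since the integrand is nonnegative --- to obtain
\[ \EE_z\exp\!\Big(\frac{\tau^2}{2}\|z\|^2\Big)=\EE_g\,\EE_z\exp(\langle z,g\rangle)\le\EE_g\exp\!\Big(\frac{\sigma^2}{2}\|g\|^2\Big), \]
where the inequality applies the sub-Gaussian hypothesis of Definition~\ref{def:subg-mult} with $v=g$, conditionally on $g$. Since $\|g\|^2$ is distributed as $\tau^2$ times a $\chi_m^2$ variable, the right-hand side is a $\chi_m^2$ moment generating function, equal to $(1-\sigma^2\tau^2)^{-m/2}$, and this is finite precisely because $\sigma^2\tau^2<1$. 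Writing $t=\tau^2/2$, this shows $\EE\exp(t\|z\|^2)\le(1-2\sigma^2 t)^{-m/2}$ for every $0\le t<1/(2\sigma^2)$.

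To finish, I would apply Markov's inequality, $\Pr[\|z\|^2\ge a]\le e^{-ta}\,\EE\exp(t\|z\|^2)\le(1-2\sigma^2 t)^{-m/2}e^{-ta}$, and then choose $t=(1-\eps)/(2\sigma^2)$ (assuming without loss of generality $\eps\in(0,1)$), which makes $1-2\sigma^2 t=\eps$ and $ta=a(1-\eps)/(2\sigma^2)$, yielding the stated bound with $C(\eps,m)=\eps^{-m/2}$. There is no real obstacle in this argument; the only thing to watch is that the middle Gaussian integral diverges as $\tau^2\uparrow 1/\sigma^2$, which is exactly why the rate $1/(2\sigma^2)$ is not attained and why the prefactor $C$ must be allowed to blow up as $\eps\to 0$. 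If one prefers to avoid the smoothing trick, an equivalent route is a union bound over a $\delta$-net of the unit sphere (of cardinality at most $(1+2/\delta)^m$), using that each one-dimensional marginal $\langle z,u\rangle$ with $\|u\|=1$ is sub-Gaussian with variance proxy $\sigma^2$; this gives the same form of bound with a different constant $C(\eps,m)$.
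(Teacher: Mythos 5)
Your proof is correct, but it takes a genuinely different route from the one in the paper. The paper argues geometrically: it fixes a finite set of unit vectors $v_1,\ldots,v_C$ such that every unit vector has inner product at least $\sqrt{1-\eps}$ with some $v_i$, notes that $\|z\|^2 \ge a$ forces $\langle z, v_i\rangle \ge \sqrt{a(1-\eps)}$ for some $i$, and then applies the one-dimensional Chernoff bound to each direction and a union bound over the $C$ directions \--- i.e., exactly the ``net'' variant you mention in passing at the end of your write-up. Your main argument instead controls the full exponential moment: the Gaussian-smoothing identity $\EE_g \exp(\langle z,g\rangle) = \exp\left(\frac{\tau^2}{2}\|z\|^2\right)$, Tonelli, the sub-Gaussian hypothesis applied conditionally at $v=g$, and the $\chi_m^2$ moment generating function give $\EE\exp\left(t\|z\|^2\right) \le (1-2\sigma^2 t)^{-m/2}$ for all $0 \le t < 1/(2\sigma^2)$, after which Markov with $t = (1-\eps)/(2\sigma^2)$ yields the claim with the explicit constant $C(\eps,m) = \eps^{-m/2}$ (and the case $\eps \ge 1$ is trivial, as you note). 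Your route is slightly less elementary \--- it needs the Gaussian and chi-squared moment generating functions \--- but it buys more: an explicit constant, and in fact a bound on $\EE\exp\left(t\|z\|^2\right)$ itself, which is stronger than the tail bound and could be fed directly into the second-moment computation of Theorem~\ref{thm:synch-subg} to get quantitative (even non-asymptotic) bounds rather than mere finiteness. The paper's covering argument is more hands-on and avoids any distributional computation, at the cost of leaving $C(\eps,m)$ as an unspecified covering number.
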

\begin{proof}
Let $v_1, \ldots, v_C \in \mathbb{R}^m$ be a collection of unit vectors such that for every unit vector $\hat z \in \mathbb{R}^m$, there exists $i$ satisfying $\langle \hat z, v_i \rangle \ge \sqrt{1-\eps}$. If $\|z\|^2 \ge a$ then there must exist $i$ such that $\langle z, v_i \rangle \ge \sqrt{a(1-\eps)}$. For a fixed $i$ and for any $t > 0$ we have
\begin{align*}
\prob{\langle z,v_i \rangle \ge \sqrt{a (1-\eps)}} &= \prob{\exp(t \langle z, v_i \rangle) \ge \exp(t\sqrt{a(1-\eps)})} \\
&\le \EE[\exp(\langle z,t v_i \rangle)] \exp(-t \sqrt{a(1-\eps)}) \\
&\le \exp\left(\frac{1}{2} \sigma^2 t^2\right) \exp(-t \sqrt{a(1-\eps)})
\intertext{setting $t = \sqrt{a(1-\eps)}/\sigma^2$,}
&= \exp\left(-\frac{a(1-\eps)}{2\sigma^2}\right).
\end{align*}
The result now follows by a union bound over all $i$.
\end{proof}

The following theorem gives a sufficient condition for contiguity in terms of the sub-Gaussian property.
\begin{theorem}[sub-Gaussian method]
\label{thm:synch-subg}
Let $G$ be a compact group and let $\Psi$ be a list of frequencies. Suppose $Z^{(G,\Psi)}$ (defined above) is sub-Gaussian with covariance proxy $\sigma^2 I$. If $\lambda < 1/\sigma$ then $\GSynch(\lambda,G,\Psi)$ is contiguous to $\GSynch(0,G,\Psi)$.
\end{theorem}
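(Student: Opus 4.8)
The plan is to mirror the proof of Theorem~\ref{thm:subg} (the sub-Gaussian method for Gaussian Wigner), adapting it to the vector-valued quantity $\sum_u Z_u$. Starting from the second moment identity derived above,
$$\Ex_{Q_n} \left(\dd[P_n]{Q_n}\right)^2 = \Ex_Z \exp\left(\frac{\lambda^2}{2n} \left\|\sum_u Z_u\right\|^2 \right),$$
it suffices by Lemma~\ref{lem:sec} to show this expectation stays bounded as $n \to \infty$ whenever $\lambda < 1/\sigma$. Since the $Z_u$ are i.i.d.\ copies of $Z^{(G,\Psi)}$ (each drawn via an independent Haar element $h_u = g_u^{-1} g_u'$), and each $Z^{(G,\Psi)}$ is sub-Gaussian with covariance proxy $\sigma^2 I$, the sum $\sum_u Z_u$ is sub-Gaussian with covariance proxy $n\sigma^2 I$: this is immediate from Definition~\ref{def:subg-mult} since the moment-generating functions multiply.

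Next I would invoke Lemma~\ref{lem:subg-mult-tail} with $z = \sum_u Z_u$ (ambient dimension $m = D = \sum_{\rho \in \Psi} \beta_\rho d_\rho^2$, a constant independent of $n$) and variance proxy $n\sigma^2$. Fixing a small $\eps > 0$ to be chosen, this gives
$$\prob{\left\|\textstyle\sum_u Z_u\right\|^2 \ge a} \le C(\eps,D)\, \exp\left(-\frac{a(1-\eps)}{2 n \sigma^2}\right)$$
for all $a \ge 0$. I then bound the expectation by integrating the tail: writing $S = \exp\left(\frac{\lambda^2}{2n}\|\sum_u Z_u\|^2\right)$, we have $\prob{S \ge M} = \prob{\|\sum_u Z_u\|^2 \ge \frac{2n \log M}{\lambda^2}}$, which by the tail bound is at most $C(\eps,D)\, M^{-(1-\eps)/(\lambda^2 \sigma^2)}$. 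Since $\Ex[S] = \int_0^\infty \prob{S \ge M}\,\dee M \le 1 + \int_1^\infty C(\eps,D)\, M^{-(1-\eps)/(\lambda^2\sigma^2)}\,\dee M$, this integral converges (uniformly in $n$, since $C$ does not depend on $n$) provided $(1-\eps)/(\lambda^2 \sigma^2) > 1$. Because $\lambda < 1/\sigma$, i.e.\ $\lambda^2 \sigma^2 < 1$, we may choose $\eps > 0$ small enough that $(1-\eps)/(\lambda^2\sigma^2) > 1$, and the second moment is bounded. Contiguity then follows from Lemma~\ref{lem:sec}.

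The only mild subtlety — and the closest thing to an obstacle — is confirming that the dimension $D$ really is $n$-independent (it is, since $\Psi$ is a fixed finite list of irreducibles), so that the constant $C(\eps,D)$ in Lemma~\ref{lem:subg-mult-tail} does not grow with $n$; this is what makes the tail integral converge uniformly. Everything else is a direct transcription of the scalar argument in Theorem~\ref{thm:subg}, with the scalar sub-Gaussian tail bound replaced by its multivariate analogue (Lemma~\ref{lem:subg-mult-tail}). Unlike in the Wigner case, I would not bother to identify the exact limiting value of the second moment, since the distribution of $\|\sum_u Z_u\|^2$ depends on the covariance structure of $Z^{(G,\Psi)}$ (which need not equal $\sigma^2 I$, only be dominated by it); boundedness is all that is needed for contiguity.
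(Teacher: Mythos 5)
Your proposal is correct and follows essentially the same route as the paper: the second moment identity, the observation that $\sum_u Z_u$ is sub-Gaussian with covariance proxy $n\sigma^2 I$, the multivariate tail bound of Lemma~\ref{lem:subg-mult-tail} with its $n$-independent constant $C(\eps,D)$, and integration of the tail with $\eps$ chosen so that $(1-\eps)/(\lambda^2\sigma^2) > 1$. Your remark that boundedness (rather than the limiting value) suffices also matches the paper's treatment, which likewise defers the limit-value computation to the CLT argument of Theorem~\ref{thm:subg} only as an optional aside.
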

\begin{proof}
Note that $\sum_u Z_u$ is sub-Gaussian with covariance proxy $n \sigma^2 I$. From above we have
\begin{align*}
\Ex_{Q_n} \left(\dd[P_n]{Q_n}\right)^2 &= \EE \exp\left(\frac{\lambda^2}{2n} \left\|\sum_u Z_u\right\|^2 \right) \\
&= \int_0^\infty \problr{\exp\left(\frac{\lambda^2}{2n} \left\|\sum_u Z_u\right\|^2 \right) \ge M}\, dM \\
&= \int_0^\infty \problr{\|Z\|^2 \ge \frac{2n \log M}{\lambda^2}}\, dM \\
&\le 1 + \int_1^\infty \problr{\|Z\|^2 \ge \frac{2n \log M}{\lambda^2}}\, dM \\
&\le 1 + \int_1^\infty C \exp\left(-\frac{(1-\eps)}{2n\sigma^2} \frac{2n \log M}{\lambda^2}\right) \, dM \\
&= 1 + \int_1^\infty C \exp\left(-\frac{(1-\eps)\log M}{\sigma^2 \lambda^2} \right) \, dM \\
&= 1 + \int_1^\infty C M^{-(1-\eps)/(\sigma^2 \lambda^2)} \, dM \\
\end{align*}
which is finite provided that $(1-\eps)/(\sigma^2 \lambda^2) > 1$. The second inequality uses Lemma~\ref{lem:subg-mult-tail}. Since $\eps$ was arbitrary, this completes the proof.
\end{proof}

We remark that if we want to find the limit value of the second moment (e.g.\ for the hypothesis testing bounds of Proposition~\ref{prop:hyptest}), we can apply the argument of Theorem~\ref{thm:subg} based on the central limit theorem (except now using the multivariate central limit theorem).

Note that $\EE[Z^{(G,\Psi)}] = 0$ (which is a requirement for sub-Gaussianity) is automatically satisfied; this follows from the Peter--Weyl theorem on orthogonality of matrix entries, which we will discuss in more detail in Section~\ref{sec:gsynchcond}.

\subsection{Applications of the sub-Gaussian method}
\label{sec:subg-ex}

In this section we use Theorem~\ref{thm:synch-subg} to prove contiguity for some specific synchronization problems.

First we consider $U(1)$ with a single frequency. This is a complex-valued spiked Gaussian Wigner matrix, where the spike is complex-valued with each entry having unit norm. \cite{sdp-phase} predicted that the statistical threshold for this problem should be the spectral threshold $\lambda = 1$; we now confirm this.
\begin{theorem}[$U(1)$ with one frequency]\label{thm:u1-1}
Consider the group $U(1)$ of unit-norm complex numbers under multiplication. Identify each element $e^{i\theta}$ of $U(1)$ with its angle $\theta$. Let $\Psi_1$ be the list containing the single frequency $\rho: \theta \mapsto e^{i\theta}$. For any $\lambda < 1$, $\GSynch(\lambda,U(1),\Psi_1)$ is contiguous to $\GSynch(0,U(1),\Psi_1)$.
\end{theorem}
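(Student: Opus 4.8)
The plan is to apply the sub-Gaussian method of Theorem~\ref{thm:synch-subg}: it suffices to show that the random vector $Z^{(U(1),\Psi_1)}$ is sub-Gaussian with covariance proxy $\sigma^2 I$ for $\sigma^2 = 1$, since then contiguity follows for all $\lambda < 1/\sigma = 1$, matching the spectral threshold.

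First I would make the vector $Z_u$ explicit. The frequency $\rho:\theta\mapsto e^{i\theta}$ satisfies $\bar\rho\not\cong\rho$, so it has complex type, with complex dimension $d_\rho = 1$ and $\beta_\rho = 2$; hence $D = \beta_\rho d_\rho^2 = 2$. Drawing $h_u = e^{i\phi_u}$ with $\phi_u$ uniform on $[0,2\pi)$ and vectorizing $\sqrt{\beta_\rho d_\rho}\,\rho(h_u) = \sqrt 2\, e^{i\phi_u}$ into its real and imaginary parts gives
$$Z_u = \sqrt 2\,(\cos\phi_u,\ \sin\phi_u)\in\RR^2,$$
so $Z_u$ is uniformly distributed on the circle of radius $\sqrt 2$. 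Its mean is $0$ (immediate by rotational symmetry, or from Peter--Weyl) and its covariance is exactly $I$, so $\sigma^2 = 1$ is the best one could hope for.

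Next I would compute the moment generating function. For $v\in\RR^2$, writing $r = \|v\|$ and using $\langle Z_u, v\rangle = \sqrt 2\, r\cos(\phi_u - \psi)$ for the appropriate phase $\psi$, together with the fact that $\phi_u - \psi$ is again uniform, one gets
$$\EE\exp(\langle Z_u, v\rangle) = \EE_\phi \exp\!\big(\sqrt 2\, r\cos\phi\big) = I_0(\sqrt 2\, r),$$
where $I_0$ is the modified Bessel function of the first kind. The crux of the argument is then the bound $I_0(\sqrt 2\, r)\le \exp(r^2/2)$ for all $r\ge 0$, which I would verify by comparing power series: $I_0(\sqrt 2\, r) = \sum_{k\ge 0}(r^2/2)^k/(k!)^2$ while $\exp(r^2/2) = \sum_{k\ge 0}(r^2/2)^k/k!$, and $(k!)^2\ge k!$ gives the inequality term by term.

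Having established this, $Z^{(U(1),\Psi_1)}$ is sub-Gaussian with covariance proxy $I$, and Theorem~\ref{thm:synch-subg} immediately yields $\GSynch(\lambda,U(1),\Psi_1)\contig\GSynch(0,U(1),\Psi_1)$ for all $\lambda < 1$. There is no serious obstacle: the only non-mechanical step is recognizing the Bessel-function form of the MGF and the elementary series comparison that controls it. It is worth noting that the bound is tight (the variance proxy equals the true variance), which is precisely why this recovers the $\lambda = 1$ spectral threshold rather than something weaker.
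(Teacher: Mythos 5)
Your proposal is correct and follows essentially the same route as the paper: reduce by rotational symmetry to the MGF of $\sqrt{2}\,\|v\|\cos\theta$ and bound its power series termwise by that of $\exp(\|v\|^2/2)$ (the paper writes the series via double factorials rather than naming it $I_0$, but the comparison is identical), then invoke Theorem~\ref{thm:synch-subg}. No gaps.
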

\begin{proof}
We have $Z^{(U(1),\Psi_1)} = \sqrt{2}\,(\cos\theta, \sin\theta)$ where $\theta$ is drawn uniformly from $[0,2 \pi]$. Towards showing sub-Gaussianity we have, for any $v \in \mathbb{R}^2$,
$$\EE \exp\left(\left\langle Z^{(U(1),\Psi_1)},v \right\rangle\right) = \EE_\theta \exp\left(\sqrt{2}\, v_1 \cos \theta + \sqrt{2}\, v_2 \sin \theta\right) = \EE_\theta \exp\left( \sqrt{2} \,\|v\| \cos\theta \right).$$
Letting $w = \|v\|$, it is sufficient to show for all $w \ge 0$,
$$\EE_\theta \exp\left(\sqrt{2} \, w \cos \theta\right) \le \exp\left(\frac{1}{2} \, w^2\right).$$
This can be verified numerically but we also provide a rigorous proof. Using the Taylor expansion of $\exp$ and the identity $$\EE_\theta\left[\cos^k \theta\right] = \left\{\begin{array}{cc}\frac{(k-1)!!}{k!!} & k \text{ even} \\ 0 & k \text{ odd} \end{array} \right.$$
we have
\begin{align*}
\EE_\theta \exp\left(\sqrt{2} \, w \cos \theta\right)
&= \EE_\theta \sum_{k \ge 0} \frac{2^{k/2} w^k \cos^k \theta}{k!} 
= \sum_{k \ge 0} \frac{2^{k} w^{2k} \EE_\theta \cos^{2k} \theta}{(2k)!} \\
&= \sum_{k \ge 0} \frac{2^{k} w^{2k} (2k-1)!!}{(2k)!(2k)!!} 
= \sum_{k \ge 0} \frac{2^{k} w^{2k}}{(2k)!!(2k)!!} \\
&\le \sum_{k \ge 0} \frac{w^{2k}}{(2k)!!}
= \sum_{k \ge 0} \frac{w^{2k}}{2^k k!}
= \exp\left(\frac{1}{2} w^2\right).
\end{align*}
The exchange of expectation and infinite sum is justified by the Fubini--Tonelli theorem, provided we can show absolute convergence:
$$\sum_{k \ge 0} \EE_\theta \left|\frac{2^{k/2} w^k \cos^k \theta}{k!}\right| \le \sum_{k \ge 0} \left|\frac{2^{k/2} w^k}{k!}\right|$$
which converges by the ratio test.
\end{proof}

We now add a second frequency.
\begin{example}[$U(1)$ with two frequencies]\label{ex:u1-2}
Consider again $U(1)$ but now let $\Psi_2$ be the list of two frequencies: $\rho_1: \theta \mapsto e^{i\theta}$ and $\rho_2: \theta \mapsto e^{2 i \theta}$. For any $\lambda < \lambda^* \approx 0.9371$ (numerically computed), $\GSynch(\lambda,U(1),\Psi_2)$ is contiguous to $\GSynch(0,U(1),\Psi_2)$.
\end{example}
\noindent (We use ``example'' rather than ``theorem'' to indicate results that rely on numerical computations.) Although we are unable to show that the spectral threshold is optimal, note that this rules out the possibility that the threshold for two frequencies drops to $1/\sqrt{2}$ (which is what we would have if one could perfectly synthesize the frequencies). We expect that the true statistical threshold for this problem is $\lambda = 1$ and that our results are not tight here. We now move on to the case of $\ZZ/L$.
\begin{proof}[Details]
We have $Z^{(U(1),\Psi_2)} = \sqrt{2}\,(\cos\theta,\sin\theta,\cos(2\theta),\sin(2\theta))$. Our threshold is $\lambda^* = 1/\sigma^*$ where
$$(\sigma^*)^2 = \sup_v \frac{2}{\|v\|} \log \EE\left(\langle Z^{(U(1),\Psi_2)},v \rangle \right)
= \sup_v \frac{2}{\|v\|} \log \EE_\theta\left(\sqrt{2}(v_1 \cos \theta + v_2 \sin \theta + v_3 \cos(2\theta) + v_4 \sin(2\theta))\right).$$
By the change of variables $\theta \mapsto \theta - \theta_0$ (for some $\theta_0$) we can rotate $(v_1,v_2)$ arbitrarily, and so we can take $v_2 = 0$ and $v_1 \ge 0$ without loss of generality. By grid search over $v_1,v_3,v_4$, we see numerically that the maximizer is $v^* = (0.720,0,0.559,0)$ which yields contiguity for all $\lambda < \lambda^* \approx 0.937$.
\end{proof}

\begin{example}[$\ZZ/L$ with one frequency]\label{ex:ZL-1}
Now consider $\mathbb{Z}/L = \{0,1,\ldots,L-1\} \; (\mathrm{mod}\;L)$ with $L \ge 2$ and $\Psi_1$ the list of one frequency: $j \mapsto \exp(2 \pi i j / L)$. For $L = 3$, we have contiguity $\GSynch(\lambda,\ZZ/L,\Psi_1)\contig\GSynch(0,\ZZ/L,\Psi_1)$ for all $\lambda < \lambda^*_3 \approx 0.961$. For $L = 2$ and all $L \ge 4$, we have contiguity for all $\lambda < 1$.
\end{example}
\begin{proof}[Details]
This is shown numerically in a manner similar to the examples above. Of course we cannot test this for all values of $L$, but we conjecture that the $\lambda^* = 1$ trend continues indefinitely.
\end{proof}
\noindent We have that the spectral threshold is optimal for all $L$ except 3. It is surprising that $L = 3$ is an exception here, be we expect that this is a weakness of our techniques and that the true threshold for $L = 3$ is also $\lambda = 1$.

Finally we give a coarse but general result for any group with any number of frequencies.
\begin{theorem}[any group, any frequencies]\label{thm:any-hoeff}
Let $G$ be any group and let $\Psi$ be any list of frequencies, with $D = \sum_{\rho \in \Psi} \beta_\rho d_\rho^2$. If $\lambda < 1/\sqrt{D}$ then $\GSynch(\lambda,G,\Psi)$ is contiguous to $\GSynch(0,G,\Psi)$.
\end{theorem}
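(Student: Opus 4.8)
The plan is to invoke the sub-Gaussian method of Theorem~\ref{thm:synch-subg}: it suffices to show that the random vector $Z^{(G,\Psi)}$ is sub-Gaussian (in the sense of Definition~\ref{def:subg-mult}) with covariance proxy $\sigma^2 I$ for $\sigma^2 = D$, since then the hypothesis $\lambda < 1/\sqrt{D} = 1/\sigma$ delivers contiguity at once.

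The key observation is that each $Z_u$ has \emph{deterministic} norm $\sqrt{D}$. Recall that $Z_u$ is obtained by concatenating, over $\rho \in \Psi$, the real-coordinate vectorization of $\sqrt{\beta_\rho d_\rho}\,\rho(h_u)$; each scalar entry of $\rho(h_u)$ contributes $\beta_\rho d_\rho\,|\rho(h_u)_{ij}|^2$ to the squared norm, so the $\rho$-block contributes $\beta_\rho d_\rho \sum_{i,j} |\rho(h_u)_{ij}|^2$. Since $\rho$ is unitary, $\rho(h_u)\rho(h_u)^* = I$, whence $\sum_{i,j}|\rho(h_u)_{ij}|^2 = \mathfrak{Re}\,\mathrm{Tr}\big(\rho(h_u)\rho(h_u)^*\big) = d_\rho$ (with $d_\rho$ the quaternionic dimension in the quaternionic case, which is exactly the convention under which this identity holds). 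Summing over $\rho$ gives $\|Z_u\|^2 = \sum_{\rho \in \Psi} \beta_\rho d_\rho^2 = D$.

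Next, $\EE[Z_u] = 0$: every coordinate of $Z_u$ is a real component of an entry of $\rho(h_u)$ with $h_u$ Haar-distributed, and $\int_G \rho(g)\,\dee g = 0$ for each nontrivial irreducible $\rho$ by the Peter--Weyl theorem (orthogonality of matrix coefficients, using that the trivial representation is excluded from $\Psi$). Now fix any $v \in \RR^D$. By Cauchy--Schwarz and $\|Z_u\| = \sqrt{D}$, the mean-zero random variable $\langle Z_u, v \rangle$ takes values in $\big[-\sqrt{D}\,\|v\|,\ \sqrt{D}\,\|v\|\big]$, so Hoeffding's lemma gives $\EE\exp(\langle Z_u, v\rangle) \le \exp\big(\tfrac{1}{8}(2\sqrt{D}\,\|v\|)^2\big) = \exp\big(\tfrac{1}{2} D \|v\|^2\big)$, which is precisely the sub-Gaussian condition with covariance proxy $D\cdot I$. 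The theorem then follows directly from Theorem~\ref{thm:synch-subg}.

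I do not expect a serious obstacle. The only points demanding care are the uniform bookkeeping of the real, complex, and quaternionic conventions for $d_\rho$, $\beta_\rho$, and the Frobenius norm — chosen exactly so that $\sum_{i,j}|\rho(g)_{ij}|^2 = d_\rho$ holds in all three types — and noting that excluding the trivial representation is what forces $\EE[Z_u] = 0$. It is worth remarking that this bound is deliberately crude: it discards all cancellation among the coordinates of $Z_u$ (treating $\langle Z_u,v\rangle$ only through its support), which is why it is beaten by the sharper, representation-specific thresholds computed in the examples of Section~\ref{sec:subg-ex}.
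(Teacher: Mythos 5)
Your proposal is correct and follows essentially the same route as the paper's proof: deterministic norm $\|Z^{(G,\Psi)}\| = \sqrt{D}$ from unitarity, Cauchy--Schwarz to bound $\langle Z,v\rangle$, Hoeffding's lemma to get the sub-Gaussian condition with proxy $D$, and then Theorem~\ref{thm:synch-subg}. Your extra care about the mean-zero condition via Peter--Weyl and the real/complex/quaternionic bookkeeping only makes explicit what the paper leaves implicit.
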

\begin{proof}
Since our representations $\rho$ are unitary, we have $\|\rho(g)\|_F^2 = d_\rho$ for any $g \in G$, and so $\|Z^{(G,\Psi)}\|^2 = D$. This means for any vector $v$ we have $|\langle Z^{(G,\Psi)},v\rangle| \le \|Z^{(G,\Psi)}\| \|v\| = \sqrt{D} \|v\|$. By Hoeffding's Lemma (see Section~\ref{sec:z2}) this implies the sub-Gaussian condition $\EE \exp(\langle Z^{(G,\Psi)},v\rangle) \le \exp(\frac{1}{2} D \|v\|^2)$.
\end{proof}

\subsection{The conditioning method for finite groups}\label{sec:gsynchcond}

Here we give an alternative method to show contiguity for finite groups, similar to the conditioning method of Section~\ref{sec:cond-method} based on \citet{bmnn}. Let $G$ be a finite group with $|G| = L$. Again take all the $\lambda$'s to be equal: $\lambda_\rho = \lambda$ for all $\rho$. For $a,b \in G$, let $N_{ab} = |\{u \,|\, g_u = a, g'_u = b\}|$. Rewrite the second moment in terms of $N_{ab}$:

\begin{align*}
\EE_{X,X'} \prod_\rho \exp\left(\frac{\beta_\rho \lambda_\rho^2 d_\rho}{2n} \left\|X_\rho^*{X'}_\rho\right\|_F^2 \right)
&= \EE_{X,X'} \exp\left(\frac{\lambda^2}{2n} \sum_\rho \beta_\rho d_\rho \sum_c (X_\rho^* {X'}_\rho)_c^2 \right) \\
\intertext{where $c$ ranges over all (real-valued) coordinates of entries of $\rho(g)$ (e.g.\ imaginary part of top right entry)}
&= \EE_{g,g'} \exp\left(\frac{\lambda^2}{2n} \sum_\rho \beta_\rho d_\rho \sum_c \left(\sum_u \rho(g_u^{-1} g'_u)_c\right)^2 \right) \\
&= \EE_N \exp\left(\frac{\lambda^2}{2n} \sum_\rho \beta_\rho d_\rho \sum_c \left(\sum_{ab} N_{ab}\, \rho(a^{-1}b)_c \right)^2 \right) \\
&= \EE_N \exp\left(\frac{1}{n} N^\top A N \right) \\
&= \EE_N \exp\left(Y^\top A Y \right)
\end{align*}
where $Y = \frac{\vec N - n \bar\alpha}{\sqrt n}$, $\bar\alpha = \frac{1}{L^2} \one_{L^2}$, and $A$ is the $L^2 \times L^2$ matrix
$$A_{ab,a'b'} = \frac{\lambda^2}{2} \sum_\rho \beta_\rho d_\rho \sum_c \rho(a^{-1}b)_c \,\rho({a'}^{-1}b')_c.$$
To justify the last step, note that $\bar\alpha$ is in the kernel of $A$ because all row- and column-sums of $A$ are zero. This follows from the Peter--Weyl theorem on orthogonality of matrix coefficients, which we will discuss in more detail shortly. By Proposition~5 in \citet{bmnn} (Proposition~\ref{prop:nn} in this paper) we have contiguity provided that
$$\sup_\alpha \frac{\alpha^\top A \alpha}{D(\alpha,\bar\alpha)} < 1$$
where $\alpha$ ranges over (vectorized) $L \times L$ matrices with all row- and column-sums equal to $\frac{1}{L}$.

\begin{theorem}[conditioning method]
\label{thm:synch-cond}
Let $G$ be a finite group of order $L$ and let $\Psi$ be a list of frequencies. Let $\tilde A$ be the $L^2 \times L^2$ matrix $\tilde A_{ab,a'b'} = \frac{1}{2} \sum_{\rho \in \Psi} \beta_\rho d_\rho \sum_c \rho(a^{-1}b)_c \rho(a'^{-1}b')_c$ where $a,b,a',b' \in G$ and $c$ ranges over (real) coordinates of matrix entries. Let $D(u,v)$ denote the KL divergence between two vectors: $D(u,v) = \sum_i u_i \log(u_i/v_i)$. If
$$\lambda < \left[\sup_{\alpha} \frac{\alpha^\top \tilde A \alpha}{D(\alpha,\bar\alpha)}\right]^{-1/2}$$
then $\GSynch(\lambda,G,\Psi)$ is contiguous to $\GSynch(0,G,\Psi)$. Here $\alpha$ ranges over (vectorized) $L \times L$ matrices with all row- and column-sums equal to $\frac{1}{L}$.
\end{theorem}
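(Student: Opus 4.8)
The plan is to observe that the second-moment identity derived in this subsection for $\GSynch$ is already in exactly the form to which Proposition~\ref{prop:nn} (Proposition~5 of \citet{bmnn}) applies, and then to run the conditioning argument of Section~\ref{sec:cond-method}. Concretely, let $\omega_n$ be the `good' set of spikes $g$ (as defined in Appendix~A of \citet{bmnn}) for which the empirical distribution of $g_1,\dots,g_n$ is sufficiently close to the uniform distribution $\pi = \frac1L \one_L$ on $G$; since the $g_u$ are i.i.d.\ Haar, a spike lies in $\omega_n$ with probability $1-o(1)$. Let $\tilde P_n$ be the modification of the spiked model $P_n = \GSynch(\lambda,G,\Psi)$ that reveals a pure-noise observation (a draw from $Q_n = \GSynch(0,G,\Psi)$) whenever $g \notin \omega_n$. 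Then $P_n$ and $\tilde P_n$ agree within $\omega_n$, so by Lemma~\ref{lem:cond} it suffices to bound $\Ex_{Q_n}(\dd[\tilde P_n]{Q_n})^2$. Repeating the second-moment computation above with the extra indicator $\one_{\Omega_n}$ inserted, where $\Omega_n = \{g,g' \in \omega_n\}$, yields $\Ex_N[\one_{\Omega_n}\exp(Y^\top A Y)] + o(1)$, with $N_{ab} = |\{u : g_u = a,\ g'_u = b\}|$, $Y = (\vec N - n\bar\alpha)/\sqrt n$, $\bar\alpha = \pi\pi^\top = \frac1{L^2}\one_{L^2}$, and $A = \lambda^2\tilde A$.

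Next I would verify the one hypothesis of Proposition~\ref{prop:nn} that is not purely formal: that $\bar\alpha$ lies in the kernel of $A$, so that $\frac1n N^\top A N = Y^\top A Y$. As $\tilde A$ is symmetric, it suffices to check that every row sum of $\tilde A$ vanishes. Summing $\tilde A_{ab,a'b'} = \frac12\sum_{\rho\in\Psi}\beta_\rho d_\rho\sum_c \rho(a^{-1}b)_c\,\rho(a'^{-1}b')_c$ over all $(a',b')\in G^2$, and using that for each fixed $a'$ the element $a'^{-1}b'$ runs over all of $G$ as $b'$ does, the inner sum collapses to $L\sum_{h\in G}\rho(h)_c$; but $\frac1L\sum_{h\in G}\rho(h)$ is the projection onto the trivial isotypic component of $\rho$, which is zero since every $\rho\in\Psi$ is a nontrivial irreducible representation. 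Hence $\sum_{h\in G}\rho(h)=0$, every row (and column) sum of $\tilde A$ is zero, and $A\bar\alpha = 0$. This appeal to the Peter--Weyl orthogonality relations is the only genuinely representation-theoretic input; it is the step I would be most careful to state cleanly, and it is where I would expect the bookkeeping across the real/complex/quaternionic cases to require a little attention.

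With $A\bar\alpha = 0$ in hand, Proposition~\ref{prop:nn}, applied with probability vector $\pi$ and matrix $A = \lambda^2\tilde A$, shows that the conditioned second moment converges to $\Ex[\exp(Z^\top A Z)] < \infty$ provided
$$ m \;=\; \sup_{\alpha}\frac{(\alpha-\bar\alpha)^\top A(\alpha-\bar\alpha)}{D(\alpha,\bar\alpha)} \;<\; 1, $$
where $\alpha$ ranges over vectorized $L\times L$ matrices with all row and column sums equal to $\frac1L$. Using $A\bar\alpha = 0$ once more, $(\alpha-\bar\alpha)^\top A(\alpha-\bar\alpha) = \alpha^\top A\alpha = \lambda^2\,\alpha^\top\tilde A\alpha$, so $m = \lambda^2\sup_\alpha \alpha^\top\tilde A\alpha/D(\alpha,\bar\alpha)$, and $m<1$ is precisely the stated condition $\lambda < [\sup_\alpha \alpha^\top\tilde A\alpha/D(\alpha,\bar\alpha)]^{-1/2}$. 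Lemma~\ref{lem:cond} then gives $P_n\contig Q_n$, i.e.\ $\GSynch(\lambda,G,\Psi)\contig\GSynch(0,G,\Psi)$, completing the proof. The main obstacle is thus not a hard estimate but the correct translation of the finite-group/Haar setting into the framework of Proposition~\ref{prop:nn}---the choice of $\pi$, the conditioning events $\omega_n$, and the kernel condition on $\tilde A$---after which the conclusion is immediate.
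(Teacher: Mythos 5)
Your proposal is correct and follows essentially the same route as the paper: rewrite the second moment in terms of the type counts $N_{ab}$, verify $A\bar\alpha=0$ via the vanishing of $\sum_{h\in G}\rho(h)$ for nontrivial irreducibles, and invoke Proposition~\ref{prop:nn} together with Lemma~\ref{lem:cond}. If anything, you spell out the conditioning on the good event $\Omega_n$ and the row-sum computation more explicitly than the paper does, but the argument is the same.
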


A finite group has only a finite number of irreducible representations (over $\mathbb{C}$), so let us now specialize to the case where our list $\Psi$ contains all of them (excluding the trivial representation, and only taking one representation per conjugate pair). Expand the numerator:
$$\alpha^\top A \alpha = \frac{\lambda^2}{2} \sum_\rho \beta_\rho d_\rho \sum_c \left(\sum_{ab} \alpha_{ab}\, \rho(a^{-1}b)_c \right)^2 = \frac{\lambda^2}{2} \sum_\rho \beta_\rho d_\rho \sum_c \left(\sum_h \alpha_h \, \rho(h)_c \right)^2$$
where $\alpha_h = \sum_{(a,b) \in S_h} \alpha_{ab}$ and $S_h = \{(a,b)\,|\, a^{-1}b = h\}$. We now appeal to the Peter--Weyl theorem on the orthogonality of matrix coefficients: the basis functions $\chi_{\rho ij}(h) = \sqrt{d_\rho^\mathbb{C}} \rho(h)_{ij}$ (for all irreducible $\rho$ over $\mathbb{C}$ and matrix entries $i,j$) form an orthonormal basis for $\mathbb{C}^G$ under the Hermitian inner product $\langle f_1, f_2 \rangle \equiv \frac{1}{L} \sum_{h \in G} f_1(h) \bar{f_2(h)}$. Here $d_\rho^\mathbb{C}$ is the dimension as a complex representation, which is the same as $d_\rho$ for real- and complex-type but equal to $2 d_\rho$ for quaternionic-type. This means the above can be thought of as projecting the vector $\{\alpha_h\}_{h \in G}$ onto these basis elements and then computing the $\ell^2$ norm of the result. By the basis-invariance of the $\ell^2$ norm, we can rewrite the above as
$$\frac{\lambda^2 L^2}{2} \left[\frac{1}{L} \sum_h \alpha_h^2 - \left(\frac{1}{L} \sum_h \alpha_h \right)^2\right] = \frac{\lambda^2 L}{2} \left[ \sum_h \alpha_h^2 - \frac{1}{L}\right].$$
The second term here corrects for the fact that the trivial representation did not appear in our original expression. Note that the factor of $\beta = 2$ for complex representations corrects for the fact that we were only using one representation per conjugate pair. The factor of $\beta = 4$ for quaternionic representations corrects for the fact that we were thinking of these representations as being defined over $\mathbb{H}$ rather than $\mathbb{C}$; the corresponding complex representation has dimension twice as large and represents each quaternion value by the following $2 \times 2$ complex matrix:
$$a + bi + cj + dk \quad\leftrightarrow\quad \left(\begin{array}{cc} a+bi & c+di \\ -c+di & a-bi \end{array} \right).$$
(One factor of 2 comes from the fact that $d_\rho^\mathbb{C} = 2d_\rho$ and the other factor of 2 comes from the fact that the squared-Frobenius norm of this $2 \times 2$ matrix is twice the squared-norm of the associated quaternion.)

Note that we now have exactly the same optimization problem that we arrived at for the truth-or-Haar model of Section~\ref{sec:toh} with $\lambda$ in place of $\tilde p$, so we can apply Proposition~\ref{prop:matrix-opt} to immediately obtain the following.
\begin{theorem}\label{thm:synch-finite}
Let $G$ be a finite group of order $L \ge 2$ and let $\Psi_\mathrm{all}$ be the list of {\bf all} frequencies (excluding the trivial one and only taking one from each conjugate pair). If for all $\rho \in \Psi_\mathrm{all}$,
$$\lambda_\rho < \lambda^*_L \equiv \sqrt{\frac{2(L-1)\log(L-1)}{L(L-2)}}$$
then $\GSynch(\{\lambda_\rho\},G,\Psi_\mathrm{all})$ is contiguous to $\GSynch(0,G,\Psi_\mathrm{all})$. For $L = 2$, $\lambda^*_2 = 1$ (the limit value of the $0/0$ expression).
\end{theorem}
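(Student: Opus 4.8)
The plan is to invoke the conditioning method already established in Theorem~\ref{thm:synch-cond}, which reduces the contiguity question to a finite-dimensional matrix optimization problem, and then to recognize that optimization as exactly the one solved in Proposition~\ref{prop:matrix-opt}. First I would reduce to the symmetric point $\lambda_\rho = \lambda$ for all $\rho \in \Psi_\mathrm{all}$: the second moment computed in Section~\ref{sec:gsynchsecond}, namely $\Ex_{X,X'}\prod_\rho\exp\big(\tfrac{\beta_\rho\lambda_\rho^2 d_\rho}{2n}\|X_\rho^*X'_\rho\|_F^2\big)$, is monotone non-decreasing in each $\lambda_\rho$, so contiguity at the symmetric point for every $\lambda < \lambda^*_L$ immediately yields contiguity for any tuple $\{\lambda_\rho\}$ with each $\lambda_\rho < \lambda^*_L$.

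At the symmetric point, Theorem~\ref{thm:synch-cond} gives contiguity whenever $\lambda < \big[\sup_\alpha \alpha^\top\tilde A\alpha / D(\alpha,\bar\alpha)\big]^{-1/2}$, where $\alpha$ ranges over vectorized nonnegative $L\times L$ matrices with all row- and column-sums equal to $1/L$. The heart of the argument is to evaluate $\alpha^\top\tilde A\alpha$ when $\Psi = \Psi_\mathrm{all}$. Writing $\alpha_h = \sum_{(a,b)\in S_h}\alpha_{ab}$ with $S_h = \{(a,b): a^{-1}b = h\}$, the quadratic form becomes $\tfrac12\sum_\rho\beta_\rho d_\rho\sum_c\big(\sum_h\alpha_h\,\rho(h)_c\big)^2$, where $c$ indexes the real coordinates of matrix entries. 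I would then apply the Peter--Weyl theorem: the normalized matrix coefficients $\chi_{\rho ij}(h) = \sqrt{d_\rho^{\mathbb C}}\,\rho(h)_{ij}$, over all irreducible $\rho$ over $\mathbb C$ and all entry indices $i,j$, form an orthonormal basis of $\mathbb C^G$ under $\langle f_1,f_2\rangle = \tfrac1L\sum_h f_1(h)\overline{f_2(h)}$. Consequently the above sum is the squared $\ell^2$-length of the projection of $\{\alpha_h\}_{h\in G}$ onto the span of the non-trivial frequencies, which by Parseval equals $L^2\big(\tfrac1L\sum_h\alpha_h^2 - (\tfrac1L\sum_h\alpha_h)^2\big) = L\big(\sum_h\alpha_h^2 - \tfrac1L\big)$, using $\sum_h\alpha_h = 1$.

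The main obstacle is the bookkeeping of the factors $\beta_\rho$ so that the Peter--Weyl identity comes out with exactly the right constants: one must check that $\beta_\rho = 2$ for complex-type representations precisely compensates for including only one representation per conjugate pair, and that $\beta_\rho = 4$ for quaternionic-type precisely compensates both for $d_\rho^{\mathbb C} = 2d_\rho$ and for the squared Frobenius norm of the $2\times 2$ complex embedding of a quaternion being twice the quaternion's squared norm; the surviving $-1/L$ term accounts for the omitted trivial representation. Once this is done, the optimization becomes $\sup_\alpha \tfrac{L}{2}\cdot\tfrac{\sum_h\alpha_h^2 - 1/L}{D(\alpha,\bar\alpha)}$, which is exactly the quantity evaluated in Proposition~\ref{prop:matrix-opt} to be $\tfrac{L(L-2)}{2(L-1)\log(L-1)}$ (with limiting value $1$ when $L=2$). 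Taking the $-1/2$ power yields the threshold $\lambda^*_L = \sqrt{\tfrac{2(L-1)\log(L-1)}{L(L-2)}}$, which completes the proof.
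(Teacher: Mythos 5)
Your proposal is correct and follows essentially the same route as the paper: reduce to the symmetric point $\lambda_\rho = \lambda$ by monotonicity of the second moment, invoke the conditioning method of Theorem~\ref{thm:synch-cond}, evaluate the quadratic form via Peter--Weyl (with the $\beta_\rho$ factors accounting for conjugate pairs and the quaternionic embedding), and conclude with Proposition~\ref{prop:matrix-opt}. Nothing essential is missing.
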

Here we have used the monotonicity of the second moment: if we show contiguity when all the $\lambda_\rho$'s are equal to some $\lambda$, and we then decrease some of the individual $\lambda_\rho$'s, we will still have contiguity.


Interestingly, our critical value $\lambda^*_L$ is the same as our critical value $\tilde p^*_L$ from the truth-or-Haar model. As discussed previously, this matches the spectral threshold $\lambda = 1$ only when $L = 2$. However, for small values of $L$, our $\lambda^*_L$ is quite close to 1 (see the table in Section~\ref{sec:toh}).

Also note that when $L = 3$, Theorem~\ref{thm:synch-finite} matches (and proves rigorously) the numerical value $\lambda^* \approx 0.961$ of Example~\ref{ex:ZL-1} (obtained via the sub-Gaussian method). Note that when $L = 3$, $\ZZ/L$ only has one frequency, so these two results apply to the same problem. However, we see that the conditioning method gains no advantage over the sub-Gaussian method in this case. This seems to be true in general for synchronization problems because there are no particularly `bad' values for the spike due to symmetry of the group.

\subsection{Detection in the truth-or-Haar model}\label{sec:gsynchup1}

In this subsection, we show that exhaustive search outperforms spectral methods in the Truth-or-Haar Model when $L$ is large enough. Specifically, we show: 

\begin{theorem}\label{thm:toh-upper}
Let $G$ be a finite group of order $L \ge 2$. If
$$\tilde p > \sqrt{\frac{4 \log L}{L-1}}$$
there is a computationally inefficient algorithm that can distinguish between the spiked and unspiked models.
\end{theorem}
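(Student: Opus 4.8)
The plan is to analyze exhaustive search over all candidate synchronizations. For a matrix $Y \in G^{n \times n}$ and a vector $h \in G^n$, let
\[ \mathrm{Agr}(Y,h) = \#\{(u,v) : u < v,\ Y_{uv} = h_u h_v^{-1}\}, \qquad T(Y) = \max_{h \in G^n} \mathrm{Agr}(Y,h), \]
and the test will threshold $T(Y)$ at a value $s$ chosen below. Writing $m = \binom{n}{2}$ and $p = \tilde p/\sqrt n$, I would take $s = m/L + (1-\eps)\,m\,p\,(L-1)/L$ for a small constant $\eps > 0$ to be fixed at the end.

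Under the spiked model, condition on the true spike $g$. For each pair $\{u,v\}$ the event $Y_{uv} = g_u g_v^{-1}$ holds if the pair is a truth edge (probability $p$) or a Haar edge whose uniform draw coincides (probability $(1-p)/L$), and these events are independent across pairs; hence $\mathrm{Agr}(Y,g)$ is $\mathrm{Binomial}(m,\,p + (1-p)/L)$, with mean $m/L + m p (L-1)/L$ and standard deviation $O(\sqrt m) = O(n)$. Since $s$ falls below this mean by $\eps\, m p(L-1)/L = \Theta(n^{3/2})$, which dwarfs the $O(n)$ fluctuation, a Chernoff (or even Chebyshev) bound gives $\mathrm{Agr}(Y,g) \geq s$, hence $T(Y) \geq s$, with probability $1-o(1)$.

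Under the unspiked model, every $Y_{uv}$ is independent and uniform on $G$, so for a fixed $h$ we have $\mathrm{Agr}(Y,h) \sim \mathrm{Binomial}(m,1/L)$, with mean $\mu = m/L$ and variance $\sigma^2 = m(L-1)/L^2$. With $t := s - \mu = (1-\eps)\,m p(L-1)/L = \Theta(n^{3/2})$, so that the linear-in-$t$ term in Bernstein's inequality is $o(\sigma^2)$, one gets
\[ \Pr[\mathrm{Agr}(Y,h) \geq s] \ \leq\ \exp\!\left(-\frac{t^2}{2\sigma^2}(1-o(1))\right) \ =\ \exp\!\left(-\frac{(1-\eps)^2 \tilde p^2 (L-1)}{4}\,n\,(1-o(1))\right), \]
using $m = \binom n2 = \tfrac{n^2}{2}(1-o(1))$ and $p = \tilde p/\sqrt n$. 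A union bound over the $L^n$ choices of $h$ then gives $\Pr[T(Y) \geq s] \leq \exp\!\big(n\log L - \tfrac{(1-\eps)^2 \tilde p^2(L-1)}{4} n(1-o(1))\big)$, which is $o(1)$ as soon as $(1-\eps)^2 \tilde p^2 (L-1)/4 > \log L$; since $\tilde p^2 > 4\log L/(L-1)$ by hypothesis, $\eps$ can be chosen small enough to make this strict, and the theorem follows.

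The only delicate point is the matching of scales rather than any single estimate: the gap $s - m/L$ must lie below the spiked mean by only a lower-order amount (so $T \geq s$ whp under the spike) while simultaneously being far enough into the upper tail of $\mathrm{Binomial}(m,1/L)$ that the per-candidate bound $\exp(-\Theta(n))$ overcomes the $L^n$ union bound. Tracking the constants carefully --- in particular checking that $t = \Theta(n^{3/2}) \ll \sigma^2 = \Theta(n^2)$ so that Bernstein reduces to the Gaussian-tail form, and that using $\binom n2$ instead of $n^2/2$ costs only a $1-o(1)$ factor --- yields an exponent that is exactly $\tfrac{\tilde p^2(L-1)}{4} - \log L$, which is precisely why the threshold is $\tilde p = \sqrt{4\log L/(L-1)}$. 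This bookkeeping is the main (if routine) obstacle; everything else is direct.
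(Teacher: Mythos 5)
Your proposal is correct and follows essentially the same route as the paper: exhaustive search over all candidates $h \in G^n$, thresholding the maximum agreement count, with binomial concentration under the spiked model and an exponential tail bound plus a union bound over the $L^n$ candidates under the null. The only cosmetic difference is that you invoke Bernstein's inequality with a multiplicative $(1-\varepsilon)$ slack in the threshold, whereas the paper uses the KL-divergence Chernoff bound (with an additive $n\log n$ slack) and Taylor-expands the divergence; both give the same leading exponent $\tilde p^2(L-1)/4 - \log L$ and hence the same threshold.
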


\noindent For small $L$, this theorem is not very interesting because the right-hand side exceeds the spectral threshold of 1. However, for $L \ge 11$, the right-hand side drops below 1, indicating that it is information-theoretically possible to go below the spectral threshold. As $L \to \infty$, this upper bound differs from the lower bound of Theorem~\ref{thm:toh} by a factor of $\sqrt{2}$. (We expect that the upper bound is asymptotically tight here and that the lower bound can be improved by the ``noise conditioning'' method recently introduced by \citet{noise-cond}.)

\begin{proof}
We will use a non-efficient algorithm based on exhaustive search over all candidate solutions $g \in G^n$. Given an observed matrix $Y$ valued in $G$, let $T(g)$ be the number of edges satisfied by $g$, i.e.\ the number of unordered pairs $\{u,v\}$ (with $u \ne v$) such that $Y_{uv} = g_u g_v^{-1}$. The algorithm will distinguish between $P_n = \ToH(G,\tilde p)$ and $Q_n = \ToH(G,0)$ by thresholding $T = \max_{g \in G^n} T(g)$ (at some cutoff to be determined later).

Suppose $Y$ is drawn from $P_n$ and let $g^* \in G^n$ be the true spike. Then $T(g) \sim \Binom(N,p')$ where $N = \binom{n}{2}$ and $p' = p + \frac{1-p}{L}$. By Hoeffding's inequality,
$$P_n(T(g^*) \le Np' - k) \le \exp\left(-\frac{2k^2}{N}\right)$$
which in turn implies $P_n(T(g^*) \le Np' - n \log n) = o(1).$

Now suppose $Y$ is drawn from $Q_n$ and fix any $g \in G^n$. Then $T(g) \sim \Binom(N,1/L)$. By the Chernoff bound,
$$Q_n(T(g) \ge k) \le \exp\left(-N D(k/N \,\middle \| \, 1/L)\right)$$
where $D\left(a \,\middle\|\, b\right) = a \log(a/b) + (1-a) \log((1-a)/(1-b))$. By a union bound over all $L^n$ choices for $g$,
\begin{align*}
Q_n(T \ge Np' - n \log n)
&\le L^n \exp\left(-N D\left(\frac{Np' - n \log n}{N} \,\middle\|\, \frac{1}{L} \right)\right) \\
&= \exp\left(n \log L - N D\left(p + \frac{1-p}{L} - \mathcal{O}\left(\frac{\log n}{n}\right) \,\middle\|\, \frac{1}{L} \right)\right) \\
&= \exp\left(n \log L - N D\left(1/L + \Delta \,\middle\|\, 1/L \right)\right) \\
\intertext{where $\Delta = p (1 - 1/L) - \mathcal{O}\left(\frac{\log n}{n}\right) = \frac{\tilde p(L-1)}{\sqrt{n} L} - o\left(\frac{1}{\sqrt n}\right)$}
&= \exp\left[n \log L - N \left((1/L+\Delta) \log\left(\frac{1/L+\Delta}{1/L}\right) + (1-1/L-\Delta)\log\left(\frac{1-1/L-\Delta}{1-1/L}\right)\right)\right] \\
&= \exp\left[n \log L - N \left((1/L+\Delta) \log\left(1 + L \Delta\right) + (1-1/L-\Delta)\log\left(1 - \frac{L\Delta}{L-1}\right)\right)\right] \\
&= \exp\Bigg[n \log L - N \Big((1/L+\Delta) (L \Delta - \frac{1}{2} L^2 \Delta^2) \\
&\qquad\qquad + \left(\frac{L-1}{L}-\Delta\right) \left(-\frac{L\Delta}{L-1} - \frac{L^2\Delta^2}{2(L-1)^2}\right) + o(1/n)\Big)\Bigg] \\
&= \exp\left[n \log L - N \left(\Delta + L\Delta^2 - \frac{1}{2}L\Delta^2 - \Delta + \frac{L}{L-1}\Delta^2 - \frac{L\Delta^2}{2(L-1)} + o(1/n)\right)\right] \\
&= \exp\left[n \log L - \frac{n^2}{2} \Delta^2 \frac{1}{2}\left(L + \frac{L}{L-1}\right) + o(n)\right] \\
&= \exp\left[n \log L - \frac{n}{4} \tilde p^2 \left(\frac{L-1}{L}\right)^2 \left(\frac{L^2}{L-1}\right) + o(n)\right] \\
&= \exp\left[n \log L - \frac{n}{4} \tilde p^2 (L-1) + o(n)\right] \\
&= o(1)
\end{align*}
provided $\log L < \tilde p^2(L-1)/4$, i.e.\
$$\tilde p > \sqrt{\frac{4 \log L}{L-1}}.$$
Therefore, it is possible to reliably distinguish $P_n$ and $Q_n$ by thresholding $T$ at $Np' - n \log n$.
\end{proof}

\subsection{Detection in the Gaussian synchronization model}\label{sec:gsynchup2}

In this subsection, we analyze the performance of exhaustive search in the Gaussian Synchronization Model. Specifically, we show:

\begin{theorem}\label{thm:synch-upper}
Let $G$ be a finite group of order $L$ and let $\Psi$ be a list of frequencies. If
$$\sum_{\rho \in \Psi} \lambda_\rho^2 \beta_\rho d_\rho^2 > 4 \log L$$
there is a computationally inefficient algorithm that can distinguish between the spiked and unspiked models.
\end{theorem}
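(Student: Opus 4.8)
The plan is to analyze the exhaustive-search (maximum-likelihood) test, paralleling the proof of Theorem~\ref{thm:toh-upper}. For a candidate assignment $g \in G^n$, let $X_\rho(g)$ be the $n d_\rho \times d_\rho$ matrix obtained by stacking $\rho(g_u)$ over $u$, and define the test statistic
$$ T(g) \;=\; \sum_{\rho \in \Psi} \beta_\rho \lambda_\rho d_\rho \, \mathfrak{Re}\big\langle Y_\rho,\, X_\rho(g) X_\rho(g)^* \big\rangle, $$
where $\langle A, B \rangle = \mathrm{Tr}(A B^*)$ as in Section~\ref{sec:gsynchsecond}. Since $X_\rho(g)^* X_\rho(g) = n I_{d_\rho}$ for every $g$ (the representations being unitary), we have $\|X_\rho(g) X_\rho(g)^*\|_F^2 = n^2 d_\rho$, so $T(g)$ coincides with the log-likelihood of the hypothesis ``the spike equals $g$'' up to the factor $\tfrac12$ and an additive constant that is the same for all $g$; thus $T$ is an affine reparametrization of the MLE objective. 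The algorithm computes $T = \max_{g \in G^n} T(g)$ and outputs ``spiked'' iff $T \ge t_0$, for a threshold $t_0$ placed just below the typical value of $T$ under the spiked model; this is computationally inefficient because there are $L^n$ candidates.

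Write $S = \sum_{\rho \in \Psi} \beta_\rho \lambda_\rho^2 d_\rho^2$. First I would estimate $T(g^\star)$ under $P_n$, where $g^\star$ is the true spike. The signal part contributes $\mathfrak{Re}\langle \tfrac{\lambda_\rho}{n} X_\rho X_\rho^*, X_\rho X_\rho^*\rangle = \tfrac{\lambda_\rho}{n}\|X_\rho X_\rho^*\|_F^2 = \lambda_\rho n d_\rho$, so $\EE[T(g^\star)] = n S$, while the noise part $\sum_\rho \tfrac{\beta_\rho \lambda_\rho d_\rho}{\sqrt{n d_\rho}}\,\mathfrak{Re}\langle W_\rho, X_\rho X_\rho^*\rangle$ is Gaussian whose variance a short entrywise computation using the GOE/GUE/GSE normalizations --- via the identity $\mathrm{Var}\big(\mathfrak{Re}\langle W_\rho, M\rangle\big) = \tfrac{2}{\beta_\rho}\|M\|_F^2$ for Hermitian $M$ --- evaluates to $2 n S$. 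Hence $T(g^\star) = n S + \mathcal{O}(\sqrt n \log n)$ with probability $1-o(1)$, so $T \ge t_0 := n S - n^{2/3}$ with probability $1-o(1)$ under $P_n$.

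Next I would analyze $T$ under $Q_n$. For each fixed $g$, $T(g)$ is a centered Gaussian, and the same variance computation (again using $\|X_\rho(g) X_\rho(g)^*\|_F^2 = n^2 d_\rho$, valid for every $g$, and the independence of the $W_\rho$ across $\rho$) shows $T(g) \sim \cN(0, 2 n S)$. A Gaussian tail bound followed by a union bound over the $L^n$ candidates then yields
$$ Q_n(T \ge t_0) \;\le\; L^n \exp\!\left(-\frac{t_0^2}{4 n S}\right) \;=\; \exp\!\left(n \log L - \frac{t_0^2}{4 n S}\right) \;=\; \exp\!\big(n(\log L - \tfrac{S}{4}) + o(n)\big), $$
using $t_0 = n S - o(n)$ in the last step. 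This tends to $0$ exactly when $S > 4 \log L$, i.e.\ $\sum_{\rho} \lambda_\rho^2 \beta_\rho d_\rho^2 > 4 \log L$; thresholding $T$ at $t_0$ then gives a test whose type~I and type~II errors are both $o(1)$.

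\textbf{Two remarks.} The weights $\beta_\rho \lambda_\rho d_\rho$ are the optimal ones: for a general combination $\sum_\rho c_\rho\, \mathfrak{Re}\langle Y_\rho, X_\rho X_\rho^*\rangle$ the argument succeeds when $4 \log L \sum_\rho c_\rho^2/\beta_\rho < \big(\sum_\rho c_\rho \lambda_\rho d_\rho\big)^2$, and Cauchy--Schwarz shows this constraint is weakest at $c_\rho \propto \beta_\rho \lambda_\rho d_\rho$, recovering exactly the stated inequality (and these are also the MLE weights). I expect the only delicate point to be the entrywise variance bookkeeping across the three representation types --- in particular the quaternionic case, where the identity $\mathfrak{Re}(p\bar q) = \langle p, q\rangle_{\mathbb{R}^4}$ plays the role of the complex computation --- together with the diagonal blocks of $X_\rho X_\rho^*$, which contribute only $\mathcal{O}(n)$ to each of the quantities above and therefore do not affect any leading-order estimate.
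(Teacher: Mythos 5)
Your proposal is correct and follows essentially the same route as the paper: the identical exhaustive-search statistic $T(g)=\sum_\rho \lambda_\rho\beta_\rho d_\rho\,\mathrm{Tr}(V_\rho(g)^*Y_\rho V_\rho(g))$, concentration of $T(g^\star)$ around $nS$ under the spiked model, and a Gaussian tail plus union bound over $L^n$ candidates under the null, with the same variance $2nS$ (your identity $\mathrm{Var}(\mathfrak{Re}\langle W_\rho,M\rangle)=\tfrac{2}{\beta_\rho}\|M\|_F^2$ applied to $M=X_\rho X_\rho^*$ is just a repackaging of the paper's Lemma giving $\mathrm{Tr}(V^*WV)\sim\cN(0,2n^2d/\beta)$). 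The only differences are cosmetic (threshold $nS-n^{2/3}$ versus $nS-\sqrt{n\log n}$, and the optional remark on optimal weights).
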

\noindent See Corollary~\ref{cor:synch-upper-all} below for a simplification in the case of all frequencies.

Let $P_n = \GSynch_n(\{\lambda_\rho\},G,\Psi)$ and let $Q_n = \GSynch_n(0,G,\Psi)$. By the Neyman--Pearson lemma, the most powerful test statistic for distinguishing $P_n$ from $Q_n$ is the likelihood ratio $\dd[P_n]{Q_n}$. Similarly to \citet{bmvx} we use the following modified likelihood ratio. For $g \in G^n$, let $V_\rho(g)$ be the $n d_\rho \times d_\rho$ matrix formed by stacking the matrices $\rho(g_u)$. Given $Y = \{Y_\rho\}$ drawn from either $P_n$ or $Q_n$, our test is to compute $T = \max_{g \in G^n} T(g)$ where
$$T(g) = \sum_{\rho \in \Psi} \lambda_\rho \beta_\rho d_\rho \mathrm{Tr}(V_\rho(g)^* Y_\rho V_\rho(g)).$$
If $T \ge \sum_\rho n \lambda_\rho^2 \beta_\rho d_\rho^2 - \sqrt{n \log n}$ then we answer `$P_n$'; otherwise, `$Q_n$.' The definition of $T(g)$ is motivated by the computation of $\dd[P_n]{Q_n}$ in Section~\ref{sec:gsynch}; in fact, $T(g)$ is equal (up to constants) to $\dd[P_n(Y | g)]{Q_n(Y)}$. Note that this test is not computationally-efficient because it involves testing all possible solutions $g \in G^n$. The best computationally-efficient test that we know of is PCA (or AMP), which succeeds if and only if at least one $\lambda_\rho$ exceeds 1.

The proof of Theorem~\ref{thm:synch-upper} will require the following computation.
\begin{lemma}
Let $V$ be a fixed $nd \times d$ matrix where each $d \times d$ block is unitary of some type ($\RR,\CC,\mathbb{H}$). Let $W$ be an $nd \times nd$ Hermitian Gaussian matrix of the corresponding type (GOE, GUE, GSE, respectively). Let $\beta$ be $1,2,4$ (respectively) depending on the type. Then $\mathrm{Tr}(V^* W V) \sim \cN(0,2n^2d/\beta)$.
\end{lemma}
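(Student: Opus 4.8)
The plan is to verify that $\mathrm{Tr}(V^* W V)$ is a real-valued centered Gaussian and then compute its variance. First I would note that $\mathrm{Tr}(V^* W V)$ is a real linear functional of the entries of $W$ (after identifying $\CC$ or $\mathbb{H}$ with $\RR^{\beta}$): since $V^* W V$ is Hermitian, its trace is real, and it depends linearly on the independent real Gaussian coordinates of $W$. Hence $\mathrm{Tr}(V^* W V)$ is automatically a centered real Gaussian, and the only task is to pin down its variance. I would write $\mathrm{Tr}(V^* W V) = \langle W, V V^* \rangle_{\mathfrak{Re}} = \mathfrak{Re}\,\mathrm{Tr}(W (VV^*)^*) = \mathfrak{Re}\,\mathrm{Tr}(W VV^*)$, using that $VV^*$ is Hermitian, so the quantity is a $\mathfrak{Re}$-inner product of $W$ against the fixed Hermitian matrix $M := VV^*$.

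The key step is the variance computation, which reduces to understanding the covariance structure of the Hermitian Gaussian ensemble against a fixed Hermitian test matrix. Writing $M = VV^*$ with entries $M_{ij}$, expand $\mathfrak{Re}\,\mathrm{Tr}(WM) = \sum_{i,j} \mathfrak{Re}(W_{ij} M_{ji}) = \sum_i W_{ii} M_{ii} + \sum_{i<j} 2\,\mathfrak{Re}(W_{ij} M_{ji})$, using Hermitian symmetry $W_{ji} = \overline{W_{ij}}$, $M_{ji} = \overline{M_{ij}}$. The diagonal terms contribute $\sum_i M_{ii}^2 \cdot \mathrm{Var}(W_{ii}) = (2/\beta)\sum_i M_{ii}^2$, since $W_{ii} \sim \cN(0,2/\beta)$ and $M_{ii}$ is real (diagonal of a Hermitian PSD matrix). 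For an off-diagonal pair $i<j$, $W_{ij}$ is a standard Gaussian of the given type, so writing $W_{ij} = \sum_{c} W_{ij}^{(c)} e_c$ over the $\beta$ real components and using that each component has variance $1/\beta$, one gets $\mathrm{Var}\big(2\,\mathfrak{Re}(W_{ij}M_{ji})\big) = 4 \cdot \frac{1}{\beta}\sum_c (\text{component of } M_{ji})^2 = \frac{4}{\beta}|M_{ij}|^2$. Summing, the variance is $\frac{2}{\beta}\sum_i M_{ii}^2 + \frac{4}{\beta}\sum_{i<j}|M_{ij}|^2 = \frac{2}{\beta}\sum_{i,j}|M_{ij}|^2 = \frac{2}{\beta}\|M\|_F^2 = \frac{2}{\beta}\|VV^*\|_F^2$.

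It remains to compute $\|VV^*\|_F^2$ using the block-unitary structure of $V$. Here $\|VV^*\|_F^2 = \mathrm{Tr}(VV^*VV^*) = \mathrm{Tr}((V^*V)^2)$, and $V^*V = \sum_u \rho(g_u)^* \rho(g_u) = \sum_u I_d = n I_d$ since each block is unitary; hence $(V^*V)^2 = n^2 I_d$ and $\mathrm{Tr}((V^*V)^2) = n^2 d$. Therefore the variance equals $\frac{2}{\beta} n^2 d$, giving $\mathrm{Tr}(V^* W V) \sim \cN(0, 2n^2 d/\beta)$ as claimed. The main obstacle, such as it is, is bookkeeping the normalization of the ``standard Gaussian of a given type'' correctly across the three cases $\beta = 1,2,4$ — in particular keeping track of the factor-of-$2$ from $\mathfrak{Re}$ on off-diagonal entries versus the diagonal variance $2/\beta$ — but this is purely mechanical once the reduction to $\frac{2}{\beta}\|VV^*\|_F^2$ is in place.
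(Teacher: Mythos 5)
Your proposal is correct and follows essentially the same route as the paper's proof: both observe that $\mathrm{Tr}(V^* W V)$ is a real linear functional of the independent Gaussian entries of $W$ (hence a centered Gaussian) and compute its variance using unitarity of the $d \times d$ blocks of $V$. Your packaging of the variance as $\frac{2}{\beta}\|VV^*\|_F^2$ with $\|VV^*\|_F^2 = \mathrm{Tr}\bigl((V^*V)^2\bigr) = n^2 d$ is simply a tidier bookkeeping of the paper's entrywise block expansion; the only point worth flagging is that in the quaternionic case the cyclicity used in $\mathrm{Tr}(V^*WV) = \mathfrak{Re}\,\mathrm{Tr}(W VV^*)$ holds for the real part of the trace, which suffices here since the quantity is real.
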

\begin{proof}
Let $u,v$ index the $d \times d$ blocks, and let $a,b,c$ index the entries within each block.
\begin{align*}
\mathrm{Tr}(V^* W V)
&= \sum_{u,v} \mathrm{Tr}\left( V_u^* W_{uv} V_v \right) \\
&= \sum_{u<v} 2 \,\mathrm{Tr}\,\mathfrak{Re}\left( V_u^* W_{uv} V_v \right) + \sum_u \mathrm{Tr}\left(V_u^* W_{uu} V_u\right)\\
&= \sum_{u < v} \sum_{a,b,c} 2 \,\mathfrak{Re}\left[(V_u^*)_{ab} (W_{uv})_{bc} (V_v)_{ca}\right] + \sum_u \sum_{a,b,c} (V_u^*)_{ab} (W_{uu})_{bc} (V_u)_{ca}\\
&= \sum_{u < v} \sum_{a,b,c} 2 \,\mathfrak{Re}\left[(V_u^*)_{ab} (W_{uv})_{bc} (V_v)_{ca}\right] + \sum_u \sum_{a,\,b<c} 2 \,\mathfrak{Re}\left[(V_u^*)_{ab} (W_{uu})_{bc} (V_u)_{ca}\right]\\
&\qquad\qquad + \sum_u \sum_{a,b} (V_u^*)_{ab} (W_{uu})_{bb} (V_u)_{ba}\\
&= \sum_{u < v} \sum_{b,c} 2 \,\cN(0, |\sum_a(V_u^*)_{ab} (V_v)_{ca}|^2/\beta) + \sum_u \sum_{b<c} 2 \,\cN(0,|\sum_a(V_u^*)_{ab} (V_u)_{ca}|^2/\beta)\\
&\qquad\qquad + \sum_u \sum_{b} \cN(0,2|\sum_a(V_u^*)_{ab} (V_u)_{ba}|^2/\beta)\\
&=  \cN(0, 2 \sum_{u,v} \sum_{b,c} |\sum_a(V_u^*)_{ab} (V_v)_{ca}|^2/\beta) \\
&=  \cN(0, 2 \sum_{u,v} \sum_{b,c} \sum_{a,a'} (\bar{V_u})_{ba} (V_v)_{ca} (\bar{V_v})_{ca'} (V_u)_{ba'} /\beta) \\
&=  \cN(0, 2 \sum_{u,v} \sum_{a,a'} \delta_{aa'} /\beta) \\
&=  \cN(0, 2 n^2 d /\beta). \\
\end{align*}
\end{proof}

\begin{proof}[Proof of Theorem~\ref{thm:synch-upper}]

We will now prove Theorem~\ref{thm:synch-upper} by showing that (given the condition in the theorem) the test $T = \max_g T(g)$ (defined above) succeeds with probability $1-o(1)$.
If $Y_\rho$ is drawn from the unspiked model $Q_n: \frac{1}{\sqrt{nd_\rho}} W$ then for any $g \in G^n$ we have
$$T(g) = \sum_\rho \lambda_\rho \beta_\rho d_\rho \frac{1}{\sqrt{nd_\rho}} \mathrm{Tr}(V_\rho(g)^* W V_\rho(g)) = \sum_\rho \lambda_\rho \beta_\rho d_\rho \frac{1}{\sqrt{nd_\rho}} \cN\left(0,\frac{2n^2 d_\rho}{\beta_\rho}\right) = \cN\left(0,\sum_\rho 2n\lambda_\rho^2 \beta_\rho d_\rho^2\right).$$
If instead $Y_\rho$ is drawn from the spiked model $P_n: Y_\rho = \frac{\lambda_\rho}{n} X_\rho X_\rho^* + \frac{1}{\sqrt{ nd_\rho}} W$ and we take $g$ to be the ground truth $g^*$ (so that $V_\rho(g) = X_\rho$), we have
$$T(g^*) = \sum_\rho \lambda_\rho \beta_\rho d_\rho \mathrm{Tr}\left(\frac{\lambda_\rho}{n} X_\rho^* X_\rho X_\rho^* X_\rho + \frac{1}{\sqrt{nd\rho}} V_\rho^* W V_\rho\right) = \sum_\rho n \lambda_\rho^2 \beta_\rho d_\rho^2 + \cN\left(0,\sum_\rho 2n\lambda_\rho^2 \beta_\rho d_\rho^2\right).$$
Using the Gaussian tail bound $\prob{\cN(0,\sigma^2) \ge t} \le \exp\left(\frac{-t^2}{2\sigma^2}\right)$, we have that under the spiked model,
$$P_n\left[T \le \sum_\rho n \lambda_\rho^2 \beta_\rho d_\rho^2 - \sqrt{n\log n}\right] \le \exp\left(\frac{-n \log n}{2}\left(\sum_\rho 2n \lambda_\rho^2 \beta_\rho d_\rho^2\right)^{-1}\right) = o(1).$$
Taking a union bound over all $L^n$ choices for $g \in G^n$, we have that under the unspiked model,
\begin{align*}
Q_n\left[T \ge \sum_\rho n \lambda_\rho^2 \beta_\rho d_\rho^2 - \sqrt{n\log n}\right] &\le L^n \exp\left(-\frac{1}{2} \left(\sum_\rho n \lambda_\rho^2 \beta_\rho d_\rho^2 - \sqrt{n\log n}\right)^2 \left(\sum_\rho 2n \lambda_\rho^2 \beta_\rho d_\rho^2\right)^{-1} \right) \\
&= \exp\left(n \log L - \frac{1}{4}\sum_\rho n \lambda_\rho^2 \beta_\rho d_\rho^2 + \mathcal{O}(\sqrt{n \log n})\right)
\end{align*}
which is $o(1)$ provided $\sum_\rho \lambda_\rho^2 \beta_\rho d_\rho^2 > 4 \log L$.
\end{proof}


We can simplify the statement of the theorem in the case where all frequencies are present. We note that if $\Psi_\mathrm{all}$ is the list of all frequencies then
$$\sum_{\rho \in \Psi_\mathrm{all}} \beta_\rho d_\rho^2 = L-1.$$
This follows from the ``sum-of-squares'' formula from the representation theory of finite groups. (The extra 1 comes from the fact that we don't use the trivial representation in our list. The factor of $\beta = 2$ for complex-type representations accounts for the fact that we only use one representation per conjugate pair. The factor of $\beta = 4$ for quaternionic-type representations accounts for the fact that the complex dimension is twice the quaternionic dimension.) We therefore have the following corollary.

\begin{corollary}
\label{cor:synch-upper-all}
Let $G$ be a finite group of order $L \ge 2$ and let $\Psi_\mathrm{all}$ be the list of {\bf all} frequencies (excluding the trivial one and only taking one from each conjugate pair). If
$$\lambda > \sqrt{\frac{4 \log L}{L-1}}$$
then a non-efficient algorithm can distinguish the spiked and unspiked models, and so $\GSynch(\lambda,G,\Psi_\mathrm{all})$ is {\bf not} contiguous to $\GSynch(0,G,\Psi_\mathrm{all})$.
\end{corollary}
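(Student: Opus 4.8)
The plan is to derive this directly from Theorem~\ref{thm:synch-upper} by taking all $\lambda_\rho$ equal to a common value $\lambda$ and then simplifying the quantity $\sum_{\rho \in \Psi_\mathrm{all}} \beta_\rho d_\rho^2$ via the sum-of-squares identity from the representation theory of finite groups. Theorem~\ref{thm:synch-upper} already provides a computationally inefficient test that distinguishes the spiked and unspiked models whenever $\sum_{\rho \in \Psi} \lambda_\rho^2 \beta_\rho d_\rho^2 > 4\log L$; with all $\lambda_\rho = \lambda$ this reads $\lambda^2 \sum_{\rho \in \Psi_\mathrm{all}} \beta_\rho d_\rho^2 > 4 \log L$, so everything reduces to evaluating the sum of $\beta_\rho d_\rho^2$ over all frequencies.

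The one substantive step is to check that $\sum_{\rho \in \Psi_\mathrm{all}} \beta_\rho d_\rho^2 = L - 1$. I would do this by matching $\beta_\rho d_\rho^2$, for each $\rho$ in our list, to the contribution of the corresponding complex irreducible representation(s) in the classical identity $\sum_{\rho \text{ irred over } \CC} (d_\rho^{\CC})^2 = L$. For a real-type $\rho$ we have $\beta_\rho = 1$ and $d_\rho = d_\rho^{\CC}$, so $\beta_\rho d_\rho^2 = (d_\rho^{\CC})^2$. For a complex-type $\rho$ we include only one member of the conjugate pair $\{\rho,\bar\rho\}$ in $\Psi_\mathrm{all}$, but $\beta_\rho = 2$ and $d_\rho^{\CC} = d_{\bar\rho}^{\CC} = d_\rho$, so $\beta_\rho d_\rho^2 = (d_\rho^{\CC})^2 + (d_{\bar\rho}^{\CC})^2$, which correctly accounts for both members of the pair. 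For a quaternionic-type $\rho$ we have $\beta_\rho = 4$ and $d_\rho^{\CC} = 2 d_\rho$, so $\beta_\rho d_\rho^2 = (2 d_\rho)^2 = (d_\rho^{\CC})^2$. Summing over $\Psi_\mathrm{all}$ therefore reconstructs $\sum_{\rho \text{ irred over } \CC} (d_\rho^{\CC})^2$ with the trivial representation (contributing $1$) omitted, yielding $L - 1$. This is exactly the bookkeeping already laid out in the paragraph preceding the corollary.

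Substituting $\sum_{\rho \in \Psi_\mathrm{all}} \beta_\rho d_\rho^2 = L - 1$, the hypothesis of Theorem~\ref{thm:synch-upper} becomes $\lambda^2 (L-1) > 4 \log L$, i.e.\ $\lambda > \sqrt{4 \log L / (L-1)}$, which is precisely the hypothesis of the corollary. Under this condition there is a test distinguishing $\GSynch(\lambda,G,\Psi_\mathrm{all})$ from $\GSynch(0,G,\Psi_\mathrm{all})$ with $o(1)$ error, and then by the contrapositive of Claim~\ref{cor:contig} the two models cannot be contiguous. I do not anticipate a real obstacle here: the corollary is a short consequence of the theorem, and the only place requiring care is keeping the $\beta_\rho$ factors consistent with the conventions adopted for complex- and quaternionic-type representations, which has already been arranged.
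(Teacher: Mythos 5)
Your proposal is correct and follows essentially the same route as the paper: the corollary is obtained by specializing Theorem~\ref{thm:synch-upper} to equal $\lambda_\rho = \lambda$ and invoking the sum-of-squares identity, with exactly the same bookkeeping of the $\beta_\rho$ factors (real, one-per-conjugate-pair complex, and quaternionic with $d_\rho^{\CC} = 2d_\rho$) to get $\sum_{\rho \in \Psi_\mathrm{all}} \beta_\rho d_\rho^2 = L-1$. The concluding non-contiguity step via Claim~\ref{cor:contig} matches the paper's intent as well.
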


\noindent Note that for large $L$ this differs from the lower bound of Theorem~\ref{thm:synch-finite} by a factor of $\sqrt{2}$. (We expect that the upper bound is asymptotically tight here and that the lower bound can be improved by the ``noise conditioning'' method recently introduced by \citet{noise-cond}.) Also note that the right-hand side matches Theorem~\ref{thm:toh-upper} (upper bound for the truth-or-Harr model); interestingly, both our lower and upper bounds indicate that the all-frequencies Gaussian model behaves like the truth-or-Haar model with $\lambda$ in place of $\tilde p$. In particular, we again see that a non-efficient algorithm can beat the spectral threshold once $L \ge 11$.

\section*{Acknowledgements}
The authors are indebted to Philippe Rigollet for helpful discussions and for many comments on a draft, and to Amit Singer and his group for discussions about synchronization.

\bibliography{main}

\appendix

\section{Proof of Theorem~\ref{thm:sphere-prior}}\label{app:confluent}
\begin{reptheorem}{thm:sphere-prior}
Consider the spherical prior $\cXs$. If $\lambda < 1$ then $\GWig(\lambda,\cXs)$ is contiguous to $\GWig(0)$.
\end{reptheorem}
\begin{proof}
By symmetry, we reduce the second moment above as
$$ \Ex_{x,x'} \exp\left(\frac{n \lambda^2}{2} \langle x,x' \rangle^2\right) = \Ex_{x} \exp\left(\frac{n \lambda^2}{2} \langle x,e_1 \rangle^2\right) = \Ex_{x_1} \exp\left(\frac{n \lambda^2}{2} x_1^2\right), $$
where $e_1$ denotes the first standard basis vector. Note that the first coordinate $x_1$ of a point uniformly drawn from the unit sphere in $\RR^n$ is distributed proportionally to $(1-x_1^2)^{(n-3)/2}$, so that its square $y$ is distributed proportionally to $(1-y)^{(n-3)/2} y^{-1/2}$. Hence $y$ is distributed as $\mathrm{Beta}(\frac12,\frac{n-1}{2})$. The second moment is thus the moment generating function of $\mathrm{Beta}(\frac12,\frac{n-1}{2})$ evaluated at $n \lambda^2/2$, and as such, we have
\begin{equation}\label{eq:confluent}  \Ex_{{Q}_n}\left(\dd[{P}_n]{{Q}_n}\right)^2 = {}_1 F_1\left( \frac12 ; \frac{n}{2} ; \frac{\lambda^2 n}{2} \right),  \end{equation}
where ${}_1 F_1$ denotes the confluent hypergeometric function.

Suppose $\lambda < 1$. Equation~13.8.4 from \citet{dlmf} grants us that, as $n \to \infty$,
\begin{align*}
{}_1 F_1\left( \frac12 ; \frac{n}{2} ; \frac{\lambda^2 n}{2} \right) &= (1+o(1)) \left(\frac{n}{2}\right)^{1/4} e^{\zeta^2 n/8} \left( \lambda^2 \sqrt{\frac{\zeta}{1-\lambda^2}} U(0,\zeta \sqrt{n/2}) \right. \\
&\qquad\qquad \left. + \left( -\lambda^2 \sqrt{\frac{\zeta}{1-\lambda^2}} + \sqrt{\frac{\zeta}{1-\lambda^2}} \right) \frac{U(-1,\zeta\sqrt{n/2})}{\zeta\sqrt{n/2}} \right),
\intertext{where $\zeta = \sqrt{2(\lambda^2-1-2\log \lambda)}$ and $U$ is the parabolic cylinder function,}
&= (1+o(1)) \left(\frac{n}{2}\right)^{1/4} e^{\zeta^2 n/8} \left( \lambda^2 \sqrt{\frac{\zeta}{1-\lambda^2}} e^{-\zeta^2 n / 8} (\zeta \sqrt{n/2})^{-1/2} \right.\\
&\qquad\qquad \left. + \left( -\lambda^2 \sqrt{\frac{\zeta}{1-\lambda^2}} + \sqrt{\frac{\zeta}{1-\lambda^2}} \right) \frac{e^{-\zeta^2 n/8} (\zeta \sqrt{n/2})^{1/2}}{\zeta\sqrt{n/2}} \right),
\intertext{by Equation~12.9.1 from \citet{dlmf},}
&= (1+o(1)) (1-\lambda^2)^{-1/2},\numberthis\label{eq:sphere-moment}
\end{align*}
which is bounded as $n \to \infty$, for all $\lambda < 1$. The result follows from Lemma~\ref{lem:sec}.
\end{proof}

\section{Proof of Propositions~\ref{prop:nong-lower-spherical} and~\ref{prop:nong-lower-iid}}
\label{app:nong-prior-conditions}

In this section we verify that the conditions of Theorem~\ref{thm:nongauss-lower} are satisfied for spherical and \iid priors.

\begin{repproposition}{prop:nong-lower-spherical}
Consider the spherical prior $\cXs$. Then conditions (i) and (ii) in Assumption~\ref{as:nong-lower} are satisfied.
\end{repproposition}

\begin{proof}

For the spherical prior we have $\lambda^*_\cXs = 1$, as computed in Theorem~\ref{thm:sphere-prior}.
Note that one can sample $x \sim \cXs$ by first sampling $y \sim \N(0,1)^n$ and then taking $x = y/\|y\|_2$. By Chebyshev, $\left| \|y\|_2^2 - n\right| < n^{3/4}$ with probability $1-o(1)$.
\begin{enumerate}[(i)]
\item Supposing that $\|y\|_2^2 > n - n^{3/4}$, which occurs with probability $1-o(1)$, we have
$$ \Pr[|x_u| \geq n^{-1/3}] \leq \Pr[|y_u| \geq n^{1/6}\sqrt{1-n^{-1/4}}] \leq e^{-n^{1/3}(1-n^{-1/4})/2} = o(1/n), $$
so that with probability $1 - o(1)$, we have for all $u$, $|x_u| < n^{-1/3}$.
\item We have $\|x\|_2 = 1$. For $q \in \{4,6,8\}$, $\|y\|_q^q$ has expectation $n(q-1)!!$ and variance $$n[(2q-1)!!-((q-1)!!)^2].$$ Supposing that $\|y\|_2^2 > n - n^{3/4} > n/2$, which occurs with probability $1-o(1)$, we have for any $\alpha_q$ that
\begin{align*}
\Pr[\|x_q\| > \alpha_q n^{\frac1q-\frac12}] &= \Pr[\|x\|_q^q > \alpha_q^q n^{1-\frac{q}2}] \\
&= \Pr[\|y\|_q^q > \alpha_q^q n^{1-\frac{q}2} \|y\|_2^q ] \\
&\leq \Pr[ \|y\|_q^q > \alpha_q^q 2^{-q/2} n] \\
&\leq \frac{n((2q-1)!! - ((q-1)!!)^2)}{n^2(2^{-q}\alpha_q^{2q} - (q-1)!!)^2},
\end{align*}
by Chebyshev. This probability is $o(1)$ so long as we take $\alpha_q^{2q} > 2^q (q-1)!!$. \qedhere
\end{enumerate}
\end{proof}

\begin{repproposition}{prop:nong-lower-iid}
Consider an \iid prior $\cX = \IID(\pi)$ where $\pi$ is zero-mean and unit-variance with $\EE[\pi^{16}] < \infty$. Then conditions (i) and (ii) in Assumption~\ref{as:nong-lower} are satisfied.
\end{repproposition}
An immediate implication of this is that conditions (i) and (ii) are also satisfied for a `conditioned' prior which draws $x$ from $\IID(\pi)$ but then outputs zero if a 'bad' event occured.

\begin{proof}
We have $x_i = \frac{1}{\sqrt n} \pi_i$ where $\pi_i$ are independent copies of $\pi$.
To prove (i),
$$\prob{|x_i| \ge n^{-1/3}} = \prob{|\pi_i| \ge n^{1/6}} = \prob{\pi_i^8 \ge n^{4/3}} \le \frac{\EE[\pi^8]}{n^{4/3}} = \mathcal{O}(n^{-4/3})$$
using Markov's inequality and $\EE[\pi^8] \le 1 + \EE[\pi^{16}] < \infty$. The proof follows by a union bound over all $n$ coordinates.

To prove (ii), for $q \in \{2,4,6,8\}$,
\begin{align*}
\prob{\|x\|_q > \alpha_q n^{\frac{1}{q}-\frac{1}{2}}}
&= \prob{\|x\|_q^q > \alpha_q^q n^{1-\frac{q}{2}}} = \problr{\sum_i x_i^q > \alpha_q^q n^{1-\frac{q}{2}}} \\
&= \problr{\sum_i \pi_i^q > \alpha_q^q n} = \problr{\sum_i \pi_i^q - n \EE[\pi^q] > (\alpha_q^q - \EE[\pi^q]) n}. \\
\intertext{Choose $\alpha_q$ so that $C \equiv \alpha_q^q - \EE[\pi^q] > 0$, and apply Chebyshev's inequality:}
&\le \frac{\mathrm{Var}[\sum_i \pi_i^q]}{C^2 n^2} = \frac{n \mathrm{Var}[\pi^q]}{C^2 n^2} = \mathcal{O}(1/n).
\end{align*}
Here we needed $\EE[\pi^{2q}] < \infty$ so that $\mathrm{Var}[\pi^q] < \infty$.
\end{proof}

\section{Proof of Proposition~\ref{prop:matrix-opt}}
\label{app:matrix-opt}

In this section we prove Proposition~\ref{prop:matrix-opt}, which we restate here for convenience.
\begin{repproposition}{prop:matrix-opt}
For $L \ge 2$,
$$\sup_\alpha \frac{L}{2} \frac{\left(\sum_{h \in G} \alpha_h^2 - \frac{1}{L}\right)}{D(\alpha,\bar\alpha)} = \frac{LC}{2}$$
where
$$C = \frac{L-2}{(L-1)\log(L-1)}.$$
Here $\alpha$ ranges over (vectorized) nonnegative $L \times L$ matrices with row- and column-sums equal to $\frac{1}{L}$. When $L = 2$, we define $C = 1$ (the limit value).
\end{repproposition}
Recall $G$ is a finite group of order $L$, $\bar\alpha = \frac{1}{L^2} \one_{L^2}$ and $\alpha_h = \sum_{(a,b) \in S_h} \alpha_{ab}$ where $S_h = \{(a,b)\,|\,a^{-1}b = h\}$. $D$ denotes the KL divergence, which in this case is
$$D(\alpha,\bar\alpha) = \sum_{ab} \alpha_{ab} \log(L^2 \alpha_{ab}) = 2\log L + \sum_{ab} \alpha_{ab}\log(\alpha_{ab}).$$

Although $\alpha$ belongs to a compact domain, we write $\sup$ rather than $\max$ in the optimization above. This is because when $\alpha = \bar\alpha$, the numerator and denominator of are both zero, so we are really optimizing over $\alpha \ne \bar\alpha$.

A high-level sketch of the proof is as follows. First we observe that the optimal $\alpha$ value should be constant on each $S_h$, allowing us to reduce the problem to only the variables $\alpha_h$. By local optimality, we show further that the optimal $\alpha$ should take a particular form where $\alpha_h = x$ for $k$ out of the $L$ group elements $h$, and $\alpha_h = y$ for the remaining ones (where $y = \frac{1-kx}{L-k}$ so that $\sum_h \alpha_h = 1$ as required). This allows us to reduce the problem to only the variables $k$ and $x$. We then show that for a fixed $k$, the optimum value is $\frac{LC_k}{2}$ where
$$C_k = \frac{L-2k}{k(L-k) \log\left(\frac{L-k}{k}\right)}$$
(defined to equal its limit value $\frac{2}{L}$ when $k = L/2$). Finally, we show that $C_k$ is largest when $k = 1$, in which case we have $C_1 = C$ and the proof is complete.

Now we begin the proof in full detail. Note that the numerator of the optimization problem depends only on the sums $\alpha_h$ and not the individual entries $\alpha_{ab}$. Furthermore, once we have fixed the $\alpha_h$'s, the denominator is minimized by setting all the $\alpha_{ab}$ values equal within each $S_h$. (Think of the fact that the uniform distribution maximizes entropy.) Therefore we only need to consider matrices $\alpha$ that are constant on each $S_h$. Note that any such matrix has row- and column-sums equal to $1/L$ (since each row or column contains exactly one entry in each $S_h$), so we can drop this constraint. (Interestingly, the fact that this constraint doesn't help means that we do not actually benefit from conditioning away from `bad' events in this case.) The denominator becomes
$$D(\alpha,\bar\alpha) = 2\log L + \sum_h L \cdot \frac{\alpha_h}{L} \log\left(\frac{\alpha_h}{L}\right) = \log L + \sum_{h \in G} \alpha_h \log(\alpha_h)$$
and so we have a new equivalent optimization problem:
$$\sup_{\alpha\ne\bar\alpha} M(\alpha)$$
where
$$M(\alpha) = \frac{L}{2} \cdot \frac{\sum_{h \in G} \alpha_h^2 - \frac{1}{L}}{\log L + \sum_h \alpha_h \log(\alpha_h)}.$$
Now $\alpha$ is simply a vector of $\alpha_h$ values, with the constraints $\alpha_h \ge 0$ and $\sum_h \alpha_h = 1$. Accordingly, $\bar\alpha_h = \frac{1}{L}$ for all $h$.

We will show that the optimum value is $\frac{LC}{2}$. We first focus on showing one direction: $\sup_{\alpha \ne \bar\alpha} M(\alpha) \le \frac{LC}{2}$. By multiplying through by the denominator of $M(\alpha)$ (which is positive for all $\alpha \ne \bar\alpha$ since it is the divergence), this is equivalent to
$$\max_\alpha T(\alpha) \le 0$$
where
$$T(\alpha) = \sum_h \alpha_h^2 - \frac{1}{L} - C \left[\log L + \sum_h \alpha_h \log(\alpha_h) \right].$$
Note that $\alpha \ne \bar\alpha$ is no longer required (since $T(\bar\alpha) = 0$) and so we now have a maximization problem over a compact domain. We will restrict to values of $\alpha$ that are locally optimal for $T(\alpha)$. Compute partial derivatives:
$$\frac{\partial T}{\partial \alpha_h} = 2\alpha_h - C \left[\log(\alpha_h) + 1\right]$$
$$\frac{\partial^2 T}{\partial \alpha_h^2} = 2 - \frac{C}{\alpha_h}.$$
Note that $\frac{\partial T}{\partial \alpha_h} \to \infty$ as $\alpha_h \to 0^+$ (and this is the only place in the interval $[0,1]$ where the derivative blows up), and so a maximizer $\alpha$ for $T(\alpha)$ should have no coordinates set to zero. $\frac{\partial T}{\partial \alpha_h}$ is decreasing when $\alpha_h < \frac{C}{2}$, and increasing when $\alpha_h > \frac{C}{2}$. In particular, $\frac{\partial T}{\partial \alpha_h}(\alpha_h)$ is (at most) 2-to-1. If some coordinate of $\alpha$ is 1 then the rest would have to be 0, which we already ruled out. Therefore, all coordinates of a maximizer are strictly between 0 and 1, which means $\frac{\partial T}{\partial \alpha_h}$ must be equal for all coordinates. Since the derivative is 2-to-1, this means a maximizer can have at most two different $\alpha_h$ values.

We can therefore restrict to $\alpha$ for which $k$ out of the $L$ coordinates have the value $x$, and the remaining $L-k$ coordinates have the value $y = \frac{1-kx}{L-k}$ (since the sum of coordinates must be 1). Therefore it is sufficient to show
$$\min_{1 \le k \le L/2} \;\min_{0 \le x \le 1/k} T_k(x) \ge 0$$
where
$$T_k(x) = C \left[\log L + kx \log x + (1-kx)\log\left(\frac{1-kx}{L-k}\right) \right] - \left[kx^2 + \frac{(1-kx)^2}{L-k} - \frac{1}{L}\right].$$
Although it only makes sense for $k$ to take integer values, we will show that the above is still true when $k$ is allowed to be any real number in the interval $[0,L/2]$.

Define
$$t_k(x) = C_k \left[\log L + kx \log x + (1-kx)\log\left(\frac{1-kx}{L-k}\right) \right] - \left[kx^2 + \frac{(1-kx)^2}{L-k} - \frac{1}{L}\right].$$
Note that this is the same as $T_k(x)$ but with $C$ replaced by $C_k$ (defined above). In the following two lemmas we will show $\min_{k,x} t_k(x) \ge 0$ and $C_k \le C_1 = C$ for all $k$. It follows that $T_k(x) \ge t_k(x)$ (since the coefficient of $C$ in $T_k(x)$ is the KL divergence, which is nonnegative). This completes the proof of the upper bound $\sup_{\alpha \ne \bar\alpha} M(\alpha) \le \frac{LC}{2}$ because
$$\min_{k,x} T_k(x) \ge \min_{k,x} t_k(x) \ge 0.$$

\begin{lemma}
For any $k \in [1,L/2]$, we have
$$\min_{x \in [0,1/k]} t_k(x) \ge 0.$$
\end{lemma}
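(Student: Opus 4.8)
The plan is to show that $t_k(x) \ge 0$ on $[0,1/k]$ by a combination of endpoint checks and a convexity/derivative analysis. First I would record the structure of the function: writing $y = \frac{1-kx}{L-k}$, we have
$$ t_k(x) = C_k\left[\log L + kx\log x + (1-kx)\log\left(\frac{1-kx}{L-k}\right)\right] - \left[kx^2 + \frac{(1-kx)^2}{L-k} - \frac1L\right], $$
and note that both bracketed expressions vanish simultaneously at the ``balanced'' point $x = 1/L$ (there $y = 1/L$ as well, the first bracket is the KL divergence of the uniform distribution with itself, and the second is $L\cdot\frac1{L^2} - \frac1L = 0$). So $t_k(1/L) = 0$, and the claim is that $x = 1/L$ is the global minimum of $t_k$ on $[0,1/k]$.

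The natural approach is to differentiate. Computing $t_k'(x)$, the quadratic part contributes $-2kx + \frac{2k(1-kx)}{L-k} = -2k\left(x - \frac{1-kx}{L-k}\right)$, which vanishes exactly at $x = 1/L$; the entropy part contributes $C_k k\left(\log x - \log\frac{1-kx}{L-k}\right) = C_k k\,\log\frac{x}{y}$, which also vanishes exactly at $x = 1/L$. Thus $t_k'(1/L) = 0$, confirming $x=1/L$ is a critical point. For the second derivative, $t_k''(x) = C_k k\left(\frac1x + \frac{k}{1-kx}\right) - 2k\left(1 + \frac{k}{L-k}\right) = C_k k\left(\frac1x + \frac1y\cdot\frac{1}{L-k}\cdot(L-k)\right)\cdots$ — more carefully, $t_k''(x) = C_k k\left(\frac1x + \frac{k}{1-kx}\right) - \frac{2kL}{L-k}$. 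One checks $\frac1x + \frac{k}{1-kx} \ge \frac{(1+\sqrt k)^2}{?}$... the cleaner route: by AM–HM or direct minimization over $x \in (0,1/k)$, the quantity $\frac1x + \frac{k}{1-kx}$ is minimized at $x$ where $\frac1{x^2} = \frac{k^2}{(1-kx)^2}$, i.e. $1-kx = kx$, $x = \frac1{2k}$, giving minimum value $2k + 2k = 4k$. So $t_k''(x) \ge 4C_k k - \frac{2kL}{L-k}$, and since $C_k = \frac{L-2k}{k(L-k)\log\frac{L-k}{k}}$, I would need $4C_k \ge \frac{2L}{L-k}$, i.e. $2(L-2k) \ge L(L-k)\log\frac{L-k}{k}/(L-k)\cdot(L-k) \cdots$ — this inequality will \emph{not} hold in general, so a global-convexity argument fails and the real work is elsewhere.

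Since $t_k$ need not be convex, I would instead argue monotonicity of $t_k'$ through a single sign change, or split at $x = 1/L$: show $t_k'(x) \le 0$ on $(0, 1/L)$ and $t_k'(x) \ge 0$ on $(1/L, 1/k)$. Factor $t_k'(x) = k\left[C_k \log\frac{x}{y} - 2(x - y)\right]$ where $y = \frac{1-kx}{L-k}$ is a decreasing affine function of $x$ with $y > x \iff x < 1/L$. On $(0,1/L)$ we have $x < y$ so $\log(x/y) < 0$ and $x - y < 0$; the sign of the bracket is not immediate, so I would use the elementary bound $\log t \ge 1 - 1/t$ (equivalently $\log(x/y) \ge 1 - y/x = (x-y)/x$) together with the inequality $C_k/x \le 2$ on the relevant range — since on $(0,1/L)$, $x < 1/L \le$ (something), and $C_k \le C_1 = C = \frac{L-2}{(L-1)\log(L-1)}$ by the second lemma — to sandwich $t_k'$. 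Symmetrically on $(1/L, 1/k)$ using $\log(x/y) \le x/y - 1$. The crux — and the step I expect to be the main obstacle — is pinning down the precise range of $x$ on which $C_k/x \le 2$ (or the appropriate variant) so that these logarithmic bounds actually yield the correct sign of $t_k'$ throughout each subinterval; this requires carefully exploiting $k \ge 1$ and the explicit form of $C_k$, and may need a further case split near the endpoints $x \to 0$ and $x \to 1/k$ where $t_k$ itself must be shown nonnegative directly (at $x = 0$: $t_k(0) = C_k[\log L + \log\frac{1}{L-k}] - [\frac1{L-k} - \frac1L] = C_k\log\frac{L}{L-k} - \frac{k}{L(L-k)}$, whose nonnegativity reduces to an inequality in $k, L$ that should follow from the definition of $C_k$). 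Once the sign pattern of $t_k'$ is established, $t_k(x) \ge t_k(1/L) = 0$ follows immediately.
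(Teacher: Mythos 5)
There is a genuine gap: the sign pattern of $t_k'$ that your whole argument rests on is false. You propose to show $t_k'\le 0$ on $(0,1/L)$ and $t_k'\ge 0$ on $(1/L,1/k)$, i.e.\ that $x=1/L$ is the unique minimizer. In fact, for every $k<L/2$ the derivative $t_k'$ has \emph{three} zeros, $\frac1L<\frac1{2k}<\frac{L-k}{kL}$: the point $\frac1{2k}$ is a local maximum and $\frac{L-k}{kL}$ is a second local minimum at which $t_k$ again equals $0$ (this is forced by the symmetry of swapping the roles of the two values $x$ and $y=\frac{1-kx}{L-k}$; a short computation using the explicit form of $C_k$ confirms $t_k'\bigl(\tfrac{1}{2k}\bigr)=t_k'\bigl(\tfrac{L-k}{kL}\bigr)=0$ and $t_k\bigl(\tfrac{L-k}{kL}\bigr)=0$). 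So $t_k'$ is strictly negative on $\bigl(\tfrac1{2k},\tfrac{L-k}{kL}\bigr)$, and no choice of logarithmic bounds of the form $\log t\ge 1-1/t$ can establish $t_k'\ge 0$ throughout $(1/L,1/k)$ --- the statement you would be trying to prove is simply not true. The conclusion $t_k\ge 0$ survives only because the second dip bottoms out exactly at $0$, which is precisely the fact your outline never engages with.

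The paper's proof handles this by controlling the \emph{number} of critical points rather than their signs: it computes $\frac{d^4t_k}{dx^4}=C_k\bigl[\frac{2k}{x^3}+\frac{2k^4}{(1-kx)^3}\bigr]>0$, so $t_k''$ is convex, hence $t_k'$ has at most three zeros; these are then exhibited explicitly as $\frac1L<\frac1{2k}<\frac{L-k}{kL}$, the middle one is a local maximum, and the global minimum on $[0,1/k]$ is therefore attained at one of the two outer critical points, both of which give $t_k=0$. Your preliminary observations are fine (you correctly get $t_k(1/L)=0$, $t_k'(1/L)=0$, and you correctly diagnose that global convexity fails since $4C_kk$ need not dominate $\frac{2kL}{L-k}$), but the subsequent plan must be replaced by an argument that accounts for the second zero-level minimum, e.g.\ the paper's critical-point count or an explicit symmetry/factorization argument.
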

\begin{proof}
We relax $k$ to be a real number in the interval $(0,L/2)$. The $k = L/2$ case will follow by continuity.
Compute the fourth derivative:
$$\frac{d^4 t_k}{dx^4} = C_k \left[\frac{2k}{x^3} + \frac{2k^4}{(1-kx)^3}\right] > 0.$$
Since the fourth derivative is strictly positive, the second derivative is convex. It follows that the first derivative $\frac{dt_k}{dx}$ has at most three zeros. One can check explicitly that these zeros are $\frac{1}{L} < \frac{1}{2k} < \frac{L-k}{kL}$. Using concavity of the second derivative, the middle zero $\frac{1}{2k}$ is a local maximum of $t_k(x)$ and the global minimum of $t_k(x)$ is achieved at either $\frac{1}{L}$ or $\frac{L-k}{kL}$. Both of these attain the value $t_k(x) = 0$, completing the proof.
\end{proof}

\begin{lemma}
For all $k \in [1,L/2]$, $C_k \le C_1 = C$.
\end{lemma}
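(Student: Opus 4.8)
The plan is to show that $C_k$, regarded as a function of $k$, is non-increasing on $[1,L/2]$, so that its maximum over this interval is attained at $k=1$; since $C_1 = C$ by definition, this is exactly what we want. (When $L=2$ the interval $[1,L/2]$ is the single point $k=1$ and there is nothing to prove, so assume $L\ge 3$.) The computation becomes transparent after a change of variable. Writing $u = k/L \in [1/L,1/2]$ and then $t = (1-u)/u = (L-k)/k \in [1,L-1]$, a short manipulation of the definition of $C_k$ gives
$$C_k = \frac1L\,\psi(t),\qquad \text{where } \psi(t) = \frac{t^2-1}{t\log t} = \frac{t - 1/t}{\log t},$$
and $\psi(1)$ is understood as its limiting value $2$, consistent with the convention $C_{L/2} = 2/L$. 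Note that $k=1$ corresponds to the \emph{largest} value $t = L-1$, and as $k$ increases over $[1,L/2]$ the parameter $t$ decreases over $[1,L-1]$. Hence it suffices to prove that $\psi$ is non-decreasing on $(1,\infty)$: then $\psi(t) \le \psi(L-1)$ for every $t\in[1,L-1]$, which says precisely $C_k \le C_1 = C$.

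Next I would differentiate. From $\psi(t) = (t - t^{-1})/\log t$ the quotient rule yields
$$\psi'(t) = \frac{(1+t^{-2})\log t - (1 - t^{-2})}{(\log t)^2},$$
so $\psi'(t) > 0$ on $(1,\infty)$ is equivalent to the single-variable inequality
$$\log t > \frac{t^2-1}{t^2+1}, \qquad t > 1.$$

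This inequality is the heart of the matter, and it has a one-line proof: put $g(t) = \log t - \frac{t^2-1}{t^2+1}$, so that $g(1) = 0$ and
$$g'(t) = \frac1t - \frac{4t}{(t^2+1)^2} = \frac{(t^2+1)^2 - 4t^2}{t(t^2+1)^2} = \frac{(t-1)^2(t+1)^2}{t(t^2+1)^2} \ge 0$$
for all $t>0$. Thus $g$ is non-decreasing, whence $g(t) > 0$ for $t>1$; this gives $\psi'>0$ on $(1,\infty)$, so $\psi$ is strictly increasing there, and the argument concludes as indicated above (with the inequality $C_k \le C_1$ strict for $k\in(1,L/2]$ and equality at $k=1$).

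I do not anticipate any genuine obstacle: once the substitution $t = (L-k)/k$ is in place, everything collapses to the routine inequality $\log t > (t^2-1)/(t^2+1)$. The only points requiring a little care are bookkeeping the direction of the substitution (that $k=1$ maps to the right endpoint $t=L-1$, not the left) and treating the value at $t=1$, i.e.\ $k=L/2$, by continuity so that the limiting convention for $C_{L/2}$ matches $\psi(1)=2$.
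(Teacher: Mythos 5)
Your proof is correct and takes essentially the same route as the paper: both show $C_k$ is monotone decreasing on the interval by differentiation combined with an elementary logarithm bound (the paper differentiates in $k$ directly and uses $\log x \ge \frac{2(x-1)}{x+1}$, while you reparametrize by $t=(L-k)/k$ and use $\log t \ge \frac{t^2-1}{t^2+1}$). The substitution is a cosmetic simplification rather than a genuinely different argument, and your handling of the endpoint $k=L/2$ via the limit convention $\psi(1)=2$ matches the paper's convention $C_{L/2}=2/L$.
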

\begin{proof}
We will show that $C_k$ is monotone decreasing in $k$ on the interval $(0,L/2)$, by showing that its derivative is negative. It then follows that we should take the smallest allowable value for $k$, i.e.\ $k = 1$. Compute the derivative:
$$\frac{dC_k}{dk} = \frac{L(L-2k) - (k^2 + (L-k)^2)\log\left(\frac{L-k}{k}\right)}{k^2(k-L)^2 \log^2\left(\frac{L-k}{k}\right)}.$$
The denominator is positive, so it suffices to show that the numerator is negative. Applying the bound $\log(x) < 2\left(\frac{x-1}{x+1}\right)$, valid for all $x \ge 1$, we see that the numerator is at most $-\frac{(L-2k)^3}{L} < 0$.
\end{proof}

This completes the proof of the upper bound $\sup_{\alpha \ne \bar\alpha} M(\alpha) \le \frac{LC}{2}$. The matching lower bound is achieved by taking the $\alpha$ value corresponding to $k = 1$ and $x = \frac{L-1}{L}$. (For $L = 2$, this corresponds to the sigularity $\bar\alpha$, but the optimum is achieved in the limit $x \to \frac{L-1}{L} = \frac{1}{2}$.)

\section{Improved Wishart lower bound}
\label{app:wish-nc}

In this section we improve our lower bound for the spiked Wishart model. The proof is based on the ``noise conditioning'' technique recently introduced by \citet{noise-cond}. Our main result is the following strengthening of Theorem~\ref{thm:wishart-ld}(i).

\begin{theorem}
\label{thm:wish-nc}
Let $\cX$ be a spike prior supported on the unit sphere in $\RR^n$, with rate function $f_\cX$ which is finite on $(0,1)$. Let $\beta \ge -1$ and $\gamma > 0$. If $\beta^2/\gamma > (\lambda^*_\cX)^2$ and
\begin{equation}
\label{eq:nc-result}
\gamma f_\cX(t^2) > -\log(1+\beta) + \beta + \frac{1}{2} \log(c^*/t)
- \frac{1+\beta-t c^*}{1-t^2} + 1 \quad \forall t \in (0,1)
\end{equation}
where
\begin{equation}
\label{eq:c-star}
c^* = c^*(t) = \frac{1}{2t}\left(-(1-t^2) + \sqrt{(1-t^2)^2 + 4 t^2 (1+\beta)^2}\right),
\end{equation}
then $\Wish(\gamma,\beta,\cX) \contig \Wish(\gamma)$.
\end{theorem}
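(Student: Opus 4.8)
The proof refines the rate-function argument behind Theorem~\ref{thm:wishart-ld}(i) with a \emph{noise-conditioning} step in the spirit of \citet{noise-cond}: rather than only modifying the prior $\cX$ to discard atypical spikes, one modifies the full spiked law so as to also discard atypical realizations of the noise. Since detection (and recovery) can only be easier given the sample matrix $X$ --- whose columns $y_1,\dots,y_N$ (with $N=n/\gamma$) are i.i.d.\ from $\cN(0,I+\beta xx^\top)$ --- it suffices to prove contiguity of the spiked and unspiked laws of $X$.

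First I would set up the conditioning. Fix $\eps>0$ and, for a unit vector $x$, let $\omega_n(x)$ be the event $\bigl|\tfrac1N\sum_{i=1}^N\langle y_i,x\rangle^2-(1+\beta)\bigr|\le\eps$. Under the spiked law with true spike $x$ one has $\langle y_i,x\rangle\sim\cN(0,1+\beta)$ i.i.d.\ (using $\|x\|=1$), so $\sum_i\langle y_i,x\rangle^2\sim(1+\beta)\chi^2_N$ concentrates and $\omega_n(x)$ holds with probability $1-o(1)$, uniformly over $x$. Following \citet{noise-cond}, define $\tilde P_n$ by replacing each conditional law $P_n(\cdot\mid x)$ with its restriction to $\omega_n(x)$ (relocating the lost mass to a dummy outcome); then $\mathrm{TV}(P_n,\tilde P_n)\le\EE_{x\sim\cX}P_n(\omega_n(x)^c\mid x)=o(1)$, so $\tilde P_n\contig Q_n$ implies $P_n\contig Q_n$, and $\dd[\tilde P_n]{Q_n}(X)=\EE_{x\sim\cX}\bigl[L_x(X)\,\one_{\omega_n(x)}\bigr]$ with $L_x=\dd[P_n(\cdot\mid x)]{Q_n}$.

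Next I would recompute the second moment as in Section~\ref{sec:wishsecond}, now carrying the indicators. Writing $u_i=\langle y_i,x\rangle$, $v_i=\langle y_i,x'\rangle$ and $\rho=\langle x,x'\rangle$, under $Q_n$ the pairs $(u_i,v_i)$ are i.i.d.\ centered Gaussian with covariance $\Sigma_\rho=\left(\begin{smallmatrix}1&\rho\\\rho&1\end{smallmatrix}\right)$, while by (\ref{eq:wishart-dpdq}) (using $\|x\|=\|x'\|=1$) one has $L_xL_{x'}=(1+\beta)^{-N}\exp\bigl(\tfrac{\beta}{2(1+\beta)}\sum_{i=1}^N(u_i^2+v_i^2)\bigr)$, and $\one_{\omega_n(x)\cap\omega_n(x')}$ forces $\tfrac1N\sum_iu_i^2$ and $\tfrac1N\sum_iv_i^2$ to lie within $\eps$ of $1+\beta$. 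By Cram\'er's theorem for the i.i.d.\ sequence $(u_i,v_i)$ --- and because the exponent and the constraints depend only on second moments, so that Gaussian laws are extremal --- the exponential rate (per sample) of $\EE_{Q_n}[L_xL_{x'}\one_{\omega_n(x)\cap\omega_n(x')}]$ is
\[ J_\eps(\rho)=-\log(1+\beta)+\sup_{\substack{\Sigma'\succeq 0:\ |\Sigma'_{11}-(1+\beta)|\le\eps\\ |\Sigma'_{22}-(1+\beta)|\le\eps}}\Bigl[\tfrac{\beta}{2(1+\beta)}(\Sigma'_{11}+\Sigma'_{22})-D\bigl(\cN(0,\Sigma')\,\|\,\cN(0,\Sigma_\rho)\bigr)\Bigr]. \]
Sending $\eps\to0$ pins $\Sigma'_{11}=\Sigma'_{22}=1+\beta$ and leaves a one-variable problem in $b:=\Sigma'_{12}$, whose stationarity condition is the quadratic $\rho b^2+(1-\rho^2)b-\rho(1+\beta)^2=0$ with positive root exactly $c^*=c^*(\rho)$ of (\ref{eq:c-star}); plugging this in and simplifying the Gaussian KL divergence --- using the same quadratic to rewrite $-\tfrac12\log\tfrac{1-\rho^2}{(1+\beta)^2-(c^*)^2}=\tfrac12\log(c^*/\rho)$ --- shows $\lim_{\eps\to0}J_\eps(\rho)$ equals the right-hand side of (\ref{eq:nc-result}) with $t=\rho$. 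For $\beta\ne0$ the equality constraint is strictly active, so this rate lies strictly below the unconditioned rate $-\tfrac12\log(1-\beta^2\rho^2)$ implicit in Theorem~\ref{thm:wishart-ld}; this is both the source of the improvement and the reason the $\beta\mapsto-\beta$ symmetry is broken.

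Finally I would assemble $\EE_{Q_n}\bigl[(\dd[\tilde P_n]{Q_n})^2\bigr]=\EE_{x,x'}\EE_{Q_n}[L_xL_{x'}\one_{\omega_n(x)\cap\omega_n(x')}]$ by splitting on $\rho^2\le\eps'$ versus $\rho^2>\eps'$, mirroring the proof of Theorem~\ref{thm:wishart-ld}(i): on the small-correlation part the conditioning is asymptotically inactive at the CLT scale $\rho\sim n^{-1/2}$, so it is controlled by comparison to the Wigner second moment as in (\ref{eq:parti-spectral-frac}), which is bounded under the stated hypothesis relating $\beta^2/\gamma$ and $(\lambda^*_\cX)^2$; on $\rho^2>\eps'$ one invokes the pointwise hypothesis (\ref{eq:nc-result}) --- now with $J$ in place of $-\tfrac1{2\gamma}\log(1-\beta^2 t)$ --- together with a Chernoff-type non-asymptotic version of the rate bound (compare (\ref{eq:chernoff-rate-assumption})) and lower semicontinuity of $f_\cX$, so that the integral tends to $0$. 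Then $\tilde P_n\contig Q_n$, hence $P_n\contig Q_n$. The main obstacle is making the large-deviations step rigorous and \emph{uniform}: I would need a non-asymptotic upper bound on $\EE_{Q_n}[L_xL_{x'}\one_{\omega_n(x)\cap\omega_n(x')}]$ valid simultaneously for all correlations $\rho$ in the relevant compact range --- a quantitative, two-dimensional analogue of the Chernoff bound underlying (\ref{eq:chernoff-rate-assumption}) --- so that the rate $J(\rho)$ can be integrated against $f_\cX$ without uncontrolled sub-exponential factors, plus a check that restricting the Cram\'er variational problem to Gaussian laws is legitimate under the imposed moment constraints. The variational computation producing $c^*$ and the algebraic reduction to (\ref{eq:nc-result}) are then routine.
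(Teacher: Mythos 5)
Your plan follows the paper's route quite closely: the same noise-conditioning event (concentration of $x^\top Y x$ around $N(1+\beta)$), the same reduction via Lemma~\ref{lem:cond}, the same conditioned second moment expressed through the joint law of the quadratic forms $(x^\top Y x,\,x^\top Y x',\,x'^\top Y x')$, and a split of $\Ex_{x,x'}$ according to the correlation $\rho=\langle x,x'\rangle$ with the small-$\rho$ part handled by comparison to the Wigner second moment. Your variational computation is also correct and equivalent to the paper's: pinning $\Sigma'_{11}=\Sigma'_{22}=1+\beta$ and optimizing the Gaussian KL over $b=\Sigma'_{12}$ gives the stationarity quadratic $\rho b^2+(1-\rho^2)b-\rho(1+\beta)^2=0$, whose root is exactly $c^*$ of (\ref{eq:c-star}), and the simplification $\tfrac12\log\frac{(1+\beta)^2-(c^*)^2}{1-\rho^2}=\tfrac12\log(c^*/\rho)$ recovers the right-hand side of (\ref{eq:nc-result}). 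The paper reaches the same exponent not through an abstract Cram\'er/Sanov argument but by writing $m(\alpha)$ with the explicit $2\times2$ Wishart density (the pair $(x^\top Yx,x'^\top Yx')$ together with $c=\tfrac1N x^\top Y x'$), applying Stirling to $\Gamma_2$, and maximizing over $c$.

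The genuine gap is exactly the step you defer: a non-asymptotic bound on $\EE_{Q_n}[L_xL_{x'}\one_{\Omega}]$ that is uniform in $\rho$ over the whole range, so that the exponent can be integrated against the prior's rate function. Cram\'er's theorem gives only pointwise asymptotic rates, and the uniformity you need genuinely fails at the edge: your $J(\rho)$ is a $0/0$ expression at $\rho=\pm1$ (where $\Sigma_\rho$ is singular and the bivariate density does not exist), and the coefficient $1/(1-\rho^2)$ multiplying the conditioning-window error blows up there, which is why the paper cannot keep its ``uniform $o(1)$'' Stirling/window estimate beyond $|\alpha|\le 1-\eps$. The paper's fix is a third regime that your two-regime split omits: on $|\alpha|\in(1-\eps,1]$ it bounds $m(\alpha)\le m(1)$ by a worst-case-correlation argument (the conditioning probability is maximized when the two $\chi^2_N$ variables are perfectly correlated), evaluates $m(1)$ directly via the $\chi^2_N$ density to get the exponent $-\tfrac12\log(1+\beta)+\tfrac{\beta}{2}$, and checks that this is implied by (\ref{eq:nc-result}) near $t=1$ by continuity. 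Two smaller points in the same direction: the paper uses a shrinking window $\delta=\log n/\sqrt n$, so the window cost is negligible without any $\eps\to0$ limit, whereas your fixed-$\eps$ window followed by ``sending $\eps\to0$'' inside the variational problem requires a double limit you would still have to justify; and your Gaussian-extremality shortcut in the Cram\'er step is avoided entirely by working with the exact Wishart density, which is what supplies the quantitative, uniform control for $|\alpha|\le1-\eps$. Without these two ingredients (explicit non-asymptotic density bounds on the bulk range and the separate endpoint argument), the integration against $f_\cX$ in your final step is not justified as written.
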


Note that for simplicity we are now assuming that the prior $\cX$ is supported exactly on the unit sphere. One consequence for the Rademacher prior is that PCA is optimal for all positive $\beta$:
\begin{corollary}
Let $\cX$ be the \iid Rademacher prior. If $0 \le \beta < \sqrt{\gamma}$ then $\Wish(\gamma,\beta,\cX) \contig \Wish(\gamma)$; in this setting, the conditions of Theorem~\ref{thm:wish-nc} can be shown to hold.
\end{corollary}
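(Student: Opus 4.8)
The plan is to verify, one by one, the hypotheses of Theorem~\ref{thm:wish-nc} for $\cX = \IID(\{\pm 1\})$ under the standing assumption $0 \le \beta < \sqrt{\gamma}$. The structural hypotheses are immediate: every coordinate of $x \sim \IID(\{\pm1\})$ equals $\pm 1/\sqrt{n}$, so $\|x\|_2 = 1$ identically and $\cX$ is supported on the unit sphere; the rate function computed earlier, $f_\cX(s) = \log 2 - H\!\left(\tfrac{1+\sqrt{s}}{2}\right) = \sum_{i\ge1}\tfrac{1}{2i(2i-1)}s^i$, is finite (indeed real-analytic) on $(0,1)$; and $\lambda^*_\cX = 1$ by Theorem~\ref{thm:pmone-wigner}, so the spectral-type condition involving $(\lambda^*_\cX)^2 = 1$ and $\beta^2/\gamma$ holds precisely because $\beta < \sqrt{\gamma}$. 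The entire content of the corollary is therefore the master inequality \eqref{eq:nc-result}.

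Writing $f_\cX(t^2) = \tfrac12(1+t)\log(1+t) + \tfrac12(1-t)\log(1-t)$ and letting $R_\beta(t)$ denote the right-hand side of \eqref{eq:nc-result} with $c^* = c^*(t)$ given by \eqref{eq:c-star}, I would first reduce to the boundary value $\gamma = \beta^2$ (used only to bound the inequality, not to apply the theorem there): since $f_\cX(t^2) > 0$ for $t \in (0,1)$, the left-hand side $\gamma f_\cX(t^2)$ is strictly increasing in $\gamma$, so it suffices to prove $\Phi_\beta(t) := \beta^2 f_\cX(t^2) - R_\beta(t) \ge 0$ on $(0,1)$ for every $\beta \ge 0$; the strict inequality \eqref{eq:nc-result} for $\gamma > \beta^2$ then follows by adding $(\gamma - \beta^2)f_\cX(t^2) > 0$. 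To analyze $\Phi_\beta$, I would use the defining quadratic $t\,c^{*}(t)^2 + (1-t^2)\,c^{*}(t) = t(1+\beta)^2$ to express the derivatives of $t\mapsto c^*(t)$ rationally in $t$ and $c^*$, and check that $\Phi_\beta$ is even in $t$ with $\Phi_\beta(0) = 0$ and vanishing $t^2$-coefficient (expanding about $t=0$, the $t^2$-coefficient of $R_\beta$ works out to $\beta^2/2$, matching that of $\beta^2 f_\cX(t^2)$; in the degenerate case $\beta=0$ one has $c^*(t)\equiv t$ and $\Phi_0\equiv0$). It then remains to show $\Phi_\beta(t) = \sum_{k\ge2}c_k(\beta)\,t^{2k} \ge 0$, which I would attempt in the style of the Taylor-coefficient argument behind Proposition~\ref{prop:wishart-z2-spectral-tight} (cf.\ the series \eqref{eq:wishart-z2-series}): extract the coefficients $c_k(\beta)$ — the $f_\cX$ part contributes $\beta^2/(2k(2k-1))$, and the $R_\beta$ part is handled via Lagrange inversion for the algebraic function $c^*(t)$ — and verify $c_k(\beta)\ge0$ for all $k\ge2$, $\beta\ge0$, together with convergence of the series on $[0,1)$; continuity then extends $\Phi_\beta\ge0$ up to $t=1$, where in any case $\beta^2 f_\cX(1) = \beta^2\log 2$ while $R_\beta(1) = -\tfrac12\log(1+\beta) - \tfrac12 < 0$. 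Finally one invokes Theorem~\ref{thm:wish-nc}.

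The main obstacle is this last step: $\Phi_\beta$ mixes the entropy-type term with the square-root expression $c^*(t)$, so neither a direct sign analysis of the coefficients $c_k(\beta)$ nor a second-derivative argument is routine. The most promising route is to eliminate $c^*$ entirely, treating $c$ as a new variable subject to $t c^2 + (1-t^2)c = t(1+\beta)^2$ (this is exactly how $c^*$ arises, as the optimizer in the noise-conditioned second moment built from \eqref{eq:wishart-moment}), and to prove the resulting polynomial-plus-logarithm inequality in $(t,c,\beta)$; a secondary point requiring care is the regime $t\to0$, where $\gamma f_\cX(t^2)$ and $R_\beta(t)$ agree to order $t^2$ and the inequality is tightest, so the $t^4$ and higher coefficients must be controlled genuinely rather than absorbed into slack.
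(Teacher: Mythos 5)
Your verification of the structural hypotheses is fine: the Rademacher prior is supported exactly on the unit sphere, its rate function $f_\cX(t)=\log 2-H\bigl(\frac{1+\sqrt t}{2}\bigr)$ is finite on $(0,1)$, $\lambda^*_\cX=1$ by Theorem~\ref{thm:pmone-wigner}, and you correctly read the condition involving $\beta^2/\gamma$ and $(\lambda^*_\cX)^2$ in Theorem~\ref{thm:wish-nc} in the direction actually used in its proof (the statement has a sign misprint). The reduction to the boundary case $\gamma=\beta^2$ and the order-$t^2$ matching at $t=0$ (both sides have $t^2$-coefficient $\beta^2/2$) are also correct. But the proof is not complete: as you yourself say, the entire content of the corollary is the inequality (\ref{eq:nc-result}) on all of $(0,1)$, and your proposal stops exactly there. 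Showing that every Taylor coefficient $c_k(\beta)$, $k\ge 2$, of $\Phi_\beta$ is nonnegative is neither carried out (the Lagrange-inversion computation is only gestured at) nor supported by any argument that it should be true --- the inequality could hold even if some coefficients were negative, and conversely nothing you write rules out a sign change; your fallback of eliminating $c^*$ via the quadratic is likewise only a suggestion. So the decisive step is missing, and it cannot be waved through: the claim is genuinely quantitative, tight to second order at $t=0$ when $\gamma=\beta^2$.

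There is also a concrete computational error at the other endpoint. The term $\frac{1+\beta-t c^*}{1-t^2}$ is $0/0$ at $t=1$ (since $c^*(1)=1+\beta$); its limit is $\frac{2+\beta}{2}$, so the right-hand side of (\ref{eq:nc-result}) tends to $\frac12\bigl(\beta-\log(1+\beta)\bigr)\ge 0$ as $t\to 1^-$, not to $-\frac12\log(1+\beta)-\frac12<0$ as you claim. The needed endpoint comparison $\beta^2\log 2\ \ge\ \frac12\bigl(\beta-\log(1+\beta)\bigr)$ does hold, but the point is that the inequality is not trivially slack away from $t=0$, which underscores that the interior verification must actually be done. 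For what it is worth, carrying your expansion one order further (using $c^*/t = a + a(1-a)t^2 + a(1-a)(1-2a)t^4+O(t^6)$ with $a=(1+\beta)^2$) gives the $t^4$-coefficient of $\Phi_\beta$ as $\frac{7\beta^2}{12}+\beta^3+\frac{\beta^4}{4}>0$, so the first nontrivial coefficient is indeed positive --- encouraging, but far from the uniform statement you need. Note also that the paper itself supplies no written verification of (\ref{eq:nc-result}) for this corollary (it is asserted that the conditions "can be shown to hold"), so there is no argument there to lean on; as submitted, your proposal is a plausible plan with the main inequality unproved.
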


The rest of this section is devoted to proving Theorem~\ref{thm:wish-nc}. The main idea is to apply the conditioning method (see Section~\ref{sec:cond-method}) to a more intricate `good' event that depends jointly on the signal and noise (whereas previously we only conditioned on the signal).

Define $P_n$ and $Q_n$ as in Section~\ref{sec:wish}. For a vector $x \in \RR^n$ and an $n \times n$ matrix $Y$, define the `good' event $\Omega(x,Y)$ by $x^\top Y x \in [N(1+\beta)(1-\delta),N(1+\beta)(1+\delta)]$ where $\delta = \frac{\log n}{\sqrt n}$. Note that under $P$ (where $x$ is the spike and $Y$ is the Wishart matrix), $\Omega(x,Y)$ occurs with probability $1-o(1)$. Let $\tilde P_n$ be the conditional distribution of $P_n$ given $\Omega(x,Y)$.

Following equation (\ref{eq:wishart-dpdq}) in the second moment computation of Section~\ref{sec:wish}, we compute the noise-conditioned second moment:
\begin{align}
\label{eq:cond-2nd}
\Ex_{Q_n} \left(\dd[\tilde P_n]{Q_n}\right)^2 &= \Ex_{x,x'\sim \cX} \,\Ex_{Y \sim Q_n} (1+\beta)^{-N} \exp\left( \frac12  \,\frac{\beta}{1+\beta} N (x^\top Y x + x'^\top Y x') \right) \\
&= (1+o(1)) \Ex_{x,x'\sim \cX} \,\Ex_{Y \sim Q_n} (1+\beta)^{-N} \exp\left(\beta N \left(1+\frac{\Delta}{2} + \frac{\Delta'}{2}\right)\right) \one_{|\Delta| \le \delta} \one_{|\Delta'| \le \delta} \\
&\defeq (1+o(1)) \Ex_{x,x' \sim \cX} m(\langle x,x' \rangle) \nonumber
\end{align}
where $\Delta,\Delta'$ are defined by $x^\top Y x = N(1+\beta)(1+\Delta)$ and $x'^\top Y x' = N(1+\beta)(1+\Delta')$. We will see below that $m$ is indeed only a function of $\langle x,x' \rangle$.

\subsection{Interval $|\alpha| \in [\eps,1-\eps]$}

Let $\alpha = \langle x,x' \rangle$. Let $\eps > 0$ be a small constant (not depending on $n$), to be chosen later. First let us focus on the contribution from $|\alpha| \in [\eps,1-\eps]$, i.e.\ we want to bound
$$M_1 \defeq \Ex_\alpha \left[\one_{|\alpha| \in [\eps,1-\eps]} m(\alpha) \right].$$

For $Y \sim Q_n$ and with $x,x'$ fixed unit vectors, the matrix
$$\left(\begin{array}{cc} x^\top Y x & x^\top Y x' \\ x^\top Y x' & x'^\top Y x' \end{array}\right)$$
follows the $2 \times 2$ Wishart distribution with $N$ degrees of freedom and shape matrix $$\left(\begin{array}{cc} 1 & \alpha \\ \alpha & 1 \end{array}\right)$$ where $\alpha = \langle x,x' \rangle$ as above.

By integrating over $c = \frac{1}{N} x^\top Y x'$ and using the PDF of the Wishart distribution, we have
\begin{align*}
&m(\alpha) = \\
&\iiint (1+\beta)^2 \exp\Big\{N\Big[-\log(1+\beta) + \beta\left(1 + \frac{\Delta}{2} + \frac{\Delta'}{2}\right) + \left(\frac{1}{2} - \frac{3}{N}\right) \log((1+\beta)^2(1+\Delta)(1+\Delta')-c^2) \\
&- \frac{1}{1-\alpha^2}\left((1+\beta)\left(1+\frac{\Delta}{2}+\frac{\Delta'}{2}\right)-\alpha c\right) - \frac{1}{2} \log(1-\alpha^2) + \log(N/2) - \frac{1}{N}\log \Gamma_2(N/2)\Big]\Big\} \,\dee c\, \dee \Delta\, \dee \Delta'
\end{align*}
where the integration ranges over $|\Delta| \le \delta$, $|\Delta'| \le \delta$, and $|c| \le (1+\beta)\sqrt{(1+\Delta)(1+\Delta')}$.

Using $\delta = o(1)$ and applying Stirling's approximation to $\Gamma_2$, we have for $|\alpha| \in [\eps,1-\eps]$,
$$m(\alpha) \le \max_{|c| \le 1+\beta} (1+\beta)^2 \exp\Big\{N\Big[-\log(1+\beta) + \beta + \frac{1}{2} \log((1+\beta)^2-c^2)
- \frac{1+\beta-\alpha c}{1-\alpha^2} - \frac{1}{2} \log(1-\alpha^2) + 1 + o(1) \Big]\Big\}$$
where the $o(1)$ is uniform in $\alpha$.

We can solve explicitly for the optimal value $c^*$ for $c$:
\begin{equation}
\label{eq:c-star-id}
c^* (1-\alpha^2) = \alpha((1+\beta)^2 - c^2)
\end{equation}
and so
$$c^* = \frac{1}{2\alpha}\left(-(1-\alpha^2) + \sqrt{(1-\alpha^2)^2 + 4 \alpha^2 (1+\beta)^2}\right).$$
Using (\ref{eq:c-star-id}), the above becomes
$$m(\alpha) \le m_1(\alpha) \defeq (1+\beta)^2 \exp\Big\{N\Big[-\log(1+\beta) + \beta + \frac{1}{2} \log(c^*/\alpha)
- \frac{1+\beta-\alpha c^*}{1-\alpha^2} + 1 + o(1) \Big]\Big\}.$$

Due to the symmetry $c^*(-\alpha) = -c^*(\alpha)$ we have $m_1(-\alpha) = m_1(\alpha)$ and so it is sufficient to restrict to the positive $\alpha$ case. (Here we assume for convenience that the distribution of $\alpha$ is symmetric about zero, but the proof easily extends to the asymmetric case.) Also, $m_1(\alpha)$ is increasing on $[0,1]$. We have
\begin{align*}
\frac{1}{2} M_1 &\le \Ex_\alpha \left[\one_{\alpha \in [\eps,1-\eps]} m_1(\alpha) \right] \\
&= \int_0^\infty \problr{\one_{\alpha \in [\eps,1-\eps]} m_1(\alpha) \ge u} \dee u \\
&= \int_0^\infty \problr{\alpha \in [\eps,1-\eps] \text{ and } m_1(\alpha) \ge u} \dee u \\
&= m_1(\eps) \problr{\alpha \in [\eps,1-\eps]} + \int_{m_1(\eps)}^{m_1(1-\eps)} \problr{\alpha \in [\eps,1-\eps] \text{ and } m_1(\alpha) \ge u} \dee u.
\intertext{The change of variables $u = m_1(t)$ yields}
&= m_1(\eps) \problr{\alpha \in [\eps,1-\eps]} + \int_\eps^{1-\eps} \problr{\alpha \in [\eps,1-\eps] \text{ and } \alpha \ge t} m_1(t) \,O(N)\, \dee t \\
&\le m_1(\eps) \problr{\alpha \ge \eps} + O(N) \int_\eps^{1-\eps} \problr{\alpha \ge t} m_1(t) \dee t.
\end{align*}

\noindent Plugging in the rate function to bound $\prob{\alpha \ge t}$, we obtain $M_1 = o(1)$ provided that (\ref{eq:nc-result}) holds.

\subsection{Interval $|\alpha| \in [0,\eps)$}

This case needs special consideration because (\ref{eq:nc-result}) does not hold with strict inequality at $t=0$ and so the last step above requires $\alpha$ to be bounded away from 0. As in the proof of Theorem~\ref{thm:wishart-ld} the contribution $M_2 \defeq \Ex_\alpha \left[\one_{|\alpha| \in [0,\eps)} m(\alpha) \right]$ is bounded as $n\to \infty$ provided that $\beta^2/\gamma < (\lambda^*_\cX)^2$ and $\eps$ is small enough. This step does not need to use the conditioning on $\tilde P_n$ and simply reverts back to the basic second moment, which is only larger (for each value of $\alpha$).

\subsection{Interval $|\alpha| \in (1-\eps,1]$}

This case needs special consideration because in the calculations for the $[\eps,1-\eps]$ interval, certain terms in the exponent blow up at $|\alpha|=1$ which prevents us from replacing $\Delta,\Delta'$ by an error term that is $o(1)$ uniformly in $\alpha$. To deal with this case we will bound $m(\alpha)$ by its worst-case value $m(1)$.

To see that $m(1)$ is the worst case, notice from (\ref{eq:cond-2nd}) that up to an $\exp(o(N))$ factor (which will turn out to be negligible), $m(\alpha)$ is proportional to $\prob{|\Delta| \le \delta \text{ and } |\Delta'| \le \delta}$. Since $x^\top Y x$ and $x'^\top Y x'$ each follow at $\chi_N^2$ distribution (with correlation that increases with $|\alpha|$), this probability is maximized when they are perfectly correlated at $|\alpha| = 1$.

We now proceed to bound $m(1)$. Let $|\alpha| = 1$, let $Y \sim Q_n$, and let $x,x'$ be fixed unit vectors. We have that $x^\top Y x$ follows a $\chi_N^2$ distribution, with $x'^\top Y x' = x^\top Y x$. Similarly to the computation for $[\eps,1-\eps]$ we obtain
$$m(1) \le m_3 \defeq (1+\beta) \exp\left\{N\left[-\frac{1}{2} \log(1+\beta) - \frac{1}{2}(1-\beta) + \frac{1}{2} + o(1)\right]\right\}$$
and
$$M_3 \defeq \Ex_\alpha \left[\one_{|\alpha| \in (1-\eps,1]} m(\alpha) \right] \le \exp(o(N)) \prob{|\alpha| \ge 1-\eps} m_3.$$
Plugging in the rate function, $M_3$ is $o(1)$ provided that $\gamma f((1-\eps)^2) > -\frac{1}{2}\log(1+\beta) - \frac{1}{2}(1-\beta) + \frac{1}{2}$. This follows from (\ref{eq:nc-result}) (near $t=1$) provided $\eps$ is small enough.

\end{document}